\numberwithin{equation}{section}
\numberwithin{figure}{section}
\newcommand{\Aut}{\operatorname{Aut}}
\newcommand{\ab}{{\operatorname{ab}}}
\newcommand{\tors}{{\operatorname{tors}}}
\newcommand{\GL}{\operatorname{GL}}
\newcommand{\M}{\operatorname{M}}
\newcommand{\To}{\longrightarrow}
\newcommand{\calF}{{\mathcal F}}
\newcommand{\A}{{\mathcal A}}
\newcommand{\Z}{{\mathbb Z}}
\newcommand{\C}{{\mathbb C}}
\newcommand{\N}{{\mathbb N}}
\newcommand{\calS}{{\mathcal S}}
\newcommand{\qq}{{\mathfrak q}}
\newcommand{\gp}{{\mathfrak p}}
\newcommand{\gm}{{{\mathbf G}_m}}
\newcommand{\ga}{{{\mathbf G}_a}}
\def\st{{\sf t}}
\newcommand{\bL}{\mathbf L}
\newcommand{\bK}{\mathbf K}
\newcommand{\bF}{\mathbf F}
\newcommand{\bx}{\mathbf x}
\newtheorem*{theorem*}{Theorem}
\newtheorem*{conjecture*}{Conjecture}
\newtheorem{conjecture}[equation]{Conjecture}
\newtheorem*{question*}{Question}
\newtheorem{theorem}[equation]{Theorem}
\newtheorem{lemma}[equation]{Lemma}
\newtheorem{corollary}[equation]{Corollary}
\newtheorem{prop}[equation]{Proposition}
\newtheorem{defprop}[equation]{Definition/Proposition}
\newtheorem{proposition}[equation]{Proposition}
\newtheorem{consequence}[equation]{Consequence}
\newtheorem{construction}[equation]{Construction}
\newtheorem*{problem}{Problem}
\theoremstyle{definition}
\newtheorem{definition}[equation]{Definition}
\newtheorem{question}[equation]{Question}
\newtheorem{example}[equation]{Example}
\newtheorem{assumption}[equation]{Assumption}
\newtheorem{remark}[equation]{Remark}
\newtheorem{remarks}[equation]{Remarks}
\theoremstyle{remark}
\newtheorem{notation}[equation]{Notation}
\newcommand{\oo}{\mathcal{O}}
\newcommand{\Q}{\mathbb{Q}}
\newcommand{\R}{\mathbb{R}}
\newcommand{\PP}{\mathbb{P}}
\newcommand{\rank}{\mathrm{rank}}
\newcommand{\Gal}[2]{\mathrm{Gal}(#1/#2)}
\def\Hom{\mathrm{Hom}}
\def\Spec{\mathrm{Spec}}
\def\Aff{\mathrm{Aff}}
\def\df{\bf}
\title[ Existential Definability and Diophantine Stability]{Existential Definability 
and Diophantine Stability}
\author{Barry Mazur}
\address{Department of Mathematics\\ Harvard University \\ Cambridge, MA 02138-2901}
\email{mazur@g.harvard.edu}
\urladdr{http://www.math.harvard.edu/~mazur}
\author{Karl Rubin}
\address{Department of Mathematics\\
UC Irvine\\
Irvine, CA 92697, 
USA}
\email{krubin@uci.edu}
\urladdr{https://math.uci.edu/~krubin}
\author{Alexandra Shlapentokh}
\address{Department of Mathematics \\ East Carolina University \\ Greenville, NC 27858}
\email{shlapentokha@ecu.edu }
\urladdr{myweb.ecu.edu/shlapentokha}
\subjclass[2000]{Primary 11U05; Secondary 11G05} \keywords{Hilbert's Tenth
Problem, diophantine definition}
\thanks{
The authors thank Hector Pasten for his comments and suggestions for the paper. They also thank 
Laurent Moret-Bailly for pointing out an error in an earlier version of the paper.\\
The research for this paper was partially supported by DMS grants 2152098 (AS), 2152149 (BM), 2152262 (KR).
AS was also partially supported by an ECU Creative Activity and Research Grant during the summer of 2022.
}
\begin{document}
\maketitle
\setcounter{tocdepth}{1}

\tableofcontents

\section{Introduction}
For the definition of ``diophantine definition'', 
``diophantine undecidable'', ``existentially definable'', 
and similar terms used in this introduction, see 
\S\ref{htpsec} and \S\ref{sec:Dioph}.

Here is a corollary of one of our main results (see Theorems \ref{mainth1} and \ref{thm:inf} below).
By ``number field'' we mean a finite extension of $\Q$.

\begin{theorem}
\label{pre-mainth1}
Let $K$ be a number field, let $L$ be an algebraic (possibly infinite degree) extension of $K$, and let
$\oo_K \subset \oo_L$ be their rings of integers.  Suppose $A$ is an abelian variety 
defined over $K$ such that $A(K)$ is infinite and $A(L)/A(K)$ is a torsion group.  
If at least one of the following conditions is satisfied:
\begin{enumerate}
\item
$L$ is a number field,
\item
$L$ is totally real,
\item
$L$ is a quadratic extension of a totally real field,
\end{enumerate}
then $\oo_K$ has a diophantine definition over $\oo_L$. 
\end{theorem}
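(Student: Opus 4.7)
The plan is to deduce this corollary from Theorems~\ref{mainth1} and \ref{thm:inf}, which presumably supply general criteria for diophantine definability of $\oo_K$ inside $\oo_L$ formulated in terms of an abelian variety with the rank-stability property $A(L)/A(K)\text{ torsion}$. Theorem~\ref{mainth1} should cover the finite-degree case (1), and Theorem~\ref{thm:inf} the infinite-degree cases (2) and (3); the corollary would then follow by verifying the hypotheses of the appropriate main theorem for the given $A$. My task here is to describe how those main theorems should themselves be proved, since the corollary is little more than a packaging of them.

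The technical heart is a Denef--Poonen-style construction. Fix a point $P \in A(K)$ of infinite order, which exists because $A(K)$ is infinite and finitely generated by Mordell--Weil over the number field $K$. The hypothesis that $A(L)/A(K)$ is torsion guarantees that, modulo torsion, the cyclic subgroup $\Z P \subseteq A(L)$ is a distinguished copy of $\Z$ whose ``index'' in $A(L)$ is controlled. The first step is to promote this into a \emph{diophantine} description: using the addition formulas on $A$ relative to a fixed affine model, one writes a polynomial system whose $\oo_L$-solutions are, up to a finite torsion ambiguity, exactly the affine coordinates of the points $nP$ for $n \in \Z$. This yields a diophantine copy of $\Z$ inside $\oo_L$, and by the same mechanism applied to further generators of $A(K) \otimes \Q$, a diophantine copy of $A(K)$ modulo torsion.

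From a diophantine copy of $\Z$ in $\oo_L$, one assembles a diophantine definition of $\oo_K$ by testing an element $\alpha \in \oo_L$ for membership in $\oo_K$ via a monic integer-coefficient polynomial whose coefficients are themselves witnessed diophantinely as coordinates of multiples of $P$ (or auxiliary $K$-points). In case (1), finite generation of $\oo_L$ over $\Z$ keeps everything uniformly bounded, so the argument reduces to a version of the classical Denef--Lipshitz--Pheidas--Shlapentokh scheme. In cases (2) and (3), $L$ is infinite over $\Q$, so degree bounds are unavailable; instead one must exploit the archimedean rigidity afforded by total reality (algebraic integers with all real conjugates in a bounded interval form a finite set), together with descent to the totally real subfield of index two in case (3).

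I expect the main obstacle to be precisely this archimedean integrality test in cases (2) and (3): one must express, by a positive existential formula in the language of rings, the constraint that an algebraic integer in $L$ lies in the subring $\oo_K$, with no finite degree or global norm form to appeal to. The standard device, from work of Denef and Shlapentokh, encodes total positivity using sums of squares (available in totally real extensions) and couples it with a Pell-type or abelian-variety equation to bound denominators. Verifying that the rank-stability hypothesis $A(L)/A(K)\text{ torsion}$ supplies enough flexibility to run this encoding uniformly across the infinite extension, and that the quadratic twist in case (3) can be handled by descent to the totally real subfield without destroying the diophantine character of the construction, is where I expect the bulk of the technical work to lie.
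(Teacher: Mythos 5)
Your proposal correctly identifies the high-level reduction (pass to Theorems \ref{mainth1} and \ref{thm:inf}, build a diophantine set of ``integer-like'' elements, then upgrade to $\oo_K$ via a basis argument as in Lemma \ref{3.7}), and you correctly flag the archimedean bounding problem in the infinite-degree cases. But the central construction is missing, and the mechanism you do describe would not work in general.

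You propose to ``write a polynomial system whose $\oo_L$-solutions are, up to a finite torsion ambiguity, exactly the affine coordinates of the points $nP$ for $n\in\Z$,'' obtaining a diophantine copy of $\Z$. There is no such existential description when $\rank A(K) \ge 2$: nothing singles out the cyclic subgroup $\Z P$ inside $A(K)$, and the hypothesis $A(L)/A(K)$ torsion only controls what happens \emph{above} $A(K)$, not the index of $\Z P$ in $A(K)$. The paper sidesteps this entirely. It does not produce a copy of $\Z$; it produces a set $Y$ with $\N \subset Y \subset \oo_K$, which is all that Lemma \ref{3.7} needs. The mechanism is the scheme-theoretic formal logarithm of Part~2: for each $\oo_K$-point $P$ one has the vanishing ideal $z_P$, the congruence ideal $c_P = z_P^2$, and a ``derivative'' $\partial P$ valued in $\mathcal{N}^*\otimes(\oo_K/c_P)$, which is additive, $\partial(P\cdot Q)=\partial P + \partial Q \pmod{(c_P,c_Q)}$ (Proposition \ref{sum}), hence $\partial(P^\nu)=\nu\,\partial P$ (Corollary \ref{qp}). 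The diophantine set is
$$
Y = \{\alpha : \exists\, P,Q \in M^r_{ID(\alpha),L},\ P\ne e,\ \partial Q = \alpha\cdot\partial P\},
$$
where $M^r_{I,L}$ is a congruence subgroup chosen so that $(r,I)$ is an exponent of diophantine stability (forcing $P,Q$ to actually lie in $G(\oo_K)$). Containment $Y\subset\oo_K$ is then a congruence argument: the relation $\partial Q=\alpha\,\partial P$ forces $\alpha\equiv b\pmod{z_P}$ for some $b\in\oo_K$, and $z_P$ is deep enough (controlled by $D(m,\alpha)$, or $(2\alpha+1)^2$ plus sums of squares, or $X_1\alpha$) that Lemma \ref{le:congruence} pins $\alpha$ into $\oo_K$. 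Containment $\N\subset Y$ comes from taking $Q=P^\nu$. Your proposal names none of the ingredients $z_P$, $c_P$, $\partial$, or the relation $\partial Q=\alpha\partial P$, and without them there is no bridge from ``$A(L)/A(K)$ torsion'' to a diophantine description.

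For case (3) the gap is wider still: ``descent to the totally real subfield'' requires an auxiliary existentially-defined device, namely the twist $\mathbf{G}_m^{\mathbf{M}/\bL}$ and its diophantine stability for $\bF/\bL$ (Lemmas \ref{min2} and \ref{le:Diophstability}, Corollary \ref{cor:Diophstable}), which is what lets one produce in a diophantine way elements of $\bL$ to serve as bounds $X_1/X_2$. A Pell-type equation alone does not supply this in an infinite extension. In short, the outline is directionally right but the load-bearing idea — the $\partial$-operator on the conormal bundle and the equation $\partial Q = \alpha\cdot\partial P$ inside a congruence subgroup — is absent, and the substitute you offer (a diophantine copy of the multiples of one point) does not exist in the required generality.
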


Theorem \ref{pre-mainth1} answers a question raised by B.\ Poonen in \cite[Question 2.6(3)]{Po}.
For a discussion of previous results in the direction of Theorem \ref{pre-mainth1}, 
see \S\ref{htpsec}.

Theorem \ref{pre-mainth1} is a special case of a more general result 
(see Propositions \ref{prop2}, \ref{prop3}, and \ref{prop5}) where 
the abelian variety is replaced by a smooth group scheme over $\oo_K$ satisfying 
some mild additional hypotheses (Assumption \ref{Ghyp}).  In this paper the group 
schemes we mainly use are N{\'e}ron models of abelian varieties and twists of 
multiplicative groups.

One way of describing the main structural element in the proof of Theorem \ref{pre-mainth1} is the following. We pass, via an `existential definition'---from an appropriate subgroup of the group of  rational points on  the group scheme we work with---to the (additive group of the ) ring $\oo_K$, by constructing a scheme-theoretic (existentially defined) analogue to some approximation of the standard logarithm mapping that sends an appropriate open subset of the identity in an algebraic group to its Lie algebra. 

The following concept, implicit in the statement of Theorem \ref{pre-mainth1}, is fundamental 
to our results in this paper.

\begin{definition}
\label{def0}
Let $L/K$ be an extension of fields, and $V$ an algebraic variety 
defined over $K$. We denote by $V(K)$ the set of $K$-rational points of $V$.
\begin{itemize} 
\item Say that $V$ is {\df diophantine stable} for $L/K$, or 
$L/K$ is {\df diophantine stable} for $V$,
if the inclusion $V(K) \hookrightarrow V(L)$ is an isomorphism, 
i.e., if $V$ acquires no new rational points after passing from $K$ to $L$.
\item If $V = A$ is an abelian variety over a field $K$, say that $A$ is 
{\df rank stable} for $L/K$ if $A (L)/A(K)$ is a torsion group. If $L$ is a number 
field, this is equivalent to saying that $\rank\, A(K) = \rank\, A(L)$.
\end{itemize}
\end{definition}

A study of diophantine stability for elliptic curves can be found in \cite{MR}, 
and for higher dimension abelian varieties in \cite{MR1}.
  
\begin{definition}\label{chain}
We say that number fields $L/K$ are {\df connected by a diophantine chain} 
if there is an $n \ge 0$ and a tower of number fields  
$$K = K_0 \subset K_1 \subset \cdots \subset K_n \supset L$$
such that for every $i$, $1 \le i \le n$, there is an abelian variety 
$A_i$ defined over $K_{i-1}$ that is rank stable for $K_i/K_{i-1}$ and 
such that $\rank\,A_i(K_{i-1}) > 0$.
\end{definition}

The existence of a rank stable abelian variety {\it descends} in the following sense.
 
\begin{theorem}
\label{descend} 
Let $L/K$ and $K'/K$ be linearly disjoint number field extensions of $K$, and consider 
$L':=K' L$, the compositum of $K'$ and $L$. If there is an abelian variety $A'$ 
over $K'$ with $\rank\, A'(K') = \rank\, A'(L') > 0$, then there is an abelian variety $A$ 
over $K$ with $\rank\, A(K) = \rank\, A(L) > 0$.
\end{theorem}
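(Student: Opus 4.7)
The plan is to form the Weil restriction of scalars $A := \mathrm{Res}_{K'/K}(A')$. Because $K'/K$ is a finite separable extension of number fields, $A$ is an abelian variety over $K$ of dimension $[K':K]\cdot\dim(A')$, and by the universal property of Weil restriction one has, for every $K$-algebra $R$, a natural identification
\[
A(R) \;=\; A'(R \otimes_K K').
\]

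I would then specialize this identification to $R=K$ and $R=L$. The case $R=K$ gives $A(K) = A'(K')$ directly, so $\rank A(K) = \rank A'(K') > 0$. For $R=L$ one must identify $L \otimes_K K'$; this is exactly where the linear disjointness hypothesis enters. The multiplication map $L \otimes_K K' \to L'$ is surjective, and linear disjointness says the source has $K$-dimension $[L:K][K':K] = [L':K]$, so the map is an isomorphism of rings (equivalently, since $L/K$ is separable, $L \otimes_K K'$ is an \'etale $K'$-algebra which linear disjointness forces to be a single field, necessarily $L'$). Hence $A(L) = A'(L')$. Combining the two identifications, $A(L)/A(K) \cong A'(L')/A'(K')$, which is torsion by hypothesis, and $\rank A(L) = \rank A(K) = \rank A'(K') > 0$, so $A$ has the required properties.

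There is no genuine obstacle: the argument is entirely formal once one invokes the standard fact that Weil restriction along a finite separable extension preserves the class of abelian varieties (cf.\ Bosch--L\"{u}tkebohmert--Raynaud, \emph{N\'{e}ron Models}, \S7.6) and unpacks the algebraic content of linear disjointness as the isomorphism $L \otimes_K K' \cong L'$. The only point where one must be slightly attentive is this tensor-product identification, but it is immediate from comparing $K$-dimensions.
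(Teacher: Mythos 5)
Your proof is correct and follows essentially the same route as the paper: the paper also takes $A$ to be the Weil restriction ("Weil trace") of $A'$ along $K'/K$, invokes the identification $A(R)=A'(K'\otimes_K R)$ for $R=K$ and $R=L$, and uses linear disjointness to get $K'\otimes_K L\cong L'$.
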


(See Theorem \ref{mainth2} below.)

\begin{remark}  
The abelian variety $A$ in the conclusion of  Theorem  \ref{descend} 
is the ``$K'/K$-Weil trace'' of $A'$. In particular, $\dim A = [K':K]\dim A'$ and 
$\rank\, A(K) = \rank\, A'(K')$.
\end{remark}  

The following corollary follows directly from Theorem \ref{pre-mainth1} and 
Lemma \ref{3.5} below. 

\begin{corollary}
\label{c2} 
If $L/K$ is an extension of number fields connected by a diophantine chain, 
then the ring of integers of $K$ has a diophantine definition over the ring of 
integers of $L$.
\end{corollary}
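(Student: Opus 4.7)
The plan is to combine Theorem \ref{pre-mainth1} with transitivity of diophantine definability (the content I expect of Lemma \ref{3.5}) in a direct chain argument. Fix a tower $K = K_0 \subset K_1 \subset \cdots \subset K_n \supseteq L$ together with abelian varieties $A_i/K_{i-1}$ realizing the diophantine chain. For each $i$ with $1 \le i \le n$ I verify the hypotheses of Theorem \ref{pre-mainth1}(1) applied to $K_i/K_{i-1}$ with abelian variety $A_i$: rank stability gives directly that $A_i(K_i)/A_i(K_{i-1})$ is torsion, while $\rank A_i(K_{i-1}) > 0$ together with finiteness of torsion on an abelian variety over a number field forces $A_i(K_{i-1})$ to be infinite. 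Theorem \ref{pre-mainth1} therefore yields a diophantine definition of $\oo_{K_{i-1}}$ over $\oo_{K_i}$ for every $i$.

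I then compose these $n$ diophantine definitions by iteratively substituting the existentially quantified witnesses at one level into the defining system at the previous level, the standard transitivity argument for chains of rings; the output is a diophantine definition of $\oo_K = \oo_{K_0}$ over $\oo_{K_n}$. To conclude, I would descend from $\oo_{K_n}$ down to $\oo_L$, using that $[K_n:L] < \infty$: pick $\omega_1, \ldots, \omega_d \in \oo_{K_n}$ spanning $K_n$ over $L$, along with a positive integer $N$ such that $N\oo_{K_n} \subseteq \oo_L\omega_1 + \cdots + \oo_L\omega_d$. Any $\oo_{K_n}$-witness $y$ then takes the form $y = \frac{1}{N}\sum_k b_k \omega_k$ with $b_k \in \oo_L$; substituting, clearing the power of $N$, and equating coefficients in the $\omega$-basis via a rescaled multiplication table for the $\omega_i$ turns each polynomial equation over $\oo_{K_n}$ into a system of $d$ polynomial equations over $\oo_L$ in the scalar witnesses $b_k$. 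For $x \in \oo_L$, the resulting system gives the desired diophantine definition of $\oo_K$ over $\oo_L$.

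No substantive difficulty arises: all of the depth sits in Theorem \ref{pre-mainth1}, while the iterated composition and the $\oo_{K_n} \to \oo_L$ descent are purely formal manipulations of polynomial systems, exactly the sort of bookkeeping I expect Lemma \ref{3.5} to record once and for all. The only mild point of care is noting that in the descent the integer $N$ and the multiplication-table denominator can be chosen independently of the witnesses, so that the transformed system has fixed coefficients in $\oo_L$.
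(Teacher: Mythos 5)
Your high-level strategy matches the paper's exactly: Theorem \ref{pre-mainth1} (or, more directly, Theorem \ref{mainth1}) applied to each link $K_i/K_{i-1}$ gives a diophantine definition of $\oo_{K_{i-1}}$ over $\oo_{K_i}$; chaining these by transitivity yields a diophantine definition of $\oo_K$ over $\oo_{K_n}$; and a finite-degree descent from $\oo_{K_n}$ to $\oo_L$ finishes. The paper packages both the chaining and the descent into Lemma \ref{3.5} and simply cites it, while you re-derive them inline. The hypothesis check for Theorem \ref{pre-mainth1} (rank stability gives torsion quotient; positive rank plus Mordell--Weil gives an infinite group) is fine, and the chaining step is standard.

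The inline sketch of the descent from $\oo_{K_n}$ to $\oo_L$, however, has a genuine gap. Writing $y = \tfrac{1}{N}\sum_k b_k\omega_k$ with $\omega_1,\ldots,\omega_d$ an $L$-basis inside $\oo_{K_n}$ and $N\oo_{K_n} \subseteq \sum_k \oo_L\omega_k$ is \emph{surjective} onto $\oo_{K_n}$ as $\vec b$ ranges over $\oo_L^d$, so the forward direction works. But the image of this parametrization is the full lattice $\tfrac{1}{N}\sum_k\oo_L\omega_k$, which in general strictly contains $\oo_{K_n}$ (and does so unless $\oo_{K_n}$ happens to be $\oo_L$-free with basis $\omega_1,\ldots,\omega_d$ and $N=1$). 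Consequently, a tuple $\vec b \in \oo_L^{nd}$ solving the transformed, coefficient-matched system need not produce witnesses $y_j$ lying in $\oo_{K_n}$; the original diophantine definition then says nothing, and you cannot conclude $t \in \oo_K$. So the backward implication fails as written. The standard repair --- and what the cited proof of Lemma \ref{3.5} actually does --- is either to parametrize by $\oo_L$-module \emph{generators} $\eta_1,\ldots,\eta_m$ of $\oo_{K_n}$, so that $y = \sum_k c_k\eta_k$ with $c_k \in \oo_L$ lands exactly in $\oo_{K_n}$ (one then coefficient-matches in an $L$-basis after a fixed integer scaling), or equivalently to append to your system the diophantine congruence conditions on $\vec b \bmod N$ that cut $\tfrac{1}{N}\sum_k\oo_L\omega_k$ back down to $\oo_{K_n}$. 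Once this is patched, your argument agrees with the paper's.
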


\begin{conjecture}
\label{ccc} 
Every number field $L$ is connected to $\Q$ by a diophantine chain.
\end{conjecture}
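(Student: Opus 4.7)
My plan is to construct the desired diophantine chain by reducing to prime-degree extensions. Since Definition~\ref{chain} only requires the final field $K_n$ to \emph{contain} $L$, one may replace $L$ by its Galois closure $\tilde L$ over $\Q$, and then refine any intermediate tower into a sequence $\Q = F_0 \subset F_1 \subset \cdots \subset F_m = \tilde L$ in which each step $F_i/F_{i-1}$ has prime degree. The conjecture then reduces to the following atomic statement: \emph{for every prime-degree extension of number fields $F/E$, there exists an abelian variety $A/E$ with $\rank\,A(E) > 0$ that is rank stable for $F/E$.}

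For the atomic statement in the Galois (equivalently, cyclic) case, I would invoke the diophantine-stability machinery established by Mazur and Rubin for cyclic extensions of prime degree in \cite{MR} and \cite{MR1}: starting from an elliptic curve $E_0/E$ of positive rank (which exists over any number field by classical specialization or Hilbert-irreducibility arguments), one searches among its twists for a curve whose rank is provably unchanged in $F$. The non-Galois prime-degree case would then be reduced to the Galois case by choosing an auxiliary extension $E'/E$ linearly disjoint from $F/E$ so that $FE'/E'$ becomes cyclic of prime degree; applying the Galois case over $E'$ produces the required abelian variety over $E'$, which descends to $E$ via Theorem~\ref{descend}.

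The principal obstacle is that all currently known methods for producing rank-stable abelian varieties of positive rank in cyclic extensions of prime degree are either conditional (on BSD, finiteness of Shafarevich--Tate groups, or parity conjectures) or limited in scope (to specific primes, specific base fields, or restricted families of twists). Establishing the atomic statement \emph{unconditionally and uniformly} across all base fields and primes arising in the reduction is exactly what makes Conjecture~\ref{ccc} open. A subsidiary difficulty is arranging the auxiliary $E'$ in the non-Galois step to be simultaneously linearly disjoint from $F$ and such that $FE'/E'$ is Galois: the obvious candidate---the Galois closure of $F/E$---fails the disjointness requirement, so one must combine cyclotomic extensions such as $E(\zeta_p)$ with further Hilbert-irreducibility-style constructions to manufacture a suitable $E'$.
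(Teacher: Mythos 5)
You've written a plan, not a proof --- but that is the right thing to have done, because Conjecture \ref{ccc} is exactly that, a conjecture, and the paper supplies no proof of it. The paper places it in the same conditional universe as the Denef--Lipshitz conjecture and the results of \cite{MR}, \cite{MP18}, and \cite{Pas22-1}, which is consistent with your diagnosis in the closing paragraphs: the missing ingredient is an unconditional supply of positive-rank, rank-stable abelian varieties for the relevant cyclic extensions, and this is precisely why the statement remains open.

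There is, however, one concrete flaw in your proposed reduction independent of the conditional atomic statement. You assert that after replacing $L$ by its Galois closure $\tilde L$ over $\Q$ one can always refine $\Q \subset \tilde L$ into a tower in which every step has prime degree. This fails in general. If $\Gal{\tilde L}{\Q} \cong A_6$, for instance, then $A_6$ has no subgroup of prime index at all (its maximal subgroups have indices $6$, $10$, and $15$), so no chain of intermediate fields between $\Q$ and $\tilde L$ can consist entirely of prime-degree steps. Since Definition \ref{chain} only demands $K_n \supset L$ rather than $K_n = \tilde L$, you could in principle escape by climbing past $\tilde L$, but then you would owe a separate argument that \emph{some} overfield of $L$ is reachable from $\Q$ by a prime-degree tower, and your sketch does not supply one. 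It is worth noting that the paper's own related reduction, Theorem \ref{AS}, reduces the Denef--Lipshitz conjecture to \emph{quadratic} extensions of number fields by passing to the Galois closure, intersecting the fixed fields of all complex conjugations to land in a totally real field, and then invoking Denef's theorem; this avoids any refinement into prime-degree steps. But that argument produces diophantine definitions directly rather than diophantine chains, so it does not furnish a reduction of Conjecture \ref{ccc} itself, and the chain-theoretic formulation remains genuinely harder to attack by this route.
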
 

A consequence of Conjecture \ref{ccc} is the following 
conjecture, first formulated by Denef and Lipshitz in \cite{Den2}, which is also 
known to follow from other standard conjectures about elliptic curves (\cite{MR,MP18}):

\begin{conjecture}
The ring $\Z$ of rational integers has a diophantine definition over the ring of 
integers of any number field. Hence Hilbert's Tenth Problem has a negative answer 
for the ring of integers of every number field.
\end{conjecture}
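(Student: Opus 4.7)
The plan is to obtain the first assertion as a direct consequence of Conjecture \ref{ccc} combined with Corollary \ref{c2}, and then to derive the Hilbert's Tenth Problem statement by reducing it to the Matiyasevich--Davis--Putnam--Robinson theorem via the diophantine definition just produced. No new diophantine geometry is needed beyond what is already packaged into Conjecture \ref{ccc}.

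For the first assertion, fix any number field $L$. By Conjecture \ref{ccc} applied to $L$, there is a tower
$$\Q = K_0 \subset K_1 \subset \cdots \subset K_n \supset L$$
of number fields together with, for each $1 \le i \le n$, an abelian variety $A_i$ over $K_{i-1}$ that is rank stable for $K_i/K_{i-1}$ and satisfies $\rank\, A_i(K_{i-1}) > 0$. This is exactly the data making $L/\Q$ a diophantine chain in the sense of Definition \ref{chain}, and Corollary \ref{c2} then delivers the desired diophantine definition of $\oo_\Q = \Z$ over $\oo_L$.

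For the Hilbert's Tenth Problem assertion, I would leverage this diophantine definition to reduce HTP for $\oo_L$ to HTP for $\Z$. Explicitly: given any polynomial equation $P(t_1,\dots,t_k) = 0$ with coefficients in $\Z$, one conjoins it with the existential formula defining $\Z$ in $\oo_L$ applied to each variable $t_j$; the resulting polynomial system over $\oo_L$ has a solution in $\oo_L$ if and only if the original equation has a solution in $\Z$. Since the Matiyasevich--Davis--Putnam--Robinson theorem tells us that HTP over $\Z$ is algorithmically undecidable, any algorithm solving HTP over $\oo_L$ would, via this effective reduction, yield one over $\Z$, a contradiction.

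The only real content in this derivation is Conjecture \ref{ccc} itself, and the hard part will be there: for an arbitrary number field $L$ one must produce a tower from $\Q$ to a field containing $L$ together with, at each step, a positive-rank abelian variety whose rank does not grow in the next extension. Theorem \ref{descend} shows that rank stability descends under Weil restriction, which suggests an inductive reduction to the case of prime-degree steps, but even in the prime-degree case the construction of a positive-rank rank stable abelian variety is the essential difficulty. This is precisely what Conjecture \ref{ccc} assumes as input, and it is the only gap between the present proposal and an unconditional proof.
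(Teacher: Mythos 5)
Your derivation is correct and matches the paper's intended reasoning: the paper presents this conjecture as "a consequence of Conjecture \ref{ccc}," and the route you take—Conjecture \ref{ccc} supplies the diophantine chain from $\Q$ to $L$, Corollary \ref{c2} then gives a diophantine definition of $\Z$ over $\oo_L$, and the undecidability of HTP follows by the reduction of Proposition \ref{prop:diophdef}(1) together with the Matiyasevich–Davis–Putnam–Robinson theorem—is exactly the one the paper has in mind. Your closing remark that all the substance lives in Conjecture \ref{ccc} is an accurate assessment; the derivation itself is routine bookkeeping once that conjecture is granted.
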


For a discussion of Hilbert's Tenth Problem see \S\ref{htpsec}.

Inspired by conjectures of C. David, J. Fearnley, and H. Kisilevsky \cite{DFK1, DFK2},
the first two authors of this article developed in \cite{MR2} a ``heuristic'' 
(based on the statistics of modular symbols)
for groups of rational points on elliptic curves over infinite abelian extensions 
of $\Q$.
Using specially constructed abelian varieties, this heuristic and the main 
results of this article led us to make the following  
diophantine undecidability conjecture.  See \S\ref{stable}, \S\ref{trf}, and \S\ref{qtrf}, especially 
Consequences \ref{newcon}, \ref{con:real7}, and \ref{con:ab7} for details.
Let $\Q^\ab$ denote the maximal abelian extension of $\Q$, the field generated 
over $\Q$ by all roots of unity.

\begin{conjecture}
\label{CON1}
For the primes $p=7$, $11$, or $13$ there are subfields 
${\bL} \subset {\Q}^{\ab}$ for which the field extension 
$ {\Q}^{\ab}/{\bL}$ is cyclic of degree $p$ and such that 
$\oo_{\bL}$, the ring of integers in ${\bL}$, is diophantine 
undecidable. 
\end{conjecture}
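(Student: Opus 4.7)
The plan is to apply Theorem \ref{pre-mainth1} with $K=\Q$ and $L=\bL$, using a specially constructed abelian variety $A/\Q$ whose existence is predicted by the modular-symbol heuristic of \cite{MR2}. Since $\mathrm{Gal}(\Q^{\ab}/\Q)$ is abelian, every subfield $\bL\subset\Q^{\ab}$ is Galois over $\Q$, and complex conjugation acts on $\bL$. When $[\Q^{\ab}:\bL]=p$ is odd, either $\bL$ is totally real (satisfying condition (2) of Theorem \ref{pre-mainth1}) or $\bL$ is a quadratic extension of its totally real subfield $\bL\cap\Q^{\ab,+}$ (condition (3)). Thus Theorem \ref{pre-mainth1} will yield a diophantine definition of $\Z=\oo_{\Q}$ inside $\oo_{\bL}$ once we supply $A/\Q$ with $\rank\,A(\Q)>0$ and $A(\bL)/A(\Q)$ torsion; diophantine undecidability of $\oo_{\bL}$ then follows from that of $\Z$.

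The heart of the argument is the simultaneous choice of $A$ and $\bL$. Starting from a modular abelian variety over $\Q$ (for instance an elliptic curve of positive rank), one examines its behaviour under Dirichlet twists. The heuristic of \cite{MR2}, refining \cite{DFK1,DFK2}, predicts that for $p\in\{7,11,13\}$ the set of primitive characters $\chi$ of $p$-power order with vanishing twisted $L$-value at $s=1$ is \emph{finite}. Combined (conjecturally) with the BSD decomposition $\rank\,A(F)=\sum_{\chi}\mathrm{ord}_{s=1}L(A,\chi,s)$ over characters of $\mathrm{Gal}(F/\Q)$, this leaves only finitely many obstructing hyperplanes in the infinite-dimensional $\F_p$-vector space $\mathrm{Gal}(\Q^{\ab}/\Q)[p]$. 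One then chooses the order-$p$ subgroup $H\subset \mathrm{Gal}(\Q^{\ab}/\Q)$ to avoid these, sets $\bL=(\Q^{\ab})^{H}$, and may further replace $A$ by a twist or Weil restriction---the ``specially constructed abelian varieties'' alluded to in the preamble---so as to absorb the rank contributions from characters of order coprime to $p$. The detailed execution is the content of Consequences \ref{newcon}, \ref{con:real7}, and \ref{con:ab7} in \S\ref{stable}, \S\ref{trf}, \S\ref{qtrf}.

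The main obstacle is that the arithmetic core of the construction is entirely conjectural. The restriction to $p\in\{7,11,13\}$ reflects the quantitative form of the modular-symbol heuristic: at these primes the expected count of vanishing twists is finite for generic $A$, while at $p=2,3,5$ systematic sources of rank growth (roots of unity, rational $p$-torsion) force the count to be infinite. An unconditional proof would require a nonvanishing theorem for $L(A,\chi,1)$ across cofinitely many $\chi$ of $p$-power order---well beyond Rohrlich's prime-conductor nonvanishing---together with the cyclic-twist form of the BSD rank formula. Neither is presently within reach, so the conjecture rests on strong heuristic evidence alone.
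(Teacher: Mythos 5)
Since this is a conjecture the paper does not prove it; the question is whether your outline matches the conditional justification the paper actually supplies via Consequences \ref{conseq}, \ref{newcon}, \ref{con:real7} and \ref{con:ab7}. You have the global architecture right: a subfield $\bL$ of odd index $p$ in the CM field $\Q^{\ab}$ is a quadratic extension of a totally real field, so the main theorems apply with $K=\Q$ once one supplies an abelian variety $A/\Q$ with $\rank\,A(\Q)>0$ that is rank-stable for $\bL/\Q$; and the justification for expecting such an $A$ is the heuristic of [MR2] combined with the twisting construction of Definition \ref{twdef}. But the mechanism you describe in the middle paragraph is not the paper's and, as stated, fails.

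The crucial point you miss is that ``choosing $H$ to avoid finitely many obstructing hyperplanes'' cannot work: any subfield $\bL$ of finite index $p$ in $\Q^{\ab}$ still has $\Gal{\bL}{\Q}$ carrying infinitely many characters of each small order $2,3,5,\dots$, and Conjecture \ref{mrc1} would then predict $E(\bL)$ is \emph{not} finitely generated for a fixed $E$---a finite-index choice of $H$ removes nothing of relevance. What the paper does instead is fix a cyclic $F/\Q$ of degree $p$ with $\rank\,E(F)>\rank\,E(\Q)$ and pass to the twist $A:=E_F$. By Corollary \ref{twistrem} this has $\rank\,A(\Q)>0$, and because $\Gal{F\bL}{\Q}=\Gal{F}{\Q}\times\Gal{\bL}{\Q}$ whenever $F\cap\bL=\Q$, Proposition \ref{twdef}(3.b) shows every character of $\Gal{\bL}{\Q}$ that could occur in $A(\bL)$ corresponds to a character of $E(F\bL)$ of order divisible by $p\ge7$. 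Conjecture \ref{mrc2} then controls \emph{all} of them simultaneously, and Lemma \ref{bc} gives $A(\bL)$ finitely generated; the twist is not absorbing rank from low-order characters, it is pushing every relevant character into the range where the heuristic is expected to apply. This also explains $p\in\{7,11,13\}$: the heuristic itself covers all orders $\ge7$ other than $8,10,12$, but one needs an explicit pair $(E,F)$ realizing the rank jump (Question \ref{qc}), and Fearnley--Kisilevsky provide these for $p=7,11$ while the $p=13$ example needs BSD. Finally, since Conjecture \ref{mrc2} concerns only even characters, the paper does not apply Theorem \ref{pre-mainth1} directly to the non-real field $\bL$; it first handles the totally real subfield $\bL^+$ and then descends via Corollary \ref{cor+} using the twisted multiplicative group of \S\ref{sec:quad}.
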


In fact, we conjecture (the stronger statement) that ${\Z}$ is existentially 
definable over ${\mathcal O}_{\bL}$ for the fields ${\bL}$ in Conjecture \ref{CON1}. 
We do not make the same conjecture for $\oo_{{\Q}^{\rm ab}}$. 

The following result---related to Conjecture \ref{CON1}---is due to  
the third author \cite{Sh37},  K. Kato \cite{Kato}, K. Ribet \cite{Rib} 
and D. Rohrlich \cite{Rohr1, Rohr2} (see \cite[Theorem 1.2]{LR}).

\begin{theorem}\label{KRR}
Let $\bL$ be an abelian extension with finitely many ramified primes.  
Then $\Z$ is existentially definable over $\oo_{\bL}$.
\end{theorem}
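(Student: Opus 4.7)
The plan is to apply Theorem \ref{pre-mainth1} with $K=\Q$ and $L=\bL$. Since $\bL/\Q$ is Galois, it is stable under complex conjugation, so $\bL$ is either totally real or a quadratic extension of its maximal totally real subfield $\bL^+$; thus $\bL$ falls into case (2) or case (3) of the theorem. It therefore suffices to exhibit an abelian variety $A/\Q$ with $\rank A(\Q)>0$ and $A(\bL)/A(\Q)$ torsion, since any such $A$ produces a diophantine (hence existential) definition of $\oo_\Q=\Z$ in $\oo_\bL$. Moreover, by Kronecker--Weber and the finiteness of the ramification set $S$, one has $\bL\subset\Q(\mu_{p^\infty}:p\in S)$, so it is enough to find $A/\Q$ of positive $\Q$-rank whose rank remains stable in this cyclotomic compositum.

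For a single prime $p\in S$, the key input is the Kato--Rohrlich--Ribet package. Kato's Euler system of Beilinson elements provides an upper bound on the $p^\infty$-Selmer group of an elliptic curve $E/\Q$ over any finite layer of $\Q(\mu_{p^\infty})/\Q$, expressed in terms of the special values $L(E,\chi,1)$ of twisted $L$-functions. Rohrlich's nonvanishing theorem complements this by showing that $L(E,\chi,1)\neq 0$ for all but finitely many Dirichlet characters $\chi$ of $p$-power conductor. Combining these, as Ribet observed, forces $\rank E(\Q(\mu_{p^\infty}))$ to be finite; and with a generic choice of $E$ of positive $\Q$-rank one obtains the equality $\rank E(\Q(\mu_{p^\infty}))=\rank E(\Q)$, so $E$ is rank stable in this cyclotomic direction.

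The principal obstacle is to arrange rank stability simultaneously for all primes of $S$: rank stability of a fixed $E$ along one cyclotomic $\Z_p$-direction does not formally imply rank stability along another. To overcome this, one applies Rohrlich's nonvanishing uniformly over the countable family of Dirichlet characters of $\Gal{\bL}{\Q}$---those whose conductors are supported on $S$---and invokes Kato's bound at each prime of $S$. This produces an elliptic curve, or via a product or Weil restriction construction an abelian variety $A/\Q$, of positive $\Q$-rank that is rank stable in the full compositum $\Q(\mu_{p^\infty}:p\in S)$ and therefore in $\bL$. Feeding $A$ into Theorem \ref{pre-mainth1} then yields the desired existential definition of $\Z$ in $\oo_\bL$.
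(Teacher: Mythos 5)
Your overall outline — use the Galois-ness of $\bL/\Q$ to place $\bL$ in case (2) or (3) of Theorem~\ref{pre-mainth1}, invoke Kronecker--Weber to embed $\bL$ in a compositum $\Q(\mu_{p^\infty}: p\in S)$, and cite the Kato/Ribet/Rohrlich circle of results — is the right direction and matches what the paper has in mind. But fixing $K=\Q$ creates a genuine gap. What Kato's Euler system together with Rohrlich's nonvanishing actually yields is that $E(\Q(\mu_{p^\infty}:p\in S))$ is \emph{finitely generated} (this is what \cite{LR} Theorem~1.2 records); it does not yield an $E/\Q$ with $\rank\, E(\Q(\mu_{p^\infty}:p\in S))=\rank\, E(\Q)$. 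Rohrlich's nonvanishing holds only for all but finitely many Dirichlet characters of $p$-power conductor, and nothing prevents some of those finitely many exceptional characters from occurring in the Mordell--Weil group; there is no known ``generic choice'' of $E$ that eliminates them. So the claim that one can produce an abelian variety $A/\Q$ that is rank stable all the way down to $\Q$ in the full cyclotomic compositum is unsupported, and the Weil-restriction variant you gesture at does not repair this (restriction of scalars multiplies ranks on the large side but not on the small side, so it cannot restore equality).

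The correct route, and the one the paper implicitly takes via Corollaries~\ref{cor:infty} and~\ref{cor+}, concedes that rank stability holds only from some intermediate number field. Since $E(\bL)$ is finitely generated, let $K\subset\bL$ be the number field generated by a finite set of generators of $E(\bL)$; then $E(K)=E(\bL)$, $E(K)$ has positive rank, and $E$ is trivially rank stable for $\bL/K$. Because $\bL$ is totally real or a quadratic extension of a totally real field, so is $K$. Theorem~\ref{pre-mainth1} (via Propositions~\ref{prop3} and~\ref{prop5}) then gives a diophantine definition of $\oo_{K}$ over $\oo_{\bL}$; the results of Denef and of Denef--Lipshitz give a diophantine definition of $\Z$ over $\oo_{K}$; and the transitivity Lemma~\ref{3.5} concludes. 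The detour through $K$ and the transitivity step are not decorative — they are exactly what allows ``finitely generated'' to substitute for the unavailable ``rank stable over $\Q$.''
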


\part{Existential definability and Hilbert's Tenth Problem }

\section{Hilbert's Tenth Problem over rings of algebraic integers}
\label{htpsec}

The original ``Hilbert's Tenth Problem" was one of 23 problems posed over 
a century ago by David Hilbert in the International Congress of Mathematicians, 
at the Sorbonne, in Paris:
\begin{problem} 
Find an algorithm that, when given an arbitrary polynomial equation in several variables over
$\Z$, answers the question of whether that equation has solutions in $\Z$.
\end{problem}

Work of M.\ Davis, H.\ Putnam, J.\ Robinson and Yu.\ Matijasevich shows that 
there is no such algorithm. (See \cite{Da1} and \cite{Da2}.) 

Since the time when this result was obtained, similar questions have been raised for
other fields and rings. E.g., 
\begin{question}
Let $R$ be a computable ring, i.e., a countable ring computable as a set 
and with ring operations represented by computable functions. 
Is there an algorithm (equivalently computer program) taking the coefficients of 
an arbitrary polynomial over $R$ as its input and outputting a ``Yes'' or ``No'' 
answer to the question whether the polynomial in question has solutions in $R$?
\end{question} 

This question in the special case of $R = \Q$ remains an open basic 
diophantine issue; we wonder {(a)} why Hilbert didn't formulate this 
question as an addendum to his initial ``tenth problem," and {(b)} 
whether there is currently a strong consensus guess by the experts about its answer.

One way to resolve the question of diophantine decidability negatively over a ring of
characteristic zero is to construct a diophantine definition of $\Z$ over such a ring. 
The usefulness of such a diophantine definition stems from the fact that if a ring 
has a diophantine definition of $\Z$, then its analog of Hilbert's Tenth Problem is 
undecidable.  We explain how diophantine definitions are used in the following section.

\subsection*{Deriving undecidability of Hilbert's Tenth Problem over a ring 
$R$ using a diophantine definition of $\Z$ over $R$} 

We start with explaining what a diophantine definition is.
\begin{definition}
\label{defdd}
Let $R$ be a ring and let $E$ be a subset of $R$. Then we say that $E$ {\df has a diophantine definition over
$R$} if there exists a finite system polynomials with coefficients in $R$,
$$
\fbox{${{\mathcal F}: \; \text{$f_i(t, x_1,x_2,x_3,\dots x_n) 
\;\in\; R[t,x_1,\ldots,x_n]$ for $i=1,2,\dots, m$}}$}
$$
such that for any $\tau \in R$,
\[ 
\tau \in E \Longleftrightarrow
\text{$\exists a_1,\ldots,a_n \in R$ such that $f_i(\tau,a_1,...,a_n) = 0$ for $i=1,2,\dots, m$} .
\]
We will use interchangeably the terminology $E$ {\em has a diophantine definition over $R$}, 
or $E$ {\em is diophantine over $R$}, or {\em $E$ 
is existentially definable over $R$}---noting that this last is a slight abuse of language: 
properly speaking, we should say $E$ {\em is positively existentially definable over $R$.}
\end{definition}

We now prove an easy proposition that explains the importance of diophantine definition discussed above.

\begin{proposition}
\label{prop:diophdef}
\begin{enumerate}
\item 
Suppose $R$ is a ring containing $\Z$, and $\Z$ has a diophantine definition over $R$.  Then 
there is no algorithm to determine whether an arbitrary finite system of polynomial 
equations with coefficients in $R$ has solutions in $R$.
\item
More generally, suppose $I$ is an arbitrary index set, and $\{R_\alpha : \alpha \in I\}$ 
is a collection of subrings of some ring $\hat{R}$ containing $\Z$. 
Let $R_0 := \cap_{\alpha\in I} R_\alpha$, and suppose there exists a finite collection 
of polynomials 
$$
f_i(t,x_1,\ldots,x_{n}) \in R_0[t,x_1,\ldots x_n], \quad1 \le i \le m
$$ 
that constitutes a  diophantine definition of $\Z$ over $R_{\alpha}$ for {\em every} $\alpha \in I$.  
Then there is no algorithm to determine whether an arbitrary finite system of polynomial 
equations with coefficients in $R_0$ has solutions in $R_{\alpha}$ for {\em some} $\alpha \in I$.
\end{enumerate}
\end{proposition}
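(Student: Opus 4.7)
The plan is to reduce the classical undecidability of Hilbert's Tenth Problem over $\Z$ (the theorem of Davis, Matiyasevich, Putnam, and Robinson cited above) to the two decision problems in question, using the given diophantine definition as the reduction.

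For part (1), I would argue by contradiction: suppose there is an algorithm $\mathcal{A}$ that decides, for any finite system of polynomial equations with coefficients in $R$, whether the system has solutions in $R$. Given an arbitrary system $g_1(\vec y) = \cdots = g_\ell(\vec y) = 0$ with $\vec y = (y_1,\ldots,y_k)$ and $g_j \in \Z[\vec y] \subseteq R[\vec y]$, introduce fresh variables $x_{j,s}$ for $1 \le j \le k$ and $1 \le s \le n$, and form the combined system consisting of all the original equations $g_j(\vec y) = 0$ together with $f_i(y_j,x_{j,1},\ldots,x_{j,n}) = 0$ for every $i$ and every $j$. By the defining property of $\mathcal{F}$, this combined system has a solution in $R$ iff there exist $y_1,\ldots,y_k \in \Z$ with $g_1(\vec y) = \cdots = g_\ell(\vec y) = 0$. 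Running $\mathcal{A}$ on the combined system would therefore decide Hilbert's Tenth Problem over $\Z$, a contradiction.

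For part (2), the reduction is essentially the same, but one must check that the \emph{uniformity} of the defining system $\mathcal{F}$ across the family $\{R_\alpha\}$ makes the argument go through. Given an input system $g_1(\vec y) = \cdots = g_\ell(\vec y) = 0$ over $\Z \subseteq R_0$, form the same combined system over $R_0$. If this combined system has a solution in some $R_\alpha$, then the equations $f_i(y_j,\vec x_j) = 0$ force each $y_j$ to lie in $\Z$ (this is where we use that $\mathcal{F}$ defines $\Z$ inside \emph{that} particular $R_\alpha$), so the $y_j$ form an integer solution of the original system. Conversely, if there is an integer solution $\vec y$, then picking any single $\alpha \in I$ and invoking the definition of $\Z$ inside $R_\alpha$ yields auxiliary $x_{j,s} \in R_\alpha$ that extend $\vec y$ to a solution of the combined system in $R_\alpha$. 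Hence solvability of the combined system in some $R_\alpha$ is equivalent to solvability of the original in $\Z$, and a decision algorithm for the former would again contradict the Davis--Matiyasevich--Putnam--Robinson theorem.

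No real obstacle arises; the proposition is essentially a formal reduction, and the only point requiring any care is ensuring that both directions of the equivalence in part (2) use the \emph{same} polynomials $f_i$ for every $\alpha$, which is exactly the hypothesis that $\mathcal{F}$ is a common diophantine definition of $\Z$ over the whole family. This uniformity is what allows a single combined system with coefficients in $R_0$ to encode integer solvability in a way visible to every $R_\alpha$ simultaneously.
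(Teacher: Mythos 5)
Your proof is correct and is essentially the same reduction the paper gives: augment the input $\Z$-system with copies of the defining polynomials $f_i$ to force the distinguished variables into $\Z$, then invoke the undecidability of Hilbert's Tenth Problem over $\Z$. The only cosmetic difference is that the paper states (1) as a special case of (2) and works with a single polynomial $p$ rather than a system $g_1=\cdots=g_\ell$, which is immaterial.
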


\begin{proof}
Assertion (1) is a special case of (2), where we take $I$ to have only one element. 
As for (2), let $p(t_1,\ldots,t_r) \in \Z[t_1, \ldots,t_r]$.  Then for $\alpha \in I$, 
the system equations
\begin{equation}
\label{sys}
p(t_1,\ldots,t_r)=0, \quad
f_i(t_j,x_{j,1},\ldots, x_{j,n})=0, \quad 1 \le i \le m, 1 \le j \le r
\end{equation}
has solutions in $R_{\alpha}$ if and only if the equation $p(t_1,\ldots,t_r)=0$ 
has solutions in $\Z$.  So if there is an algorithm to to determine whether 
\eqref{sys} has solutions in $R_{\alpha}$ for some $\alpha$, then there 
is an algorithm to determine whether $p(t_1,\ldots,t_r)$ has solutions in $\Z$.
\end{proof}

Here is a brief account of some of the history of diophantine definitions.
Using norm equations, diophantine definitions have been obtained for $\Z$ over the 
rings of algebraic integers of some number fields. J. Denef has constructed a 
diophantine definition of $\Z$ for the finite degree totally real extensions of $\Q$. 
J. Denef and L. Lipshitz extended Denef's results to all quadratic extensions of 
finite degree totally real fields. 
(These fields include all finite abelian extensions of $\Q$.)
T. Pheidas, C. Videla and the third author of this paper 
have independently constructed diophantine definitions of $\Z$ for number 
fields with exactly one pair of non-real conjugate
embeddings. 
Lemma \ref{3.5} below shows that the subfields of all the fields mentioned above ``inherit'' 
the diophantine definitions of $\Z$. 
The proofs of the results listed above can be found in 
\cite{Den1}, \cite{Den2}, \cite{Den3}, \cite{Ph1}, \cite{Vid89}, \cite{Sha-Sh}, and \cite{Sh2}. 

The first abelian varieties put to use for the purpose of definability were 
elliptic curves. Perhaps the first mention of elliptic curves in the context 
of the first-order definability belongs to R. Robinson in \cite{RRobinson} 
and in the context of existential definability and diophantine stability relative to $\Q$ to J. Denef in \cite{Den3}. 
Using elliptic curves B. Poonen has shown in \cite{Po} that if for a number 
field extension $M/K$ we have an elliptic curve $E$ defined 
over $K$, of rank one over $K$, such that
the rank of $E$ over $M$ is also one, then $\oo_K$ (the ring of integers of $K$) 
is diophantine over $\oo_M$. G.~ Cornelissen, T.~Pheidas and K. Zahidi weakened 
somewhat the assumptions of B. Poonen's theorem. Instead of requiring a rank one 
curve retaining its rank in the extension, they require existence of a 
rank one elliptic curve over the number field field 
and an abelian variety or a commutative group-scheme of positive rank defined over $\Q$ and diophantine stable relative to $\Q$ (see \cite{CPZ}). This paper was the first to use a higher dimensional abelian 
variety or a group-scheme to show that Hilbert's Tenth Problem is undecidable over a ring of 
integers of a number field.

Somewhat later B. Poonen and the third author have independently shown 
that the conditions of B. Poonen's theorem can be weakened to
remove the assumption that the rank is one and require 
only that the rank in the extension is positive and the same as the rank over 
the ground field, i.e. the elliptic curve is rank stable and with a positive rank
(see \cite{Sh33} and \cite{Po3}). Additional use of diophantine stable 
elliptic curves can be found in \cite{CS}, where G. Cornelissen and the third 
author of this paper used elliptic curves to define a subfield of a number 
field using one universal and existential quantifiers. Recent papers by 
N. Garcia-Fritz and H. Pasten (\cite{GFPast}) and by D. Kundu, A. Lei and F. Sprung (\cite{KLS}) also use diophantine stability of elliptic 
curves to construct diophantine definitions of $\Z$ over new families of 
rings of integers of number fields.

The first two authors showed in \cite{MR} that if the Shafarevich--Tate 
conjecture holds over a number field $K$, then for any prime degree cyclic 
extension $M$ of $K$, there exists an elliptic curve of rank one over $K$, 
keeping its rank over $M$. Combined with B. Poonen's theorem, this 
result shows that the Shafarevich--Tate conjecture implies that Hilbert's Tenth 
Problem is undecidable over the rings of integers of any number field. 
While in \cite{MR}, the case of a general extension was reduced to a cyclic 
extension of prime degree, in fact, it would be enough to show that result 
holds for any quadratic extension of number fields. The proof of this 
fact relies on well-known properties of diophantine definitions; see 
Theorem \ref{AS} below. R. Murty and H. Pasten produced another conjectural 
instance where one could use diophantine stability of elliptic curves in 
finite extensions of number fields to show that $\Z$ has a diophantine 
definition in the rings of integers (\cite{MP18}). The authors relied on 
a different set of conjectures for elliptic curves (automorphic, parity 
and the analytic rank 0 part of the twisted Birch and Swinnerton-Dyer conjecture) 
and the results from \cite{Sh33} and \cite{Po3} for their proof. An accessible 
exposition of the proof can be found in \cite{MF19}.  H. Pasten also showed in \cite{Pas22-1} that existential definability of $\Z$ over rings of integers of number fields follows from a well-known conjecture on elliptic surfaces.

\section{Main theorems: diophantine definitions from diophantine stability}
\label{ressec}
\subsection*{Number field results}

\begin{theorem}
\label{mainth1}
Let $L/K$ be a number field extension with $\oo_L/\oo_K$ the corresponding extension 
of their rings of integers. Let $A$ be an abelian variety defined over $K$ such that 
$\rank\,A(L) = \rank\,A(K) \ge 1$. Then $\oo_K$ 
has a diophantine definition over $\oo_L$. 
\end{theorem}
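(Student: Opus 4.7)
The plan is to reduce the problem to giving a diophantine definition of $\Z$ over $\oo_L$: since $\oo_K$ is a finitely generated $\Z$-module with a fixed $\Z$-basis $\alpha_1,\dots,\alpha_r \in \oo_K \subset \oo_L$, one has $\oo_K = \{a_1\alpha_1 + \cdots + a_r\alpha_r : a_i \in \Z\}$, so $\oo_K$ becomes diophantine over $\oo_L$ as soon as $\Z$ is. To define $\Z$, I plan first to define a set of the form $\Z\beta \subseteq \oo_L$ for a fixed nonzero $\beta \in \oo_L$; then $\Z = \{n \in \oo_L : n\beta \in \Z\beta\}$ is diophantine because $\oo_L$ is a domain. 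The rank-one $\Z$-submodule $\Z\beta$ will arise as the image of the cyclic group generated by a fixed infinite-order point $P \in A(K)$ under a polynomial approximation to the formal-group logarithm on the N\'eron model of $A$.

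Concretely, let $\mathcal{A}/\oo_K$ be the N\'eron model of $A$; since $A$ is proper, $\mathcal{A}(\oo_K) = A(K)$ and $\mathcal{A}(\oo_L) = A(L)$. Fix $P \in A(K)$ of infinite order (which exists since $\rank\,A(K) \ge 1$), and choose an unramified prime $\gp \subset \oo_K$ of good reduction for $A$. After replacing $P$ with a suitable positive multiple, we may assume $P$ lies in the kernel $\mathcal{A}^1$ of reduction modulo $\gp$. In local coordinates $x_1,\dots,x_g$ at the identity section of $\mathcal{A}$, the group law on $\mathcal{A}^1$ is given by formal power series $F_i(x;y) = x_i + y_i + (\text{higher order})$ with coefficients in $\oo_K$. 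For points with coordinates in $\gp^N\oo_{L,\gp}$, truncating $F$ to total degree less than $2N$ yields a polynomial $\widetilde{F}$ that computes the sum exactly modulo $\gp^{2N}$; iterating, any $Q = mP$ landing deep enough in $\mathcal{A}^1$ satisfies $x_i(Q) \equiv m \cdot x_i(P) \pmod{\gp^{2N}}$—a polynomial, and hence existentially verifiable, avatar of the identity $\log_F(mP) = m\log_F(P)$.

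Using this, I will construct a diophantine set $T \subseteq \oo_L$ as the first-coordinate image of those $Q \in \mathcal{A}(\oo_L)$ that lie in $\mathcal{A}^N$ for $N$ sufficiently large and that can be certified, via the polynomial-approximated group law, as integer multiples of $P$ modulo torsion. The rank-stability hypothesis is the decisive input: because $A(L)\otimes\Q = A(K)\otimes\Q$, the subgroup of $A(L)$ consisting of points $Q$ with $nQ \in \Z P + A(L)_{\mathrm{tors}}$ for some $n > 0$ is genuinely a rank-one extension of $\Z P$, so the existentially quantified witness points are forced to live in this rank-one world. Combined with the polynomial approximation this yields $T = \Z\beta$ for $\beta := x_1(P) \neq 0$, whence $\Z$ and then $\oo_K$ follow from the reduction of the first paragraph.

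The main obstacle is turning the qualitative rank-stability statement into an honest finite system of polynomial equations over $\oo_L$ whose solutions are exactly $\Z P + A(L)_{\mathrm{tors}}$ inside the relevant congruence subgroup $\mathcal{A}^N(\oo_L)$. One must simultaneously (i) arrange the formal-group truncation to hold modulo a high enough power of $\gp$ that a purely polynomial identity replaces the actual logarithm, (ii) absorb the finite torsion subgroup $A(L)_{\mathrm{tors}}$, which costs extra existential variables, and (iii) prevent spurious solutions—and it is precisely here that rank stability does the decisive work by forbidding existentially quantified points that would demand extra independent generators in $A(L)$. Packaging all three requirements into a single existential system is the technical heart of the proof.
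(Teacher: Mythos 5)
Your proposal shares the paper's conceptual skeleton: pass to the N\'eron model, work in the kernel-of-reduction congruence subgroup where a polynomial approximation to the formal-group logarithm is available, and use rank stability to force the witness points to lie in $A(K)$. In that sense the scheme-theoretic and arithmetic ingredients you name are the right ones. However, there is a genuine gap, and you have in fact pointed at it yourself without resolving it.

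The gap is the claim that one can arrange the diophantine set $T$ to \emph{equal} $\Z\beta$ for some fixed $\beta = x_1(P)$. The formal-group truncation only gives $x_i(mP) \equiv m\,x_i(P)$ modulo a power of $\gp$; the higher-order terms are nonzero, so the first coordinate of $mP$ is not literally $m\beta$, and the set of first coordinates of a cyclic subgroup is not a $\Z$-submodule of $\oo_L$. If instead you interpret $T$ as the set of $\alpha \in \oo_L$ for which some pair $(P,Q)$ of congruence-subgroup points satisfies $x_i(Q) \equiv \alpha\, x_i(P) \pmod{\gp^{2N}}$, you face the second, more serious problem: with $N$ fixed once and for all, this congruence has many solutions $\alpha \in \oo_L\setminus\oo_K$ (any $\alpha$ congruent to a suitable integer modulo $\gp^{2N}$ works), so $T \subsetneq \Z\beta$ fails in both directions. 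Rank stability tells you $P,Q \in A(K)$, hence that the \emph{target} of the congruence lies in $\oo_K$; it does nothing to prevent $\alpha$ itself from sitting in $\oo_L$ while accidentally satisfying a bounded congruence.

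The idea you are missing — and what does the decisive work in the paper's argument — is that the congruence modulus must be made to grow \emph{as a diophantine function of} $\alpha$. The paper requires the witness points $P,Q$ to lie in the congruence subgroup $M^r_{I\,D(\alpha),\,L}$, where $D(m,\alpha) = \bigl(C(m)\,\alpha(1-\alpha)\cdots(m-\alpha)\bigr)^{m^2}$ is an explicit polynomial in $\alpha$ (Proposition \ref{prop:bounds}); the vanishing ideal $z_P$ then divides $I\,D(\alpha)$, so the congruence $\alpha \equiv b \pmod{z_P^{\,2}}$ holds modulo an ideal whose norm exceeds $|\mathbf{N}_{M/\Q}(\alpha-\beta)|$ for every Galois conjugate $\beta$ of $\alpha$. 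Lemma \ref{le:congruence} (a global norm inequality, not a local formal-group statement) then forces $\alpha$ to equal its conjugates, i.e.\ $\alpha \in \oo_K$. This mechanism replaces your $T = \Z\beta$ step entirely: the paper never produces an exact $\Z$-module, it produces a set $Y$ with $\N \subset Y \subset \oo_K$ (Corollary \ref{cor01}) and invokes Lemma \ref{3.7}. Two further points you leave unresolved are handled cleanly in the paper: the torsion in $A(L)$ is killed not by extra existential variables but by choosing the auxiliary modulus $n$ so that Serre--Tate good reduction at two primes makes $M_{n,L'}$ torsion-free (Lemma \ref{serretate}); and the construction is carried out with ideals of $\oo_K$, not a single prime $\gp$, which is what makes the norm inequality usable globally.
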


\begin{remark}\label{uniform} 
Regarding the diophantine definitions provided by  Theorem \ref{mainth1}, fix 
the number field  $K$ and choose an   abelian variety $A$ over $K$  with  $\rank\,A(K) \ge 1$.
\begin{enumerate} 
\item  For any positive number  $d$ there is a single  set of equations ${\mathcal F}(K,A,d)$   
(i.e., equations of the form described in Definition \ref{defdd}), with coefficients 
in $\oo_K$, such  that for any field extension $L/K$ of degree $\le d$ such that 
$\rank\,A(L) = \rank\,A(K)$  that set of equations provides a diophantine definition  
of $\oo_K$ over $\oo_L$.  
\item 
By Theorem \ref{intro:1} below (\cite[Theorem 1.2]{MR1}), if $A$ is a non-CM elliptic curve there are (infinitely) many 
integers $d$ for which there exist infinitely many pairwise linearly disjoint extensions 
$L/K$ with  $[L:K]=d$ and $\rank\,A(L) = \rank\,A(K)$.
\item  
Without any restriction on the degree of $L$,   if $L$ is a totally real field 
or a quadratic extension of a totally real field  there is a single  set of equations 
${\mathcal F}(K,A)$ with coefficients in $\oo_K$ that provides a diophantine definition  
of $\oo_K$ over $\oo_L$ for any field extension $L/K$ such that $\rank\,A(L) = \rank\,A(K)$.
\item 
If we set $K=\Q$ or let $K$ be any number field with a diophantine definition of $\Z$ over 
$\oo_K$ then Proposition \ref{prop:diophdef} applies to each collection of fields described above.
\end{enumerate}\end{remark}

For the proof of Theorem \ref{mainth1}, see Proposition \ref{prop2} and Lemma \ref{serretate} 
below. Theorem \ref{mainth1} can be sharpened to:
 
\begin{theorem}
\label{mainth2}
Let $L/K$ be a number field extension with $\oo_L/\oo_K$ the corresponding 
extension of their rings of integers. Let $K'/K$ be a number field extension 
with $K'$ linearly disjoint from $L$ over $K$.  Put $L':=K'L$.  
Suppose there is an abelian variety $A'$ 
over $K'$ with $\rank\, A'(K') = \rank\, A'(L') > 0$.
Then
\begin{enumerate} 
\item  
there is an abelian variety $A$ over $K$ with $\rank\, A(K) = \rank\, A(L) > 0$, 
\item  
$\oo_K$ has a diophantine definition over $\oo_L$. 
\item If $L$ is a totally real field or a quadratic 
extension of a totally real field, the diophantine definition in (2) depends only on 
$K$ and $A$, and not on $L$. \end{enumerate}
\end{theorem}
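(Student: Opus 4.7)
The plan is to reduce Theorem \ref{mainth2} to Theorem \ref{mainth1} by producing, from the abelian variety $A'$ over $K'$, an abelian variety $A$ over $K$ that inherits rank stability for $L/K$. The natural construction is Weil restriction of scalars: I would set $A := \mathrm{Res}_{K'/K}(A')$, which, since $K'/K$ is a finite separable extension of number fields, is an abelian variety over $K$ of dimension $[K':K]\cdot\dim A'$. This is exactly the ``$K'/K$-Weil trace'' referred to in the remark following Theorem \ref{descend}.

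To prove (1), I would use the defining adjunction of Weil restriction, which for any $K$-algebra $R$ gives a canonical identification $A(R) = A'(R \otimes_K K')$. Specializing to $R = K$ yields $A(K) = A'(K')$; specializing to $R = L$ yields $A(L) = A'(L \otimes_K K')$. The hypothesis that $L$ and $K'$ are linearly disjoint over $K$, together with the separability of $K'/K$, identifies $L \otimes_K K'$ with the compositum $L' = LK'$. Hence $A(L) = A'(L')$, and so
\[
\rank A(L) = \rank A'(L') = \rank A'(K') = \rank A(K) > 0,
\]
proving (1).

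Part (2) then follows by feeding $A$ and the extension $L/K$ into Theorem \ref{mainth1}: since $\rank A(L) = \rank A(K) \ge 1$, we conclude that $\oo_K$ has a diophantine definition over $\oo_L$. For part (3), the key observation is that the construction $A = \mathrm{Res}_{K'/K}(A')$ makes no reference to $L$. Hence, when $L$ is totally real or a quadratic extension of a totally real field, Remark \ref{uniform}(3) applies to $A$ and produces a diophantine definition of $\oo_K$ over $\oo_L$ that depends only on $K$ and $A$, as required.

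The only point to check carefully is the identification $L \otimes_K K' \simeq L'$, but this is a standard consequence of linear disjointness and separability and is not a substantive obstacle. All the real work has already been carried out in Theorem \ref{mainth1} and Remark \ref{uniform}(3); Theorem \ref{mainth2} is essentially their combination with the Weil-restriction descent encoded in Theorem \ref{descend}.
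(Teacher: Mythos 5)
Your proof is correct and follows essentially the same route as the paper: the paper's ``Weil trace'' of $A'$ with respect to $K'/K$ is exactly the Weil restriction of scalars $\mathrm{Res}_{K'/K}(A')$ (as the dimension formula $[K':K]\dim A'$ confirms), and the paper likewise invokes the adjunction $A(R)=A'(R\otimes_K K')$, linear disjointness to get $L\otimes_K K'\simeq L'$, then Theorem \ref{mainth1} and Remark \ref{uniform}(3).
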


\begin{proof} 
Set  $A$ to be the ``Weil trace" of $A'$ with respect to the field extension $K'/K$.
Then $A$ is an abelian variety over $K$ of dimension $[K':K]\dim\,A'$, and 
$$
A(K) = A'(K' \otimes_K K) = A'(K'), \qquad A(L) = A'(K' \otimes_K L) = A'(L'),
$$
the last equality because we assumed $L$ and $K'$ are linearly disjoint over $K$.
This shows that $A$ satisfies (1).

Applying Theorem \ref{mainth1} with the abelian variety $A$ shows 
that $\oo_K$ has a diophantine definition over $\oo_L$, 
and (2) follows from the ``transitivity lemma'' (Lemma \ref{trans}).
Assertion (3)  follows from Remark \ref{uniform}.
\end{proof} 

Combined with Theorem \ref{intro:1} below (\cite[Theorem 1.2]{MR1}), we get the following corollary.

\begin{corollary}
For any number field $K$, there is an integer $N_K$ such that for every prime 
$\ell > N_K$, and every positive integer $n$, there exist infinitely many cyclic 
extensions $L/K$ of degree $\ell^n$ such that $\oo_K$ is diophantine over $\oo_L$.
\end{corollary}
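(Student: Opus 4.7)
The plan is to assemble the Corollary from Theorem~\ref{mainth1} and Theorem~\ref{intro:1} (\cite[Theorem~1.2]{MR1}). The strategy is to fix, once and for all, a suitable elliptic curve $E/K$, feed the rank-stable cyclic extensions of prime-power degree produced by \cite{MR1} directly into Theorem~\ref{mainth1}, and read off the statement.

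First I would exhibit a non-CM elliptic curve $E/K$ with $\rank E(K) \geq 1$. This is not the main difficulty: one may take any non-CM elliptic curve $E_0/\Q$ of positive Mordell--Weil rank (such curves are abundant in Cremona's tables) and base-change to $K$. A $\Q$-rational point of infinite order remains of infinite order after base change, so $\rank E_0(K) \geq 1$; and non-CM is a geometric property of the $j$-invariant in $\overline{\Q}$, hence preserved under any extension of the base field.

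Next, apply \cite[Theorem~1.2]{MR1} to the pair $(K,E)$: this is the engine of the argument and produces the integer $N_K := N_K(E)$ with the property that for every prime $\ell > N_K$ and every positive integer $n$ there exist infinitely many cyclic extensions $L/K$ of degree $\ell^n$ satisfying $\rank E(L) = \rank E(K)$. For each such $L$ the hypotheses of Theorem~\ref{mainth1} are met with $A := E$, since $\rank E(L) = \rank E(K) \geq 1$, and that theorem immediately yields a diophantine definition of $\oo_K$ over $\oo_L$, giving the Corollary.

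Since the two cited theorems do essentially all the heavy lifting, there is no genuine obstacle once $E$ is in hand. The only design decisions are that $E$ be non-CM (to invoke \cite[Theorem~1.2]{MR1}) and of positive rank over $K$ (to invoke Theorem~\ref{mainth1}); both are secured by descending from $\Q$ as above. If anything counts as ``the hard part,'' it is buried inside Theorem~\ref{intro:1}, where the existence of the $\ell^n$-cyclic rank-stable tower for $\ell \gg 0$ is actually proved.
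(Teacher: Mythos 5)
Your proof is correct and follows exactly the paper's own argument: pick a non-CM elliptic curve $E/K$ of positive rank, invoke Theorem~\ref{intro:1} (the last sentence, for non-CM elliptic curves) to obtain $N_K$ and the infinitely many rank-stable cyclic extensions of degree $\ell^n$, then conclude via Theorem~\ref{mainth1}. The only difference is that you spell out the existence of such an $E$ via base change from $\Q$, which the paper leaves as a parenthetical remark.
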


\begin{proof}
Let $E$ be a non-CM elliptic curve defined over $K$ such that $\rank(E(K)) >0$. 
(Such a curve always exists.) Then by Theorem \ref{intro:1}, for all sufficiently large 
primes $\ell$ and all $n$, there are infinitely  
many cyclic extensions $L/K$ of degree $\ell^n$ such that $E(L)=E(K)$. 
Now the corollary follows from Theorem \ref{mainth1}.
\end{proof}

\subsection*{Results for infinite algebraic extensions of $\Q$} 
We will generally use boldface letters (e.g., ${\bL, \bK}$) to denote fields of algebraic 
numbers that are allowed to have infinite degree over ${\Q}$, and normal type (e.g., $L, K$) 
for number fields, i.e., fields of finite degree over $\Q$.

\begin{theorem}
\label{thm:inf}
Let ${\bL}$ be an algebraic extension of $\Q$. Assume that ${\bL}$ 
is totally real or a quadratic extension of a totally real field. 
Let ${\bK}$ be a subfield of $\bL$. Let $\oo_{\bL}/\oo_{\bK}$ be 
the corresponding extension of their rings of integers. Let $A$ be an abelian variety 
defined over ${\bK}$ such that $A({\bK})$ contains an element of 
infinite order and $A({\bL})/A({\bK})$ is a torsion group. If ${\bK}$ 
is a number field, then $\oo_{\bK}$ has a diophantine definition over 
$\oo_{\bL}$. If ${\bK}$ is an infinite extension of $\Q$, then 
$\oo_{\bL}$ contains a subset $D$ such that $D$ is diophantine over 
$\oo_{\bL}$ and $\Z \subset D \subset \oo_{\bK}$.
\end{theorem}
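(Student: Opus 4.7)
The plan is to reduce the theorem to the finite-extension case of Theorem \ref{mainth1} by writing $\bL$ as a directed union of its number subfields, and then to use the uniformity statement in Remark \ref{uniform}(3) to assemble a single system of equations that works over $\oo_\bL$. Let $L_\alpha$ range over the number fields contained in $\bL$; each $L_\alpha$ is automatically totally real or quadratic over a totally real field. The key observation, used repeatedly, is that for any finite polynomial system $\mathcal F$ with coefficients in a fixed number subring $\oo_{K_0} \subset \oo_\bL$, the subset of $\oo_\bL$ defined by $\mathcal F$ is exactly the union over $\alpha$ (with $K_0 \subset L_\alpha$) of the subsets of $\oo_{L_\alpha}$ defined by $\mathcal F$, because any finite list of witnesses in $\oo_\bL$ already lies in some single $\oo_{L_\alpha}$.

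Suppose first that $\bK$ is a number field. Restricting to $L_\alpha$ with $\bK \subset L_\alpha$, the injection $A(L_\alpha)/A(\bK) \hookrightarrow A(\bL)/A(\bK)$ shows that $A(L_\alpha)/A(\bK)$ is torsion, so $\rank\,A(L_\alpha) = \rank\,A(\bK)$. Remark \ref{uniform}(3) then supplies a single polynomial system $\mathcal F(\bK, A)$ over $\oo_\bK$ that diophantinely defines $\oo_\bK$ over every such $\oo_{L_\alpha}$; by the setup observation, the same system defines $\oo_\bK$ diophantinely over $\oo_\bL$.

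Suppose now that $\bK$ is infinite. Fix a number field $K_0 \subset \bK$ over which $A$ is defined and which contains the coordinates of a chosen infinite-order point $P_0 \in A(\bK)$, and let $\mathcal F(K_0, A)$ be the corresponding uniform system of Remark \ref{uniform}(3); define $D \subset \oo_\bL$ to be the set it defines. Applying Remark \ref{uniform}(3) to the trivial extension $L_\alpha = K_0$ yields $\oo_{K_0} \subset D$, hence $\Z \subset D$. To establish $D \subset \oo_\bK$, take $\tau \in D$ with witnesses in some number field $L_\alpha \supset K_0$ and set $K_0' := L_\alpha \cap \bK$; the identity $A(L_\alpha) \cap A(\bK) = A(K_0')$ together with the injection $A(L_\alpha)/A(K_0') \hookrightarrow A(\bL)/A(\bK)$ gives $\rank\,A(L_\alpha) = \rank\,A(K_0')$.

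The main obstacle is then to conclude that $\tau \in \oo_{K_0'} \subset \oo_\bK$: the fixed system $\mathcal F(K_0, A)$ was built from $K_0$-data, but must be shown to cut out only integers of $K_0'$ over $\oo_{L_\alpha}$ even though $\rank\,A(L_\alpha)$ may strictly exceed $\rank\,A(K_0)$. To handle this I would open up the construction behind Theorem \ref{mainth1}: the definition is produced by inverting a formal logarithm on the N\'{e}ron model of $A$ at a well-chosen prime, and any witness point $Q \in A(L_\alpha)$ appearing in the definition has a positive-integer multiple lying in $A(\bK) \cap A(L_\alpha) = A(K_0')$ by the torsion hypothesis, so the resulting logarithm scalar lives in the Lie algebra of $A$ over $K_0'$, forcing $\tau \in \oo_{K_0'}$. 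Combined with $K_0' \subset \bK$, this gives $D \subset \oo_\bK$ and completes the proof.
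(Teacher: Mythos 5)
Your reduction for the case where $\bK$ is a number field is essentially sound; it repackages the paper's direct argument over $\oo_\bL$ (Propositions \ref{prop3}, \ref{prop5}) as a direct-limit argument over the number fields $L_\alpha \subset \bL$, and the two are equivalent because the witnesses of any one instance of the diophantine definition land in a single $\oo_{L_\alpha}$. Two small things you should verify: you need the $L_\alpha$ to be totally real or quadratic over totally real (this is not automatic for every number subfield of such a $\bL$, so restrict to $L_\alpha$ of the form $H(\delta)$ with $H$ a number subfield of $\bL^{+}$ containing $\delta^{2}$, which are cofinal), and the compactness observation should be stated precisely, since the converse direction (elements of $\oo_\bK$ have witnesses) uses the remark in the paper that witnesses may be chosen already in $\oo_\bK$.

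The case where $\bK$ is infinite, however, has a genuine gap at exactly the point you flag as the ``main obstacle,'' and the fix you sketch does not close it. To conclude $\tau \in \oo_{K_0'}$ you must show that the witness points $P, Q$ of $G$ \emph{themselves} lie in $G(\oo_{K_0'})$, not merely that some positive-integer multiples $tP, tQ$ do. The congruence argument (Corollary \ref{cor01} via Proposition \ref{maincong2} and its totally real and quadratic analogues) hinges on $z_P$ being an ideal of the base ring and on $\mathcal{W}_P = \mathcal{N}^{*}\otimes(z_P/z_P^{2})$ being a free module over $\oo_{K_0'}/z_P$; these are properties of $P$ itself, and they do not transfer back from $tP$ to $P$, so ``$tQ$ lies in $A(K_0')$, hence the logarithm scalar lives in the Lie algebra over $K_0'$'' is not a proof. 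What the paper actually invokes is the Serre--Tate argument of Lemma \ref{serretate}: the modulus $n=pq$ (two primes of good reduction of $A$) baked into $\mathcal{F}(K_0,A)$ is uniform in the subextension, and for $P\in M_{n,L_\alpha}$ with $tP\in A(K_0')$, every $\sigma$ fixing $K_0'$ has $\sigma P - P$ a torsion element of the torsion-free group $M_{n,\,\text{Galois closure}}$, hence zero, forcing $P\in A(K_0')$. That is precisely the ``exponent of diophantine stability for $L_\alpha/K_0'$,'' and the paper structures Propositions \ref{prop3} and \ref{prop5} so that this is an explicit hypothesis, separately verified by Lemma \ref{serretate}, rather than something to be re-derived inside the congruence argument. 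Your sketch names the desired conclusion but omits this torsion-theoretic step, which is where the actual work lies.
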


The proof of Theorem \ref{thm:inf} is similar to that of Theorem \ref{mainth1}, 
using  Propositions \ref{prop3} and \ref{prop5} below.

\begin{remark}
If $K$ is a number field, then the diophantine definition $f(t,x_1,\ldots,x_{\ell})$ 
of $\oo_K$  over $\oo_L$ or $\oo_{\bL}$ constructed in the proofs of 
Theorems \ref{mainth1}, \ref{mainth2} and \ref{thm:inf} has the property that 
for all $t \in \oo_K$ there exist $x_1, \ldots, x_{\ell} \in \oo_K$ such that 
$f(t,x_1,\ldots,x_{\ell})=0$.  This follows from the fact that we use points of 
$A(K)$ to generate rational integers and then a basis of $K/\Q$ to generate all 
elements of $\oo_K$.
\end{remark}

\begin{corollary}
Suppose $\bK \subset \bL$ and $A$ are as in Theorem \ref{thm:inf}. If $\bK$ is 
a number field then the existential theory of ${\oo_\bL}$ is undecidable. 
Alternatively, Hilbert's Tenth Problem is undecidable over ${\oo_\bL}$.
\end{corollary}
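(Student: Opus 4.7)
The plan is to chain together Theorem~\ref{thm:inf} with the classical Denef/Denef--Lipshitz results recalled in \S\ref{htpsec}, via the transitivity of diophantine definitions, and then close with Proposition~\ref{prop:diophdef}.

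First I would apply Theorem~\ref{thm:inf} directly. Since $\bK$ is a number field and the hypotheses on $\bL$ and $A$ are those of that theorem, the ``if $\bK$ is a number field'' clause of Theorem~\ref{thm:inf} yields that $\oo_{\bK}$ has a diophantine definition over $\oo_{\bL}$.

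Second, I would locate $\bK$ in the Denef--Lipshitz class in order to invoke a diophantine definition of $\Z$ over $\oo_{\bK}$. Because $\bK$ is a number subfield of $\bL$, and $\bL$ is either totally real or of the form $F(\sqrt{-d})$ for $F$ totally real, $\bK$ itself is either totally real or a quadratic extension of a totally real number field: any real embedding of an ambient totally real field restricts to a real embedding of $\bK$, and in the quadratic-CM case the intersection $\bK \cap F$ is a totally real number field of index $1$ or $2$ in $\bK$. By the theorems of Denef and Denef--Lipshitz cited in \S\ref{htpsec}, $\Z$ therefore has a diophantine definition over $\oo_{\bK}$.

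Third, I would compose the two diophantine definitions using Lemma~\ref{3.5} (the transitivity statement referenced after Corollary~\ref{c2}): a diophantine definition of $\oo_{\bK}$ over $\oo_{\bL}$ followed by a diophantine definition of $\Z$ over $\oo_{\bK}$ produces a diophantine definition of $\Z$ over $\oo_{\bL}$. Finally, Proposition~\ref{prop:diophdef}(1), applied with $R = \oo_{\bL}$, gives the undecidability of Hilbert's Tenth Problem over $\oo_{\bL}$; this is precisely the statement that the positive existential theory of $\oo_{\bL}$ is undecidable, yielding both formulations in the corollary.

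There is no real obstacle here: the corollary is essentially a bookkeeping consequence of Theorem~\ref{thm:inf} together with facts already in the literature. The only point requiring a brief verification is the middle step, namely that a number subfield of a totally real field (respectively, of a quadratic extension of a totally real field) again lies in the Denef/Denef--Lipshitz class so that $\Z$ is existentially definable over its ring of integers.
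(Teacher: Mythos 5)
Your proof follows the same route as the paper's: invoke Theorem~\ref{thm:inf} to get $\oo_\bK$ diophantine over $\oo_\bL$, invoke Denef/Denef--Lipshitz for a diophantine definition of $\Z$ over $\oo_\bK$, chain with Lemma~\ref{3.5}, and close with Proposition~\ref{prop:diophdef}. The one place you should be more careful is the middle step, where you assert that $\bK$ \emph{itself} is totally real or a quadratic extension of a totally real number field because ``the intersection $\bK \cap F$ is a totally real number field of index $1$ or $2$ in $\bK$'' --- this index bound is not obvious, since the involution of $\bL/F$ need not stabilize an arbitrary number subfield $\bK$. The robust argument (and the one the paper signals in \S\ref{htpsec} when it remarks that subfields ``inherit'' the diophantine definition of $\Z$ via Lemma~\ref{3.5}) is to write $\bK \subset K := L(\gamma)$ for a suitable totally real number field $L \subset \bM$ with $\gamma^2 \in L$, apply Denef--Lipshitz to $K$, and then use the second bullet of Lemma~\ref{3.5} (finite extension $K/\bK$) to descend the diophantine definition of $\Z$ from $\oo_K$ to $\oo_\bK$. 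With that patch the argument is exactly the paper's.
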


\begin{proof}
By the result of Denef and Lipshitz \cite{Den2} mentioned in \S\ref{htpsec}, $\Z$ has a 
diophantine definition over $\oo_\bK$. Combining this with Theorem \ref{thm:inf} 
and Lemma \ref{3.5} proves the corollary.
\end{proof}

\section{Existential definitions}
\label{sec:Dioph}
\subsection*{The basics of existential definability} 
Recall Definition \ref{defdd} above.


\begin{lemma}
\label{3.1} 
Let $\oo$ be an integral domain whose fraction field $K$ is not algebraically closed, 
and suppose $p(t) \in \oo[t]$ is a (non-constant) polynomial with no root in $K$.
Let 
$$
\{f_i(x_1,x_2,\dots,x_m)=0 : 1 \le i \le r\}
$$ 
be a system of polynomial equations over $\oo$. 
Then there exists a single effectively computable polynomial 
$F(x_1,x_2,\dots,x_m) \in \oo[x_1,x_2,\dots,x_m]$ such that the solutions 
to $F = 0$ in $\oo^m$ are the same as the common solutions in $\oo^m$ 
of the system ${\mathcal F}$.
\end{lemma}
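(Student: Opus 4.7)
The plan is to reduce any finite system of equations over $\oo$ to a single equation by repeatedly combining two equations into one, using a two-variable form built from the auxiliary polynomial $p(t)$. Write
$$p(t) \;=\; a_n t^n + a_{n-1} t^{n-1} + \cdots + a_0, \qquad a_n \neq 0,$$
and let $P(X,Y) \in \oo[X,Y]$ be its homogenization,
$$P(X,Y) \;=\; \sum_{i=0}^{n} a_i X^{i} Y^{n-i}.$$
I claim that for any $x,y \in \oo$ we have $P(x,y) = 0$ if and only if $x = y = 0$. Indeed, since $\oo$ embeds in $K$, it suffices to verify the statement in $K$. If $y \neq 0$, then $P(x,y) = y^n p(x/y)$, which cannot vanish because $p$ has no root in $K$. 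If $y = 0$, then $P(x,y) = a_n x^n$ with $a_n \neq 0$, so this vanishes precisely when $x = 0$.

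Using this combining form, define polynomials $F_i \in \oo[x_1,\dots,x_m]$ recursively by
$$F_1 \;:=\; f_1, \qquad F_{i+1} \;:=\; P(F_i, f_{i+1}) \quad \text{for } 1 \le i < r,$$
and set $F := F_r$. By a straightforward induction on $i$, a point $(x_1,\dots,x_m) \in \oo^m$ satisfies $F_i = 0$ if and only if $f_1 = f_2 = \cdots = f_i = 0$ at that point: the base case $i=1$ is trivial, and the inductive step is immediate from the characterization of the zeros of $P$ established above, applied to the elements $F_i(x_1,\dots,x_m)$ and $f_{i+1}(x_1,\dots,x_m)$ of $\oo$. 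Taking $i = r$ gives the desired equivalence between $F = 0$ and the system $\mathcal{F}$. Effectivity is clear: once $p$ is given, forming $P$ and then the sequence $F_1,\dots,F_r$ amounts to polynomial substitutions in $\oo[X,Y]$ and $\oo[x_1,\dots,x_m]$.

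There is no real obstacle here; the substance of the argument is simply the observation that a polynomial with no $K$-root produces, via homogenization, a norm-like form on $K^2$ whose only zero is the origin, and this is exactly what lets one combine two equations without introducing spurious solutions. The hypothesis that $K$ is not algebraically closed is used only to guarantee that such a $p$ exists (and here it is given as part of the statement).
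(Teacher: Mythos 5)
Your proof is correct and follows essentially the same approach as the paper: both observe that the homogenization $P(X,Y)=\sum_i a_iX^iY^{n-i}$ of $p$ vanishes on a pair of ring elements only when both vanish, and then induct to fold the $r$ equations into one. You merely spell out the two cases ($y\neq 0$, $y=0$) and the recursion a bit more explicitly than the paper does.
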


\begin{proof}  The proof is taken from \cite{Sh34}. 
Write $p(t) = a_n t^n+a_{n-1}t^{n-1} + \cdots +a_0$ with $a_i \in \oo$. 
If $f, g \in \oo[x_1,x_2,\dots,x_m]$ and $\bx \in \oo^m$, then 
$$
(a_n f^n+a_{n-1}f^{n-1}g+\cdots+a_1fg^{n-1} + a_0g^n)(\bx)=0 
   \iff f(\bx)=g(\bx)=0.
$$ 
Now we proceed by induction to combine any finite number of polynomials 
$\{f_1,\ldots,f_r\}$ into one.
\end{proof}

Note that although the degree of $F(x_1,x_2,\dots,x_m)$ in Lemma \ref{3.1} 
may be significantly higher than the degree of the polynomials 
$f_i(x_1,x_2,\dots,x_m)$ that comprise the system, the number of variables 
$m$ remains the same.

Here are some easy (and well-known) properties of 
{\em existential definability} we use in this paper. The proofs of many of 
the statements below can be found (among other places) in Chapter 2 of \cite{Sh34}.

\begin{lemma}
The set $\oo^\times$ of units in any commutative ring $\oo$ (with unit) 
is existentially definable over the ring $\oo$. 
\end{lemma}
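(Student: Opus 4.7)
The plan is essentially to write down the defining equation directly. By definition, an element $u \in \oo$ is a unit if and only if there exists $v \in \oo$ such that $uv = 1$. This is already a positive existential condition in the sense of Definition \ref{defdd}: taking the single polynomial
\[
f(t, x) = tx - 1 \in \oo[t,x],
\]
we have $\tau \in \oo^\times$ if and only if there exists $a \in \oo$ with $f(\tau, a) = 0$. This is exactly the format required by Definition \ref{defdd}, with $n=1$ and $m=1$.

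There is no real obstacle here; the lemma is a one-line verification once the definition of diophantine/existential definability is unwound. The only subtlety worth flagging is that ``existentially definable'' in this paper means \emph{positively} existentially definable (equations only, no inequations and no negations), and the equation $tx = 1$ fits this restricted syntax without any modification. No appeal to Lemma \ref{3.1} or to any hypothesis on $\oo$ (such as having a non-algebraically-closed fraction field) is needed, since we have only one defining equation. Thus the proof will consist of a single sentence exhibiting the polynomial $tx-1$ and verifying the biconditional.
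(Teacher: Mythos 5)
Your proof is correct and matches the paper's own argument exactly: both exhibit the single polynomial $f(t,x) = tx - 1$ (the paper calls the second variable $s$) and observe that $\tau$ is a unit if and only if there exists $a$ with $f(\tau,a)=0$.
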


\begin{proof}
The polynomial $f(t, s):= ts-1 \in \oo[t,s]$ has a zero for 
$t=\alpha \in \oo$ if and only if $\alpha \in \oo^\times$.
\end{proof}

Below, if $\bK$ is a subfield of $\bar{\Q}$, then $\oo_\bK$ will be its ring of integers.
The following lemma is proved in \cite{Sha-Sh}.

\begin{lemma}\label{trans}[Transitivity descent for diophantine definitions]
\label{3.5}
Let ${\bK} \subset {\bL} \subset {\bf H}$ be algebraic possibly infinite extensions of $\Q$.
\begin{itemize}
\item  
If $\oo_{\bK}$ has a diophantine definition over $\oo_{\bL}$, and $\oo_{\bL}$ 
has a diophantine definition over $\oo_{\bf H}$, 
then $\oo_{\bK}$ has a diophantine definition over $\oo_{\bf H}$.
\item 
If ${\bf H}/{\bL}$ is a finite extension and  $\oo_{\bK}$ has a 
diophantine definition over $\oo_{\bf H}$, then $\oo_{\bK}$ has a 
diophantine definition over $\oo_{\bL}$.
\end{itemize}
\end{lemma}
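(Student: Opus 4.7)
The plan is to produce, in each direction, a finite system of polynomial equations with coefficients in the target ring that constitutes the required diophantine definition per Definition \ref{defdd}. The first assertion is an easy composition; the second is a genuine descent, whose main subtlety is that a witness variable ranging over $\oo_{\mathbf{H}}$ must be re-parameterized by variables ranging over $\oo_{\bL}$, despite the fact that $\oo_{\mathbf{H}}$ need not be finitely generated over the (in general non-Noetherian) ring $\oo_{\bL}$ when $\bL$ is an infinite algebraic extension of $\Q$.

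For the first assertion, let $\{f_i(t,\vec x)\} \subset \oo_{\bL}[t,\vec x]$ define $\oo_{\bK}$ over $\oo_{\bL}$, and let $\{g_j(s,\vec y)\} \subset \oo_{\mathbf{H}}[s,\vec y]$ define $\oo_{\bL}$ over $\oo_{\mathbf{H}}$. Because $\oo_{\bL} \subseteq \oo_{\mathbf{H}}$, the coefficients of the $f_i$ already lie in $\oo_{\mathbf{H}}$. For $t \in \oo_{\mathbf{H}}$, the condition $t \in \oo_{\bK}$ is equivalent to: $t \in \oo_{\bL}$ and there exist $x_1,\ldots,x_n \in \oo_{\bL}$ with $f_i(t,\vec x) = 0$ for every $i$. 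Each of these $n+1$ required $\oo_{\bL}$-memberships (of $t$ and of every $x_k$) is expressed existentially over $\oo_{\mathbf{H}}$ by a fresh copy of the $g_j$-system, and concatenating everything yields a single finite system over $\oo_{\mathbf{H}}$ that defines $\oo_{\bK}$.

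For the second assertion, we may assume $d := [\mathbf{H}:\bL] > 1$. Choose a primitive element $\alpha \in \oo_{\mathbf{H}}$ with $\mathbf{H} = \bL(\alpha)$, and let $\Delta \in \oo_{\bL} \setminus \{0\}$ be the discriminant of the $\bL$-basis $\{1,\alpha,\ldots,\alpha^{d-1}\}$; the standard trace-pairing argument gives $\Delta \cdot \oo_{\mathbf{H}} \subseteq \oo_{\bL}[\alpha]$, so any $x \in \oo_{\mathbf{H}}$ can be written as $x = \Delta^{-1}\sum_{j=0}^{d-1} w_j \alpha^j$ with $w_j \in \oo_{\bL}$. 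Conversely, an arbitrary such expression lies only in $\Delta^{-1}\oo_{\bL}[\alpha]$, which in general strictly contains $\oo_{\mathbf{H}}$, so to cut out the image $\oo_{\mathbf{H}}$ diophantinely I use the characterization that $x \in \oo_{\mathbf{H}}$ if and only if the characteristic polynomial of multiplication by $x$ on $\mathbf{H}/\bL$ has all coefficients in $\oo_{\bL}$. Setting $M = \sum_j w_j \alpha^j \in \oo_{\mathbf{H}}$, the $k$-th coefficient of that characteristic polynomial equals $\Delta^{-k} P_k(\vec w)$, where $P_k \in \oo_{\bL}[\vec w]$ is (up to sign) the $k$-th elementary symmetric function of the Galois conjugates of $M$, built explicitly from the structure constants of $\oo_{\bL}[\alpha]$. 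Thus the integrality $x \in \oo_{\mathbf{H}}$ is encoded by the diophantine divisibility conditions $P_k(\vec w) = \Delta^k b_k$, $k=1,\ldots,d$, with auxiliary witnesses $b_k \in \oo_{\bL}$.

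Given a diophantine definition $\{F_i(t,\vec x)\} \subset \oo_{\mathbf{H}}[t,\vec x]$ of $\oo_{\bK}$ over $\oo_{\mathbf{H}}$, for each witness $x_k$ we introduce fresh $\oo_{\bL}$-variables $w_{k,0},\ldots,w_{k,d-1}$ and $b_{k,1},\ldots,b_{k,d}$, append the integrality equations above, and substitute $x_k = \Delta^{-1}\sum_j w_{k,j}\alpha^j$ into each $F_i$. Clearing denominators by an appropriate power of $\Delta$ produces equations with coefficients in $\oo_{\bL}[\alpha] \subseteq \oo_{\mathbf{H}}$; expanding each such equation in the basis $\{1,\alpha,\ldots,\alpha^{d-1}\}$ and invoking $\bL$-linear independence splits it into $d$ polynomial equations with coefficients in $\oo_{\bL}$. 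The resulting finite system over $\oo_{\bL}$, in the variables $\vec w,\vec b$, is solvable for a given $t \in \oo_{\bL}$ precisely when the original system has a solution $\vec x \in \oo_{\mathbf{H}}^n$, i.e., precisely when $t \in \oo_{\bK}$. The principal obstacle is exactly this integrality step: without the $b_k$ and divisibility equations, the substitution would admit spurious "witnesses" lying in $\Delta^{-1}\oo_{\bL}[\alpha] \setminus \oo_{\mathbf{H}}$ and break the equivalence.
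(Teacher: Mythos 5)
Your proof is correct. The paper gives no in-text proof of Lemma \ref{3.5}---it cites \cite{Sha-Sh}---so there is nothing to compare against line by line, but your reconstruction is the standard argument (cf.\ Chapter~2 of \cite{Sh34}). You handle the first bullet correctly, in particular by also running a copy of the $g_j$-system on the target variable $t$ itself, which is needed to exclude $t \in \oo_{\mathbf{H}}\setminus\oo_{\bL}$. For the second bullet you correctly isolate the one genuine subtlety: parameterizing an $\oo_{\mathbf{H}}$-witness by $\oo_{\bL}$-coordinates via $\Delta^{-1}\oo_{\bL}[\alpha]$ overshoots $\oo_{\mathbf{H}}$, and when $\bL$ is an infinite extension one cannot simply enumerate coset representatives (since $\oo_{\bL}/\Delta\oo_{\bL}$ need not be finite and $\oo_{\mathbf{H}}$ need not be a finitely generated $\oo_{\bL}$-module); the characteristic-polynomial integrality conditions, valid because $\oo_{\bL}$ is integrally closed in $\bL$, are exactly the right diophantine way to cut $\oo_{\mathbf{H}}$ out of $\Delta^{-1}\oo_{\bL}[\alpha]$.
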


\begin{remark}
\label{red} 
In particular, if ${\bf H}/{\bK}$ is a finite extension and $\oo_{\bK}$ has a diophantine definition over $\oo_{\bf H}$, then $\oo_{\bK}$ has 
a diophantine definition over $\oo_{\bL}$ for every intermediate field $\bL$ such that ${\bK} \subset {\bL} \subset {\bf H}$.
\end{remark}
The following lemma is clear.

\begin{lemma}[Intersection]
\label{cap} 
Let $E_1, E_2 \subset \oo_K$ be subsets each existentially 
definable in $\oo_K$. Then $E_1\cap E_2 \subset \oo_K$ is existentially definable. 
\end{lemma}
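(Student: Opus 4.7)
The plan is straightforward, since intersection on the semantic side corresponds to conjunction on the syntactic side, and existential formulas are closed under conjunction. Concretely, suppose $E_1$ is cut out by a diophantine system
\[
\{\,f_i(t,x_1,\ldots,x_n)=0 : 1 \le i \le m\,\} \subset \oo_K[t,x_1,\ldots,x_n],
\]
and $E_2$ by a system $\{\,g_j(t,y_1,\ldots,y_{n'})=0 : 1 \le j \le m'\,\}$. By renaming if necessary, I may assume the two blocks of variables $\{x_i\}$ and $\{y_j\}$ are disjoint. Then for $\tau \in \oo_K$, membership $\tau \in E_1 \cap E_2$ is equivalent to the existence of $a_1,\ldots,a_n,b_1,\ldots,b_{n'} \in \oo_K$ satisfying $f_i(\tau,a_1,\ldots,a_n)=0$ for all $i$ \emph{and} $g_j(\tau,b_1,\ldots,b_{n'})=0$ for all $j$. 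This combined system is a diophantine definition of $E_1 \cap E_2$ over $\oo_K$.

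If desired, one can then invoke Lemma \ref{3.1} (applicable because the fraction field $K$ of $\oo_K$ is a number field and hence not algebraically closed) to compress the combined system of $m+m'$ equations into a single polynomial equation in the variables $t,x_1,\ldots,x_n,y_1,\ldots,y_{n'}$. There is no real obstacle here: the only point worth noting is the disjointness of the two variable blocks, which is what prevents the two existential witnesses from interfering with each other.
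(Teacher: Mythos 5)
Your proof is correct and is exactly the standard argument the paper has in mind; indeed, the paper gives no proof at all and simply declares the lemma "clear." Your observation that the two variable blocks should be kept disjoint so the witnesses don't interfere is the right point to flag, and the optional appeal to Lemma \ref{3.1} to compress to a single equation is also consistent with how the paper handles such systems.
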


The following is due to J. Denef (\cite{Den3}).

\begin{lemma}
\label{3.3} 
Let ${\bK}$ be any field of algebraic numbers. The set $\oo_{\bK}\setminus\{0\}$ 
of non-zero elements of $\oo_{\bK}$ is existentially definable over $\oo_{\bK}$.
\end{lemma}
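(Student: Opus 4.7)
The goal is a polynomial system over $\oo_\bK$ whose solvability in $\oo_\bK$ captures exactly the condition $\alpha \neq 0$. The prior results to draw on are Lemma \ref{3.1} (which collapses a finite system of equations into one polynomial, provided the fraction field $\bK$ admits some polynomial $p(t) \in \oo_\bK[t]$ without roots in $\bK$) and the preceding lemma giving the existential definability of $\oo_\bK^\times$.

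My main strategy is to exhibit, for each $\alpha \neq 0$, a witness element of the unit group forced to exist by the arithmetic of $\oo_\bK/\alpha\oo_\bK$. Concretely, I would try to establish
$$
\alpha \neq 0 \iff \exists\, \beta \in \oo_\bK \text{ and } \exists\, u \in \oo_\bK^\times \text{ with } u - 1 = \alpha\beta \text{ and } u \neq 1.
$$
The reverse direction is trivial: $u - 1 = \alpha\beta$ with $u \neq 1$ forces $\alpha\beta \neq 0$ and hence $\alpha \neq 0$. The forward direction, in the number field setting, exploits the finiteness of $(\oo_\bK/\alpha\oo_\bK)^\times$ together with Dirichlet's unit theorem: a unit of infinite order in $\oo_\bK$ has a nontrivial power $u^n \equiv 1 \pmod{\alpha}$ with $u^n \neq 1$. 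The subtlety — and what makes the formulation genuinely existential rather than circular — is encoding the side condition "$u \neq 1$"; one way is to bundle it via an auxiliary Bezout-type relation declaring that $u-1$ divides a prescribed specific nonzero element, which is precisely the kind of configuration Lemma \ref{3.1} can absorb into a single polynomial condition.

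My expected order of work would be: first, formulate the candidate equivalence and verify it in the core case of number fields where Dirichlet provides enough units; second, adapt to infinite algebraic extensions $\bK/\Q$ by passing to a finite subextension that contains $\alpha$ and inherits the definition; third, collapse the whole system into one polynomial equation via Lemma \ref{3.1}, using any non-constant polynomial in $\Z[t]$ without a root in $\bK$. The main obstacles are the two extreme cases. For $\bK = \Q$ the unit group is $\{\pm 1\}$, too small to witness nonvanishing by the approach above; here the substitute is a Pell-type equation $x^2 - Dy^2 = 1$, whose nontrivial integer solutions modulo $\alpha$ play the role of the missing units. For $\bK = \Qbar$ the fraction field is algebraically closed, so Lemma \ref{3.1} does not apply, and a separate argument — exploiting that every nonzero element of $\bar{\Z}$ admits $n$th roots in $\bar{\Z}$ for infinitely many $n$ — is required. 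I expect the proof in \cite{Den3} sidesteps this case analysis by producing a single polynomial identity uniform in $\bK$, so the bulk of the technical work lies in constructing that identity rather than in checking its consequences.
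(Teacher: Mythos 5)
There is a genuine gap here, and it sits precisely at the point you flagged as the ``subtlety.'' Your proposed equivalence
$$
\alpha \neq 0 \iff \exists\, \beta,\ \exists\, u \in \oo_\bK^\times :\ u - 1 = \alpha\beta \text{ and } u \neq 1
$$
contains the clause ``$u \neq 1$,'' which is again a nonvanishing condition of exactly the type you are trying to define, so the formulation is circular until that clause is replaced by something genuinely existential. Your suggestion — ``an auxiliary Bezout-type relation declaring that $u-1$ divides a prescribed specific nonzero element'' — is the right instinct, but it is not carried out, and carrying it out is the entire content of the lemma. Separately, the unit-based strategy requires $\oo_\bK^\times$ to be large enough to contain a unit $\equiv 1 \pmod{\alpha}$ other than $1$; this fails not only for $\bK=\Q$ (which you noticed) but also for every imaginary quadratic field, where the unit group is finite. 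So even granting the Pell-equation patch for $\Q$, you would still be left with a further case analysis, and for $\bK = \Qbar$ you propose yet a third argument. The net result is a proof skeleton with three unresolved branches and a central step left as a sketch.

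The paper sidesteps all of this with a single uniform polynomial condition:
$$
x \neq 0 \iff \exists\, y,z,w \in \oo_\bK :\ (2y-1)(3z-1) = xw.
$$
The key observation — and the idea your proposal is missing — is that one does not need units at all: the factors $2y-1$ and $3z-1$ are \emph{automatically} nonzero for any $y,z \in \oo_\bK$, because $1/2$ and $1/3$ are never algebraic integers. This builds the nonvanishing of the witness into the shape of the polynomial rather than into an extra clause. The forward direction is then an ideal-theoretic Chinese-remainder argument in the number field $\Q(x)$: write $(x) = \mathfrak{a}\mathfrak{b}$ with $\mathfrak{a}$ coprime to $2$ and $\mathfrak{b}$ coprime to $3$, choose $y,z$ with $2y \equiv 1 \pmod{\mathfrak{a}}$ and $3z \equiv 1 \pmod{\mathfrak{b}}$, and conclude $x \mid (2y-1)(3z-1)$. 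This works uniformly across all fields of algebraic numbers, including $\Q$, imaginary quadratic fields, and $\Qbar$, with no appeal to Lemma \ref{3.1} and no case distinctions. Your instinct to ``exhibit a nonzero element that $x$ divides'' is correct; the missing ingredient is to manufacture the nonzero element from the \emph{non-integrality} of $1/2$ and $1/3$ rather than from the unit group.
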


\begin{proof}
Let $x \in\oo_{\bK}$. We claim that $x \ne 0$ is equivalent to the existential statement
$$
\exists y, z, w \in\oo_K : (2y - 1)(3z - 1) = xw.
$$
For if $x = 0$, then either $y=1/2$ or $z=1/3$, so either $y$ or $z$ is not in 
$\oo_K$. Suppose now $x \ne 0$. Working in the 
number field $K_0:=\Q(x)$, we can factor the principal ideal $(x) = {\mathfrak a} {\mathfrak b}$, 
where $({\mathfrak a},(2)) = 1$, and $({\mathfrak b},(3)) = 1$. (It is possible
that either ${\mathfrak a}$ or ${\mathfrak b}$ is the unit ideal.) 
Choose
$y, z \in\oo_{K_0}$ such that $2y \equiv 1 \pmod{\mathfrak a}$ and 
$3z \equiv 1 \pmod{\mathfrak{b}}.$ Then $(2y-1)(3z-1) \equiv 0 \pmod{\mathfrak{ab}},$
and so $x$ divides $(2y- 1)(3z- 1)$ in $\oo_{K_0}$, and therefore in $\oo_{\bK}$ as well.
\end{proof}

\begin{lemma}
\label{3.7}
Let ${\bf L}/K$ be an algebraic extension possibly of infinite degree, where $K$ is a number field. 
Suppose that there exists a subset $S$ of $\oo_K$ containing $\N$ such that $S$ has 
a diophantine definition over $\oo_{\bL}$. Then $\oo_K$ has a diophantine definition over $\oo_{\bL}$. 
\end{lemma}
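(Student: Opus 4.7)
The plan is to bootstrap from the given diophantine definition of $S$ to one of all of $\oo_K$ via a $\Z$-basis of $\oo_K$. Since $K$ is a number field of degree $n=[K:\Q]$, the ring $\oo_K$ is a free $\Z$-module of rank $n$; fix once and for all an integral basis $\omega_1,\ldots,\omega_n\in\oo_K\subset\oo_{\bL}$. These basis elements are available as coefficients in a diophantine definition over $\oo_{\bL}$ by Definition \ref{defdd}.

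I claim the following gives a diophantine definition of $\oo_K$ over $\oo_{\bL}$: an element $x\in\oo_{\bL}$ lies in $\oo_K$ if and only if there exist $a_1,b_1,\ldots,a_n,b_n\in S$ such that
$$
x \;=\; \sum_{i=1}^{n} (a_i - b_i)\,\omega_i.
$$
Concretely, if $\{g_j(u,y_1,\ldots,y_r)=0\}$ is a finite system of polynomial equations witnessing that $S$ is diophantine over $\oo_{\bL}$, the proposed defining system for $\oo_K$ consists of the single linear equation above together with $2n$ copies of $\{g_j\}$, one enforcing membership in $S$ for each of the $a_i$ and $b_i$.

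To verify correctness I would argue two directions. For the ``if'' direction: any witnesses satisfy $a_i-b_i\in\oo_K$ since $S\subset\oo_K$, and therefore $x=\sum(a_i-b_i)\omega_i$ is an $\oo_K$-linear combination of elements of $\oo_K$, so $x\in\oo_K$. For the ``only if'' direction: given $x\in\oo_K$, the integral basis property furnishes unique $m_1,\ldots,m_n\in\Z$ with $x=\sum m_i\omega_i$; writing each $m_i=a_i-b_i$ with $a_i,b_i\in\N\subset S$ produces the required witnesses inside $\oo_{\bL}$.

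There is really no obstacle here; the conceptual point worth flagging is that we never need the differences $a_i-b_i$ to lie in $\Z$. The hypothesis $\N\subset S$ is used only to guarantee the existence of witnesses for each integer coordinate of $x$, while the hypothesis $S\subset\oo_K$ is used only to force the resulting linear combination to remain in $\oo_K$. Thus the weaker assumption that $S$ is a subset of $\oo_K$ containing $\N$—rather than the stronger $S=\Z$ or $S=\N$—is exactly enough for the argument to go through.
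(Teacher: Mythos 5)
Your proof is correct, and it is a close cousin of the paper's argument but with a cleaner implementation. The paper uses a single primitive element $\alpha$ with $K=\Q(\alpha)$ and captures $\oo_K$ as the set of $x\in\oo_{\bL}$ with $bx=\sum a_i\alpha^i$, where $b\ne 0$ and $\pm a_i, b$ range over $S$. Because the powers of $\alpha$ span $K$ only over $\Q$ (and $\Z[\alpha]$ may be a proper subring of $\oo_K$), they must allow a nonzero denominator $b$, which forces them to invoke Lemma \ref{3.3} to make the condition $b\ne 0$ diophantine, and they use the $\pm$ trick to realize negative integer coefficients. By working with a $\Z$-integral basis $\omega_1,\dots,\omega_n$ of $\oo_K$ instead, you get that arbitrary integer coordinates are obtained exactly as $\Z$-linear combinations; the substitution $m_i=a_i-b_i$ with $a_i,b_i\in\N\subset S$ handles signs, and no denominator (hence no appeal to Lemma \ref{3.3}) is needed. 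The structural point is the same in both: elements of $\oo_K$ are coordinatized by finitely many ``scalars'' constrained to lie in $S$, and the two sandwich hypotheses $\N\subset S$ and $S\subset\oo_K$ are used in exactly the roles you identified (existence of witnesses, and containment of the output in $\oo_K$). Your route is slightly more economical; the paper's is perhaps a hair more elementary in that it does not presuppose the existence of an integral basis, only of a primitive element, though of course both facts are standard.
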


\begin{proof}
Let $\alpha \in \oo_{\bL}$ be any element such that $K=\Q(\alpha)$ and consider the 
following subset $E$ of $\oo_{\bL}$:
\[
E=\{x \in \oo_{\bL} : bx=\sum_{i=0}^{[\Q(\alpha):\Q]}a_i\alpha^{i},\; 
\text{with}\; b \not =0,\pm a_i, b \in S\}
\]
Now if $y \in E$, then $y \in K\cap \oo_\bL=\oo_K$ because $b \in\oo_K$ and all 
$a_i \in \oo_K$. Conversely, if $y \in \oo_K$, then $y \in E$, since every 
element of $\oo_K$ can be represented as the sum in the definition of $E$ 
with $b \not=0, a_i \in \Z$. Finally the condition $b \ne 0$ is diophantine 
over $\oo_L$ by Lemma \ref{3.3}.
\end{proof}

The following theorem due to the third author was mentioned in the discussion 
at the end of Section \ref{htpsec}.

\begin{theorem}
\label{AS}
Suppose that for every quadratic extension of number fields $L/K$ we have that 
$\oo_K$ has a diophantine definition over $\oo_L$. Then $\Z$ has a diophantine 
definition over the ring of integers of any number field.
\end{theorem}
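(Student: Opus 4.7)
The plan is to prove the stronger statement that, under the hypothesis, $\Z$ itself has a diophantine definition over $\oo_L$ for \emph{every} number field $L$; the conclusion of the theorem then follows by taking $K = \Q$ in Lemma~\ref{3.7}. My first step would be to reduce to the case that $L/\Q$ is Galois: if $\tilde{L}$ is the Galois closure of $L$ over $\Q$, then $\tilde{L}/L$ is finite, so by the descent clause of Lemma~\ref{trans} it suffices to prove the statement for $\oo_{\tilde{L}}$. So from now on I would assume $L/\Q$ is Galois, and set $G = \Gal{L}{\Q}$.

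Next I would build an ascending chain of normal subgroups of $G$. Set $I_0 = \{1\}$, and for $k \ge 0$ define $I_{k+1}$ to be the preimage in $G$ of the subgroup of $G/I_k$ generated by all elements of order~$2$. An easy induction shows that each $I_k$ is normal (in fact characteristic) in $G$, and since $G$ is finite the chain stabilizes at some $I_\infty$. By construction $G/I_\infty$ has no elements of order~$2$, so $|G/I_\infty|$ is odd.

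The heart of the argument is an ``intersection trick'' at each level of the filtration. For each $k$, the extension $L^{I_k}/L^{I_{k+1}}$ is Galois with group $H_k = I_{k+1}/I_k$, and by the very definition of $I_{k+1}$ this group $H_k$ is generated by its involutions. For each involution $h \in H_k$ the intermediate field $F_h = (L^{I_k})^{\langle h \rangle}$ has $[L^{I_k}:F_h]=2$, so the hypothesis yields that $\oo_{F_h}$ is diophantine over $\oo_{L^{I_k}}$. The intersection of the finitely many $F_h$ as $h$ ranges over involutions of $H_k$ is the subfield of $L^{I_k}$ fixed by the subgroup of $H_k$ generated by involutions, which is all of $H_k$; hence this intersection equals $L^{I_{k+1}}$, and $\oo_{L^{I_{k+1}}} = \bigcap_h \oo_{F_h}$ is diophantine over $\oo_{L^{I_k}}$ by Lemma~\ref{cap}. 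Chaining these containments through the transitivity clause of Lemma~\ref{trans} shows that $\oo_{L^{I_\infty}}$ is diophantine over $\oo_L$.

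Finally, since $L^{I_\infty}/\Q$ is Galois of odd degree, any complex conjugation (being an involution in its Galois group) must be trivial, so $L^{I_\infty}$ is totally real. The unconditional theorem of Denef and Lipshitz (\cite{Den2}) then gives a diophantine definition of $\Z$ over $\oo_{L^{I_\infty}}$, and one more application of transitivity completes the proof. The hard part is checking carefully that at each level $H_k$ really is generated by its order-$2$ elements---this is what makes the intersection of the $F_h$ collapse all the way down to $L^{I_{k+1}}$---but this is immediate from the definition of $I_{k+1}$; everything else is routine bookkeeping with Lemmas~\ref{trans} and~\ref{cap}.
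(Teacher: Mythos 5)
Your proof is correct, but it takes a noticeably longer route than the paper does, and the first sentence contains a small red herring: the conclusion of the theorem already \emph{is} that $\Z$ has a diophantine definition over $\oo_L$ for every number field $L$, so the appeal to Lemma~\ref{3.7} to ``finish'' is unnecessary.

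The paper's argument is a one-step version of your inductive scheme. After reducing to $M/\Q$ Galois (as you do), it observes that every complex embedding $M\hookrightarrow\C$ gives a complex conjugation $\sigma\in\Gal{M}{\Q}$, and $M/M^\sigma$ is quadratic. Applying the hypothesis to each such $\sigma$ and intersecting (Lemma~\ref{cap}) shows $\oo_{M^+}$ is diophantine over $\oo_M$ where $M^+=\cap_\sigma M^\sigma$; one then checks directly that $M^+$ is totally real (every embedding of $M^+$ into $\C$ extends to an embedding of $M$, and is fixed by the associated conjugation, hence real), and applies Denef plus transitivity. Your proof replaces this single intersection with a filtration $\{1\}=I_0\subset I_1\subset\cdots\subset I_\infty$ by subgroups generated by order-$2$ elements, and at each stage intersects the fixed fields of \emph{all} involutions of $H_k=I_{k+1}/I_k$, not just complex conjugations. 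The parity argument (odd-order Galois group over $\Q$ forces totally real) then does the work that the paper does by tracking conjugations explicitly. Both proofs ultimately bottom out at the same tools---the hypothesis for quadratic steps, Lemma~\ref{cap}, Denef's totally real theorem, and transitivity (Lemma~\ref{trans})---so the mathematical content is the same; your version is more combinatorial and doesn't need to identify which involutions are conjugations, at the cost of a several-stage induction where the paper needs none, and it may descend to a smaller (but still totally real) subfield than necessary. If you want the shortest path, just intersect over complex conjugations directly.
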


\begin{proof}
Let $M$ be a number field. By Remark \ref{red}, without loss of generality 
we can assume that $M$ is Galois over $\Q$.
For any complex embedding $\M \hookrightarrow \C$ consider the corresponding complex 
conjugation which gives an involution $\sigma:M\to M$ of the field $M$. 
Let $M^\sigma\subset M$ be the fixed field of this involution. Since $M/M^\sigma$ is a quadratic 
extension, we obtain---from the assumption in the statement of the theorem---that 
its ring of integers has a diophantine definition in $\oo_M$. By Corollary \ref{cap} 
the same is true for the ring of integers in the intersection 
$$
M^+:=\cap_\sigma{M^\sigma}.
$$ 
That is, $\oo_{M^+}$ has a diophantine definition in $\oo_M$.
Since $M^+$ is totally real, the result of Denef \cite{Den3} discussed above gives us that 
$\Z$ has a diophantine definition in $\oo_{M^+}$. By ``transitivity'' of diophantine 
definitions (Lemma \ref{3.5}) we have that $\Z$ has a diophantine definition over $\oo_M$.
\end{proof}

\subsection*{Total Positivity; replacing inequalities by equations}

\begin{proposition}\label{tpprop}
Let $\bF$ be an algebraic (possibly infinite) extension of $\Q$.  
Let $x,z \in \oo_{\bF}$ with $x\ne z$. Then there exists 
$y_1,\ldots, y_5 \in \oo_{\bF}$ with $y_5 \ne 0$ such that
\begin{equation}\label{eqn1} 
y_5^2(x-z)=y_1^2+y_2^2+y_3^2+y_4^2
\end{equation} 
if and only if for every  embedding $\sigma: \bF \hookrightarrow \R$ 
we have that $\sigma(x) > \sigma(z)$.
\end{proposition}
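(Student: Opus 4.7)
My plan is to treat the two directions separately: the forward implication is an easy application of the fact that any real embedding preserves the sign of a sum of squares, while the converse will reduce to the classical theorem of Siegel that every totally positive element of a number field is a sum of four squares in that field.

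For the forward direction, I would fix any real embedding $\sigma : \bF \hookrightarrow \R$ and apply it to \eqref{eqn1}, obtaining $\sigma(y_5)^2 \sigma(x-z) = \sum_{i=1}^{4} \sigma(y_i)^2 \geq 0$. Injectivity of $\sigma$ gives $\sigma(y_5) \neq 0$ (since $y_5 \neq 0$) and $\sigma(x-z) \neq 0$ (since $x \neq z$), and dividing yields $\sigma(x) > \sigma(z)$.

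For the converse, set $w := x - z \in \oo_{\bF}$ and assume $\sigma(w) > 0$ for every real embedding $\sigma$ of $\bF$. The bulk of the work is finding a \emph{finite} subextension $F' \subset \bF$ containing $w$ such that $w$ is totally positive in $F'$: once this is done, Siegel's theorem gives $w = a_1^2 + a_2^2 + a_3^2 + a_4^2$ with $a_i \in F'$, and picking any nonzero $c \in \oo_{F'}$ clearing the denominators of the $a_i$ produces the desired relation with $y_5 := c$ and $y_i := c a_i$ for $1 \leq i \leq 4$, all lying in $\oo_{F'} \subset \oo_{\bF}$. The main obstacle is the descent to $F'$: starting from any finite $F \subset \bF$ containing $w$, some real embeddings $\sigma_j : F \hookrightarrow \R$ may fail to extend to real embeddings of $\bF$ (call these ``bad''), and over those the hypothesis provides no control on $\sigma_j(w)$. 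For each bad $\sigma_j$, I claim there exists a finite extension $F \subset F_j \subset \bF$ to which $\sigma_j$ has no real extension. Otherwise the inverse system, over finite intermediate $F \subset F_\lambda \subset \bF$, of real extensions of $\sigma_j$ to $F_\lambda$ would consist of finite nonempty sets, whose inverse limit is nonempty by a standard compactness/K\"onig argument, yielding a real embedding of $\bF$ extending $\sigma_j$ and contradicting ``bad''. Taking $F'$ to be the compositum in $\bF$ of $F$ with all such $F_j$ (a finite extension, since $F$ has only finitely many real embeddings) gives a number field in which every real embedding restricts to a non-bad embedding of $F$, hence extends to a real embedding of $\bF$, on which by hypothesis $w$ is positive; thus $w$ is totally positive in $F'$, completing the construction.
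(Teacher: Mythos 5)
Your proof is correct, and it takes a more explicit route than the paper's. Both arguments agree on the easy direction. For the converse, the paper keeps the problem at the level of the (possibly infinite) field $\bF$: it observes that the quadratic form $Y_1^2+Y_2^2+Y_3^2+Y_4^2 - (x-z)Y_5^2$ is isotropic at every archimedean completion of $\bF$ (because $\sigma(x-z)>0$ at the real ones) and at every non-archimedean completion (dimension $\ge 5$), and then invokes the Hasse--Minkowski theorem in the form given in Shimura, Corollary 27.5, to conclude isotropy over $\bF$ itself. You instead descend to a number field before applying the Hasse principle, and the nontrivial content of your argument is exactly where this descent is delicate: a real embedding $\sigma_j$ of a finite subfield $F\ni w$ need not extend to a real embedding of $\bF$, in which case the hypothesis says nothing about $\sigma_j(w)$. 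Your compactness/K\"onig argument, producing for each such "bad" $\sigma_j$ a finite $F_j\subset\bF$ to which $\sigma_j$ has no real extension, and then taking $F'$ to be the compositum, is exactly the right way to manufacture a number field $F'\subset\bF$ in which $w$ is genuinely totally positive; Siegel's four-square theorem then finishes over $F'$, and clearing denominators produces the $y_i$. What the paper's route buys is brevity, at the cost of relying on a Hasse--Minkowski statement that applies verbatim to infinite algebraic extensions; what yours buys is a self-contained reduction that only uses the classical number-field case (your descent lemma is also a reusable fact in its own right). Both ultimately rest on the same local-global principle, so the difference is in packaging, but your version makes the finite-to-infinite passage fully explicit rather than outsourcing it to the citation.
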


\begin{proof}  The existence  of $y_1,\ldots, y_5 \in \oo_{\bF}$ with $y_5 \ne 0$ 
implies the inequality  $\sigma(x) > \sigma(z)$ for all real embeddings 
$\sigma: \bF \hookrightarrow \R$. 
  
To go the other way, assume  the inequality  $\sigma(x) > \sigma(z)$ for all real embeddings.
It follows that \eqref{eqn1} has a solution in all real completions of ${\bF}$, 
and hence in all archimedean completions.   
In any non-archimedean completion a quadratic form of dimension four  represents 
every element; so \eqref{eqn1}  has a solution in every completion of ${\bF}$. 
By the   Hasse-Minkowski Theorem (\cite{Shim10} Corollary 27.5)  it has a solution in ${\bF}$.
\end{proof}

\part{On the geometry of group schemes over rings of integers in number fields}

\section{The conormal bundle to a section of a smooth scheme}

Let $S= {\Spec}(\oo)$ where $\oo$ is a Dedekind domain, and let $X\to S$ be a 
morphism of finite type and smooth of dimension $d$. Let $e: S \hookrightarrow X$ 
be a section. We'll refer to the pair $(X,e)$ as an {\df $S$-pointed scheme}. 
Let $I = I_e$ be the sheaf of ideals on $X$ that cut out the section $e$. 
For a general reference to this, see \cite[\S{I.4}]{EGA}, especially Proposition 4.1.2. 

Denote by $X_{e,[2]} = X_{[2]}\subset X$ the subscheme cut out by $I^2$. 
In the language of \cite{EGA}, $X_{e,[2]}$ is a ``formal scheme" with support 
equal to the closed subscheme $e: S \hookrightarrow X_{e,[2]}$ and has $I$, 
restricted to $\oo_{X_{e, [2]}}$, as its ideal of definition (which is an 
ideal of square zero in $\oo_{X_{[2]}}$). 

The pullback $e^*(I/I^2)$ to $S$ is a locally free coherent sheaf of rank $d$ over $S$ 
(the {\df conormal bundle} to the section $e$; for another general reference, cf. \cite{Har}).

Let 
$$
{\mathcal N}= {\mathcal N}_{X,e}:= H^0(S,e^*(I/I^2)) \quad \subset \quad 
{\mathcal R}_{X} = {\mathcal R}_{X,e}:= H^0(S,e^*(\oo_X/I^2)).
$$ 
So ${\mathcal N}_{X,e}$ is the (locally free, rank $d$) $\oo$-module of 
sections of the coherent sheaf $e^*(I/I^2)$ over $S$, viewed as an ideal 
in ${\mathcal R}_{X,e}$, the $\oo$-algebra of global sections (over $S$) of $e^*(\oo_X/I^2)$.

We can write 
$$
{\mathcal R}_{X,e} = \oo \oplus {\mathcal N}_{X,e} = \oo[{\mathcal N}_{X,e}].
$$
where the object on the right is the $\oo$-algebra generated by the $\oo$-module 
${\mathcal N}_{X,e} $ where the square of ${\mathcal N}_{X,e}\subset {\mathcal R}_{X,e}$ 
is zero. (Compare: \cite[Proposition 10.8.11]{EGA}.)

\begin{proposition}[Functoriality]
\label{functor} 
\begin{enumerate}\item There is a canonical isomorphism 
$$
X_{e,[2]} \simeq {\Spec}({\mathcal R}_{X,e}).
$$
\item 
Let $f:(X,e) \to (X',e')$ be a morphism of smooth ($S$-pointed) schemes over $S$. 
Then $f$ induces (via the canonical mapping $I_X\to f^*I_{X'}$) functorial morphisms
$$
\xymatrix@C=10pt{X\ar[rr]^f && X'\\
X_{e,[2]}\ar[rr]^{f_{[2]}}\ar@{^(->}[u] && X'_{e',[2]}\ar@{^(->}[u] \\ 
& S\ar[ul]^e \ar[ur]_{e'}}
$$
and (correspondingly) contravariant functorial $\oo$-homomorphisms
\begin{equation}
\label{contr} 
\raisebox{22pt}{\xymatrix{ {\mathcal N}_{X'}\ar[r]\ar@{^(->}[d] & {\mathcal N}_X\ar@{^(->}[d] \\ 
{\mathcal R}_{X'}\ar[r] & {\mathcal R}_X.}}
\end{equation}
\item 
If $f: X\to X'$ is a closed immersion, so is 
$$
f_{[2]}: X_{[2]}\to X'_{[2]},
$$ 
and the horizontal morphisms in \eqref{contr} are surjections.
\item 
If $(X,e)$ and $(X',e')$ are smooth ($S$-pointed) schemes over $S$, 
letting $(Y,y):=(X,e)\times_S(X',e')$ we have an isomorphism of $\oo$-modules
$$ 
{\mathcal N}_{(Y,y)} \simeq {\mathcal N}_{(X,e)}\oplus {\mathcal N}_{(X',e')}.
$$
\end{enumerate} 
\end{proposition}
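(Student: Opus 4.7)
The plan is to unpack each part directly from the definitions, using that the second-order neighborhood of a closed section is affine and that the conormal filtration behaves well under pullback.

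For part (1), I would first observe that because $X\to S$ is smooth it is separated, so $e:S\hookrightarrow X$ is a closed immersion. Hence $X_{e,[2]}$, as a subscheme of $X$ with ideal $I_e^2$, has underlying topological space equal to $e(S)\cong S$, which is affine. A scheme whose underlying space is affine and whose structure sheaf is quasi-coherent is itself affine, so $X_{e,[2]}\simeq \Spec\Gamma(X_{e,[2]},\mathcal{O}_{X_{e,[2]}})$. The identification of this ring of global sections with $\mathcal{R}_{X,e}=H^0(S,e^*(\mathcal{O}_X/I_e^2))$ follows because $\mathcal{O}_X/I_e^2$ is scheme-theoretically supported on $X_{e,[2]}$, so $e^{-1}$ and $e^*$ agree on it.

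For part (2), the condition $f\circ e=e'$ forces $f^{-1}(I_{e'})\mathcal{O}_X\subseteq I_e$, and squaring gives $f^{-1}(I_{e'}^2)\mathcal{O}_X\subseteq I_e^2$. This yields a canonical $\mathcal{O}_X$-algebra morphism $f^*(\mathcal{O}_{X'}/I_{e'}^2)\to \mathcal{O}_X/I_e^2$, equivalently a closed-square morphism $f_{[2]}:X_{e,[2]}\to X'_{e',[2]}$ sitting over $f$ and compatible with the sections from $S$. Pulling this back along $e$ and restricting to $I/I^2$ produces the map $\mathcal{N}_{X'}\to\mathcal{N}_X$, and taking global sections gives diagram \eqref{contr}. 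For part (3), if $f$ is a closed immersion with ideal $J$, then $\mathcal{O}_X=\mathcal{O}_{X'}/J$ and $I_e=I_{e'}/J$, so
\[
\mathcal{O}_X/I_e^2 \;=\; \mathcal{O}_{X'}/(I_{e'}^2+J),
\]
visibly a quotient of $\mathcal{O}_{X'}/I_{e'}^2$; this simultaneously shows $f_{[2]}$ is a closed immersion and that the horizontal maps in \eqref{contr} are surjections (surjectivity passes to global sections because $S$ is affine and the sheaves are coherent).

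For part (4), write $p_1,p_2$ for the projections of $Y=X\times_S X'$ and $y=(e,e')$. The ideal $I_y$ is generated by $p_1^{-1}(I_e)\mathcal{O}_Y + p_2^{-1}(I_{e'})\mathcal{O}_Y$, and modulo $I_y^2$ the cross-products $p_1^*(I_e)\cdot p_2^*(I_{e'})$ vanish. Combined with smoothness of the two factors (so the two conormal summands are locally free of the expected ranks, with no hidden relations), this gives
\[
y^*(I_y/I_y^2)\;\simeq\; e^*(I_e/I_e^2)\,\oplus\, e'^*(I_{e'}/I_{e'}^2),
\]
and passing to global sections over the affine base $S$ yields the asserted decomposition $\mathcal{N}_{(Y,y)}\simeq \mathcal{N}_{(X,e)}\oplus \mathcal{N}_{(X',e')}$.

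The main subtlety I anticipate is the bookkeeping in part (1), ensuring that for the section $e$ (a closed immersion) the functors $e^{-1}$ and $e^*$ agree on $\mathcal{O}_X/I_e^2$ and that this coincides with the ring of global sections of $X_{e,[2]}$—so that the statement $X_{e,[2]}\simeq\Spec(\mathcal{R}_{X,e})$ is not just true abstractly but with the canonical structure intended. Parts (2)–(4) then become formal consequences of the ideal-theoretic definitions plus the direct-sum check for a product, and the closed-immersion assertion in (3) amounts to the elementary observation above.
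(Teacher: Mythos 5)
Your proof is correct and is essentially a detailed unpacking of what the paper's one-line proof (``the statements follow directly from the functoriality of the construction $X\mapsto X_{[2]}$\dots'') leaves implicit; the ideal-theoretic verifications you supply for parts (2)--(4) are exactly what that sentence gestures at. The one small wrinkle is in part (1): for a closed immersion $e$ the $\mathcal{O}$-module pullback actually collapses, $e^*(\mathcal{O}_X/I_e^2)=e^{-1}(\mathcal{O}_X/I_e^2)\otimes_{e^{-1}\mathcal{O}_X}\mathcal{O}_S\cong\mathcal{O}_S$ (the $I/I^2$ part is killed by the tensor), so $e^{-1}$ and $e^*$ do \emph{not} agree on $\mathcal{O}_X/I_e^2$ as you assert --- the paper's $\mathcal{R}_{X,e}$ must be read as $H^0(S,e^{-1}(\mathcal{O}_X/I^2))=\Gamma(X_{e,[2]},\mathcal{O}_{X_{e,[2]}})$, which is a notational quirk of the source rather than a flaw in your argument, and everything goes through with that reading.
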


\begin{proof} The statements follow directly from the functoriality of the 
construction $X\mapsto X_{[2]}$ and the fact that $e'=f\circ e$ so $(e')^*= e^*\circ f^*$. 
\end{proof}

\section{The conormal bundle to the identity section of a smooth group scheme}
\label{ibis}

We will be dealing with smooth group schemes $G$ of finite type over our base $S$, 
which we now suppose to be ${\Spec}(\oo_K)$ for some number field $K$. 
Our main applications will use group schemes $G$ that are either 
\begin{itemize} 
\item 
the N{\'e}ron model over the base $S$ of an abelian variety $A_{/K}$, or
\item 
the multiplicative group ${\mathbf G}_m$ over $S$, or more generally a torus over $S$, or
\item 
(possibly in the future) extensions of these groups.
\end{itemize}

As the reader will see, we will only be ``using" the connected component of the 
identity of $G$, so we could restrict to connected group schemes over $S$. 
Moreover, there are few properties of $G$ (besides smoothness along the 
identity section) that are required, in the constructions to follow. 
Specifically, $G$ needn't be commutative; it needn't even have inverses: it could 
just be a monoid; more curious is that---although it would take some discussion 
which we won't enter to explain this: it needn't even be associative. The main 
requirement is that there be a binary law $\gamma: G\times G \to G$ of schemes 
over $S$ with a two-sided identity section $e: S \to G$; i.e., such that this 
diagram is commutative:
$$
\xymatrix@C=30pt{G= S\times_S G \ar[r]^-{e\times Id}\ar[rd] 
& G\times_SG\ar[d]^\gamma & G\times_SS=G\ar[l]_-{Id\times e}\ar[ld]\\
& G}
$$
But let $(X,e) = (G,e)$ just be a smooth group scheme of finite type over $S$ 
pointed by its ``identity section." Below we'll begin to drop the $e$ from 
$(G,e)$ and just call it $G$. 

\begin{lemma}
\label{free1} 
Let $h$ denote the class number of the number field $K$. 
For $G$ a smooth group scheme of finite type over $S$, let 
$$
G' :=\{ G\}^h:= G\times_S G\times_S \dots \times_S G
$$ 
denote the $h$-fold power of $G$ , and ${\mathcal N}_{G}$ and ${\mathcal N}_{G'}$ 
their corresponding conormal bundle $\oo_K$-modules. Then
$$
{\mathcal N}_{G'} = \oplus^h {\mathcal N}_{G},
$$ 
and ${\mathcal N}_{G'}$ is a {\em free} (finite rank) $\oo_K$-module.
\end{lemma}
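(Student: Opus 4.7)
The plan is to establish the two assertions separately. The decomposition ${\mathcal N}_{G'} \simeq \oplus^h {\mathcal N}_G$ is an immediate consequence of Proposition \ref{functor}(4), which already handles the case of a product of two smooth $S$-pointed schemes. I would simply iterate that isomorphism: writing $G' = G \times_S (G \times_S \cdots \times_S G)$ and applying Proposition \ref{functor}(4) together with a straightforward induction on the number of factors gives the desired direct sum decomposition of $\oo_K$-modules.

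For freeness, the main input is the classical structure theorem for finitely generated projective modules over a Dedekind domain. Since $G\to S$ is smooth of some relative dimension $d$ along $e$, the conormal module ${\mathcal N}_G$ is a locally free $\oo_K$-module of rank $d$, hence projective. By the structure theorem (see e.g.\ Milnor's \emph{Introduction to Algebraic $K$-Theory} or any standard reference on modules over Dedekind domains), there exists a fractional ideal $I \subset K$ such that
\[
{\mathcal N}_G \;\simeq\; \oo_K^{d-1} \oplus I,
\]
and $I$ is determined up to isomorphism by its class $[I]$ in the ideal class group $\mathrm{Cl}(\oo_K)$.

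Combining the two parts, ${\mathcal N}_{G'} \simeq \oplus^h {\mathcal N}_G \simeq \oo_K^{h(d-1)} \oplus (\oplus^h I)$, so it suffices to show $\oplus^h I \simeq \oo_K^h$. For this I would invoke the classical fact that for any two fractional ideals $I_1, I_2$ of $\oo_K$ one has an $\oo_K$-module isomorphism $I_1 \oplus I_2 \simeq \oo_K \oplus I_1 I_2$. Iterating this relation $h-1$ times yields
\[
\oplus^h I \;\simeq\; \oo_K^{h-1} \oplus I^h,
\]
where $I^h$ denotes the ideal-theoretic $h$-th power. Since $h$ is the order of $\mathrm{Cl}(\oo_K)$, the class $[I]^h$ is trivial, so $I^h$ is principal and therefore free of rank one over $\oo_K$. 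Hence $\oplus^h I \simeq \oo_K^h$, and substituting back gives ${\mathcal N}_{G'} \simeq \oo_K^{hd}$, a free $\oo_K$-module.

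There is no real obstacle here: the argument is essentially a bookkeeping exercise combining Proposition \ref{functor}(4) with two standard facts about modules over a Dedekind domain (the invariant-factor form for projectives, and $I_1\oplus I_2\simeq \oo_K\oplus I_1I_2$). The only point that needs a moment's care is recording that the rank of ${\mathcal N}_G$ is constant on $S$ (which follows from $G\to S$ being smooth of relative dimension $d$), so that the structure theorem applies with a single ideal class.
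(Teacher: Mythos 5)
Your proof is correct and follows the same route as the paper: Proposition \ref{functor}(4) gives the direct sum decomposition, and freeness comes down to the classical fact that the $h$-fold direct sum of a locally free module over a Dedekind domain with class number $h$ is free. The paper simply cites that fact (to \cite[Theorem II.4.13]{FT}), whereas you unpack its standard proof via the Steinitz form $\oo_K^{d-1}\oplus I$ and the relation $I_1\oplus I_2\simeq \oo_K\oplus I_1I_2$; the content is the same.
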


\begin{proof} 
The first part of that sentence follows from Proposition \ref{functor}(4). 
The second follows from the fact that ${\mathcal N}_{G}$ is locally free 
over $\oo_K$, and if $h$ is the class number of $K$ the $h$-fold direct sum 
of any locally free $\oo_K$-module is free (\cite[Theorem II.4.13]{FT}). 
\end{proof}

\begin{remark}
\label{rmk1} 
This lemma will be useful later. Whenever we have a group scheme that is 
diophantine stable for a field extension $L/K$, Lemma \ref{free1} allows 
us to choose one with the further property that its conormal bundle module 
${\mathcal N}_G$ is free over $\oo_K$. 
\end{remark}

\begin{proposition}
\begin{enumerate}
\item 
The functor $(G,e) \mapsto G_{e,[2]}$ preserves closed immersions.
\item 
The functor $G \mapsto {\mathcal N}_G$ sends closed immersions $G_1 \hookrightarrow G_2$ 
to surjections 
$$
{\mathcal N}_{G_2} \To {\mathcal N}_{G_1}.
$$ 
\item 
If $G, H$ are $S$-group schemes we have canonical closed immersions of
$S$-schemes 
$$
\xymatrix{(G\times_SH)_{[2]}\ar[d]^=\ar@{^(->}[r] 
& G_{[2]}\times_SH_{[2]}\ar[d]^=\ar@{^(->}[r] & G\times_SH\\ 
{ \rm Spec}({\mathcal R}_{G\times_S H})\ar@{^(->}[r] 
& {\Spec}({\mathcal R}_{G})\times_S{\rm Spec}({\mathcal R}_{H})& }
$$ 
\item 
Letting $1_G\in {\mathcal R}_G={\mathcal R}_{G,e}$ denote the unit, and ditto for $H$, 
we have a canonical isomorphism of ${\oo_K}$-modules 
$$ 
{\mathcal N}_{G\times_SH} \stackrel{\simeq}{\longrightarrow} 
{\mathcal N}_G\otimes_{\oo_K} 1_H \oplus 1_G\otimes_{\oo_K} {\mathcal N}_H.
$$ 
\item 
Let 
$
\gamma: G \times_SG \rightarrow G
$ 
denote the group law $(g_1,g_2) \mapsto g_1g_2$. We have a commutative diagram
$$
\xymatrix{ {\Spec}({\mathcal R}_{G\times_S G})\ar[r]^=\ar[d]^{\gamma_{[2]}} 
&\{G\times_S G\}_{[2]}\ar@{^(->}[r]\ar[d]^{\gamma_{[2]}} 
& G_{[2]}\times_S G_{[2]}\ar[d]^\gamma\ar@{^(->}[r] & G\times_S G\ar[d]^\gamma\\
{\Spec}({\mathcal R}_{G})\ar[r]^= & G_{[2]}\ar@{^(->}[r] & G\ar[r]^= & G.}
$$
\end{enumerate} 
\end{proposition}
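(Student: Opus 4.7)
The plan is to deduce all five parts directly from the general functoriality results of Proposition \ref{functor}, with the group-scheme structure entering in an essential way only in part (5). For (1) and (2), I will simply invoke Proposition \ref{functor}(3): a closed immersion of pointed $S$-schemes $G_1 \hookrightarrow G_2$ gives a closed immersion $(G_1)_{[2]} \hookrightarrow (G_2)_{[2]}$ and a surjection $\mathcal{R}_{G_2} \twoheadrightarrow \mathcal{R}_{G_1}$, which after restricting to the $I/I^2$ summand yields the desired surjection $\mathcal{N}_{G_2} \twoheadrightarrow \mathcal{N}_{G_1}$. Part (4) is the translation of Proposition \ref{functor}(4) using the direct-sum decomposition $\mathcal{R}_G = \oo \oplus \mathcal{N}_G$ introduced just above that proposition: the summand of $\mathcal{R}_G \otimes \mathcal{R}_H$ linear in infinitesimals is exactly $\mathcal{N}_G \otimes 1_H \oplus 1_G \otimes \mathcal{N}_H$.

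For (3), I would note that the two projections $p_1 : G \times_S H \to G$ and $p_2 : G \times_S H \to H$ are morphisms of $S$-pointed schemes, so by Proposition \ref{functor}(2) they induce compatible morphisms $(G \times_S H)_{[2]} \to G_{[2]}$ and $(G \times_S H)_{[2]} \to H_{[2]}$, which combine to give the required map into $G_{[2]} \times_S H_{[2]}$. That this map is a closed immersion follows once one identifies the ideal of $(e_G, e_H)$ in $G \times_S H$ as $I = p_1^{*} I_{e_G} + p_2^{*} I_{e_H}$ and observes the containment $I^{2} \supset p_1^{*} I_{e_G}^2 + p_2^{*} I_{e_H}^2$; thus $(G \times_S H)_{[2]}$ is cut out by a larger ideal than $G_{[2]} \times_S H_{[2]}$, so sits inside it as a closed subscheme.

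For (5), the key observation is that the group law $\gamma : G \times_S G \to G$ is a morphism of $S$-pointed schemes, since the axiom $\gamma \circ (e \times \mathrm{Id}) = \mathrm{Id}$ implies $\gamma(e,e) = e$. Applying the functoriality of Proposition \ref{functor}(2) to $\gamma$ produces the vertical arrow $\gamma_{[2]} : (G \times_S G)_{[2]} \to G_{[2]}$, and the commutativity of the entire diagram is then automatic from the functoriality of $X \mapsto X_{[2]}$ together with the canonical identifications $\Spec(\mathcal{R}_X) = X_{[2]}$ provided by Proposition \ref{functor}(1) and the closed immersion of part (3). The only subtle point to verify — and the closest thing to an obstacle here — is that the composition $G_{[2]} \times_S G_{[2]} \hookrightarrow G \times_S G \to G$, when restricted via (3) to $(G \times_S G)_{[2]}$, lands in $G_{[2]}$; but this factoring is precisely what Proposition \ref{functor}(2) applied to $\gamma$ guarantees. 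No new geometric input beyond Proposition \ref{functor} is required.
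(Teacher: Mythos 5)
Your proof is correct and follows essentially the same route as the paper's terse argument: items (1) and (2) are direct invocations of Proposition \ref{functor}(3), and the remaining items follow from the functoriality of $X \mapsto X_{[2]}$ together with the product decomposition. The only place where you provide something the paper elides is the explicit ideal-containment check $I^2 \supset p_1^{*}I_{e_G}^2 + p_2^{*}I_{e_H}^2$ in part (3) and the identification of $\mathcal{N}_G \otimes 1_H \oplus 1_G \otimes \mathcal{N}_H$ as the ``linear in infinitesimals'' part of $\mathcal{R}_G \otimes \mathcal{R}_H$ in part (4) --- both of which correctly flesh out what the paper calls ``straight from the definitions'' and ``a direct computation.''
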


\begin{proof} Items (1) and (2) follow from item (3) of Proposition \ref{functor}. 
The remaining items follow straight from the definitions or the functoriality 
of the objects named, except for (4) which is a direct computation.

Note that the natural $\oo_K$-homomorphism 
$$ 
{\mathcal R}_{G}\otimes_{\oo_K}{\mathcal R}_{H} \longrightarrow {\mathcal R}_{G\times_S H}
$$ 
is a surjection, but not (necessarily) an isomorphism. 
\end{proof} 

\begin{proposition}
\label{propcomm} 
Recall that $\gamma : G \times G \to G$ denotes the group operation. The mapping 
$$
\gamma_{[2]}: {\mathcal N}_G \longrightarrow {\mathcal N}_{G\times_SG} 
= ({\mathcal N}_G\otimes_\oo 1_G) \oplus (1_G\otimes_\oo {\mathcal N}_G)
$$
is given by the formula
\begin{equation}
\label{prod1} 
x \quad\mapsto\quad x\otimes 1_G + 1_G \otimes x.\end{equation}
\end{proposition}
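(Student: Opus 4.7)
The plan is to use the two-sided unit axiom together with the contravariant functoriality of $\mathcal{N}$ established in Proposition \ref{functor}. By the direct-sum decomposition furnished by the preceding proposition, every element of $\mathcal{N}_{G \times_S G}$ has a unique expression of the form $a \otimes 1_G + 1_G \otimes b$ with $a,b \in \mathcal{N}_G$, so I can write
$$
\gamma_{[2]}(x) \;=\; \alpha(x)\otimes 1_G \,+\, 1_G\otimes \beta(x)
$$
for a uniquely determined pair of $\oo_K$-linear endomorphisms $\alpha,\beta$ of $\mathcal{N}_G$. The goal is then reduced to showing $\alpha = \beta = \mathrm{id}_{\mathcal{N}_G}$.

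Next I would spell out what the section $e$ does on conormal bundles. Since $\mathcal{R}_G = \oo_K \oplus \mathcal{N}_G$ and $e^*\colon \mathcal{R}_G \to \mathcal{R}_S = \oo_K$ is the canonical $\oo_K$-algebra splitting of the inclusion $\oo_K \hookrightarrow \mathcal{R}_G$, we have $e^*(1_G) = 1$ and $e^*|_{\mathcal{N}_G} = 0$. Therefore, for the closed immersion $e \times \mathrm{id}\colon G = S\times_S G \hookrightarrow G\times_S G$, the induced map $(e\times\mathrm{id})_{[2]}^*\colon \mathcal{N}_{G\times_S G} \to \mathcal{N}_G$ is simply the projection onto the second summand in the decomposition, sending $a\otimes 1_G + 1_G\otimes b \mapsto b$. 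By the symmetric argument, $(\mathrm{id}\times e)_{[2]}^*$ is projection onto the first summand.

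The last step is to invoke the unit axioms $\gamma \circ (e\times\mathrm{id}) = \mathrm{id}_G = \gamma \circ (\mathrm{id}\times e)$. By the contravariant functoriality of $\mathcal{N}$ (Proposition \ref{functor}(2)), the composition $(e\times\mathrm{id})_{[2]}^* \circ \gamma_{[2]}$ equals $\mathrm{id}_{\mathcal{N}_G}$; computing this composition with the formula above gives $x \mapsto \beta(x)$, so $\beta = \mathrm{id}$. The analogous computation with $\mathrm{id}\times e$ yields $\alpha = \mathrm{id}$, and substituting back gives exactly \eqref{prod1}.

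There is no serious obstacle here: the argument is a short diagram chase, and the only point that needs genuine verification is the identification of $(e\times\mathrm{id})_{[2]}^*$ with projection onto the second summand, which is immediate from the structure of $e^*$ on $\mathcal{R}_G$. The content of the proposition is simply the classical assertion that the identity of a pointed group-like object is ``primitive'' for the coproduct on the conormal bundle, and the proof will amount to recording this fact within the setup of Section \ref{ibis}.
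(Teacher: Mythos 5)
Your argument is correct and is essentially the paper's own proof: both use the two-sided unit axiom $\gamma\circ(e\times\mathrm{id})=\mathrm{id}=\gamma\circ(\mathrm{id}\times e)$ together with the contravariant functoriality of $X\mapsto X_{[2]}$ and $\mathcal{N}$ to conclude that the two components of $\gamma_{[2]}(x)$ in the direct-sum decomposition of $\mathcal{N}_{G\times_S G}$ are both $x$. The only difference is presentational: you explicitly name the endomorphisms $\alpha,\beta$ and identify $(e\times\mathrm{id})_{[2]}^*$ as a projection, whereas the paper states directly that composing $\gamma_{[2]}$ with the two projections yields $x\mapsto x\otimes 1_G$ and $x\mapsto 1_G\otimes x$.
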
 

\begin{proof} 
Since $e$ is the identity section we have the commutative diagram 
$$
\xymatrix{ e\times G\ar@{^(->}[r]\ar[rd]_= & G\times G\ar[d]^\gamma 
& G\times e\ar@{_(->}[l]\ar[ld]^=\\
& G & }
$$ 
which gives us that the composition of 
$$
{\mathcal N}_G \stackrel{\gamma_{[2]}}{\longrightarrow} {\mathcal N}_{G\times_SG} 
= {\mathcal N}_G\otimes_\oo 1_G \oplus 1_G\otimes_\oo {\mathcal N}_G
$$ 
with projection to $ {\mathcal N}_G\otimes_\oo 1_G$ or to $1_G\otimes_\oo {\mathcal N}_G$ 
induces the `identity mapping' (i.e., $x \mapsto x\otimes 1_G$ or 
$x \mapsto 1_G \otimes x$ respectively). 
\end{proof}

\begin{corollary}
Let $G$ and $G'$ be two smooth group schemes over $S$. 
Let $G_0 \subset G$ be an open subscheme containing $e$, the identity section, 
and let $G_0' \subset G'$ be, similarly, an open subscheme containing $e'$, 
the identity section. We view $(G_0,e)$ and $(G_0',e')$ as (smooth) 
$S$-pointed schemes.
Let $\iota: (G_0,e) \to (G_0',e')$ be a morphism of $S$-pointed schemes 
that is a closed immersion of schemes
(but $\iota$ is not required to extend to a homomorphism, or even a morphism, 
of the ambient groups). We have a commutative diagram 
\begin{equation}
\label{comm}
\raisebox{25pt}{\xymatrix@C=15pt{ G\times G\ar[d]^\gamma 
& (G\times G)_{[2]}\ar[l]\ar[r]^=\ar[d]^{\gamma_{[2]}} 
& (G_0\times G_0)_{[2]}\ar[r]^{\iota\times \iota}\ar[d]^{\gamma_{[2]}} 
& (G'_0\times G'_0)_{[2]}\ar[r]^=\ar[d]^{{\gamma'}_{[2]}} 
& (G'\times G')_{[2]}\ar[d]^{\gamma'_{[2]}}\ar@{^(->}[r]& {G'\times G'}\ar[d]^{\gamma'}\\
G & G_{[2]}\ar[l]\ar[r]^= & {G_0}_{[2]}\ar[r]^{\iota} 
& {G_0'}_{[2]}\ar[r]^= & G'_{[2]}\ar@{^(->}[r] & G'.}}
\end{equation} 
\end{corollary}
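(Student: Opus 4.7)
The plan is to decompose diagram \eqref{comm} into elementary sub-squares and verify each one separately, reducing the only nontrivial check---the square involving $\iota \times \iota$---to a short computation on conormal bundles.

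First I would observe that the two outermost squares at each end commute by the very definition of $\gamma_{[2]}$ and $\gamma'_{[2]}$: these morphisms are induced from $\gamma$ and $\gamma'$ by the functoriality of $X\mapsto X_{e,[2]}$ recorded in Proposition \ref{functor}, so the squares containing the inclusions $G_{[2]}\hookrightarrow G$, $(G\times G)_{[2]}\hookrightarrow G\times G$, and their primed analogues commute tautologically. The squares labelled by equalities, namely $(G\times G)_{[2]}=(G_0\times G_0)_{[2]}$, $G_{[2]}={G_0}_{[2]}$, and their primed counterparts, are canonical identifications: the sheaf of ideals cutting out the identity section has stalks that depend only on any Zariski-open neighborhood of $e(S)$, so $X_{e,[2]}$ is unchanged when $X$ is replaced by such a neighborhood. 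All these squares commute for formal reasons, with no computation required.

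The central square involving $\iota$ and $\iota\times\iota$ is the heart of the matter; since $\iota$ is only a pointed morphism and is not required to be a group homomorphism, one cannot appeal to functoriality of the group law directly. My plan is to pass to conormal bundles via Proposition \ref{functor}(1), identifying $G_{e,[2]}=\Spec(\oo_K\oplus {\mathcal N}_G)$ with ${\mathcal N}_G^2=0$, and analogously for $G'$, $G\times G$, and $G'\times G'$, the latter two using the canonical decomposition ${\mathcal N}_{G\times_S G}\simeq {\mathcal N}_G\otimes 1_G \oplus 1_G\otimes {\mathcal N}_G$ recorded just before the corollary. By Proposition \ref{propcomm}, both $\gamma_{[2]}$ and $\gamma'_{[2]}$ induce on conormal bundles the \emph{universal} formula $x\mapsto x\otimes 1+1\otimes x$, which depends only on the pointed-scheme structure and not on the specific group law. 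Commutativity of the central square then reduces to the identity
\[
(\iota\times\iota)^*(x'\otimes 1+1\otimes x') \;=\; \iota^*(x')\otimes 1 + 1\otimes \iota^*(x')
\]
for every $x'\in {\mathcal N}_{G'}$, which is immediate from the functoriality of the conormal-bundle construction under pointed morphisms, applied to the projections onto each factor. Because ${\mathcal R}_{G\times G}$ is generated over $\oo_K$ by ${\mathcal N}_{G\times G}$ subject to $1\mapsto 1$, this verification on conormal bundles suffices to establish commutativity at the level of schemes.

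The main obstacle is conceptual rather than computational: one might suspect that, since $\iota$ is not a group homomorphism, the square involving $\iota\times\iota$ and the two multiplication morphisms should fail to commute. The resolution is that at first order along the identity section the group law always acts by the linear formula $x\mapsto x\otimes 1 + 1\otimes x$, and every pointed morphism is automatically compatible with this linearization, irrespective of how $\iota$ may distort higher-order structure.
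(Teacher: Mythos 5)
Your proof is correct and follows essentially the route the paper intends: the paper does not spell out a proof of this corollary, but the remark immediately after it points to Proposition~\ref{propcomm} and formula~\eqref{prod1} as the ``key fact,'' and your reduction of the inner square to the identity $(\iota\times\iota)^*(x'\otimes 1+1\otimes x') = \iota^*(x')\otimes 1 + 1\otimes \iota^*(x')$ is exactly the intended verification, with the outer and equality squares handled by functoriality of $X\mapsto X_{[2]}$ and the fact that the construction depends only on a Zariski neighborhood of the identity section.
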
 

\begin{remark} We will make use of this discussion in the case where $G=\A$ 
is the N{\'e}ron model of an abelian variety, and $G'= {\Aff}^n$ is $n$-dimensional 
affine space. The key fact we use is Proposition \ref{propcomm} and the 
commutativity of the inner square in diagram \eqref{comm} (and this follows 
directly from the formula \eqref{prod1}). 
\end{remark}

\section{A prepared group scheme}

Fix a number field $K$. 
\subsection*{ Projective space over $\oo_K$}
For $n$ a positive integer, consider $n$-dimensional projective space $\PP^n$ 
viewed as a scheme (over $\Z$, or more relevant to our context, over $\oo_K$).
A point in $\PP^n$ rational over $K$ (which is the same as being rational 
over $\oo_K$) can be represented in $n+1$ homogeneous coordinates (not all 
of them $0$), $$(x_1: x_2:\dots: x_{n+1})$$ for $x_i \in K$ noting that such 
a representation is unique up to scalar multiplication by a nonzero element in $K$. 
Two such vectors 
$(x'_1: x'_2:\dots: x'_{n+1}), (x_1: x_2:\dots: x_{n+1})$ are {\em equivalent} 
if and only if there is a nonzero element $c \in K$ such that $x_i'=c x_i$ for $i=1,2,\dots n+1.$
Any such point can therefore be represented by such a vector with $x_i \in \oo_K$. 
(Below, we keep to the convention that the colons signify that we are 
considering ``homogenous coordinates.'') 

\begin{definition}
In the special case that the entries $a_1,a_2,\dots, a_{n+1}$ of an $n+1$-vector
$$
\alpha= (a_1: a_2:\dots: a_{n+1})
$$ 
generates a principal ideal $``\gcd(\alpha)" = (a) \subset \oo_K,$ define 
\begin{itemize} 
\item 
the {\df denominator} of $\alpha$ to be 
$$
\delta(\alpha):= a_{n+1}/a \subset \oo_K,
$$
noting that $\delta(\alpha)$ is only well-defined up to a unit in $K$---i.e., 
it is only the principal ideal generated by $\delta(\alpha)$ that is well-defined,
\item 
the {\df numerator} of $\alpha$ to be 
$$
\nu(\alpha):= \text{the ideal $(a_1/a,a_2/a,\dots, a_n/a) \subset \oo_K,$}
$$
\end{itemize}
noting that $\nu(\alpha)$ and $\delta(\alpha)$ are relatively prime---i.e., 
the ideal generated by $\nu(\alpha)$ and $\delta(\alpha)$ is the unit ideal.
\end{definition}

\subsection*{ Affine space over $\oo_K$}
\label{aff}

We view affine $n$-dimensional space 
$$
{\Spec}(\oo_K[y_1,y_2,\dots, y_n]) =:{\Aff}^n \simeq {\mathbf G}_a^n
$$ 
as an additive group scheme over $\oo_K$; with 
$$
e:=(0,0,\dots,0) \in {\Aff}^n
$$ 
its zero-section (cut out by the ideal 
$I:=(y_1, y_2,\dots, y_n) \subset \oo_K[y_1, y_2,\dots, y_n]$).

Letting $\PP^{n-1} \simeq H \subset \PP^n$ denote the hyperplane defined 
by $x_{n+1}=0$, we have an isomorphism
$$
{\Aff}^n \simeq \PP^n\setminus H \subset \PP^n,
$$ 
defined by 
$$
(a_1,a_2,\dots,a_n) \mapsto (a_1:a_2:\dots:a_n:1).
$$

``Going the other way:'' if $(a_1:a_2:\dots:a_n: a_{n+1})$ is a homogenous 
representative of a point ${\mathbf a} \in \PP^n(\oo_K)$, where 
$a_{n+1} \ne 0$, denote by 
\begin{equation}
\label{pa} 
{\mathbf a}^!:=({\frac{a_1}{a_{n+1}}}, {\frac{a_2}{a_{n+1}}},\dots,{\frac{a_n}{a_{n+1}}}) 
\in {\Aff}^n\big(\oo_K[{\frac{1}{a_{n+1}}}]\big).
\end{equation}

\begin{remark}
\label{frac} 
If the ideal $(a_1,a_2,\dots, a_n) \in \oo_K$ is 
relatively prime to the ideal $(a_{n+1})$ then ${\mathbf a}^!$ reduces to 
a well-defined element---call it ${\mathbf a}^!_{[2]}$---in the quotient 
$\oo_K^n/(a_1,a_2,\dots, a_n)^2\oo_K^n$. 
\end{remark}

Visibly, ${\mathbf a}^!_{[2]}$ is dependent only on the equivalence class of 
$(a_1:a_2:\dots:a_n: a_{n+1})$ as long as the hypothesis in Remark \ref{frac} holds. 
This will be relevant in the discussion below.

We will be working with the quotient: $\oo_K \to \oo_K[y_1, y_2,\dots, y_n]/I^2$. 
Form the corresponding closed subscheme 
$$
{\Aff}^n_{[2]}:= {\Spec}(\oo_K[y_1, y_2,\dots, y_n]/I^2) \subset 
{\Aff}^n={\Spec}(\oo_K[y_1,y_2,\dots, y_n]).
$$

\subsection*{ An embedded group scheme}

\begin{assumption}
\label{Ghyp} 
Let $G$ be a smooth connected quasi-projective group scheme over $S={\Spec}(\oo_K)$. 
Assume further that the conormal bundle module, ${\mathcal N}_G$ is free over $\oo_K$ 
(see Lemma \ref{free1} and Remark \ref{rmk1}).
\end{assumption}

There is a positive integer $n$ and an $\oo_K$-morphism of schemes 
$$G \stackrel{\iota}{\hookrightarrow} \PP^n$$ that is a local immersion, 
identifying $G$ with a locally closed $\oo_K$-subscheme of $\PP^n$.

Denote by ${e}$ the zero-section of $G$ over $\oo_K$, and let ${\mathfrak I}$ 
be the sheaf of ideals in $G$ that cuts out ${e}$. 

Recall the construction $G_{[2]} \subset G$ of \S\ref{ibis}; i.e., the 
subscheme of $G$ cut out by ${\mathfrak I}^2$. 

\begin{proposition}
\label{prop1} 
If ${\bar G} \subset \PP^n$ is the Zariski-closure of the $\oo_K$-subscheme 
$G \subset \PP^n$, then setting 
$$
{\mathcal X}:= {\bar G} \setminus G \subset \PP^n,
$$
the support of ${\mathcal X}$ is disjoint from the zero-section ${e}$.
\end{proposition}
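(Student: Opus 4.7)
The plan is to unpack what it means for $\iota:G\hookrightarrow\PP^n$ to be a local (i.e., locally closed) immersion and then observe that the statement reduces to the elementary fact that the image of the identity section lies in $G$, while $\mathcal{X}$ by definition lies in the complement of $G$.

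First I would recall that, by hypothesis, $\iota$ factors as a composition $G\hookrightarrow U\hookrightarrow\PP^n$ in which the first arrow is a closed immersion into some Zariski open $U\subset\PP^n$ and the second is the tautological open immersion. Intersecting with the Zariski closure $\bar{G}$ of $G$ in $\PP^n$ gives $\bar{G}\cap U=G$: the inclusion $G\subset\bar{G}\cap U$ is clear, while $\bar{G}\cap U$ is a closed subscheme of $U$ containing $G$, so it must also contain the closure of $G$ in $U$, which equals $G$ by the first factorization. Hence $G$ is open in $\bar{G}$, and therefore $\mathcal{X}=\bar{G}\setminus G$ is a closed subset of $\bar{G}$ whose underlying set is disjoint from the underlying set of $G$.

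Next I would give $\mathcal{X}$ its reduced induced scheme structure inherited from $\bar{G}$ (so that ``support'' makes sense) and note that the identity section $e:S\to G\hookrightarrow\PP^n$ factors through $G$ by construction. Consequently the set-theoretic image $|e(S)|$ is contained in $|G|$, which is disjoint from $|\mathcal{X}|$ by the previous paragraph. This gives precisely the assertion of the proposition.

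The only real content is the claim that $G$ is open in $\bar{G}$, and this is a standard consequence of the locally-closed-immersion hypothesis on $\iota$; I do not anticipate any genuine obstacle. Everything else is either definitional or immediate from the fact that $e$ is a section of the structure morphism of $G$.
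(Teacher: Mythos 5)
Your argument is correct and takes a more elementary route than the paper. The paper's one-line proof appeals to smoothness of $G$ along the zero-section and to an induced isomorphism of normal bundles under $G \hookrightarrow \bar G$. You instead observe that the conclusion follows already from the hypothesis, built into the setup, that $\iota$ identifies $G$ with a \emph{locally closed} subscheme of $\PP^n$: any locally closed subspace is open in its Zariski closure, so $G$ is open in $\bar G$, hence $\mathcal{X} = \bar G \setminus G$ is closed in $\bar G$ and set-theoretically disjoint from $G$, and since $e$ factors through $G$ the assertion follows. Your intermediate verification that $\bar G \cap U = G$ when $G$ is closed in the open $U \subset \PP^n$ is exactly the standard topological fact that the closure of a subset inside an open subspace equals the ambient closure intersected with that open, and you apply it correctly. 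Smoothness is simply not needed for the stated set-theoretic conclusion. What the paper's normal-bundle phrasing is presumably gesturing at is the stronger and more useful point that $\bar G$ is itself smooth along $e(S)$ and that passing from $G$ to $\bar G$ does not change the conormal module of the zero-section---but this too is an immediate consequence of the fact your argument establishes, namely that $G \hookrightarrow \bar G$ is an open immersion and hence a local isomorphism near $e(S)$.
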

\begin{proof} Since the group scheme $G$ is smooth along the zero-section, the injection $G \hookrightarrow   {\bar G}$ induces an isomorphism on normal bundles along the zero-section, establishing the proposition.\end{proof}
\begin{remark}\label{bar}
 
If, for example, $G=\A$, the N{\'e}ron model of an abelian variety $A$ over $K$, then the 
support of ${\mathcal X}$ in Proposition \ref{prop1} 
is concentrated in fibers of $\A \to {\Spec}(\oo_K)$ 
over the finite punctual subscheme $ \Sigma_{\rm bad}(A) \subset {\Spec}(\oo_K)$ 
where $ \Sigma_{\rm bad}(A)$ is the set of bad primes of $A$, i.e.:
$$ 
\Sigma_{\rm bad}(A):= \sqcup_{\gp | {\rm cond}( A)}\ {\Spec}(k_\gp).
$$
(Here $k_\gp$ is the residue field of the prime $\gp$ of $K$, and 
${\mathrm{cond}}( A)$ is the conductor of $A_{/K}$.)
\end{remark}

\begin{definition}\label{wellarr1} 
An injective (local immersion) $\oo_K$-morphism $\iota: G \hookrightarrow \PP^n$ 
will be called {\df well-arranged} if both of the following properties hold:
\begin{itemize} 
\item 
$\iota$ takes any 
point $P \in G (\oo_K)$ to a point $\iota(P) \in \PP^n$ that can be written 
in homogeneous coordinates $(a_1:a_2:\dots : a_{n+1})$ with $a_i \in \oo_K$, 
and such that the ideal generated by the entries, 
$(a_1,a_2,\dots,a_{n+1}) \subset \oo_K$, is the unit ideal,
\item 
$\iota$ takes the zero-section, ${e} \in G$ to the point $(0:0:\dots:0:1)$ ---this 
being written in homogenous coordinates; i.e., $x_{n+1}=1$:
$$
\xymatrix{{e}\ar[r]^\iota\ar[d] & e\ar[d] & e\ar[d]\ar[l]_= \\
G\ar[r]^\iota & \PP^n & {\rm Aff}^n\ar@{_(->}[l]}
$$ 
\end{itemize}
\end{definition}

\begin{proposition}
Let 
$$
\xymatrix{ S\ar[r]^s\ar[dr]_= & \PP^n\ar[d]\\
& S}
$$
be an $S$-section. Then there is an injective $S$-morphism 
${\mathfrak v}: \PP^n\hookrightarrow \PP^N$ for some $N$ such that 
\begin{enumerate} 
\item 
the image of any $S$-section of $\PP^n$ in $\PP^N$, when written in 
homogenous coordinates $(a_1: a_2:\dots:a_{N+1})$ has the property 
that the ideal generated by the entries, $(a_1,a_2,\dots,a_{N+1}) \subset \oo_K$, 
is a principal ideal in $\oo_K$, and hence after scaling can be taken to be the unit ideal, and 
\item 
${\mathfrak v}(s) =( 0:0:\dots:0:1)$.\end{enumerate}
\end{proposition}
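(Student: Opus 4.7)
The plan is to take ${\mathfrak v}:=\alpha\circ v_d$, where $v_d\colon \PP^n\hookrightarrow \PP^N$ is a suitable $d$-uple Veronese embedding and $\alpha\colon \PP^N\to\PP^N$ is an $\oo_K$-linear automorphism of $\PP^N$ chosen so as to move the image $v_d(s)$ onto $(0:0:\dots:0:1)$. Both steps are a matter of choosing coordinates; the arithmetic content lies in the classification of invertible and projective modules over the Dedekind domain $\oo_K$.

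To secure (1), I would use the $d$-uple Veronese with $d$ equal to the class number $h$ of $K$. An $S$-section $t\colon S\to \PP^n$ corresponds to an invertible $\oo_K$-module $\calL_t=t^*\calO(1)$ together with a surjection $\oo_K^{n+1}\twoheadrightarrow \calL_t$; writing $t=(a_1:a_2:\dots:a_{n+1})$ with the $a_i\in\oo_K$ cleared of denominators, the ideal $(a_1,\dots,a_{n+1})\subset\oo_K$ represents the inverse class of $\calL_t$ in $\operatorname{Cl}(\oo_K)$. Composition with $v_d$ replaces $\calL_t$ by $\calL_t^{\otimes d}$, so taking $d=h$ forces the ideal generated by the image coordinates to be principal for every $t$. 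After scaling by a generator the entries can then be taken to have unit ideal, and this property survives composing with any $\oo_K$-linear automorphism of $\PP^N$, since such an automorphism preserves the isomorphism class of the pulled-back $\calO(1)$.

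For (2), let $s':=v_d\circ s$. Because $\calL_{s'}$ is trivial, $s'$ is given by an $\oo_K$-linear surjection $\sigma\colon \oo_K^{N+1}\twoheadrightarrow \oo_K$, whose kernel $P$ is a finitely generated projective $\oo_K$-module of rank $N$. The exact sequence $0\to P\to \oo_K^{N+1}\to \oo_K\to 0$ gives $[P]=[\oo_K^N]$ in $K_0(\oo_K)$, so $P$ is stably free. By the structure theorem for finitely generated projective modules over a Dedekind domain (\cite[Theorem II.4.13]{FT}), any such module of rank $r$ is of the form $\oo_K^{r-1}\oplus \calJ$ with $[\calJ]\in \operatorname{Cl}(\oo_K)$ recording its $K_0$-class; in particular stably free of rank $N$ implies free, so $P\cong \oo_K^N$. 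Lifting any $\oo_K$-basis $v_1,\dots,v_N$ of $P$ together with any $v_{N+1}\in \oo_K^{N+1}$ satisfying $\sigma(v_{N+1})=1$ yields an $\oo_K$-basis of $\oo_K^{N+1}$, and the corresponding change-of-basis element of $\GL_{N+1}(\oo_K)$ induces an automorphism $\alpha$ of $\PP^N$ sending $s'$ to $(0:\dots:0:1)$, as required. The delicate ingredient is precisely this stably-free-implies-free step, which is the only nontrivial input and which had already been invoked in the proof of Lemma \ref{free1}.
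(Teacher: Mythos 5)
Your proof is correct and follows essentially the same route as the paper: the $h$-fold Veronese embedding for part (1), and the classification of finitely generated projective modules over the Dedekind domain $\oo_K$ (stably free implies free) for part (2). The only stylistic difference is that the paper works with the sub-line-bundle $T := \tau\oo_K \subset \oo_K^{N+1}$ and its torsion-free (hence projective, hence free) quotient $W_0$, whereas you work dually with the surjection $\sigma\colon \oo_K^{N+1}\twoheadrightarrow\oo_K$ defining the section and its kernel $P$; both arguments produce a basis of $\oo_K^{N+1}$ adapted to the section and appeal to \cite[Theorem II.4.13]{FT} in the same way.
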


\begin{proof} 
Let $h= h_K$ denote the class number of $K$. Let 
$$
{\mathfrak v}_{n,h}:\PP^n \to \PP^N
$$ 
(with $N:=\binom{n+h}{n}-1$) be the $h$-fold Veronese embedding of 
$\PP^n_{/{\oo_K}}$ in $ \PP^N_{/{\oo_K}}$ (see for example \cite[\S1.4.4.2]{Shaf}), 
i.e., the embedding defined by the rule 
$$
\xymatrix{(x_1,x_2,\dots x_{n+1})~ \ar@{|->}[r]^-{{\mathfrak v}_{n,h}} 
& ~(\mu_1(x_1,\dots,x_{n+1})\dots, \mu_N(x_1,\dots,x_{n+1})),}
$$ 
where the entries of the vector on the right, $\mu_k(x_1,\dots,x_{n+1})$, 
run through the $N+1$ monomials of degree $h$ in the variables $x_1,x_2,\dots, x_{n+1}$.

For any $K$-valued point $\alpha \in \PP^n(K)$ represented by the $n+1$-vector 
$(a_1,a_2,\dots a_{n+1})$ the $\oo_K$-fractional ideal generated by the entries 
$\mu_k(a_1,\dots,a_{n+1})$ of the vector ${\mathfrak v}_{n,h}(\alpha)$ is the 
$h$-th power of the fractional ideal generated by the entries of $\alpha$. 
So this (fractional) ideal is principal.
By scaling our homogenous coordinates by dividing each entry by the inverse 
of a generator of that principal ideal we get assertion (1) of the proposition.

For part (2) of the proposition, let the image of the section 
$s$ (i.e., $ {\mathfrak v}_{n,h}(s) \in \PP^N(\oo_K)$) be represented 
by the point $\tau:=(\tau_1, \tau_2, \ldots,\tau_{N+1}) \in {\Aff}^{N+1}(\oo_K)$ 
where the entries generate the unit ideal. Let $W:= {\Aff}^{N+1}(\oo_K)$ and 
let $T:=\tau\oo_K \subset W$ be the cyclic $\oo_K$-module generated by $\tau$. 
Then $W_0:=W/T$ is a torsion-free $\oo_K$-module, and hence projective, so the 
exact sequence
$$
0 \To T \To W \To W_0 \To 0
$$
splits. Therefore $W \cong W_0\oplus T$, and by the classification theorem for 
projective modules (of finite rank) over Dedekind domains (see for example 
\cite[Theorem II.4.13]{FT}), since $W$ and $T$ are free over $\oo_K$, so is $W_0$.
It follows that we can find an $\oo_K$-basis of $W$ where the first $N$ 
elements of that basis generate $W_0$. That is, there is an $\oo_K$-linear 
change of coordinates of ${\Aff}^{N+1}$ so that after that change ${\Aff}^{N+1}$ 
is given by coordinates $(z_1,z_2,\dots,z_N, z_{N+1})$ where ${\Aff}^N$ is cut 
out by $z_{N+1} = 0$ and the element $\tau$ has coordinates $(0,0,\dots, 0, 1)$.
\end{proof} 

\begin{corollary}
\label{warcor} 
Let $G$ be a group scheme over $S={\Spec}(\oo_K)$ satisfying Assumption \ref{Ghyp}. 
There is a well-arranged injective $S$-morphism $G\stackrel{\iota}{\hookrightarrow} \PP^n$ 
(for some positive number $n$). For any rational point, $P:S \to G$, its image, 
$\lambda(P):=\iota P(S)\in \PP^n$, when written in homogenous coordinates 
$\lambda(P)=(\lambda_1(P),\dots, \lambda_{n+1}(P))$ has the 
property that the entries $\lambda_i(P)$ for $i=1,2,\dots, n$ generate a 
principal ideal in $\oo_K$---equivalently: one can arrange the homogeneous 
coordinates of $\lambda(P)$ by appropriate scalar multiplication so that the 
entries generate the unit ideal. 
\end{corollary}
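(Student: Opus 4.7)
The plan is to deduce the corollary directly from the preceding proposition, which already contains the genuine work. Since $G$ is quasi-projective over $\oo_K$ by Assumption~\ref{Ghyp}, I would first fix some locally closed $S$-immersion $\phi\colon G\hookrightarrow\PP^m$. Composing $\phi$ with the identity section $e\colon S\to G$ yields an $S$-section $s:=\phi\circ e\colon S\to\PP^m$. Now applying the preceding proposition to this $s$ produces an $S$-embedding $\mathfrak{v}\colon\PP^m\hookrightarrow\PP^N$, with $N=\binom{m+h_K}{m}-1$, enjoying the two key properties that (i)~the image in $\PP^N$ of any $S$-section of $\PP^m$ admits homogeneous coordinates in $\oo_K$ generating a principal ideal, which after scaling by the inverse of a generator may be taken to generate the unit ideal of $\oo_K$, and (ii)~$\mathfrak{v}(s)=(0:0:\cdots:0:1)$.

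The desired well-arranged morphism is then $\iota := \mathfrak{v}\circ\phi$ (with $N$ renamed as $n$). Indeed, $\iota$ is a locally closed $S$-immersion since $\phi$ and $\mathfrak{v}$ both are. For any $P\in G(\oo_K)$, the composition $\phi\circ P\colon S\to\PP^m$ is an $S$-section of $\PP^m$, so property (i) furnishes integral homogeneous coordinates for $\iota(P) = \mathfrak{v}(\phi\circ P)\in\PP^n(\oo_K)$ whose entries generate the unit ideal of $\oo_K$, giving the first clause of Definition~\ref{wellarr1}. The second clause is immediate: $\iota\circ e = \mathfrak{v}\circ s$, which equals $(0:\cdots:0:1)$ by (ii). This establishes the existence of the well-arranged immersion $\iota$.

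The closing assertion of the corollary is then a direct restatement of what has just been proved: any two integral homogeneous representatives of $\iota(P)\in\PP^n(\oo_K)$ differ by a scalar in $K^{\times}$, so saying that the ideal generated by the coordinates of some representative is principal is equivalent to saying that a representative can be chosen for which the entries generate the unit ideal. Because the Veronese-plus-$\oo_K$-linear-change-of-basis construction that underlies property (i) was already the content of the preceding proposition, no genuine obstacle remains here; the corollary simply transports that projective-space fact along the quasi-projective immersion $\phi$ of $G$, with the only thing to check being that composition with the immersion $\mathfrak{v}$ preserves the status of $\phi$ as a locally closed immersion, which is routine.
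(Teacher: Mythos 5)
Your proof is correct and follows the approach implicit in the paper: the paper states Corollary \ref{warcor} without a proof precisely because it is meant to fall out of the preceding proposition in the way you describe. Fixing a quasi-projective $S$-immersion $\phi\colon G\hookrightarrow\PP^m$ (guaranteed by Assumption \ref{Ghyp}), applying the proposition to the section $s=\phi\circ e$ to obtain $\mathfrak{v}\colon\PP^m\hookrightarrow\PP^N$, and taking $\iota=\mathfrak{v}\circ\phi$ is exactly the intended argument, and your verification of the two clauses in Definition \ref{wellarr1} is sound.

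Two small remarks. First, the corollary's phrase ``the entries $\lambda_i(P)$ for $i=1,2,\dots,n$'' appears to be a slip for $i=1,\dots,n+1$; the ``equivalently'' clause (rescaling so that \emph{the entries} generate the unit ideal) and the later definition of $\delta(\alpha)$ both concern all $n+1$ coordinates, and the first $n$ entries alone cut out the vanishing ideal $z_P$, which need not be principal. Your proof reads the statement in the intended sense, so this does not affect your argument. Second, you note that $\mathfrak{v}\circ\phi$ remains a locally closed immersion because $\mathfrak{v}$ is a closed immersion and $\phi$ is locally closed; this is the only compatibility point worth flagging and you flag it correctly.
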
 

\section{The open piece in ${\bar G}$}

From now on, we will fix a quasi-projective group scheme $G$ over $S={\Spec}(\oo_K)$ 
as in Assumption \ref{Ghyp} such that its conformal bundle module ${\mathcal N}_G$ 
is free over $\oo_K$ (using Lemma \ref{free1}) and with a fixed well-arranged 
injective $S$-morphism $G \stackrel{\iota}{\hookrightarrow} \PP^n$ (this being 
guaranteed to exist by Corollary \ref{warcor}).

Recall its Zariski-closure $ G \subset{\bar G} \stackrel{\iota}{\hookrightarrow} \PP^n$ 
as defined in \ref{prop1}. Letting $H \subset \PP^n$ be the hyperplane described 
in \S\ref{aff} above, i.e., cut out by $x_{n+1}=0$. Let $B:={\bar G}\cap H \subset G$ 
denote the divisor in ${\bar G}$ at infinity.

\begin{definition}
Let ${\bar G}_0\subset {\bar G}$ be the Zariski-dense open ($\oo_K$-scheme) 
defined by the cartesian diagram: 
$$
\xymatrix{{e}\ar[r]\ar[d]^= &{\bar G}_0:= \ {\bar G} \cap {\Aff}^n\ar[r]\ar[d]^\iota 
& {\bar G} \setminus B\ar[d]^\iota \ar[r] & G\ar@{^(->}[d]^\iota \\ 
e\ar[r] &{\Aff}^n\ar[r]^= & \PP^n \setminus H\ar[r] & \PP^n}
$$
\end{definition}

The $\oo_K$-scheme ${\bar G}_0$ is an affine scheme, immersed as a closed subscheme 
of ${\Aff}^n$, and contains an open subscheme of the zero-section ${e}$ in $G$. 

The injection $${\bar G}_0\stackrel{\iota}{\hookrightarrow} {\Aff}^n$$ is induced 
by the (surjective) ring homomorphism 
$$
\iota: \oo_K[x_1,x_2,\dots,x_n] \longrightarrow \oo_K[x_1,x_2,\dots,x_n]/(t_1,t_2,\dots,t_m)
$$
where 
$$
t_j(x_1,x_2,\dots,x_n) \in I \subset \oo_K[x_1,x_2,\dots, x_n]; \ {\rm for}\ j=1,2,\dots,m
$$ 
are the polynomials (all with `no constant term') that cut out the affine subscheme 
${\bar G}_0$ in ${\Aff}^n.$ (Recall that $I$ is the ideal generated by the $x_i$.) 

Passing to quotients by $I^2$ we have:
\begin{equation}
\label{gcoord2}
\raisebox{25pt}{\xymatrix{\iota: \oo_K[x_1,x_2,\dots,x_n]\ar[r]\ar[d] 
& \oo_K[x_1,x_2,\dots,x_n]/(t_1,t_2,\dots,t_m)\ar[d] \\
\iota_{[2]}: \oo_K[x_1,x_2,\dots,x_n]/I^2\ar[r] 
& \oo_K[x_1,x_2,\dots,x_n]/(t_1,t_2,\dots,t_m, I^2)} } 
\end{equation}
these being the ring homomorphisms inducing the morphisms of affine schemes:
$$
\xymatrix{{\Aff}^n & {\bar G}_{0}\ar@{_(->}[l] \\ 
{\Aff}^n_{[2]}\ar@{^(->}[u] & {\bar G}_{0,[2]}= G_{[2]}.\ar[l]\ar@{^(->}[u]}
$$

\begin{lemma}
\label{i1} 
After an appropriate $\oo_K$-linear automorphism of the group scheme ${\Aff}^n$ we 
may rewrite the surjective ring homomorphism of \eqref{gcoord2} 
\begin{equation}
\label{i2}
\iota_{[2]}: \oo_K[x_1,x_2,\dots,x_n]/I^2\to \oo_K[x_1,x_2,\dots,x_n]/(t_1,t_2,\dots,t_m, I^2)
\end{equation}
that induces the group scheme morphism 
$G_{[2]}\stackrel{\iota}{\hookrightarrow} {\Aff}^n_{[2]}$
as the projection
$$
\iota_{[2]}: \oo_K[x_1,x_2,\dots,x_n]/(x_1,x_2,\dots,x_n)^2\to 
\oo_K[x_1,x_2,\dots,x_d]/(x_1,x_2,\dots,x_d)^2
$$
where $d$ is the dimension of the group scheme $G_{/K}$, and the mapping 
$\iota_{[2]}$ is given by:
\begin{itemize} 
\item 
$x_i \mapsto x_i$ if $i \le d$, and 
\item 
$x_i \mapsto 0$ if $d < i \le n$.
\end{itemize} 
\end{lemma}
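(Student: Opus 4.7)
The plan is to reduce the lemma to a purely linear-algebraic statement about a surjection of finitely generated free $\oo_K$-modules. First I would observe that both the source and target of the ring homomorphism in \eqref{i2} are square-zero thickenings of $\oo_K$: the source is canonically $\oo_K \oplus (I/I^2) = \oo_K \oplus \oo_K^n$, with $x_1,\ldots,x_n$ forming a basis of the nilpotent summand, and by Assumption \ref{Ghyp} combined with the smoothness of $G$ along the identity, the target is canonically $\oo_K \oplus {\mathcal N}_G \cong \oo_K \oplus \oo_K^d$. Since each algebra is generated over $\oo_K$ by its nilpotent ideal, $\iota_{[2]}$ is completely determined by its degree-one part, an $\oo_K$-linear surjection
$$
\varphi : \oo_K^n \twoheadrightarrow \oo_K^d.
$$

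Next I would show that the kernel $K := \ker(\varphi)$ is free over $\oo_K$. Because $\oo_K^d$ is projective, the surjection $\varphi$ splits, yielding a decomposition $\oo_K^n \cong K \oplus \oo_K^d$. Taking top exterior powers, the freeness of $\oo_K^n$ and $\oo_K^d$ forces $\det(K) \cong \oo_K$. By the classification of finitely generated projective modules over a Dedekind domain (\cite[Theorem II.4.13]{FT}), a projective module of rank $r$ is isomorphic to $\oo_K^{r-1} \oplus \mathfrak{a}$ for a fractional ideal $\mathfrak{a}$ whose class equals its Steinitz class, so triviality of $\det(K)$ implies $K \cong \oo_K^{n-d}$.

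Having established freeness, I would pick an $\oo_K$-basis $e_{d+1},\ldots,e_n$ of $K$ together with a basis $e_1,\ldots,e_d$ of a complement of $K$ in $\oo_K^n$ chosen so that $\varphi$ sends $e_1,\ldots,e_d$ to the standard basis of $\oo_K^d$. The sequence $(e_1,\ldots,e_n)$ is an $\oo_K$-basis of $\oo_K^n = I/I^2$ related to $(x_1,\ldots,x_n)$ by an element of $\GL_n(\oo_K)$, and this element induces an $\oo_K$-linear automorphism of the additive group scheme $\Aff^n = \ga^n$. Renaming the $e_i$ as the new coordinates $x_i$ on $\Aff^n$ transforms $\iota_{[2]}$ into precisely the projection sending $x_i \mapsto x_i$ for $i \le d$ and $x_i \mapsto 0$ for $d < i \le n$.

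The main obstacle is proving freeness of $K$: without it, the splitting decomposition $\oo_K^n \cong K \oplus \oo_K^d$ still exists, but the resulting change of basis could not be realized by an element of $\GL_n(\oo_K)$, so the reformulation would fail. The Steinitz-class computation works precisely because both source and target of $\varphi$ are free, the target being free by Assumption \ref{Ghyp} (in turn guaranteed by Lemma \ref{free1}).
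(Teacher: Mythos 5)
Your proof is correct and follows essentially the same route as the paper's: both reduce the lemma to the observation that the $\oo_K$-linear surjection $I/I^2 \twoheadrightarrow \mathcal{N}_G$ (equivalently, the kernel $J/I^2$) splits, has free kernel by Assumption \ref{Ghyp} and the structure theory of projectives over Dedekind domains (\cite[Theorem II.4.13]{FT}), and so a basis of that kernel extends to a basis of $I/I^2$, giving the required element of $\GL_n(\oo_K)$. Your explicit determinant/Steinitz-class justification for the freeness of the kernel is a welcome bit of extra care at a point where the paper's proof asserts it somewhat tacitly.
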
 

\begin{proof} 
This uses the fact that the group scheme $G$ is smooth, its conormal bundle 
module ${\mathcal N}_G$ is free over $\oo_K$, and the injection 
$G\hookrightarrow \PP^n$ is well-arranged.  
To be explicit, consider the ideal $J:=(t_1,t_2,\dots,t_m, I^2)$, 
so we may rewrite \eqref{i2} above as 
$$
\iota_{[2]}: \oo_K[x_1,x_2,\dots,x_n]/I^2\to \oo_K[x_1,x_2,\dots,x_n]/J,
$$ 
and we can find $n-d$ generators for the free $\oo_K$-module $J/I^2$. 
Letting $\{t_j; \ j=1,2,\dots, n-d\} \subset J$ be lifts of those generators, the ideal 
$J:=(t_1,t_2,\dots,t_{n-d}, I^2)$ as positioned in the sequence of ideals
$$
I^2 = (x_1,x_2,\dots,x_n)^2 \subset J=(t_1,t_2,\dots,t_{n-d}, I^2) 
\subset I=(x_1,x_2,\dots,x_n) \subset \oo_K
$$
has the property that $J/I^2$ is a free $\oo_K$-module with $n-d$ generators\newline  $\{t_j; \ j=1,2,
\dots, n-d\}$ and  is---as (free) $\oo_K$-submodule of  the free $\oo_K$-module 
$I/I^2$---a direct summand.  That is, there is a free $\oo_K$-submodule 
${\mathcal U} \subset I/I^2$ such that:
$$
I'^2 = J/J^2\oplus  {\mathcal U}.
$$
It follows that after a linear change of variables (over $\oo_K$) 
we can arrange it so that $t_i \equiv x_{d+i }\ \mod \ I^2$ for $i=1, 2, \dots, n-d$. 
\end{proof} 

\begin{corollary}
\label{arrange} 
Keeping to the above notation, and the terminology of \S\ref{ibis} we have:
\begin{enumerate} 
\item 
$
{\mathcal R}_G= \oo_K[x_1,x_2,\dots,x_d]/(x_1,x_2,\dots,x_d)^2,
$
\item 
$
\displaystyle
{\mathcal N}_G= \bigoplus_{i=1}^d {\bar x}_i\oo_K
$
\\
where ${\bar x}_i$ is the image of $x_i$ in $\oo_K[x_1,x_2,\dots,x_d]/(x_1,x_2,\dots,x_d)^2$,
\item 
$
\displaystyle
{\mathcal N}_G^*= \bigoplus_{i=1}^d {\bar x}_i^*\oo_K
$
\\
where ${\bar x}_i^*: {\mathcal N}_G\to \oo_K$ is the ring homomorphism sending 
${\bar x}_i$ to $1\in \oo_K$ and ${\bar x}_j$ to $0$ if $j \ne i$.
\end{enumerate}
\end{corollary}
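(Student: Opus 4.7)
The plan is that this corollary is essentially a bookkeeping translation of Lemma \ref{i1}, with the three assertions being read off in sequence.

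For (1), I would invoke Lemma \ref{i1} directly: after the linear change of coordinates on $\Aff^n$ provided there, the map $\iota_{[2]}$ is identified with the projection
\[
\oo_K[x_1,\ldots,x_n]/(x_1,\ldots,x_n)^2 \twoheadrightarrow \oo_K[x_1,\ldots,x_d]/(x_1,\ldots,x_d)^2
\]
sending $x_i \mapsto x_i$ for $i \le d$ and $x_i \mapsto 0$ for $d < i \le n$. By Proposition \ref{functor}(1) and the construction in \S\ref{ibis}, the target of $\iota_{[2]}$ is by definition ${\mathcal R}_G = H^0(S,e^*(\oo_G/\mathfrak{I}^2))$, which gives the stated identification.

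For (2), I would recall that ${\mathcal N}_G \subset {\mathcal R}_G$ is the image of $H^0(S,e^*(\mathfrak{I}/\mathfrak{I}^2))$, i.e., the augmentation ideal of ${\mathcal R}_G$. Under the identification in (1), this augmentation ideal is precisely the ideal $(x_1,\ldots,x_d)/(x_1,\ldots,x_d)^2$ inside $\oo_K[x_1,\ldots,x_d]/(x_1,\ldots,x_d)^2$. Since any monomial of degree $\ge 2$ in the $x_i$ vanishes in this quotient, every element of this ideal is uniquely a $\oo_K$-linear combination of $\bar{x}_1,\ldots,\bar{x}_d$, which gives the stated free decomposition.

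For (3), the module ${\mathcal N}_G^* = \Hom_{\oo_K}({\mathcal N}_G, \oo_K)$ is the $\oo_K$-linear dual. Since (2) exhibits ${\mathcal N}_G$ as a finite free $\oo_K$-module with basis $\bar{x}_1,\ldots,\bar{x}_d$, its dual is the free $\oo_K$-module with the dual basis $\bar{x}_1^*,\ldots,\bar{x}_d^*$ characterized by $\bar{x}_i^*(\bar{x}_j) = \delta_{ij}$, which matches the description given. There is no real obstacle here: the hard content is already concentrated in Lemma \ref{i1} (where smoothness of $G$ along $e$, freeness of ${\mathcal N}_G$ over $\oo_K$, and the well-arranged embedding were used to realize the projection form of $\iota_{[2]}$). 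The corollary itself is pure linear algebra over $\oo_K$, modulo that identification.
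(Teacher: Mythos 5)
Your proposal is correct and follows exactly the route the paper intends: the corollary is stated without proof as an immediate consequence of Lemma~\ref{i1}, and your unpacking (identifying ${\mathcal R}_G$ as the target of the projection, ${\mathcal N}_G$ as the augmentation ideal $(x_1,\ldots,x_d)/(x_1,\ldots,x_d)^2$, and ${\mathcal N}_G^*$ via the dual basis) is precisely the bookkeeping the authors left implicit. One small point worth noting is that the paper's phrase ``ring homomorphism'' in item (3) should really read ``$\oo_K$-module homomorphism,'' which your proof correctly treats it as.
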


\section{Vanishing and congruence ideals}
\label{VC}

Recall that we have fixed a group scheme $G$ satisfying Assumption \ref{Ghyp} 
above. Denote by $e:S \to G$ its identity-section. To say that $G$ is quasi-projective 
means the structure morphism $G \to S$ is a quasi-projective morphism (see 
\cite[Definition 5.3.1]{EGA}) hence is of finite type, and since $S$ is an 
affine noetherian scheme, $G \to S$ is a morphism of finite presentation.  
Let $P$ be  an $\oo_K$-point  of $G$.

\begin{definition}
By the {\df vanishing ideal of $P$} we mean the ideal $z_P\subset \oo_K$ 
defining the intersection of the $S$-section $P$ with the identity section. 

By the {\df congruence ideal of $P$} we mean the ideal $c_P\subset \oo_K$ 
defining the intersection of the $S$-section $P$ with the subscheme 
$G_{[2]} \subset G$. 

That is, the ideals $z_P$ and $c_P$ are the ideals that fit into diagram 
\eqref{cart1} below where the rectangles are cartesian. 
\begin{equation}
\label{cart1}
\raisebox{48pt}{\xymatrix{{\Spec}(\oo_K/z_P)\ar@{^(->}[r]\ar@{^(->}[d] 
&{\Spec}(\oo_K/c_P)\ar@{^(->}[r]\ar[d]^{P_{[2]}} \ \ & S\ar[d]^P\\
S\ar[r]^{e}\ar[d]^\simeq & G_{[2]}\ar@{^(->}[r]\ar[d]^\simeq &G\\
{\Spec}(\oo_K)\ar@{^(->}[r] & {\Spec}({\mathcal R}_G) }}\end{equation}
\end{definition}

\begin{lemma}
\label{cz} 
\begin{enumerate}
\item 
The ideal $c_P$ is the square of the ideal $z_P$. 
\item 
For $P={ e}$, the identity section, we have that $z_{ e}$ and $c_{e}$ 
are equal to $(0)$. If $P \ne { e}$ then $z_P$ and $c_P$ are nonzero ideals.
\item 
Let $Q,P$ be $\oo_K$-valued points of $G$. Then (writing the group law of $G$ 
multiplicatively) $z_{Q\cdot P}$ is contained in the ideal $(z_Q , z_P)$ 
generated by $z_Q$ and $z_P$.
\end{enumerate}
\end{lemma}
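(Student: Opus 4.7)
The plan is to read each ideal off from the cartesian diagram \eqref{cart1}: both $z_P$ and $c_P$ arise as pullbacks along the section $P$ of ideal sheaves on $G$. Specifically, the right cartesian square of \eqref{cart1} identifies $c_P = P^{-1}(\mathfrak{I}^2)\cdot\oo_K$, while the outer cartesian rectangle (right square composed with left) identifies $z_P = P^{-1}(\mathfrak{I})\cdot\oo_K$. Since pullback of ideal sheaves commutes with forming products, one reads off $c_P = z_P^2$ at once, proving (1).

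For (2), if $P = e$ then $z_P = e^*\mathfrak{I}\cdot\oo_K = (0)$ by the very definition of $\mathfrak{I}$, and then $c_P = (0)$ follows from (1). Conversely, $\Spec(\oo_K/z_P)$ is, by the outer cartesian rectangle, the scheme-theoretic equalizer $S\times_G S$ of the sections $P$ and $e$; if $z_P = (0)$ this equalizer equals $S$ itself, and since $G/S$ is separated (being a group scheme of finite type, with closed diagonal) this forces $P = e$. The corresponding assertion for $c_P$ then follows because $\oo_K$ is an integral domain, so $c_P = z_P^2 = (0)$ if and only if $z_P = (0)$.

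For (3), I would factor the section $Q\cdot P: S \to G$ as $S \xrightarrow{(Q,P)} G\times_S G \xrightarrow{\gamma} G$, giving $z_{QP} = (Q,P)^*\gamma^*\mathfrak{I}\cdot\oo_K$. The crucial (and elementary) ideal inclusion is
\[
\gamma^*\mathfrak{I} \;\subseteq\; (p_1^*\mathfrak{I},\, p_2^*\mathfrak{I}),
\]
where $p_1,p_2:G\times_S G \to G$ are the two projections and the right-hand side is the ideal defining the closed subscheme $(e,e)(S)\subset G\times_S G$. Granted this, pulling back along $(Q,P)$ yields
\[
z_{QP} \;\subseteq\; \bigl((Q,P)^*p_1^*\mathfrak{I},\,(Q,P)^*p_2^*\mathfrak{I}\bigr)\cdot\oo_K \;=\; (z_Q,\,z_P),
\]
since $p_1\circ(Q,P) = Q$ and $p_2\circ(Q,P) = P$.

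The only mildly delicate point---which I regard as the main obstacle---is verifying the displayed inclusion in (3) as an inclusion of ideals (not merely up to radicals). But this is immediate from the two-sided identity axiom $\gamma\circ(e\times e) = e$: for any local section $f\in\mathfrak{I}$,
\[
(e,e)^*(\gamma^*f) \;=\; (\gamma\circ(e,e))^*f \;=\; e^*f \;=\; 0,
\]
so $\gamma^*f$ lies in the kernel of $(e,e)^*:\mathcal{O}_{G\times_S G}\to\oo_K$, which is precisely the ideal $(p_1^*\mathfrak{I}, p_2^*\mathfrak{I})$ cutting out the reduced closed subscheme $(e,e)(S)$. No use of Proposition \ref{propcomm} or of any further infinitesimal structure is required for this lemma.
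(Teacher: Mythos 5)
Your proof is correct and follows essentially the same reasoning as the paper's: parts (1) and (2) are read off the cartesian diagram, and part (3) comes down to compatibility of the group law $\gamma$ with the identity section. Your ideal-theoretic inclusion $\gamma^*\mathfrak{I}\subseteq(p_1^*\mathfrak{I},p_2^*\mathfrak{I})$, pulled back along $(Q,P)$, is exactly the dual formulation of the paper's one-line argument for (3) --- namely, that $P$ and $Q$ restricted to the base $\Spec(\oo_K/(z_P,z_Q))$ both become the identity section, hence so does $Q\cdot P$.
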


\begin{proof} 
\begin{enumerate}
\item 
Recalling that the identity section $S \stackrel{{e}}{\hookrightarrow} G$ 
is the subscheme cut out by the sheaf of ideals ${\mathcal I} \subset \oo_G$, 
and $ G_{[2]} \stackrel{e}{\hookrightarrow} G$ is the subscheme cut out by 
the square of that sheaf of ideals ${\mathcal I}^2 \subset \oo_G$, 
cartesian-ness of \eqref{cart1} then implies that ideal $c_P$ is the 
square of the ideal $z_P$. 
\item 
The cartesian square 
$$
\xymatrix{{\Spec}(\oo_K/z_{e})\ar@{^(->}[r]\ar@{^(->}[d] & S\ar[d]^{P={ e}}\\
S\ar[r]^{e} & G}
$$
gives us that ${\Spec}(\oo_K/z_{e})=S$; so $z_{ e}=(0)$, and hence 
$c_{ e} = z_{ e}^2=(0)$ as well.
\item 
Briefly: note that $P$ and $Q$ restricted to the base 
${\Spec}(\oo_K/(z_P, z_Q))$ are both equal to the identity section 
(over that base), so their product is as well. 
\end{enumerate} 
\end{proof}

\begin{lemma}
\label{cz1} 
For $\nu \in \N$, let $Q = P^\nu$. Then $z_Q \subset z_P$.
\end{lemma}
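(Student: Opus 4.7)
The plan is to reduce the statement to part (3) of Lemma \ref{cz} by a trivial induction on $\nu$.

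Conceptually, $\Spec(\oo_K/z_P) \hookrightarrow S$ is the closed subscheme where the section $P$ coincides with the identity section $e$. After base-change to this subscheme both $P$ and $e$ agree, so any iterated product $P^\nu$ agrees there with $e^\nu = e$. This is precisely the statement that $\Spec(\oo_K/z_P)$ factors through $\Spec(\oo_K/z_{P^\nu}) \hookrightarrow S$, i.e.\ the ideal containment $z_{P^\nu} \subset z_P$.

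To formalize this, I would proceed by induction on $\nu$. The case $\nu = 1$ is the tautology $z_P \subset z_P$. For the inductive step, write $P^\nu = P^{\nu-1} \cdot P$ (fixing once and for all a left-to-right convention for the iterated product, since $G$ is not assumed commutative or even associative). Applying Lemma \ref{cz}(3) with $Q := P^{\nu-1}$ yields
\[
z_{P^\nu} \;=\; z_{Q \cdot P} \;\subset\; (z_{Q},\, z_P) \;=\; (z_{P^{\nu-1}},\, z_P).
\]
The inductive hypothesis says $z_{P^{\nu-1}} \subset z_P$, so the right-hand ideal is just $z_P$, completing the induction.

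No real obstacle arises: the lemma is an immediate iterate of Lemma \ref{cz}(3). The only mild subtlety is the non-associativity/non-commutativity of the binary law $\gamma$ noted at the start of Section \ref{ibis}, but this is handled simply by fixing a bracketing of $P^\nu$ before invoking Lemma \ref{cz}(3) at each step.
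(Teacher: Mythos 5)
Your proof is correct and follows exactly the paper's intended route: induction on $\nu$ via Lemma \ref{cz}(3). The remark about fixing a bracketing to handle possible non-associativity is a sensible extra precaution but not a departure from the paper's argument.
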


\begin{proof} 
Induction on $\nu$, using Lemma \ref{cz}(3).
\end{proof}

\section{Coordinates of rational points}

Assume now that we have arranged coordinates as in Lemma \ref{i1} and Corollaries \ref{arrange}  
and \ref{warcor}. The $\oo_K$-rational point $P$ 
of $G$ and its image $\iota(P) \in \PP^n$ fit into a diagram
$$
\xymatrix{ S= {\Spec}(\oo_K)\ar[r]^-{P}& G\ar[r]^\iota &\PP^n\\
{\Spec}(\oo_K/c_P)\ar@{^(->}[u]\ar[r] & G_{[2]}\ar@{^(->}[u]\ar[r]^{\iota_{[2]}} 
& {\Aff}^n_{[2]}\ar[u]\\
{\Spec}(\oo_K/z_P)\ar@{^(->}[u]\ar[r] & {e}\ar@{^(->}[u]\ar[r]^\simeq & e\ar@{^(->}[u].}
$$
and the image $\iota(P):=\iota P(S) \in \PP^n$ has homogeneous coordinates 
$$
\lambda(P) = \big( \lambda_1(P): \lambda_2(P): \dots :\lambda_n(P):\lambda_{n+1}(P) \big) 
\in {\Aff}^{n+1}(\oo_K)
$$
where the elements $$
\{\lambda_1(P), \lambda_2(P),\dots, \lambda_n(P), \lambda_{n+1}(P)\}\subset \oo_K
$$
generate the unit ideal. In particular, the ideal generated by 
$\{\lambda_1(P), \dots, \lambda_n(P)\}$ and the principal 
ideal generated by $\lambda_{n+1}(P)$ are relatively prime ideals in 
$\oo_K$. Also, using Lemma \ref{i1} and Corollary \ref{arrange} we may assume that 
$\lambda_j(P) = 0$ for $d<j \le n$.

\begin{proposition}
The vanishing ideal $z_P$ is the ideal generated by 
$$
\{\lambda_1(P),\dots, \lambda_n(P)\}.
$$
\end{proposition}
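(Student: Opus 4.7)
The plan is to prove the equality $z_P = J$, where $J := (\lambda_1(P), \dots, \lambda_n(P))$, by verifying it locally at every prime $\gp$ of $\oo_K$. First, unwinding the cartesian diagram \eqref{cart1} that defines $z_P$, one has $z_P = P^{*}\mathcal{I}$, where $\mathcal{I} \subset \oo_G$ is the sheaf of ideals cutting out the identity section $e$. Since $\iota: G \hookrightarrow \PP^n$ is a well-arranged local immersion, $\iota(e) = (0{:}\dots{:}0{:}1)$ lies in the affine chart $\Aff^n \subset \PP^n$, and by Proposition \ref{prop1} the complement $\bar G \setminus G$ is supported away from $e$. Consequently there is a Zariski open neighborhood $V$ of $e$ in $G$ such that $V \hookrightarrow \Aff^n$ is an honest closed immersion, and on $V$ the ideal $\mathcal{I}$ equals the pullback via $\iota$ of the ideal $I = (x_1, \dots, x_n)$ cutting out the origin in $\Aff^n$. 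Hence $z_P$ may be computed as $(\iota \circ P)^{*}(I)$ on the open set $V$.

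The well-arranged hypothesis also guarantees $(\lambda_1(P), \dots, \lambda_{n+1}(P)) = \oo_K$, so for every prime $\gp \subset \oo_K$ either (a) $\gp \supset J$, which forces $\lambda_{n+1}(P)$ to be a unit modulo $\gp$, or (b) $\gp \not\supset J$, in which case $J \cdot \oo_{K,\gp} = \oo_{K,\gp}$. In case (a), after localizing at $\gp$ the morphism $\iota \circ P$ factors through the affine chart and is given in affine coordinates by $x_i \mapsto \lambda_i(P)/\lambda_{n+1}(P)$; since $\lambda_{n+1}(P)$ is a unit at $\gp$, this yields
\[
z_P \cdot \oo_{K,\gp} \;=\; \bigl(\lambda_1(P)/\lambda_{n+1}(P), \dots, \lambda_n(P)/\lambda_{n+1}(P)\bigr)\oo_{K,\gp} \;=\; J \cdot \oo_{K,\gp}.
\]
In case (b), the reduction of $P$ modulo $\gp$ does not meet $e$: if $\lambda_{n+1}(P)$ is a unit at $\gp$, then some affine coordinate $\lambda_i(P)/\lambda_{n+1}(P)$ is nonzero mod $\gp$; otherwise $P$ reduces into the hyperplane at infinity while $e$ does not. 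Either way $z_P \cdot \oo_{K,\gp} = \oo_{K,\gp}$, matching $J \cdot \oo_{K,\gp}$.

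Patching these local equalities gives $z_P = J$, as required. The main subtlety lies in the first step, namely the identification of $\mathcal{I}$ with $\iota^{*}I$ on a neighborhood of the identity section; this is precisely where the combination of the local-immersion hypothesis on $\iota$, the well-arranged condition $\iota(e)=(0{:}\dots{:}0{:}1)$, and Proposition \ref{prop1} (excising $\bar G \setminus G$ from a neighborhood of $e$) is essential, promoting the a priori only locally closed immersion $\iota$ to an honest closed immersion near $e$ so that ideals of subschemes pull back cleanly. The remaining work is the routine case-by-case localization argument described above.
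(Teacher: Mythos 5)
Your proof is correct, and it is essentially the detailed unpacking of what the paper leaves as ``follows directly from the definitions.'' The key points you correctly identify are: (i) $z_P$ is the ideal of the scheme-theoretic intersection $P^{-1}(e)$; (ii) because $\iota$ is a monomorphism (locally closed immersion) with $\iota(e)=(0{:}\dots{:}0{:}1)$, that intersection can be computed after passing through $\PP^n$; (iii) the well-arranged hypothesis makes $(\lambda_1(P),\dots,\lambda_{n+1}(P))$ the unit ideal, which handles the primes where $\lambda_{n+1}(P)$ fails to be a unit; and (iv) the local computation in the affine chart gives $z_P\oo_{K,\gp}=(\lambda_1(P),\dots,\lambda_n(P))\oo_{K,\gp}$. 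Patching is then immediate.

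One small streamlining you may find worth noting: since $\iota$ is a monomorphism, $\Spec(\oo_K/z_P)$ equals the scheme-theoretic locus in $S$ where the two $\PP^n$-sections $\iota\circ P=(\lambda_1(P){:}\dots{:}\lambda_{n+1}(P))$ and $\iota\circ e=(0{:}\dots{:}0{:}1)$ agree. With both sections written in primitive homogeneous coordinates (this is where well-arrangedness is used), that locus is cut out by the $2\times 2$ minors of the $2\times(n+1)$ matrix of coordinates, and in the present situation those minors are literally $\lambda_1(P),\dots,\lambda_n(P)$. This avoids the case split over primes, though your localization argument is doing the same thing in disguise. Either way, the proof is sound and faithful to the paper's setup (in particular to Proposition \ref{prop1} and Definition \ref{wellarr1}), and correctly pins down that $z_P$ is the exact ideal $(\lambda_1(P),\dots,\lambda_n(P))$, not just an ideal with the same radical.
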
 

\begin{proof} 
This follows directly from the definitions.
\end{proof}

If 
$$
{\mathbf a}:=(\lambda_1:\lambda_2:\dots:\lambda_n:\lambda_{n+1}) \ \in \ \oo_K^{n+1}
$$ 
are homogenous coordinates for the point $P$ with properties described  
above, then letting $\delta(P):= \lambda_{n+1}$ (i.e., the denominator), 
with notation defined in \eqref{pa} above we may write:
$$
{\mathbf a}^! = (\frac{\lambda_1(P)}{\delta(P)}, 
\frac{\lambda_2(P)}{\delta(P)},\dots,\frac{\lambda_n(P)}{\delta(P)}) 
\in \big(\oo_K[{\frac{1}{\delta(P)}}]\big)^n 
$$

Write the morphism $P_{[2]}: {\Spec}(\oo_K/c_P) \rightarrow G_{[2]}$ 
as a homomorphism of the underlying affine rings, 
\begin{equation}
\label{p2}
P_{[2]}: \oo_K\oplus {\mathcal N}_G \;= \; {\mathcal R}_G \to \oo_K/c_P.
\end{equation}
Consider the $\oo_K$-dual of the locally free $\oo_K$-module ${\mathcal N}_G$
\begin{equation}
\label{dual}
{\mathcal N}^* = {\mathcal N}_G^* = \Hom_{\oo_K}({\mathcal N}_G, \oo_K)
\end{equation}

\begin{definition}
If $P$ is an $S$-section of $G_{/S}$ let 
$\partial P \in {\mathcal N}^*\otimes_{\oo_K}(\oo_K/c_P)$ denote the element 
(in ${\mathcal N}^*\otimes_{\oo_K}(\oo_K/c_P)\ = \ {\mathcal N}^*\otimes_{\oo_K}(\oo_K/z_P^2)$) 
determined by the $\oo_K$-homomorphism $P_{[2]}$ restricted to $ {\mathcal N}_G$ in \eqref{p2}.
\end{definition}

Using the quotation marks below to indicate passing to the quotient 
$$
\oo_K[{\frac{1}{\delta(P)}}] \to \oo_K/z_P^2
$$ 
we get
$$
{\mathbf a}_{[2]}^! =``\biggl(\frac{\lambda_1(P)}{\delta(P)}, 
\frac{\lambda_2(P)}{\delta(P)},\dots,\frac{\lambda_d(P)}{\delta(P)}, 0,0,\dots 0\biggr)" 
\in (\oo_K/z_P^2)^n,
$$
and using Lemma \ref{i1} and \eqref{i2}, we get that coordinates for 
$\partial P$ are given by:
\begin{equation}
\label{eq3}
\partial P =``\biggl(\frac{\lambda_1(P)}{\delta(P)}, 
\frac{\lambda_2(P)}{\delta(P)},\dots,\frac{\lambda_d(P)}{\delta(P)}\biggr)" 
\in z_P ({\mathcal N}_G^*\otimes_{\oo_K} \oo_K/z_P^2) 
\subset ({\mathcal N}_G^*\otimes_{\oo_K} \oo_K/z_P^2).
\end{equation}

\begin{definition} If $R$ is a commutative ring, and $W$ is a free $R$-module 
of finite rank, an element $w \in W$ is called a {\df basis element} if any 
of the following equivalent properties hold:
\begin{itemize} 
\item 
$W/(Rw)$ is a free $R$-module;
\item 
$w$ is a member of a basis of the free $R$-module $W$;
\item 
if $I$ is any nonunit ideal of $R$ then $w \notin I W$. 
\end{itemize} 
\end{definition}

\begin{lemma}
\label{app} 
Let $P$ be a section of $G$ that is {\em not} the identity section, so 
$z_P \ne 0$. Suppose, as well, that $z_P$ is not the unit ideal. Let
$$
{\mathcal W}_P:={\mathcal N}^*\otimes_{\oo_K}(z_P/z_P^2) 
\subset {\mathcal N}^*\otimes_{\oo_K}(\oo_K/z_P^2).
$$

Then
\begin{enumerate}
\item 
$\partial P \equiv 0 \pmod{{\mathcal N}^*\otimes_{\oo_K}{z_P}}$, so 
$\partial P \in {\mathcal W}_P$,
\item 
the $\oo_K/z_P$ module ${\mathcal W}_P$ is free over $\oo_K/z_P$, 
\item 
$\partial P$ is a basis element of ${\mathcal W}_P$. 
\end{enumerate} 
\end{lemma}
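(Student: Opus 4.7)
The plan is to read off all three claims from the explicit coordinate description \eqref{eq3} of $\partial P$, combined with the well-arranged coordinates of Lemma \ref{i1} and Corollary \ref{arrange}. Throughout the proof I will use that $\mathcal{N}^*$ is free over $\oo_K$ of rank $d$ (Assumption \ref{Ghyp}), that $z_P=(\lambda_1(P),\ldots,\lambda_d(P))$ (the remaining $\lambda_j$ being zero by the chosen coordinates), and that $\delta(P)$ is coprime to $z_P$ in the Dedekind domain $\oo_K$, hence a unit modulo $z_P^2$.

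For part (1), I will note that each $\lambda_i(P)$ lies in $z_P$ and $\delta(P)$ is a unit modulo $z_P^2$, so every entry of $\partial P$ lies in $z_P/z_P^2$. Flatness of $\mathcal{N}^*$ applied to the short exact sequence $0\to z_P/z_P^2\to\oo_K/z_P^2\to\oo_K/z_P\to 0$ then identifies the submodule $z_P\cdot(\mathcal{N}^*\otimes\oo_K/z_P^2)$ with $\mathcal{W}_P=\mathcal{N}^*\otimes(z_P/z_P^2)$, placing $\partial P$ in $\mathcal{W}_P$.

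For part (2), freeness of $\mathcal{N}^*$ of rank $d$ reduces the claim to showing that $z_P/z_P^2$ is a free $\oo_K/z_P$-module of rank $1$. I would factor $z_P=\prod_i\mathfrak{p}_i^{e_i}$ and invoke the Chinese Remainder Theorem to split both $\oo_K/z_P$ and $z_P/z_P^2$ as products over $i$, reducing to the local case. In the DVR $(\oo_K)_{\mathfrak{p}_i}$ the ideal $\mathfrak{p}_i^{e_i}$ is principal, so $\mathfrak{p}_i^{e_i}/\mathfrak{p}_i^{2e_i}$ is cyclic with annihilator exactly $\mathfrak{p}_i^{e_i}$, i.e., free of rank $1$ over $\oo_K/\mathfrak{p}_i^{e_i}$.

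For part (3), I will fix a free generator $g$ of $z_P/z_P^2$ and write $\lambda_i(P)/\delta(P)\equiv c_ig\pmod{z_P^2}$ with $c_i\in\oo_K/z_P$. Since the $\lambda_i(P)$ generate $z_P$ and $\delta(P)$ is a unit modulo $z_P^2$, the residues $c_i$ generate the unit ideal in $\oo_K/z_P$, so one can pick $\alpha_i\in\oo_K/z_P$ with $\sum_i\alpha_ic_i=1$. The $\oo_K/z_P$-linear map $\psi:\mathcal{W}_P\to\oo_K/z_P$ defined by $(x_1g,\ldots,x_dg)\mapsto\sum_i\alpha_ix_i$ then satisfies $\psi(\partial P)=1$, splitting $\mathcal{W}_P=(\oo_K/z_P)\cdot\partial P\oplus\ker\psi$ and exhibiting $\partial P$ as a basis element. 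The main obstacle I foresee is keeping the identifications between $\mathcal{W}_P$, $(z_P/z_P^2)^d$, and $(\oo_K/z_P)^d$ compatible with the dual basis of Corollary \ref{arrange}; once those bookkeeping issues are in place, each assertion is essentially forced by \eqref{eq3} and the Dedekind structure of $\oo_K$.
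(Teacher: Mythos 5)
Your argument is correct, and it amounts to a careful spelling-out of what the paper dispatches in one sentence ("Assertions (1) and (3) follow directly from the definition of vanishing ideal, while assertion (2) follows from the fact that $\mathcal{N}^*$ is locally free over $\oo_K$"). The coordinate computation from \eqref{eq3} together with the observation that $\delta(P)$ is a unit modulo $z_P^2$ is exactly what "follows from the definition of vanishing ideal" unpacks to, and your CRT reduction to the DVR case is the standard way to see that $z_P/z_P^2$ is free of rank one over $\oo_K/z_P$, which is the content behind the paper's "locally free" remark. So this is the same route, just made explicit.

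One small remark on your part (3): the splitting $\mathcal{W}_P=(\oo_K/z_P)\cdot\partial P\oplus\ker\psi$ does not by itself show that $\ker\psi$ is free (a direct summand of a free module over the non-local ring $\oo_K/z_P$ need not a priori be free). However, your map $\psi$ with $\psi(\partial P)=1$ directly verifies the paper's third equivalent characterization of a basis element --- that $\partial P\notin I\mathcal{W}_P$ for every nonunit ideal $I\subset\oo_K/z_P$ --- so the conclusion is reached without needing freeness of $\ker\psi$. It would be slightly cleaner to invoke that criterion explicitly rather than the splitting.
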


\begin{proof}
Assertions (1) and (3) follow directly from the definition of vanishing ideal, 
while assertion (2) follows from the fact that ${\mathcal N}^*$ is locally free 
over $\oo_K$. 
\end{proof} 

\begin{remark} 
With notation as in Lemma \ref{app}, if $P,Q$ are $S$-sections with property 
that $z_Q \subset z_P$ we have the natural $\oo_K$-module homomorphism
$$
{\mathcal W}_Q \xrightarrow{~\iota_{(P,Q)}~} {\mathcal W}_P.
$$
\end{remark}

\begin{proposition}
\label{sum} 
$\partial(P\cdot Q) \equiv \partial(P)+ \partial( Q) \pmod{(c_Q , c_P)}$. 
\end{proposition}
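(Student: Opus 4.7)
The plan is to factor the section $P \cdot Q \colon S \to G$ as the composition
$$
S \xrightarrow{(P,Q)} G\times_S G \xrightarrow{\gamma} G,
$$
pass to the second-order neighborhoods of all objects involved, and then read off $\partial(P\cdot Q)$ by combining the functoriality established in Proposition \ref{functor} with the explicit formula of Proposition \ref{propcomm}.

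First I would verify that the congruence in the statement even makes sense, i.e., that the reduction map $\oo_K/c_{P\cdot Q}\twoheadrightarrow \oo_K/(c_P,c_Q)$ is defined. Lemma \ref{cz}(3) gives $z_{P\cdot Q}\subset (z_P,z_Q)$, hence $c_{P\cdot Q}=z_{P\cdot Q}^2\subset (z_P,z_Q)^2=z_P^2+z_Pz_Q+z_Q^2$; working prime-by-prime in the Dedekind domain $\oo_K$, one checks that for any prime $\gp$, if $v_P$ and $v_Q$ denote the $\gp$-adic valuations of $z_P$ and $z_Q$, then $v_P+v_Q\ge 2\min(v_P,v_Q)$, so $z_Pz_Q\subset z_P^2+z_Q^2=(c_P,c_Q)$ and therefore $c_{P\cdot Q}\subset (c_P,c_Q)$. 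The same bookkeeping identifies the congruence ideal $c_{(P,Q)}$ of the section $(P,Q)\colon S\to G\times_S G$ with $(c_P,c_Q)$, so $(P,Q)_{[2]}$ naturally has target ${\Spec}(\oo_K/(c_P,c_Q))\to (G\times_S G)_{[2]}$.

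Next I would invoke Proposition \ref{propcomm} to compute the effect of $\gamma_{[2]}^*$ on the conormal summand: for $x\in \mathcal N_G$,
$$
\gamma_{[2]}^*(x) \;=\; x\otimes 1_G + 1_G\otimes x \;\in\; (\mathcal N_G\otimes 1_G)\oplus (1_G\otimes \mathcal N_G) \;=\; \mathcal N_{G\times_S G}.
$$
Meanwhile, composing $(P,Q)$ with the two projections gives $P$ and $Q$ respectively; by the contravariant functoriality of Proposition \ref{functor}(2) applied to those two projections, the induced $\oo_K$-linear map $(P,Q)_{[2]}^*\colon \mathcal N_{G\times_S G}\to \oo_K/(c_P,c_Q)$ agrees with $\partial P$ on the first summand and with $\partial Q$ on the second (in each case reduced modulo $(c_P,c_Q)$).

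Finally, because the construction $X\mapsto X_{[2]}$ is compatible with composition, after reducing modulo $(c_P,c_Q)$ we have $(P\cdot Q)_{[2]}^*=(P,Q)_{[2]}^*\circ \gamma_{[2]}^*$ as $\oo_K$-homomorphisms $\mathcal R_G\to \oo_K/(c_P,c_Q)$. Evaluating on $x\in \mathcal N_G$ and using the two observations above yields
$$
\partial(P\cdot Q)(x)\;=\;(P,Q)_{[2]}^*\bigl(x\otimes 1_G+1_G\otimes x\bigr)\;=\;\partial P(x)+\partial Q(x)\pmod{(c_P,c_Q)},
$$
which is the claim. No deep computation is required; the main obstacle is the bookkeeping, specifically checking that the identification $\mathcal N_{G\times_S G}\cong\mathcal N_G\oplus\mathcal N_G$ of Proposition \ref{functor}(4), the formula \eqref{prod1}, and the reductions $\oo_K/c_{P\cdot Q}\to \oo_K/(c_P,c_Q)$ and $\oo_K/c_{(P,Q)}\to \oo_K/(c_P,c_Q)$ are all mutually compatible.
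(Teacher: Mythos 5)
Your proof is correct and takes essentially the same route as the paper: factor $P\cdot Q$ through $G\times_S G$, pass to second-order neighborhoods, and read off the result from Proposition \ref{propcomm} together with the functoriality of Proposition \ref{functor}; the map the paper calls $\phi$ is exactly your $(P,Q)_{[2]}^*$. Your preliminary verification that $c_{P\cdot Q}\subset (c_P,c_Q)$ and that $c_{(P,Q)}=(c_P,c_Q)$ is left implicit in the paper's terse proof, so this is a useful elaboration rather than a genuinely different argument.
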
 

\begin{proof} 
The mapping $(P\cdot Q)_{[2]}: {\mathcal R}_{G} \to \oo_K$ is given by the 
composition of the maps 
$$
{\mathcal R}_G \stackrel{\gamma_{[2]}}{\longrightarrow} 
{\mathcal R}_{G\times_{\oo_K} G} \stackrel{\phi}{\longrightarrow} \oo_K
$$
where $\phi$ restricted to $ {\mathcal N}_G\otimes_{\oo_K} 1$ is $P_{[2]} \otimes_{\oo_K}1$,
and $\phi$ restricted to $1\otimes_{\oo_K} {\mathcal N}_G$ is $1\otimes_{\oo_K} Q_{[2]}$. 
The result follows from Proposition \ref{propcomm}.
\end{proof} 

\begin{corollary}
\label{qp} 
Let $Q= P^\nu$. Then 
$$
\partial Q = \nu\cdot \partial P \in {\mathcal W}_P.
$$
\end{corollary}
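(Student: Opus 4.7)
The plan is to prove Corollary \ref{qp} by induction on $\nu$, using Proposition \ref{sum} as the key ingredient and Lemmas \ref{cz}(1) and \ref{cz1} to ensure that both sides of the asserted equality live in a common $\mathcal O_K$-module where the comparison makes sense.

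The base case $\nu=1$ is trivial. For the inductive step, write $Q_\nu := P^\nu$ and assume as induction hypothesis that $\partial Q_\nu = \nu\cdot\partial P$ after pushing forward from $\mathcal W_{Q_\nu}$ to $\mathcal W_P$ along the natural map $\iota_{(P,Q_\nu)}$ (whose existence is guaranteed by the remark following Lemma \ref{app}, since $z_{Q_\nu}\subset z_P$ by Lemma \ref{cz1}). Applying Proposition \ref{sum} to the decomposition $Q_{\nu+1} = P\cdot Q_\nu$ gives
\[
\partial Q_{\nu+1} \ \equiv \ \partial P + \partial Q_\nu \pmod{(c_P, c_{Q_\nu})}.
\]
Now Lemma \ref{cz1} yields $z_{Q_\nu}\subset z_P$, and Lemma \ref{cz}(1) converts this into $c_{Q_\nu} = z_{Q_\nu}^2 \subset z_P^2 = c_P$. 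Consequently $(c_P, c_{Q_\nu}) = c_P$, and the congruence above is an equality in $\mathcal N^* \otimes_{\mathcal O_K}(\mathcal O_K/c_P)$. Restricting to $\mathcal W_P = \mathcal N^* \otimes_{\mathcal O_K}(z_P/z_P^2)$, which is where $\partial P$ and the image of $\partial Q_\nu$ both live by Lemma \ref{app}(1), the induction hypothesis converts the right-hand side into $\partial P + \nu\cdot \partial P = (\nu+1)\partial P$, completing the inductive step.

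I do not expect a genuine obstacle here: the content of the corollary is essentially the additivity statement of Proposition \ref{sum}, and the only subtle point is bookkeeping. The one thing that has to be checked carefully is that the congruence modulus $(c_P, c_{Q_\nu})$ collapses to $c_P$, which in turn relies on the ideal-theoretic identity $c_{(-)} = z_{(-)}^2$ from Lemma \ref{cz}(1) combined with the containment of vanishing ideals under powers from Lemma \ref{cz1}. Once that collapse is in place, everything reduces to tracking classes in the fixed quotient module $\mathcal W_P$, and the induction runs cleanly.
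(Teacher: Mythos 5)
Your proof is correct and follows essentially the same route as the paper: the paper's proof is a one-line citation of Lemmas \ref{cz}(1), \ref{cz1}, and Proposition \ref{sum}, and the induction you carry out (decomposing $P^{\nu+1}=P\cdot P^\nu$, collapsing the modulus $(c_P,c_{Q_\nu})$ to $c_P$ via $c_{Q_\nu}=z_{Q_\nu}^2\subset z_P^2=c_P$, then pushing forward to $\mathcal{W}_P$) is exactly the argument those citations are pointing to.
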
 

\begin{proof} This follows from Lemmas \ref{cz}(1) and \ref{cz1} and Proposition \ref{sum}. 
\end{proof}

\part{Proof of the main theorems}

We will keep to the convention that a field of algebraic 
numbers that is allowed to have infinite degree over ${\Q}$ will be put in 
boldface, e.g., ${\bL, \bK}$, but if it is assumed to be a number field, 
i.e., a field of finite degree over $\Q$, it will be in normal type, e.g., $L, K$.

\section{Capturing subrings by congruences}
\label{sec: cong}

\begin{lemma}
\label{le:congruence}
Let $L/K$ be an extension of number fields, $\oo_L/\oo_K$ their corresponding 
rings of integers, and let $M/ K$ be the Galois closure of $L/K$. Let 
$\alpha \in \oo_L$ and $b \in \oo_K$. Suppose there exists an ideal 
$I \subset \oo_K$ with the following properties: 
\begin{gather}
\label{norm:ineq}
\text{$|{\mathbf N}_{M/\Q}(I\oo_M)| > |{\mathbf N}_{M/\Q}(\alpha-\beta)|$ 
for every conjugate $\beta$ of $\alpha$ over $K$,}\\
\label{eq:equiv}
\alpha \equiv b \pmod {I\oo_L}.
\end{gather}
Then $\alpha \in \oo_K$.
\end{lemma}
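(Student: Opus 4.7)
The plan is to use Galois theory to promote the congruence $\alpha \equiv b \pmod{I\oo_L}$ to a congruence satisfied by every $K$-conjugate of $\alpha$, and then use the norm inequality to force equality of those conjugates. The conclusion that $\alpha \in \oo_K$ is then immediate from the fundamental theorem of Galois theory.

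First I would observe that since $\alpha \in \oo_L \subset \oo_M$ and $I\oo_L \subset I\oo_M$, the given congruence \eqref{eq:equiv} lifts to
$$\alpha \equiv b \pmod{I\oo_M}.$$
Now for any $\sigma \in \Gal{M}{K}$, apply $\sigma$ to this congruence. Because $I \subset \oo_K$ is fixed pointwise by $\sigma$, the ideal $I\oo_M$ is $\sigma$-stable, and since $b \in \oo_K$ we have $\sigma(b)=b$. Thus
$$\sigma(\alpha) \equiv b \pmod{I\oo_M}$$
for every $\sigma \in \Gal{M}{K}$. Subtracting, $\alpha - \sigma(\alpha) \in I\oo_M$ for every such $\sigma$.

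Next I would argue that this forces $\alpha = \sigma(\alpha)$. Suppose, toward a contradiction, that $\beta := \sigma(\alpha) \neq \alpha$ for some $\sigma$. Then $\alpha - \beta$ is a nonzero element of $I\oo_M$, so the principal ideal $(\alpha - \beta)$ of $\oo_M$ is divisible by $I\oo_M$. Taking absolute norms from $M$ to $\Q$, this gives
$$|{\mathbf N}_{M/\Q}(I\oo_M)| \;\leq\; |{\mathbf N}_{M/\Q}(\alpha - \beta)|,$$
directly contradicting hypothesis \eqref{norm:ineq}. Therefore $\sigma(\alpha) = \alpha$ for every $\sigma \in \Gal{M}{K}$, so $\alpha$ lies in the fixed field of $\Gal{M}{K}$, which is $K$. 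Combined with $\alpha \in \oo_L$, this yields $\alpha \in K \cap \oo_L = \oo_K$, as claimed.

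There is no real obstacle here; the only thing to be slightly careful about is that the congruence \eqref{eq:equiv} is hypothesized only modulo $I\oo_L$ rather than modulo $I\oo_M$, and that the conjugates $\sigma(\alpha)$ live in $\oo_M$ and not necessarily in $\oo_L$. Both issues are handled at once by enlarging the ambient ring from $\oo_L$ to $\oo_M$ at the very first step, after which the Galois-invariance of $I$ and of $b$ makes the argument entirely formal.
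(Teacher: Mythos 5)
Your proof is correct and follows essentially the same route as the paper's: promote the congruence to $\oo_M$ so that every $K$-conjugate $\beta$ of $\alpha$ satisfies $\alpha - \beta \in I\oo_M$, take ideal norms to contradict the hypothesis unless $\alpha = \beta$, and conclude $\alpha \in K \cap \oo_L = \oo_K$. The paper phrases the middle step as a divisibility of integers ($\mathbf{N}_{M/\Q}(\alpha-\beta) \equiv 0 \pmod{\mathbf{N}_{M/\Q}(I\oo_M)}$), while you phrase it as ideal containment followed by taking norms, but these are the same observation.
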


\begin{proof}
Since $I \subset \oo_K$, it follows from \eqref{eq:equiv} that every conjugate $\beta$ of $\alpha$ 
satisfies $\beta \equiv b \pmod{I\oo_M}$. Therefore $\alpha -\beta \equiv 0 \pmod{I\oo_M}$. Consequently,
\[
{\mathbf N}_{M/\Q}(\alpha -\beta) \equiv 0 \pmod{{\mathbf N}_{M/\Q}(I\oo_M)}
\]
in $\Z$. Thus, either $\alpha=\beta$ or 
\[
|{\mathbf N}_{M/\Q}(\alpha -\beta)| \geq |{\mathbf N}_{M/\Q}(I\oo_M)|
\]
which contradicts \eqref{norm:ineq}.
\end{proof}

\subsection*{Norm inequalities }

While it is clear that a congruence like \eqref{eq:equiv} can be rewritten as 
a divisibility condition in the ring of integers assuming we are given generators 
of the ideal $I$, it is not a priori clear how to convert \eqref{norm:ineq} into 
a polynomial equation with variables ranging over that ring. The propositions 
below explain how it can be done.

\begin{definition}\label{C} For $m$ a positive integer  let $C(m)$ denote $(m+1)^2$ times  the smallest positive integer greater than 
the maximum absolute value of any minor  of the matrix 
\begin{equation}\label{matrix} \left(\begin{array}{cccc} 
0&0&\ldots & 1 \\
1&1& \ldots & 1\\
\ldots\\
m^m&m^{m-1}&\ldots&1
\end{array}\right).\end{equation}
\end{definition}

\begin{proposition}
\label{prop:bounds}
Let $M/{\Q}$ be a  Galois number field extension of  degree $m$.
Suppose $\alpha \in \oo_M$ and define
\begin{equation}\label{u}
u= u(m, \alpha):=C(m)\cdot \alpha(1-\alpha)\cdots (m-\alpha). 
\end{equation}
Then for every conjugate $\beta$ of $\alpha$ over $\Q$ and every ideal $I$ of 
$\oo_M$ contained in $u^{m^2}\oo_L$, we have
$$
| {\mathbf N}_{M/\Q}(\alpha-\beta)| < |{\mathbf N}_{M/\Q}(I\oo_M)|.
$$
\end{proposition}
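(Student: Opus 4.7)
The plan is to reduce the statement to the strict inequality
\[
|\mathbf{N}_{M/\Q}(\alpha-\beta)| < |\mathbf{N}_{M/\Q}(u)|^{m^2},
\]
which suffices since $I \subset u^{m^2}\oo_M$ gives the ideal divisibility $u^{m^2}\oo_M \mid I$ and hence $|\mathbf{N}_{M/\Q}(I\oo_M)| \ge |\mathbf{N}_{M/\Q}(u)|^{m^2}$. Both sides will then be estimated using the minimal polynomial of $\alpha$ over $\Q$ as a bookkeeping device.

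First I will compute $|\mathbf{N}_{M/\Q}(u)|$ explicitly. Let $f \in \Z[X]$ be the monic minimal polynomial of $\alpha$ over $\Q$, of degree $d$, with distinct roots $\alpha_1,\ldots,\alpha_d$. Since $\Q(\alpha)\subset M$, we have $d\mid m$, and in the $\mathrm{Gal}(M/\Q)$-orbit of $\alpha$ each $\alpha_i$ occurs with multiplicity $m/d$. Thus $\mathbf{N}_{M/\Q}(k-\alpha) = f(k)^{m/d}$ for every integer $k$, and so
\[
|\mathbf{N}_{M/\Q}(u)| = C(m)^m\, P^{m/d}, \qquad P := \prod_{k=0}^{m}|f(k)|.
\]
Each $f(k)$ is a nonzero rational integer (if some $f(k)=0$, then $\alpha=k\in\Z$, $u=0$, and the statement is handled separately for nonzero $I$), so $P\ge 1$.

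Next I will bound $|\mathbf{N}_{M/\Q}(\alpha-\beta)|$. Every $\mathrm{Gal}(M/\Q)$-conjugate of $\alpha-\beta$ is a difference of two roots of $f$, hence at most $2H$ in absolute value, where $H := \max_i|\alpha_i|$. Therefore $|\mathbf{N}_{M/\Q}(\alpha-\beta)| \le (2H)^m$. To control $H$ I will use that $f$, viewed as a polynomial of degree $\le m$, is recovered from its values $f(0),\ldots,f(m)$ by Lagrange interpolation: writing $f(X)=\sum_{j=0}^{m} a_j X^j$ with $a_d=1$ and $a_j=0$ for $j>d$, the Vandermonde matrix $V$ of \eqref{matrix} satisfies $V\vec{a}=(f(0),\ldots,f(m))^T$. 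Cramer's rule with cofactor expansion yields
\[
|a_j|\cdot|\det V| \le (m+1)\cdot \max|M_{ij}| \cdot \max_k|f(k)|,
\]
where $M_{ij}$ ranges over the minors of $V$. Since $\det V$ is a nonzero rational integer, $\max_k|f(k)|\le P$, and $C(m)>(m+1)^2\max|M_{ij}|$ by construction, I conclude $|a_j|<C(m)P/(m+1)$. Combined with Cauchy's bound $H\le 1+\max_{j<d}|a_j|$, this gives $H<1+C(m)P/(m+1)$, and since $C(m)P\ge (m+1)^2\ge m+1$, it improves to $2H<4C(m)P/(m+1)$.

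Finally I combine the estimates:
\[
(2H)^m < \Bigl(\tfrac{4}{m+1}\Bigr)^{m} C(m)^{m} P^{m}, \qquad |\mathbf{N}_{M/\Q}(u)|^{m^2} = C(m)^{m^3} P^{m^3/d} \ge C(m)^{m^3} P^{m^2},
\]
using $m^3/d\ge m^2$ and $P\ge 1$. The required strict inequality thus reduces to $(4/(m+1))^m < C(m)^{m^3-m}\, P^{m^2-m}$, which for $m\ge 2$ is overwhelming, since the left side is at most $16/9$ while $C(m)^{m^3-m}\ge (m+1)^{2m(m^2-1)}$; the case $m=1$ is trivial because then $M=\Q$, $\alpha\in\Z$, and $\alpha=\beta$. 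The main obstacle is the bookkeeping in the Cramer's-rule step: one must confirm that the factor $(m+1)^2$ padded into $C(m)$ exactly absorbs the $(m+1)\max|M_{ij}|$ produced by interpolation, while leaving enough surplus in the final polynomial inequality to guarantee strict $<$.
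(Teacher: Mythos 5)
Your proof is correct, and it pivots on exactly the same mechanism as the paper: the constant $C(m)$ is built to absorb the Cramer/Vandermonde minors arising from interpolation at $0,1,\ldots,m$, which gives a coefficient bound, which gives a root bound, which gives a bound on $|{\mathbf N}_{M/\Q}(\alpha-\beta)|$. The only substantive difference is in packaging. The paper works with the degree-$m$ characteristic polynomial $g$ of $\alpha$ and normalizes the right-hand side of the linear system directly by $|{\mathbf N}_{M/\Q}(u)|$ (writing $g(r)=c_r\,{\mathbf N}_{M/\Q}(u)$ with $|c_r|\le C^{-m}$), so every intermediate estimate is already a small multiple of $|{\mathbf N}_{M/\Q}(u)|$, and it finishes by showing every root of $g$ is $<\tfrac12|{\mathbf N}_{M/\Q}(u)|$. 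You instead work with the minimal polynomial $f$ of degree $d$ padded by zeros, keep every estimate in terms of $P=\prod_{k=0}^m|f(k)|$, and only invoke $|{\mathbf N}_{M/\Q}(u)|=C(m)^m P^{m/d}$ together with $m^3/d\ge m^2$ at the final comparison. Both versions cash in the $(m+1)^2$ padding in $C(m)$ in the same Cramer step and arrive at the same (large) numerical slack, so the strict inequality holds easily for $m\ge 2$, with the $m=1$ and $u=0$ edge cases dispatched separately as you note; the latter is handled in the paper at the level of Proposition \ref{maincong2} (the case $D(m,\alpha)=0$).
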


\begin{proof}

Let $g(T)$ be the characteristic polynomial of $\alpha$ over $\Q$ as an element of $M$. 
Let $g(T)=T^m +a_{m-1}T^{m-1}+ \ldots + a_0$ with $a_i \in \Z$.
For every $r \in \Z$ we have
\begin{equation}
\label{eq:Ng}
{\mathbf N}_{M/\Q}(r-\alpha)=g(r).
\end{equation}
Put $C:=C(m)$.
By definition of $u$, if $1 \le r \le m$, then
\begin{equation}
\label{eq:w_r}
r-\alpha=\frac{u}{C\prod_{i=1,i\ne r}^m(i-\alpha)}.
\end{equation}
Put
\[
w_r:=\prod_{i=1,i\ne r}^m(i-\alpha).
\]
Then from \eqref{eq:Ng} and \eqref{eq:w_r} it follows that 
\[
r^m+a_{m-1}r^{m-1} +\ldots +a_0 = 
\frac{1}{C^m{\mathbf N}_{M/\Q}(w_r)}{\mathbf N}_{M/\Q}(u)=c_r{\mathbf N}_{M/\Q}(u), 
\]
where $|c_r| \leq \frac{1}{C^m} $.
Now consider the following linear system
$$
\left (\begin{array}{cccc} 
0&0&\ldots & 1 \\
1&1& \ldots & 1\\
\ldots\\
m^m&m^{m-1}&\ldots&1
\end{array}\right )
\left (\begin{array}{c}
1\\
a_{m-1}\\
\ldots\\
a_0
\end{array}
\right )= \left (\begin{array}{c}
c_0{\mathbf N}_{M/\Q}(u)\\
c_1{\mathbf N}_{M/\Q}(u)\\
\ldots\\
c_m{\mathbf N}_{M/\Q}(u)
\end{array}
\right ).
$$
Using Cramer's rule we obtain that $a_i=\frac{D_i}{D}$, where
\[
D_i=\left |\begin{array}{cccccc} 
0&0&\ldots& c_0{\mathbf N}_{M/\Q}(u)&\ldots & 1 \\
1&1& \ldots&c_1{\mathbf N}_{M/\Q}(u)  & \ldots & 1\\
\ldots\\
(m)^m&(m)^{m-1}&\ldots&c_m{\mathbf N}_{M/\Q}(u) &\ldots&1
\end{array}\right |,
\]
with the column $(c_r{\mathbf N}_{M/\Q}(u)), r=0,\ldots, m$ replacing the $i$-th column of the  matrix  \ref{matrix}  and $D$ is the discriminant of \ref{matrix}.  Factoring out ${\mathbf N}_{M/\Q}(u)$ and expanding along the $i$-th column we obtain the following:
\[
\left |\frac{D_i}{{\mathbf N}_{M/\Q}(u)}\right |=\left |\begin{array}{cccccc} 
0&0&\ldots& c_0&\ldots & 1 \\
1&1& \ldots&c_1& \ldots & 1\\
\ldots\\
m^m&m^{m-1}&\ldots&c_m &\ldots&1
\end{array}
\right |=\left |\sum_{r=0}^m \pm c_rM_{r,i}\right |\leq \sum_{r=0}^m |c_r||M_{r,i}|,
\]
where $M_{r,i}$ is the minor of \ref{matrix} corresponding to the elimination of the $r$-th row and the $i$-th column. By assumption 
\[
|M_{r,i}| < \frac{C}{(m+1)^2},
\]
and by construction $|c_r| \leq \frac{1}{C^m}$.  Thus,
for all $i=0,\ldots,m$, we have that 
\[
|D_i| <\frac{1}{(m+1)C^{m-1}}|{\mathbf N}_{M/\Q}(u)|. 
\]
 Finally, taking into account that $|D| >1$, we conclude that
\[
|a_i| < |D_i|<\frac{1}{(m+1)C^{m-1}}|{\mathbf N}_{M/\Q}(u)|.
\]
Suppose now that some root $\gamma$ of $g$ is greater in absolute value than 
$|ma_r|$ for all $r=0,\ldots, m-1$. Let $|a_{\rm max}|=\max_{0\leq i \leq m-1}{|a_i|}$.  In this case we have that
\[
|\gamma^m |=|-a_{m-1}\gamma^{m-1} - \ldots -a_0|\leq \sum_{r=0}^{m-1}|a_r\gamma^r|\leq |ma_{\rm max}\gamma^{m-1}| < |\gamma^m |,
\]
and we have a contradiction. Thus, every root $\gamma$ of $g$ satisfies
\[
|\gamma| < \frac{1}{C^{m-1}}|{\mathbf N}_{M/\Q}(u) |\le \frac{1}{2}|{\mathbf N}_{M/\Q}(u)|,
\]
and for any two roots $\gamma$ and $\delta$ of $g$ we have that
\[
|\gamma-\delta| < |{\mathbf N}_{M/\Q}(u)|
\]
(we may assume $m > 1$ or there is nothing to prove).
Now
\[
|{\mathbf N}_{M/\Q}(\alpha-\beta)|< |{\mathbf N}_{M/\Q}(u)^{m(m-1)}|<|{\mathbf N}_{M/\Q}(I\oo_M)|.
\]
\end{proof}

\begin{lemma}
\label{le:Dbound} 
Let $M$ be a finite Galois  totally real extension of $\Q$,  and put $m := [M:\Q]$. 
Further, let $x\in \oo_M$ be such that $|\sigma(x)| > 1$ for every embedding $\sigma:M \hookrightarrow \R$.
Then for every $\gamma \in \Gal{M}{\Q}$ we have
$$
|{\mathbf N}_{M/\Q}(x-\gamma(x))| \leq 2^m{\mathbf N}_{M/\Q}(x^2).
$$
\end{lemma}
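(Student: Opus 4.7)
The plan is to expand the norm as a product over real embeddings. Since $M/\Q$ is Galois and totally real, the $m$ embeddings $\sigma_1,\ldots,\sigma_m:M\hookrightarrow \R$ are permuted under right composition with $\gamma$: there is a permutation $\pi$ of $\{1,\dots,m\}$ with $\sigma_i\circ\gamma=\sigma_{\pi(i)}$ for every $i$. Hence
$$
\mathbf{N}_{M/\Q}(x-\gamma(x))\;=\;\prod_{i=1}^m\bigl(\sigma_i(x)-\sigma_{\pi(i)}(x)\bigr).
$$

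First I would bound each factor by the triangle inequality $|\sigma_i(x)-\sigma_{\pi(i)}(x)|\le |\sigma_i(x)|+|\sigma_{\pi(i)}(x)|$, together with the elementary observation that $a+b\le 2ab$ whenever $a,b>1$ (equivalently, $\tfrac1a+\tfrac1b\le 2$). The hypothesis $|\sigma(x)|>1$ for every real embedding $\sigma$ guarantees that both $a:=|\sigma_i(x)|$ and $b:=|\sigma_{\pi(i)}(x)|$ exceed $1$, so
$$
|\sigma_i(x)-\sigma_{\pi(i)}(x)|\;\le\;2\,|\sigma_i(x)|\,|\sigma_{\pi(i)}(x)|.
$$

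Multiplying these $m$ bounds and using that $\pi$ is a permutation to rewrite $\prod_i|\sigma_{\pi(i)}(x)|$ as $\prod_i|\sigma_i(x)|$, I get
$$
|\mathbf{N}_{M/\Q}(x-\gamma(x))|\;\le\;2^m\prod_{i=1}^m|\sigma_i(x)|^2\;=\;2^m\,|\mathbf{N}_{M/\Q}(x)|^2\;=\;2^m\,\mathbf{N}_{M/\Q}(x^2),
$$
the final equality because $\mathbf{N}_{M/\Q}(x^2)=\mathbf{N}_{M/\Q}(x)^2\ge 0$. There is no real obstacle here: the whole argument is a one-line triangle-inequality estimate, once one notices the convenient substitute $a+b\le 2ab$ valid for $a,b>1$. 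The totally real hypothesis is used in two places, namely to ensure all embeddings $\sigma_i$ are real (so that absolute values behave multiplicatively on the $\sigma_i(x)$) and so that the assumption $|\sigma_i(x)|>1$ is meaningful for applying the elementary inequality to every factor.
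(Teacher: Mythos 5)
Your proof is correct and follows essentially the same route as the paper: bound each factor $|\sigma(x)-(\sigma\circ\gamma)(x)|$ by $2|\sigma(x)|\,|(\sigma\circ\gamma)(x)|$ using the hypothesis $|\sigma(x)|>1$, then multiply over all real embeddings and use that $\gamma$ permutes them. The only stylistic difference is that you isolate the elementary inequality $a+b\le 2ab$ for $a,b>1$ as an explicit step, while the paper states the resulting bound directly.
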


\begin{proof}  Let $\sigma, \tau: M \hookrightarrow \R$ be embeddings. 
Since $|\sigma(x)|, |\tau(x)|>1$ we have that 
$$
|\sigma(x)-\tau(x)| < 2|\sigma(x)\tau(x)|, 
$$
so
\begin{multline*}
|{\mathbf N}_{M/\Q}(x-\gamma(x))| =\prod_{\sigma}|\sigma(x)-(\sigma\circ\gamma)(x)| \\
<\prod_{\sigma}2|\sigma(x)(\sigma\circ\gamma)(x)| 
=2^m\prod_{\sigma }|\sigma(x)|^2
=2^m{\mathbf N}_{M/\Q}(x^2).
\end{multline*}
\end{proof}


\begin{corollary}
\label{cor:totreal}
Let $\alpha$ be an algebraic integer contained in some totally real Galois extension $M/\Q$ and such 
that $\sigma(\alpha)>1$ for every embedding $\sigma : M \hookrightarrow \R$. 
Let $I$ be an ideal of $M$ such that $I\subset (2\alpha+1)^2\oo_{M}$.

Then for any conjugate $\beta$ of $\alpha$ over $\Q$ we have that 
$|{\mathbf N}_{M/\Q}(\alpha-\beta)| \leq |{\mathbf N}_{M/\Q}(I\oo_M)|$.
\end{corollary}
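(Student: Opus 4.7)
The plan is to reduce this to Lemma \ref{le:Dbound} by a simple substitution. I would set $x := 2\alpha + 1 \in \oo_M$ and first verify the hypothesis of that lemma: for every embedding $\sigma : M \hookrightarrow \R$, we have $\sigma(\alpha) > 1$ by assumption, hence $\sigma(x) = 2\sigma(\alpha) + 1 > 3 > 1$.

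Next I would observe that since $M/\Q$ is Galois, any conjugate $\beta$ of $\alpha$ over $\Q$ has the form $\beta = \gamma(\alpha)$ for some $\gamma \in \Gal{M}{\Q}$, and therefore
\[
x - \gamma(x) = (2\alpha + 1) - (2\gamma(\alpha) + 1) = 2(\alpha - \beta).
\]
Taking norms and applying Lemma \ref{le:Dbound} to $x$ with $m := [M:\Q]$ then yields
\[
2^m \cdot |{\mathbf N}_{M/\Q}(\alpha-\beta)| = |{\mathbf N}_{M/\Q}(x - \gamma(x))| \leq 2^m \cdot {\mathbf N}_{M/\Q}(x^2),
\]
which simplifies to $|{\mathbf N}_{M/\Q}(\alpha-\beta)| \leq {\mathbf N}_{M/\Q}((2\alpha+1)^2)$.

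Finally, I would use the hypothesis $I \subset (2\alpha+1)^2 \oo_M$: for ideals of $\oo_M$, containment $I \subset J$ implies that $J$ divides $I$ in the sense of ideal multiplication, and hence $|{\mathbf N}_{M/\Q}(I)| \geq |{\mathbf N}_{M/\Q}(J)|$. Applied to $J = (2\alpha+1)^2 \oo_M$ this gives
\[
|{\mathbf N}_{M/\Q}(I \oo_M)| \geq |{\mathbf N}_{M/\Q}((2\alpha+1)^2 \oo_M)| = |{\mathbf N}_{M/\Q}((2\alpha+1)^2)|,
\]
and chaining the two inequalities gives the conclusion. There is no real obstacle here — the work is essentially all in Lemma \ref{le:Dbound}; the only thing to get right is the bookkeeping of the factor $2^m$, which is exactly cancelled by the factor introduced by the substitution $x = 2\alpha+1$.
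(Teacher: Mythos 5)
Your proof is correct and uses the same key ingredient as the paper, namely Lemma \ref{le:Dbound}, but via a slightly different substitution. The paper applies Lemma \ref{le:Dbound} to $\alpha$ itself, obtaining $|{\mathbf N}_{M/\Q}(\alpha-\beta)| \leq 2^m{\mathbf N}_{M/\Q}(\alpha^2)$, and then separately argues that $2^m{\mathbf N}_{M/\Q}(\alpha^2) < {\mathbf N}_{M/\Q}((2\alpha+1)^2)$ using $2\sigma(\alpha) \le \sigma(2\alpha+1)$ at every real place. You instead apply the lemma directly to $x = 2\alpha+1$; since $x - \gamma(x) = 2(\alpha - \gamma(\alpha))$, the factor $2^m$ produced by Lemma \ref{le:Dbound} is cancelled exactly by the $2^m = |{\mathbf N}_{M/\Q}(2)|$ coming from the substitution, and no auxiliary comparison is needed. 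The gain is a cleaner bookkeeping: you avoid the extra step relating ${\mathbf N}(\alpha)$ to ${\mathbf N}(2\alpha+1)$, at the small cost of having to transfer the Galois action through the affine change of variable, which you do correctly. Both proofs then finish identically, using that containment of ideals reverses the norm inequality.
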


\begin{proof}
Let $v := 2\alpha+1$ and $m:=[M:\Q]$.
For every embedding $\sigma: M \hookrightarrow \R$ we have
$$
2 \leq 2\sigma(\alpha) \leq \sigma(v),
$$
and therefore 
$$
2^m \leq 2^m{\mathbf N}_{M/\Q}(\alpha) \leq {\mathbf N}_{M/\Q}(v).
$$
By Lemma \ref{le:Dbound}, we now have 
$$
|{\mathbf N}_{M/\Q}(\alpha-\beta)| \leq 2^m{\mathbf N}_{M/\Q}(\alpha^2)
< {\mathbf N}_{M/\Q}(v^2) \leq {\mathbf N}_{M/\Q}(I).
$$
\end{proof}

\subsection*{The basic congruence relation for extensions of finite degree and totally real fields} 

Fix $L/K$  
an extension of number fields and let $m$ denote the degree of  $M$, the Galois closure of $L/{\Q}$.

 For a positive integer $m$ and an algebraic integer $\alpha \in \oo_L$ define 
$$
\label{eq:Dalpha}
D(m, \alpha):= u(m, \alpha)^{m^2}=(C\alpha(1-\alpha)\cdots (m-\alpha))^{m^2} \in \oo_L, 
$$ 
where $C=C(m)$ is as in Definition \ref{C} and   $ u(m, \alpha)$ is as in Definition \ref{u} .

Putting together Lemma \ref{le:congruence}, Proposition \ref{prop:bounds} and 
Corollary \ref{cor:totreal} we immediately obtain these corollaries.

\begin{corollary}
\label{CONG}

 Let $E\subset \oo_L$ be the set of all 
elements $\alpha \in \oo_L$ such that there exists $b \in \oo_K$ and an ideal 
$I \subset \oo_K$ satisfying 
$$
I\oo_L \subset D(m, \alpha)\oo_L, \quad \alpha \equiv b \pmod{I\oo_L}.
$$
Then $ E \subset \oo_K$.
\end{corollary}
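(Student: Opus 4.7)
The strategy is a direct assembly of Lemma \ref{le:congruence} with Proposition \ref{prop:bounds}: given $\alpha \in E$, extract from the definition of $E$ the witnesses $b \in \oo_K$ and $I \subset \oo_K$, verify both hypotheses of the lemma for this choice, and read off $\alpha \in \oo_K$. The corollary should require no genuinely new input; it is essentially a packaging step.

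First, for the congruence hypothesis \eqref{eq:equiv} of Lemma \ref{le:congruence} there is nothing to check, since $\alpha \equiv b \pmod{I\oo_L}$ is built into the definition of $E$. For the norm hypothesis \eqref{norm:ineq}, I would tensor the containment $I\oo_L \subset D(m,\alpha)\oo_L$ with $\oo_M$ over $\oo_L$ to obtain
\[
I\oo_M \;\subset\; D(m,\alpha)\oo_M \;=\; u(m,\alpha)^{m^2}\oo_M.
\]
Proposition \ref{prop:bounds}, applied to this ideal of $\oo_M$, then gives
\[
|{\mathbf N}_{M/\Q}(\alpha - \beta)| \;<\; |{\mathbf N}_{M/\Q}(I\oo_M)|
\]
for every $\Q$-conjugate $\beta$ of $\alpha$, and hence \emph{a fortiori} for every $K$-conjugate.

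With both hypotheses verified, Lemma \ref{le:congruence} yields $\alpha \in \oo_K$, and since $\alpha \in E$ was arbitrary this gives $E \subset \oo_K$. One small bookkeeping point: the lemma is stated using the Galois closure of $L/K$, whereas in the present setup $M$ denotes the (possibly larger) Galois closure of $L/\Q$. This is harmless, because $M$ is still Galois over $K$ and contains every $K$-conjugate of $\alpha$, so the proof of the lemma goes through verbatim with this $M$. I do not anticipate any real obstacle: all the analytic content---namely, the explicit bound on $|{\mathbf N}_{M/\Q}(\alpha - \beta)|$ in terms of $D(m,\alpha) = u(m,\alpha)^{m^2}$ via the Cramer's-rule estimate on the coefficients of the minimal polynomial of $\alpha$---has already been absorbed into Proposition \ref{prop:bounds}.
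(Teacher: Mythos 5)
Your proof is correct and takes exactly the route the paper intends: the paper states Corollary \ref{CONG} as an immediate assembly of Lemma \ref{le:congruence} and Proposition \ref{prop:bounds} without supplying further details, and your write-up is a faithful unpacking of that assembly. Your bookkeeping remark about the mismatch between the Galois closure of $L/K$ (as in the lemma) and the Galois closure of $L/\Q$ (used to set $m$ and $D(m,\alpha)$) is accurate, and correctly resolved: the lemma's proof only uses that $M$ is Galois over $K$ and contains $L$, so enlarging $M$ is harmless.
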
 
\begin{corollary}
\label{CONG2}
Let ${\bL}$ be a totally real extension of \   $\Q$, possibly infinite. 
Let $K \subset {\bL}$ be a number field. 
Let $E\subset \oo_{\bL}$ be the set of all elements $\alpha \in \oo_L$ such that
there exists $b \in \oo_K$, an ideal $I \subset \oo_K$, and 
$u_1, u_2, u_3, u_4 \in \oo_{\bL}$ satisfying 
$$
I\oo_{\bL} \subset (2\alpha+1)^2\oo_{\bL}, \quad \alpha=1 + u_1^2 + \ldots +u_4^2,
\quad \alpha \equiv b \pmod{I\oo_{\bL}}.
$$
Then $ E \subset \oo_K$.
\end{corollary}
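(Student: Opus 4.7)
The plan is to combine Corollary \ref{cor:totreal} with the congruence argument of Lemma \ref{le:congruence}. Take $\alpha \in E$ with witnesses $b \in \oo_K$, $I \subset \oo_K$ and $u_1,\ldots,u_4 \in \oo_{\bL}$. If every $u_j = 0$ then $\alpha = 1 \in \oo_K$ and there is nothing to prove, so assume some $u_j \neq 0$.

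First I would descend to a finite totally real Galois extension. Set $L := K(\alpha, u_1,\ldots,u_4) \subset \bL$; since every $\C$-embedding of $L$ extends to one of $\bL$ and $\bL$ is totally real, $L$ is totally real. Let $M$ be the Galois closure of $L/\Q$, which is then also totally real (and in particular Galois over $K$). For every real embedding $\sigma : M \hookrightarrow \R$, injectivity of $\sigma$ together with the existence of a nonzero $u_j$ yields
\[
\sigma(\alpha) \;=\; 1 + \sum_i \sigma(u_i)^2 \;\geq\; 1 + \sigma(u_j)^2 \;>\; 1.
\]
This is precisely the strict positivity hypothesis of Corollary \ref{cor:totreal}.

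Next the two divisibility conditions must be transferred from $\oo_{\bL}$ down to $\oo_L$, and thence to $\oo_M$. The needed fact is that for any finite subextension $F \subset \bL$ and any ideal $J \subset \oo_F$ one has $J\oo_{\bL} \cap \oo_F = J$, a direct-limit consequence of the analogous statement for finite Dedekind extensions (immediate from prime factorization). Applied with $F = L$ to $I\oo_L$ and to $(2\alpha+1)^2\oo_L$, this yields $\alpha \equiv b \pmod{I\oo_L}$ and $I\oo_L \subset (2\alpha+1)^2\oo_L$; both relations then extend to $\oo_M$ after fixing any embedding $L \hookrightarrow M$.

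At this point Corollary \ref{cor:totreal} gives $|{\mathbf N}_{M/\Q}(\alpha - \beta)| < |{\mathbf N}_{M/\Q}(I\oo_M)|$ for every $\Q$-conjugate $\beta$ of $\alpha$; the inequality is strict because the proof of that corollary actually produces $|{\mathbf N}_{M/\Q}(\alpha-\beta)| \leq 2^m{\mathbf N}_{M/\Q}(\alpha^2) < {\mathbf N}_{M/\Q}((2\alpha+1)^2)$, using that $(2\alpha+1)^2$ strictly dominates $4\alpha^2$ under every real embedding. A fortiori the bound holds for every $K$-conjugate of $\alpha$, so Lemma \ref{le:congruence} (applied with the Galois closure of $L/K$ inside $M$) forces $\alpha \in \oo_K$. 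The only real delicacy is the careful choice of $L$: it must contain the $u_j$, for otherwise the strict inequality $\sigma(\alpha) > 1$ could fail at embeddings where all the $\sigma(u_j)$ happen to vanish; given this, the fact that forming a Galois closure preserves total reality is routine.
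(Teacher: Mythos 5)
Your proof is correct and follows the same route the paper indicates for this corollary (combining Lemma \ref{le:congruence} with Corollary \ref{cor:totreal}), carefully filling in the descent from $\oo_{\bL}$ to a finite totally real Galois subfield, the preservation of the divisibility conditions, and the observation that the inequality in Corollary \ref{cor:totreal} is in fact strict as needed for Lemma \ref{le:congruence}. One small remark: the ``delicacy'' of adjoining the $u_j$ to $L$ is not actually needed, since any real embedding $\sigma$ of $L$ extends to a (necessarily real) embedding $\tilde\sigma$ of the totally real field $\bL$, and injectivity of $\tilde\sigma$ already gives $\sigma(\alpha) = 1 + \sum_j \tilde\sigma(u_j)^2 > 1$ once some $u_j \ne 0$; including the $u_j$ in $L$ is simply a harmless way to sidestep this extension step.
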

\begin{remark}
The equations for the totally real case are the same across all pairs of 
totally real fields $K$ and ${\bL}$, including the case where one 
or both extensions are infinite. We will show that the same is true in 
the case of a quadratic extension of a totally real field.
\end{remark}

\section{Quadratic extensions of totally real fields}
We treat separately quadratic extensions of totally real fields because if the totally real field is of infinite degree over $\Q$, this case is technically much more complicated than the case of finite extensions or the case of totally real fields.  The main reason for the complications is the difficulty with bounds on norms.
\subsection*{Norm inequalities for quadratic extensions of totally real fields}
The construction of diophantine bounds on norms of elements of a non-totally 
real quadratic extension of a totally real field of infinite degree over $\Q$ 
is not as simple as the analogous constructions in the case of extensions of 
finite degree over $\Q$ or totally real fields of arbitrary degree.  
We construct a diophantine definition of these bounds in several steps starting 
with Lemma \ref{le:newbounds} below and continuing with Lemma \ref{le:always big}, 
Corollaries \ref{cor:existsW},  \ref{cor:algg}, \ref{cor:Diophstable} and \ref{cor:ext2}.  
Unlike diophantine definitions of bounds in the other two cases, in the case of 
non-totally real quadratic extensions of totally real fields of infinite degree over $\Q$, 
we will need to use diophantine stability of a commutative group scheme in the 
extension ${\mathbf F}/{\bL}$, where ${\bL}$ is a totally real field possibly of 
infinite degree over $\Q$ and ${\bF}$ is a non-totally real quadratic extension 
of ${\bL}$.  The group scheme, a twist of $\gm$, is constructed in \S\ref{sec:quad}.

\begin{proposition}
\label{le:newbounds}
Let $\bf{L}$ be a possibly infinite totally real extension of $\Q$ and let ${\bF}$ 
be a quadratic extension of ${\bL}$. Let $\delta \in \oo_{\bF}$, 
$\delta^2=d \in \oo_{\bL}$, ${\bF}={\bL}(\delta)$. Let 
$x=y_0+y_1\delta \in \oo_{\bF}$  with $y_0, y_1 \in {\bL}$, and let $w  \in {\bL}$ be such that 
\begin{align}
\label{eq:conds}
&\text{for all embeddings $\sigma: {\bf F} \hookrightarrow \R$ 
we have that $1 < \sigma(x) < \sigma(w)$}, \\
\label{eq:condt}
&\text{for all embeddings $\tau: {\bf F} \hookrightarrow \C$ 
with $\tau({\bf F})  \not\subset \R$ we have that $|\tau(w) |\geq 1$.} 
\end{align}

Let $L \subset \bf{L}$ be a number field containing $d, y_0,y_1$ and $w$. 
Let $F=L(\delta)$. Then $|{\mathbf N}_{F/\Q}(\delta y_1)| \leq |{\mathbf N}_{F/\Q}(xw)|$.
\end{proposition}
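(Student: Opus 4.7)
The strategy is to pair up the $[F:\Q]$ embeddings $\sigma\colon F\hookrightarrow\C$ into conjugate pairs $(\sigma,\sigma')$ satisfying $\sigma|_L=\sigma'|_L$ and $\sigma'(\delta)=-\sigma(\delta)$, and to bound $|\sigma(\delta y_1)\sigma'(\delta y_1)|$ in terms of $|\sigma(xw)\sigma'(xw)|$ on each such pair; multiplying the pairwise inequalities recovers the desired norm inequality. Since $L\subseteq\bL$ and $\bL$ is totally real, $L$ is totally real, so every embedding $\sigma_0\colon L\hookrightarrow\R$ is real; its two lifts to $F=L(\delta)$ are both real when $\sigma_0(d)>0$ (Case A) and form a complex-conjugate pair when $\sigma_0(d)<0$ (Case B). Note $\sigma_0(d)\neq 0$ since $\delta\neq 0$.

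The next step is to transfer the hypotheses from $\bF$-embeddings to $F$-embeddings. Because $\bL/L$ is algebraic and $\bL$ is totally real, $\sigma_0$ extends to a real embedding $\tilde\sigma_0\colon\bL\hookrightarrow\R$; setting $\tilde\sigma(\delta):=\sigma(\delta)$ then yields an embedding $\tilde\sigma\colon\bF\hookrightarrow\C$ whose restriction to $F$ is $\sigma$, and $\tilde\sigma(\bF)\subseteq\R$ exactly when $\sigma_0(d)>0$. Since $x\in F$ and $w\in L$, applying \eqref{eq:conds} and \eqref{eq:condt} to $\tilde\sigma$ (and to its partner $\widetilde{\sigma'}$, defined analogously) gives: in Case A, both $\sigma(x)$ and $\sigma'(x)$ lie in $(1,\sigma(w))$ with $\sigma(w)=\sigma'(w)>1$; in Case B, $|\sigma(w)|=|\sigma'(w)|\geq 1$.

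With the hypotheses in hand, the pairwise estimates are immediate. In Case A, the identity $\sigma(\delta y_1)=(\sigma(x)-\sigma'(x))/2$ gives $|\sigma(\delta y_1)|<\sigma(w)/2$, so
\[
|\sigma(\delta y_1)\sigma'(\delta y_1)|=|\sigma(\delta y_1)|^2<\frac{\sigma(w)^2}{4}<\sigma(x)\sigma'(x)\sigma(w)^2=|\sigma(xw)\sigma'(xw)|.
\]
In Case B, the identity $|\sigma(x)|^2=\sigma_0(y_0)^2+|\sigma(\delta y_1)|^2$ yields $|\sigma(\delta y_1)|\leq|\sigma(x)|\leq|\sigma(xw)|$ (using $|\sigma(w)|\geq 1$), and similarly for $\sigma'$. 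Multiplying these pairwise inequalities over all conjugate pairs produces $|{\mathbf N}_{F/\Q}(\delta y_1)|\leq|{\mathbf N}_{F/\Q}(xw)|$. The only delicate point is the transfer step: ensuring that each $F$-embedding is actually the restriction of an $\bF$-embedding of the matching type (real when $\sigma_0(d)>0$, non-real otherwise); this rests entirely on the total reality of $\bL$ and on the fact that embeddings of $L$ always extend to the algebraic extension $\bL$.
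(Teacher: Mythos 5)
Your proof is correct and follows essentially the same route as the paper: split the embeddings of $F$ into real and non-real, use the Pythagorean identity $|\sigma(x)|^2=\sigma(y_0)^2+|\sigma(\delta y_1)|^2$ in the non-real case, and use the bound $1<\sigma(x),\hat\sigma(x)<\sigma(w)$ in the real case, then multiply. Your organization differs slightly in two pleasant ways: you bound the product pairwise over conjugate pairs of embeddings lying over a common embedding of $L$, and in the real case you use the clean identity $\sigma(\delta y_1)=(\sigma(x)-\sigma'(x))/2$ rather than the paper's case-split on the signs of $\sigma(y_0)$ and $\sigma(\delta y_1)$; both give the same conclusion. The one place where you are genuinely more careful is the transfer step: the hypotheses \eqref{eq:conds}--\eqref{eq:condt} concern embeddings of $\bF$, while the estimate is over embeddings of the number field $F$, and your explicit check that every real (resp.\ non-real) embedding of $F$ is the restriction of a real (resp.\ non-real) embedding of $\bF$---using that $\bL$ is totally real so every embedding of $L$ extends to a real embedding of $\bL$, and that the sign of $\sigma(d)$ decides which case occurs---justifies a point the paper's proof takes for granted.
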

\begin{remark}
Recall that the inequality involving $\sigma$ can be converted to an equation by Proposition \ref{tpprop}.
\end{remark}
\begin{proof}
If $\tau$ is a non-real embedding of $F$, then $|\tau(y_1\delta)| \leq |\tau(x)|$. 
If $\sigma$ is a real embedding of $F$, then let $\hat \sigma$ be an embedding  of 
$F$ such that $\sigma_{|L}=\hat \sigma_{|L}$ but $\hat\sigma \ne \sigma$. In other words, if 
$\sigma(x)=\sigma(y_0)+\sigma(\delta)\sigma(y_1)$, then $\hat \sigma(x)=\sigma(y_0)-\sigma(\delta)\sigma(y_1)$. 
Then either 
\[
|\sigma(x)|=|\sigma(y_0)|+|\sigma(\delta y_1)|,
\]
 or 
\[
|\hat \sigma(x)|=|\sigma(y_0)|+|\sigma(\delta y_1)|.
\] 
So, either 
$|\hat \sigma(x) | \geq |\hat \sigma (\delta y_1)|$ or $|\sigma(x)|\geq |\sigma(\delta y_1)|$. 
Since $\sigma(w)=\hat \sigma(w)$, for all real embeddings $\sigma$ of $F$ we have that 
$|\sigma(w)| > |\sigma(\delta y_1)|$.
Let $\Sigma$ be the collection of all real embeddings of $F$, and let $T$ be the 
collection of all embeddings of $F$ that are not real. Now we have the following inequalities:
\begin{align*}
|{\mathbf N}_{F/\Q}(y_1\delta )| 
&= \prod_{\tau \in T}|\tau(\delta y_1)|\prod_{\sigma \in \Sigma}|\sigma(\delta y_1)|\\
&\leq \prod_{\tau \in T}|\tau(x)|\prod_{\sigma \in \Sigma}|\sigma(wx)|\\
&\leq \prod_{\tau \in T}|\tau(x)|\prod_{\sigma \in \Sigma}|\sigma(x)|\prod_{\tau \in T}|
\tau(w)|\prod_{\sigma \in \Sigma}|\sigma(W)|= |{\mathbf N}_{F/\Q}(xw)|.
\end{align*}
\end{proof}

For Proposition \ref{le:newbounds} to be useful we need to be provided with an existentially defined  bound ``$w$" that satisfies \eqref{eq:conds} and \eqref{eq:condt}.  This is Corollary \ref{cor:existsW} below.  In  preparation, we have:

\begin{lemma}
\label{le:always big}
Let $K$ be a totally real number field. Let 
$\Sigma=\{\sigma_1, \ldots, \sigma_n\}$ be the collection of all 
embeddings of $ K$ into $\R$.
 Let $\Omega \subset K$ be an infinite set. 
Then for any integer $N>0$ there exists $u \ne v \in \Omega$ such that for all 
$\sigma \in \Sigma$ we have that
\[
\left|\sigma \left(\left [\left(\frac{1}{u-v}\right )^2+1\right](u^2+1) \right )\right | > N.
\]
\end{lemma}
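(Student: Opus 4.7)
The plan is to combine a pigeonhole on ``which embeddings are small'' with a Bolzano--Weierstrass argument on the bounded ones, exploiting the fact that $K$ is totally real so that each of the two factors $\sigma(u^2+1)$ and $\sigma((1/(u-v))^2+1)$ is automatically $\ge 1$. It thus suffices to make \emph{one} of them large at each embedding $\sigma$.

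First I would fix $M > \sqrt{N}$ and, for every $u \in \Omega$, record the ``large-coordinate signature''
\[
s(u) := \{\,i : |\sigma_i(u)| > M\,\} \subseteq \{1,\dots,n\}.
\]
Since there are only $2^n$ such subsets and $\Omega$ is infinite, the pigeonhole principle yields an infinite $\Omega' \subseteq \Omega$ and a fixed $S \subseteq \{1,\dots,n\}$ with $s(u)=S$ for every $u \in \Omega'$.

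If $S = \{1,\dots,n\}$, pick any two distinct $u,v \in \Omega'$; then $|\sigma_i(u^2+1)| > M^2 > N$ for every $i$, and since $\sigma_i(1/(u-v))^2 + 1 \ge 1$ (here the totally real hypothesis is essential), the product exceeds $N$ at every embedding. Otherwise $S^c \neq \emptyset$, and $|\sigma_i(u)| \le M$ for all $u \in \Omega'$ and all $i \in S^c$. The key point is that the map
\[
\Omega' \longrightarrow \R^{|S^c|}, \qquad u \mapsto (\sigma_i(u))_{i \in S^c},
\]
is \emph{injective} (because each field embedding $K \hookrightarrow \R$ is injective) and its image is bounded. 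Bolzano--Weierstrass then supplies, for any prescribed $\epsilon < 1/\sqrt{N}$, distinct $u,v \in \Omega'$ with $|\sigma_i(u-v)| < \epsilon$ for every $i \in S^c$.

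For such a pair, the embeddings split into two groups, and in each group one of the factors is large: for $i \in S$ we get $\sigma_i(u^2+1) > M^2 > N$ while the other factor is $\ge 1$, and for $i \in S^c$ we get $\sigma_i(1/(u-v))^2 + 1 > 1/\epsilon^2 > N$ while $\sigma_i(u^2+1) \ge 1$. Either way the product exceeds $N$, as required. The only delicate point is the injectivity of $\Omega' \to \R^{|S^c|}$, which prevents the accumulation-point argument from collapsing; this is guaranteed by injectivity of each individual embedding $\sigma_i$, so no real obstacle arises.
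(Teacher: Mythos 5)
Your proof is correct, and it takes a mildly different route from the paper's. Both arguments hinge on the same structural observation: since $K$ is totally real, each of $\sigma(u^2+1)$ and $\sigma\bigl((1/(u-v))^2+1\bigr)$ is automatically $\ge 1$, so it suffices to make one of the two factors exceed $N$ at each real embedding. Where you differ is in how you split the embeddings and produce the pair $(u,v)$. The paper runs a diagonal subsequence extraction: for each $\sigma_i$ in turn it refines a sequence from $\Omega$ so that either $\sigma_i$ of the terms goes to infinity or forms a Cauchy sequence, and then picks two indices far along; the ``large or close'' dichotomy at each embedding is thus built into the limiting behavior of the subsequence. You instead make the dichotomy combinatorial: a pigeonhole on the ``which $\sigma_i$ are $>M$'' signature isolates an infinite $\Omega'$ with a fixed signature $S$, and on the complementary (bounded) embeddings you apply Bolzano--Weierstrass once to $\Omega'\to\R^{|S^c|}$ to get a pair with small differences. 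Both are compactness arguments in finitely many coordinates, so they are close cousins; your version avoids the inductive subsequence bookkeeping at the price of introducing the threshold $M$ and the signature set, and you correctly flag the one subtlety --- injectivity of $u\mapsto(\sigma_i(u))_{i\in S^c}$, which follows from injectivity of any one field embedding --- that guarantees the limit-point argument actually produces two \emph{distinct} elements of $\Omega$.
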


\begin{proof}
As an infinite subset of $\R$, for each embedding $\sigma$ of $K$ into $\R$, the 
set $\sigma(\Omega)$ is either unbounded or it has a limit point 
and therefore contains a non-constant Cauchy sequence. Assume without loss of 
generality that $\sigma_1(\Omega)$ contains a Cauchy sequence $\{\sigma_1(u_i)\}$ such that all elements 
of the sequence are distinct. If $\{|\sigma_2(u_i)|\}$ is unbounded, then select 
a subsequence $\{u_{i,2}\}$ such that $\{|\sigma_2(u_{i,2})|\} \rightarrow \infty$. 
If $\{\sigma_3(u_{i,2})\}$ is bounded, then let $\{u_{i,3}\}$ be a subsequence of 
$\{u_{i,2}\}$ such that $\{\sigma_3(u_{i,2})\}$ is a Cauchy sequence. Continuing 
by induction we construct a sequence $\{u_{i,n}\}$ such that for all 
$\sigma \in \Sigma$ we have that $\{\sigma(u_{i,n})\}$ is either a Cauchy 
sequence or is going to infinity. Now let $N>0$ be given. Choose $j \in \Z_{>0}$ 
such that for all $\sigma \in \Sigma$ and all $\ell \geq j$ either 
$|\sigma(u_{\ell,n})|>N$ or $|\sigma(u_{j,n}-u_{\ell,n})| < \frac{1}{N}$. 
We claim that for all $\sigma \in \Sigma, \ell > j$ it is the case that
\[
\left|\sigma \left(\left [\left(\frac{1}{u_{j,n}-u_{\ell,n}}\right )^2+1\right]
(u_{j,n}^2+1) \right )\right | > N.
\]
Indeed, for each $\sigma \in \Sigma$ we have that either $|\sigma(u_{j,n})| > N$ or 
\[
\left | \sigma\left (\frac{1}{u_{j,n}-u_{\ell,n}}\right ) \right |> N
\]
while the $\sigma$-images of both factors are always bigger than 1.
\end{proof}

\begin{corollary}
\label{cor:existsW}
Let ${\bL\subset \bF}$ be as in Proposition \ref{le:newbounds}. Let $\Omega \subset {\bL}$ 
contain infinitely many elements of some number field $K \subset {\bL}$. 
    Then for 
any $x \in \oo_{\bF}$ there exist $u, v \in \Omega$ such that 
\[
w:=\left [\left(\frac{1}{u-v}\right )^2+1\right](u^2+1)
\]
satisfies \eqref{eq:conds} and \eqref{eq:condt}.
\end{corollary}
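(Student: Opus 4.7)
The plan is to reduce the statement to a direct application of Lemma \ref{le:always big}, using two observations: that $x \in \oo_{\bF}$ has only finitely many $\Q$-conjugates, and that any subfield of the totally real field $\bL$ is itself totally real.

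First I would observe that $x \in \oo_{\bF}$ generates a number field $\Q(x) \subset \bF$, and in particular has only finitely many conjugates over $\Q$. Set
\[
M := \max\{|\rho(x)| : \rho: \Q(x) \hookrightarrow \C\} + 1,
\]
a finite real number. Since $K \subset \bL$ is contained in a totally real field, $K$ itself is totally real, so every embedding of $K$ into $\C$ lands in $\R$. I then apply Lemma \ref{le:always big} to the totally real number field $K$, to the infinite subset $\Omega \cap K \subset K$, and to the integer $N := \lceil M \rceil$, to obtain distinct $u,v \in \Omega \cap K$ such that for every embedding $\sigma': K \hookrightarrow \R$,
\[
\left|\sigma'\bigl([(\tfrac{1}{u-v})^2 + 1](u^2+1)\bigr)\right| > M.
\]
Writing $w$ for this element of $K$, one notes that $w$ is visibly a product of factors of the form ``square $+\,1$'' in the real field $\sigma'(K) \subset \R$, so each factor is positive. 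Consequently $\sigma'(w) > M$, not merely $|\sigma'(w)| > M$.

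Next I would verify the two required conditions. For any real embedding $\sigma: \bF \hookrightarrow \R$, the restriction $\sigma|_K$ is one of the real embeddings of $K$, so $\sigma(w) > M > |\sigma(x)| \geq \sigma(x)$, which supplies the upper bound $\sigma(x) < \sigma(w)$ in \eqref{eq:conds}; the lower bound $1 < \sigma(x)$ is a condition on $x$ alone and is assumed to hold (it is independent of the choice of $u,v$). For any non-real embedding $\tau: \bF \hookrightarrow \C$, the restriction $\tau|_K$ once again lands in $\R$ because $K$ is totally real, so $\tau(w) \in \R$ and by the same positivity argument $\tau(w) > M \geq 1$, giving $|\tau(w)| \geq 1$ as required in \eqref{eq:condt}.

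The main technical content is already packaged in Lemma \ref{le:always big}, whose proof handles by a diagonal argument the mixed unbounded/Cauchy behavior of the finitely many sequences $\{\sigma'(u_i)\}$ as $\sigma'$ ranges over the embeddings of $K$. The only subtle point in the present corollary is the observation that for every embedding $\tau$ of $\bF$, real or not, the restriction $\tau|_K$ is a \emph{real} embedding of $K$; this is what lets a single application of Lemma \ref{le:always big} control both \eqref{eq:conds} and \eqref{eq:condt} at once. I do not expect any serious obstacle beyond this bookkeeping.
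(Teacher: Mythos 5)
Your proof is correct and supplies exactly what the paper leaves implicit: the paper states Corollary~\ref{cor:existsW} without proof, presenting it as a direct consequence of Lemma~\ref{le:always big}, and your argument fills in the bookkeeping---restrict embeddings of $\bF$ to the totally real number field $K$, apply the lemma with $N$ chosen past the finitely many $\Q$-conjugates of $x$, and use the positivity of $w$ as a product of real ``square plus one'' factors to upgrade the absolute-value bound $|\sigma'(w)|>N$ from the lemma to the one-sided inequality $\sigma(x)<\sigma(w)$ in \eqref{eq:conds} and to get $|\tau(w)|\ge 1$ in \eqref{eq:condt}. You are also right to flag that the lower bound $1<\sigma(x)$ in \eqref{eq:conds} depends on $x$ alone and cannot be arranged by any choice of $u,v$; this is a mild imprecision in the corollary's statement, and your parenthetical reading (that this is a standing hypothesis on $x$) is consistent with how the corollary is actually invoked in Corollary~\ref{cor:ext2}, where that inequality is imposed on $\alpha$ separately.
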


\subsection*{A special case of stability for quadratic extensions of totally real fields}
\label{sec:quad}
As we have mentioned above, to construct diophantine bounds on norms of elements of non-totally real extensions of degree 2 of totally real fields, we will need to use a particular case of diophantine stability.  We discuss this case in this section.  More specifically, we consider diophantine stability of multiplicative groups
over rings of integers in  quadratic extensions ${\bF}/{\bL}$, where ${\bL}$ 
is a totally real possibly infinite algebraic extension of $\Q$. 

J. Denef and L. Lipshitz were the first to use this phenomenon for the purposes of 
existential definability over finite extensions (see \cite{Den2}).  The 
third author used it over infinite extensions in conjunction with 
diophantine stability of elliptic curves (see \cite{Sh37}). 

Here we 
present a different proof of these results using the vocabulary of 
diophantine stability. Our goal is to show that this case is of the 
same nature as other instances of diophantine stability already 
discussed in this paper. We begin by considering number fields and 
then move to infinite extensions.

\subsection*{The case of finite extensions}
Suppose $M/L$ is a quadratic extension of fields. We denote by $\mathbf{G}_m^{M/L}$ 
the twist of the multiplicative group $\gm$ over $L$ by the quadratic character 
corresponding to $M/L$, as defined for example in \cite{MRS}. 
If $F/L$ is a field extension and $F \cap M = L$, then 
\begin{equation}
\label{dutw}
\mathbf{G}_m^{M/L}(\oo_F) = \{x \in \oo_{MF}^\times : \mathbf{N}_{MF/F}x = 1\},
\end{equation}
\begin{equation}
\label{dutw2}
\mathbf{G}_m^{M/L}(\oo_L) = \{x \in \oo_{M}^\times : \mathbf{N}_{M/L}x = 1\}.
\end{equation}

\begin{lemma}
\label{min2}
Suppose $L$ is a totally real number field, and $F$ is 
a quadratic extension of $L$.
Suppose $M/L$ is a quadratic extension such that for every infinite 
place $v$ of $L$, $v$ ramifies in $M/L$ if and only if $v$ does {\em not} ramify 
in $F/L$. Then 
\begin{enumerate}
\item
$[\mathbf{G}_m^{M/L}(\oo_F):\mathbf{G}_m^{M/L}(\oo_L)]$ is finite,
\item
if $F$ is not totally real, then $\mathbf{G}_m^{M/L}(\oo_L)$ has elements 
of infinite order,
\item
if $n$ is an integer, $n \ge 3$, then 
$\ker\{\mathbf{G}_m^{M/L}(\oo_F) \to \mathbf{G}_m^{M/L}(\oo_F/n\oo_F)\} \subset \mathbf{G}_m^{M/L}(\oo_L)$.
\end{enumerate}
\end{lemma}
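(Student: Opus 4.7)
My plan is to deduce parts (1) and (2) from a rank count via Dirichlet's unit theorem, and to reduce part (3) to a Minkowski-style rigidity argument carried out on the third quadratic subfield of $MF/L$. Since the complementary ramification hypothesis forces $M\neq F$, the fields $M$ and $F$ are linearly disjoint quadratic extensions of $L$, so $MF/L$ is a Klein-four Galois extension with group $\langle\sigma,\tau\rangle$, where $\sigma$ generates $\Gal{MF}{F}$ and $\tau$ generates $\Gal{MF}{M}$; let $N$ denote the third quadratic subfield of $MF/L$, namely the fixed field of $\sigma\tau$.

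For parts (1) and (2), I would set $n:=[L:\Q]$ and let $a$, $b$ be the numbers of real places of $L$ that ramify in $M/L$ and $F/L$ respectively, so the hypothesis reads $a+b=n$. At every real place $v$ of $L$ exactly one of $M_v,F_v$ equals $\C$, hence $(MF)_v=\C\oplus\C$ and $MF$ is totally complex with $2n$ complex places. Dirichlet's theorem yields
\[
\rank\oo_L^\times=n-1,\quad \rank\oo_M^\times=2n-a-1,\quad \rank\oo_F^\times=2n-b-1,\quad \rank\oo_{MF}^\times=2n-1.
\]
Since $\mathbf{N}_{M/L}$ and $\mathbf{N}_{MF/F}$ restrict to squaring on base-field units, their images have full rank in $\oo_L^\times$ and $\oo_F^\times$ respectively, and the kernel--image sequence gives $\rank\mathbf{G}_m^{M/L}(\oo_L)=n-a$ and $\rank\mathbf{G}_m^{M/L}(\oo_F)=b$. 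The identity $a+b=n$ makes these ranks coincide, which together with finite generation of $\oo_{MF}^\times$ yields (1). For (2), $F$ not totally real means $b\geq 1$, so $n-a\geq 1$, and $\mathbf{G}_m^{M/L}(\oo_L)$ has positive rank, hence an element of infinite order.

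For part (3), take $x$ in the kernel, so $x\in\oo_{MF}^\times$ with $\mathbf{N}_{MF/F}(x)=1$ and $x\equiv 1\pmod{n\oo_{MF}}$. Set $z:=x/\tau(x)$. Since $\sigma,\tau$ commute and $\sigma(x)=x^{-1}$, a direct computation gives $\sigma(z)=\tau(z)=z^{-1}$, so $z$ is fixed by $\sigma\tau$, hence $z\in\oo_N^\times$ with $\mathbf{N}_{N/L}(z)=z\,\sigma(z)=1$, i.e.\ $z\in\mathbf{G}_m^{N/L}(\oo_L)$. A local check at each real place $v$ of $L$ shows that the fixed ring of $\sigma\tau$ acting on $(MF)_v=\C\oplus\C$ is a single copy of $\C$, so $N$ is totally complex, and the rank formula of the previous paragraph gives $\rank\mathbf{G}_m^{N/L}(\oo_L)=0$; thus $z$ is a root of unity. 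Moreover $x\equiv 1\pmod{n\oo_{MF}}$ forces $\tau(x)\equiv 1$ and hence $z\equiv 1\pmod{n\oo_{MF}}$, and since $n\oo_{MF}\cap\oo_N=n\oo_N$ we conclude $z-1\in n\oo_N$. The classical Minkowski estimate $|\mathbf{N}_{\Q(\zeta_p)/\Q}(\zeta_p-1)|=p$ for a primitive $p$-th root of unity, applied to the prime power part of the order of $z$ and combined with $n\geq 3$, forces $z=1$. Hence $\tau(x)=x$, so $x\in\oo_M^\times$, and $\mathbf{N}_{M/L}(x)=\mathbf{N}_{MF/F}(x)=1$ for $x\in M$ gives $x\in\mathbf{G}_m^{M/L}(\oo_L)$.

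The main obstacle I anticipate is bookkeeping rather than substantive: one must verify that the scheme-theoretic reduction $\mathbf{G}_m^{M/L}(\oo_F)\to\mathbf{G}_m^{M/L}(\oo_F/n\oo_F)$ is correctly computed on points as the condition $x\equiv 1\pmod{n\oo_{MF}}$. This requires comparing $\oo_M\otimes_{\oo_L}\oo_F$ to $\oo_{MF}$, which differ only at finitely many primes dividing a common discriminant; the comparison can be handled by inverting that finite set or by working localization-by-localization, but care is needed so that the congruence used in the Minkowski step lives in the right ring.
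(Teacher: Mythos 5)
Your proof is correct. Parts (1) and (2) are essentially the same as the paper's argument: you count ranks via Dirichlet's unit theorem and use $a+b=n$ (equivalently, the paper's $|V|=|V_F|+|V_M|$) to see that $\rank\,\mathbf{G}_m^{M/L}(\oo_L)=\rank\,\mathbf{G}_m^{M/L}(\oo_F)$, with positivity exactly when some real place ramifies in $F/L$.

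For part (3) you take a genuinely different route. Both you and the paper reduce the problem to showing that $z=x/\tau(x)$ (the paper's $x/x^\sigma$) is a root of unity and then invoke the congruence $z\equiv 1\pmod n$ with $n\ge 3$ to force $z=1$; the difference is how the torsion statement is obtained. The paper simply quotes part (1): since $[\mathbf{G}_m^{M/L}(\oo_F):\mathbf{G}_m^{M/L}(\oo_L)]$ is finite, some power $x^k$ lies in $\oo_M^\times$, whence $(x/\tau(x))^k=x^k/\tau(x^k)=1$ immediately. You instead identify $z$ as a point of the norm-one torus $\mathbf{G}_m^{N/L}(\oo_L)$, where $N$ is the third quadratic subfield of the biquadratic $MF/L$, and run the Dirichlet rank computation a second time to show $N$ is totally complex and this torus has rank $0$. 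Your route is a bit longer but buys a slightly stronger structural statement (it locates $z$ precisely) and is self-contained even if one has not already established (1); the paper's route is shorter and reuses (1) to avoid introducing $N$ at all. The Minkowski-type norm estimate you spell out to show that a root of unity congruent to $1$ modulo $n\ge 3$ must be trivial is also the reason the paper's briefer ``so $x/x^\sigma=1$'' is valid, so no gap there.

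The technical caveat in your last paragraph — identifying the kernel of $\mathbf{G}_m^{M/L}(\oo_F)\to\mathbf{G}_m^{M/L}(\oo_F/n\oo_F)$ with $\{x\in\oo_{MF}^\times:\mathbf{N}_{MF/F}x=1,\ x\equiv1\pmod{n\oo_{MF}}\}$ — is a real bookkeeping point, and the paper handles it implicitly via the description of points in its display \eqref{dutw}; it is good that you flagged it, but it does not affect correctness of either argument.
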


\begin{proof}  Consider the diagram of fields in the hypothesis of the lemma.
$$
\xymatrix@C12pt@R12pt{\ & MF & \ \\ F\ar@{-}[ur] & \ & M\ar@{-}[ul] \\
& L\ar@{-}[ul]\ar@{-}[ur]}
$$
Let  $V$ be the set of archimedean places of $L$  (all real, by hypothesis). 
Write $V = V_F\sqcup V_M$ where $V_F\subset V$  is the subset consisting of the 
places that  do not ramify in $F/L$ and $V_M$  is the subset consisting of the 
places that  do not ramify in $M/K$. So 
$$
[L:{\Q}]  =|V| = |V_F| + |V_M|.
$$
We have that  $r_F :=2\cdot  |V_F|$ is the number of real places of $F$  
and  $r_M:=2\cdot  |V_M|$ is the number of real places of $M$. Letting  
$s_F, s_M$ denote the number of complex places of $F$ and $M$ respectively, 
we have: $s_F= |V_M|$ and $s_M= |V_F|$.  As for $MF/L$ we have that $MF$ is 
totally complex and every archimedean place of $L$ lifts to two complex 
places of $MF$.   Letting $u_K$ denote the rank of the group of units of 
a field $K$, we have, by Dirichlet's Unit Theorem:  
$$
\begin{array}{cc}
u_L= |V|-1, & u_M =2|V_M|+ |V_F|-1, \\[7pt]
u_F = 2|V_F|+|V_M|-1,& u_{MF}= 2|V|-1
\end{array}
$$
so that
$$
u_{MF}-u_F= |V_M| = u_M-u_L.
$$

Let $G$ denote the group $\mathbf{G}_m^{M/L}$.  
The above combined with \eqref{dutw}, shows that
\begin{gather}
\notag \rank_\Z G(\oo_F) = u_{MF}-u_F  = |V_M|, \\
\label{rk} \rank_\Z G(\oo_L) = u_M-u_L = |V_M|.
\end{gather}
so that
$\rank_\Z G(\oo_F) = \rank_\Z G(\oo_L)$. This proves (1).

Equation \eqref{rk} shows that $\rank_\Z G(\oo_L) > 0$ 
unless $|V_M|=0$, i..e., unless  $F$ is totally real. This proves (2).

Suppose now that $x \in \ker\{G(\oo_F) \to G(\oo_F/n\oo_F)\}$. 
Using \eqref{dutw} we can view $x \in \oo_{FM}^\times$ such that 
$x \equiv 1 \pmod{n}$.
By (1), there is a positive integer $k$ such that $x^k \in \oo_M^\times$. 
If $\sigma$ is the nontrivial automorphism of $MF/M$, then $(x/x^\sigma)^k = 1$, 
so $x/x^\sigma$ is a root of unity. But $x/x^\sigma \equiv 1 \pmod{n}$, 
so we have $x/x^\sigma = 1$, i.e., $x \in M$. Since 
$\mathbf{N}_{M/L}x = \mathbf{N}_{MF/F}x = 1$, we have $x \in G(\oo_L)$ by \eqref{dutw}.
This proves (3).
\end{proof}

\begin{lemma}
Suppose $L$ is a totally real number field, and $F$ is 
a quadratic extension of $L$.
Then there is a quadratic extension $M/L$ such that for every infinite 
place $v$ of $L$, $v$ ramifies in $M/L$ if and only if $v$ does {\em not} ramify 
in $F/L$.
\end{lemma}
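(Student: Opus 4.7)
The plan is to produce $M$ in the form $L(\sqrt{\alpha})$ for a suitable $\alpha \in L^\times$. Since $L$ is totally real, every infinite place $v$ of $L$ is real and corresponds to an embedding $\sigma_v : L \hookrightarrow \R$. For such a place, a quadratic extension $L(\sqrt{\alpha})$ ramifies at $v$ (equivalently, the completion at a place of $L(\sqrt{\alpha})$ above $v$ is complex) if and only if $\sigma_v(\alpha) < 0$. Likewise, $v$ ramifies in $F/L$ if and only if the completion of $F$ at $v$ is complex.

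Partition the set $V$ of infinite places of $L$ as $V = V_F \sqcup V_M$, where $V_M$ consists of those $v$ that ramify in $F/L$ and $V_F$ consists of those that do not. What we need is an $\alpha \in L^\times \setminus (L^\times)^2$ such that
\[
\sigma_v(\alpha) < 0 \text{ for } v \in V_F, \qquad \sigma_v(\alpha) > 0 \text{ for } v \in V_M.
\]

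The first step is to produce any $\alpha \in L^\times$ with the prescribed sign pattern. This is a standard application of weak approximation: the diagonal image of $L$ in $\prod_{v \in V} L_v = \prod_{v \in V} \R$ is dense, so the sign map $L^\times \to \prod_{v \in V}\{\pm 1\}$ is surjective. Pick any $\alpha_0 \in L^\times$ realizing the desired sign pattern.

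The second step is to guarantee that $\alpha$ can be chosen to be a non-square, so that $M := L(\sqrt{\alpha})$ is genuinely quadratic over $L$. If $V_F \neq \emptyset$ this is automatic, since a square in $L^\times$ is totally positive under every real embedding, while our $\alpha_0$ is negative at some $v \in V_F$. If $V_F = \emptyset$, then we need a totally positive non-square; such an element exists because $L^\times/(L^\times)^2$ is infinite (for instance, pick any rational prime $p$ that is not a square in $L$; only finitely many rational primes can be squares in the number field $L$). Setting $M := L(\sqrt{\alpha})$ then gives the required extension, and no step of the argument presents a real obstacle—the whole lemma is a routine application of weak approximation at the archimedean places together with an easy non-square adjustment.
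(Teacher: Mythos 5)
Your proof is correct, but it takes a different and somewhat heavier route than the paper's. The paper observes that if $F = L(\sqrt d)$, then one can simply take $M := L(\sqrt{-d})$: since $L$ is totally real, at each real place $v$ exactly one of $\sigma_v(d)$, $\sigma_v(-d)$ is negative, so $v$ ramifies in exactly one of $F/L$, $M/L$. Your argument instead prescribes the desired sign pattern directly and invokes weak approximation at the archimedean places to produce an $\alpha$ realizing it, then separately checks non-squareness. Both are valid; the paper's is a one-line trick, while yours is the generic way one would construct a quadratic extension with arbitrary prescribed archimedean behavior (and in fact would generalize to more complicated ramification prescriptions).

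One thing your version handles more carefully than the paper's: when $F = L(i)$, one has $d \equiv -1$ modulo squares, so $-d$ is a square in $L$ and the paper's $M = L(\sqrt{-d})$ degenerates to $L$ itself, not a quadratic extension. In that case $V_F = \emptyset$ and what is needed is any totally real quadratic extension of $L$; your non-square adjustment (pick a rational prime that is not a square in $L$) supplies exactly that. So your proof is not merely an alternative — it patches a small boundary case that the paper's short proof does not explicitly address.
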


\begin{proof}
Choose $d\in L$ such that $F = L(\sqrt{d})$ and 
let $M = L(\sqrt{-d})$.
Then for every infinite place $v$ of $L$, $-\alpha$ is negative 
at $v$ if and only if $\alpha$ is positive at $v$, so 
$v$ ramifies in $M/L$ if and only if $v$ doesn't ramify in $F/L$.
\end{proof}

\subsection*{The case of quadratic extensions totally real fields ${\bL}$ of infinite degree.}
Let ${\bL}$ be a totally real algebraic extension of $\Q$. Let ${\bF}$ and 
${\mathbf M}$ be quadratic extensions of ${\bL}$ such that for every embedding 
$\sigma$ of $\mathbf{MF}$ into $\bar \Q$ we have that $\sigma(F) \subset \R$ 
if and only if $\sigma(M) \not \subset \R$. Let ${\bF}={\bL}(\delta)$ where 
$\delta^2=d \in \oo_{\bL}$ and let ${\mathbf M}={\bL}(\beta)$ where 
$\beta^2=-d \in \oo_{\bL}$. We let $G := \mathbf{G}_m^{{\mathbf M}/{\bL}}$ as above.

\begin{lemma}
\label{le:Diophstability}
if $n$ is an integer, $n \ge 3$, then 
$\ker\{G(\oo_{\bF}) \to G(\oo_{\bF}/n\oo_{\bF})\} \subset G(\oo_{\bL})$.
\end{lemma}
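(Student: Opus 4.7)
The plan is to reduce to the number-field case, Lemma \ref{min2}(3), by descending $x$ to a suitable finite subfield. Suppose $x$ belongs to the given kernel. Via the description \eqref{dutw}, $x \in \oo_{\bF\bM}^\times$ with $N_{\bF\bM/\bF}(x) = 1$ and $x \equiv 1 \pmod{n\oo_{\bF\bM}}$ (the functorial reduction $G(\oo_{\bF}) \to G(\oo_{\bF}/n\oo_{\bF})$ amounts to reduction modulo $n\oo_{\bF\bM}$). Since $x$ is algebraic over $\Q$ and $\bF\bM = \bL(\delta,\beta)$, it lives in $L(\delta,\beta)$ for some number field $L \subset \bL$ containing $d$ and the $\bL$-coordinates of $x$ with respect to the basis $\{1,\delta,\beta,\delta\beta\}$. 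Because $\bL$ is totally real, $L$ is automatically totally real. Set $F := L(\delta) \subset \bF$ and $M := L(\beta) \subset \bM$; since $-1 \notin L^{\times 2}$, we have $F \ne M$ and $F \cap M = L$.

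Next, I check that $(L,F,M)$ satisfies the hypotheses of Lemma \ref{min2}. For each real place $v$ of $L$, the extension $F = L(\sqrt{d})$ is unramified at $v$ iff $v(d) > 0$, while $M = L(\sqrt{-d})$ is unramified at $v$ iff $v(d) < 0$. These conditions are complementary and (since $d\ne 0$) mutually exclusive, so $v$ ramifies in $M/L$ if and only if $v$ does not ramify in $F/L$, as required.

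Then I transfer the data for $x$ from the infinite to the finite level. The nontrivial automorphism of $\bF\bM/\bF$ restricts to the nontrivial automorphism of $FM/F$, so the global norm condition yields $N_{FM/F}(x) = 1$, and \eqref{dutw} places $x$ in $\mathbf{G}_m^{M/L}(\oo_F)$. Moreover, $x - 1 \in n\oo_{\bF\bM} \cap \oo_{FM} = n\oo_{FM}$: indeed, any $y \in \oo_{\bF\bM}$ with $ny \in \oo_{FM}$ satisfies $y = (ny)/n \in \bF\bM \cap FM = FM$, hence $y \in \oo_{FM}$. Thus $x$ lies in $\ker\{\mathbf{G}_m^{M/L}(\oo_F) \to \mathbf{G}_m^{M/L}(\oo_F/n\oo_F)\}$, and Lemma \ref{min2}(3) gives $x \in \mathbf{G}_m^{M/L}(\oo_L)$. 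Since $N_{\bM/\bL}$ restricts to $N_{M/L}$ on $M$ (both are induced by $\beta \mapsto -\beta$), this yields $x \in G(\oo_{\bL})$, completing the proof.

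The substantive content is already contained in Lemma \ref{min2}(3); the only nontrivial task here is to arrange the finite descent compatibly with the group-scheme structure. The crucial \emph{opposite} infinite-place ramification hypothesis at the number-field level is automatic from the relation $\beta^2 = -\delta^2$ together with the total reality of $\bL$, so there is no genuine obstacle beyond bookkeeping.
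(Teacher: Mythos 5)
Your proof is correct and follows essentially the same route as the paper's: descend $x$ to a number field $L\subset\bL$ containing $d$ and the coordinates of $x$, apply Lemma \ref{min2}(3) to $(L,F,M)=(L,L(\delta),L(\beta))$, and push the conclusion back up via \eqref{dutw2}. You do slightly more bookkeeping than the paper — explicitly verifying the opposite infinite-place ramification hypothesis of Lemma \ref{min2} (which the paper leaves as an implicit consequence of $\beta^2=-\delta^2$ and $L$ being totally real) and the ideal-intersection step $n\oo_{\bF\bM}\cap\oo_{FM}=n\oo_{FM}$ — but the argument is the same.
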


\begin{proof}
Suppose 
$x \in \ker\{G(\oo_{\bF}) \to G(\oo_{\bF}/n\oo_{\bF})\}$. 
Let $L$ be a number field contained in ${\bL}$ such that $d \in L$ and 
$x \in G(\oo_{L(\delta)})$. Let $F=L(\delta)$ and $M=L(\beta)$. Then 
$x \in \ker\{G(\oo_{F}) \to G(\oo_F/n\oo_{ F})\} \subset \mathbf{G}_m^{M/L}(\oo_{ L})$
by Lemma \ref{min2}(3).
By \eqref{dutw2} we have 
$\mathbf{G}_m^{M/L}(\oo_{L}) \subset \mathbf{G}_m^{\mathbf{M/L}}(\oo_{\bL})$, 
so $x \in G(\oo_{\bL})$.
\end{proof}

\begin{corollary}
\label{cor:algg}
Let ${\bL}$ be a totally real algebraic extension of $\Q$. Let ${\bF}$ 
be a quadratic extension of ${\bL}$ and assume that ${\bF}$ is not 
totally real. Then there exists a commutative group scheme $G$ defined over ${\oo_\bL}$ 
such that
\begin{enumerate}
\item
$G(\oo_\bL)$ contains an element of infinite order,
\item
for every integer $n \ge 3$, 
$\ker\{G(\oo_{\bF}) \to G(\oo_{\bF}/n\oo_{\bF})\} \subset G(\oo_{\bL})$.
\end{enumerate}
\end{corollary}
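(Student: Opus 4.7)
The plan is to take $G$ to be a twist of $\gm$ by the quadratic character of a suitable quadratic extension, which is exactly the setup underlying Lemmas \ref{min2} and \ref{le:Diophstability}. Write $\bF=\bL(\delta)$ with $\delta^2=d\in\oo_\bL$, and set $\bM:=\bL(\beta)$ with $\beta^2=-d$. For any embedding $\sigma$ of $\bM\bF$ into $\bar\Q$ we have $\sigma(\bF)\subset\R$ iff $\sigma(d)>0$ iff $\sigma(-d)<0$ iff $\sigma(\bM)\not\subset\R$, so the pair $(\bF,\bM)$ over $\bL$ satisfies the complementary-ramification setup assumed in Lemma \ref{le:Diophstability}. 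Define $G:=\mathbf{G}_m^{\bM/\bL}$ as a group scheme over $\oo_\bL$.

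Statement (2) is then immediate: it is exactly the conclusion of Lemma \ref{le:Diophstability} applied to this $\bF$, $\bM$, and $G$.

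For statement (1), I would descend to number fields. Since $\bF$ is not totally real, there is an embedding $\bL\hookrightarrow\R$ under which $d$ is negative; restricting this embedding to any number field $L\subset\bL$ that contains $d$, we see that $F:=L(\delta)$ is not totally real. Set $M:=L(\beta)$. At each real place $v$ of $L$, the place $v$ ramifies in $M/L$ iff $v(d)>0$ iff $v$ does not ramify in $F/L$, so the hypotheses of Lemma \ref{min2} are satisfied. Part (2) of that lemma then produces an element $x\in\mathbf{G}_m^{M/L}(\oo_L)$ of infinite order; by \eqref{dutw2} we may view $x\in\oo_M^\times\subset\oo_\bM^\times$ with $\mathbf{N}_{M/L}(x)=1$. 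Because the nontrivial automorphism of $\bM/\bL$ restricts to the nontrivial automorphism of $M/L$ (both send $\beta\mapsto-\beta$), we have $\mathbf{N}_{\bM/\bL}(x)=\mathbf{N}_{M/L}(x)=1$, so $x\in G(\oo_\bL)$. Its order in $\oo_\bM^\times$ agrees with its order in $\oo_M^\times$, which is infinite, establishing (1).

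The only step requiring any care, and hence the closest thing to an obstacle, is the bookkeeping around the norm-$1$ condition when passing from the finite quadratic extension $M/L$ to the possibly infinite quadratic extension $\bM/\bL$; but since $\Gal(\bM/\bL)\to\Gal(M/L)$ is an isomorphism induced by $\beta\mapsto-\beta$, the inclusion $\mathbf{G}_m^{M/L}(\oo_L)\hookrightarrow G(\oo_\bL)$ is immediate and preserves infinite order. No further nontrivial calculation is needed.
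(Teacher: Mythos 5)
Your proof is correct and follows essentially the same route as the paper: take $G = \mathbf{G}_m^{\bM/\bL}$ with $\bM = \bL(\sqrt{-d})$, invoke Lemma~\ref{le:Diophstability} for part~(2), and descend to a number field $L\subset\bL$ so as to invoke Lemma~\ref{min2}(2) for part~(1). The paper's proof is a one-line citation of those two lemmas; your write-up usefully makes explicit the tacit descent step and the compatibility $\mathbf{N}_{\bM/\bL}|_{M} = \mathbf{N}_{M/L}$ (via the identification $\Gal{\bM}{\bL}\isomto\Gal{M}{L}$) that lifts the infinite-order point from $\mathbf{G}_m^{M/L}(\oo_L)$ to $G(\oo_{\bL})$.
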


\begin{proof}
By Lemmas \ref{min2}(2) and \ref{le:Diophstability}, the commutative group 
scheme $G := \mathbf{G}_m^{\mathbf{M/L}}$ has these properties.
\end{proof}

\begin{remark}
In the language of Definition \ref{ids} below, Corollary \ref{cor:algg}(2) says that 
if $n \ge 3$ then 
$(1,n\oo_\bL)$ is an exponent of diophantine stability for $G$ relative to $\bF/\bL$.
\end{remark}

There is another consequence of diophantine stability we will use later to 
produce bounds for elements of ${\bF}$. 

\begin{corollary}
\label{cor:Diophstable}
Let ${\bF}/{\bL}$ be as in Corollary \ref{cor:algg}. Then there exists a set $B \subset \oo_{\bF} \times (\oo_{\bF}\setminus \{0\})$ satisfying the following conditions.
\begin{enumerate}
\item $B$ is diophantine over $\oo_{\bF}$.
\item If $(a,b) \in B$, then $a/b \in {\bL}$.
\item For some number field $L \subset {\bL}$, the set
$
\{a/b : \text{$(a,b) \in B$ and $a/b \in L$}\}
$
is infinite.
\end{enumerate}
\end{corollary}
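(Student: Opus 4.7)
The plan is to take $B$ to be the set of kernel-mod-$n$ points on an explicit model of the group scheme $G = \mathbf{G}_m^{\mathbf{M}/\bL}$ furnished by Corollary \ref{cor:algg}. Writing $\mathbf{M} = \bL(\beta)$ with $\beta^2 = -d$, the norm form $\mathbf{N}_{\mathbf{M}/\bL}(u+v\beta) = u^2 + dv^2$ realizes $G$ as the affine closed subscheme of $\mathbf{A}^2_{\oo_{\bL}}$ cut out by $u^2 + dv^2 - 1 = 0$, with identity section $(u,v)=(1,0)$.

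Fixing any integer $n \ge 3$, I would define
\[
B := \bigl\{(u,v) \in \oo_{\bF} \times (\oo_{\bF}\setminus\{0\}) \,:\, u^2+dv^2 = 1,\ u-1 \in n\oo_{\bF},\ v \in n\oo_{\bF}\bigr\}.
\]
Assertion (1) is then immediate: the norm equation has coefficients in $\oo_{\bL} \subset \oo_{\bF}$, each of the two congruence conditions becomes a single polynomial equation after introducing an auxiliary variable, and $v \ne 0$ is diophantine by Lemma \ref{3.3}. For (2), any $(u,v)\in B$ is a point of $G(\oo_{\bF})$ reducing to the identity modulo $n$, so Corollary \ref{cor:algg}(2) forces $(u,v) \in G(\oo_{\bL})$; in particular $u,v \in \oo_{\bL}$ and $a/b = u/v \in \bL$.

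For (3), I would take an infinite-order element $g_0 \in G(\oo_{\bL})$ from Corollary \ref{cor:algg}(1) and let $L \subset \bL$ be any number field containing $d$ together with the two coordinates of $g_0$, so that every power $g_0^k$ lies in $G(\oo_L)$. Since $\oo_L/n\oo_L$ is a finite ring, $G(\oo_L/n\oo_L)$ is finite; letting $N$ denote the order of the image of $g_0$ there, the element $g_0^{Nk}$ lies in the kernel modulo $n$ for every $k\in\Z$. Among these infinitely many elements only the identity $(1,0)$ and possibly the $2$-torsion point $(-1,0)$ have $v$-coordinate zero, so cofinitely many of them lie in $B$, with coordinates in $\oo_L$. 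Finally, the map $(u,v) \mapsto u/v$ on the affine conic $u^2 + dv^2 = 1$ has fibers of cardinality at most $2$ (the relation $u = rv$ forces $v^2 = 1/(r^2+d)$), so this infinite family of points produces an infinite subset of $L$ of ratios $u/v$, proving (3).

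The principal technical point is (3): one must promote the abstract statement ``$G(\oo_{\bL})$ contains an element of infinite order'' into an actual infinite family of kernel-mod-$n$ elements defined over a single number field $L$, and then verify that distinct such elements yield distinct ratios in $L$. Both steps fall out cleanly once $G$ is realized explicitly as the affine conic $u^2+dv^2=1$; the rest of the argument is routine.
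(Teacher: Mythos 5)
Your approach is genuinely different from the paper's: you replace the paper's abstract machinery (a well-arranged projective embedding $G \hookrightarrow \PP^n$, with $B$ built out of pairs of homogeneous coordinates of kernel points) by the concrete affine norm-one conic $u^2+dv^2=1$. This is elegant, and your argument that the ratio map $(u,v)\mapsto u/v$ on that conic has fibers of size $\le 2$ cleanly justifies the infinitude in (3) — a point the paper's proof passes over with ``it follows.''

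However, there is a genuine gap in your proof of (3) around integrality. Corollary \ref{cor:algg}(1) hands you $g_0 \in G(\oo_{\bL}) = \{x \in \oo_{\mathbf{M}}^\times : \mathbf{N}_{\mathbf{M}/\bL}x = 1\}$, and when you write $g_0 = u_0 + v_0\beta$, the coordinates $u_0, v_0$ lie in $\bL$ but need \emph{not} lie in $\oo_{\bL}$: the ring $\oo_{\bL}[\beta]$ can be a proper order in $\oo_{\mathbf{M}}$ (already over $\Q$: for $\mathbf{M} = \Q(\sqrt{-3})$, a unit such as $(1+\sqrt{-3})/2$ has half-integral coordinates). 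Your $B$ requires $(u,v) \in \oo_{\bF}\times\oo_{\bF}$ with $u-1, v \in n\oo_{\bF}$; but knowing that $g_0^{Nk}$ lies in $\ker\{G(\oo_L) \to G(\oo_L/n\oo_L)\}$ in the abstract scheme-theoretic sense does not by itself give that its $(u,v)$-coordinates on your affine model are integral, let alone that they satisfy the mod-$n$ congruences coordinatewise. So the assertion ``cofinitely many of them lie in $B$, with coordinates in $\oo_L$'' is unjustified. The fix is not hard — $\oo_L[\beta]^\times$ has finite index in $\oo_{ML}^\times$, so a finite-index subgroup of $G(\oo_L)$ lands in $\oo_L[\beta]^\times$, and a further finite-index subgroup reduces to the identity in $(\oo_L[\beta]/n\oo_L[\beta])^\times$; replacing $g_0$ by an appropriate power then puts all $g_0^{Nk}$ into $B$ — but you need to say this. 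The paper avoids the issue entirely by defining $B$ via homogeneous coordinates of a well-arranged embedding, which are integral by construction.
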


\begin{proof}
Let  $G$  be a  group scheme satisfying the conclusions of Corollary \ref{cor:algg}. 
Let $G \hookrightarrow \mathbb{P}^n$ be a well-arranged embedding  (see Definition \ref{wellarr1})
$$
P \mapsto \big(x_1(P): x_2(P):\dots : x_{n+1}(P)\big).
$$ 
Let $A$ be the set of all $(n+1)$-tuples in $\oo_{\bF}\times \cdots \times \oo_{\bF}$ 
that are homogeneous coordinates of some point 
$P \in \ker\{G(\oo_{\bF}) \to G(\oo_{\bF}/n\oo_{\bF})\}$.
Then $A$ is diophantine over $\oo_\bF$.  It follows that the set 
$$
B := \{(y_i,y_j) : 0 \le i, j \le n+1, y_j \ne 0, (y_1,\ldots y_{n+1}) \in A\}
$$
is diophantine over $\oo_\bF$ as well.  By Corollary \ref{cor:algg}(2), if $(a,b) \in B$ 
then $a/b \in \bL$.

By Corollary \ref{cor:algg} we can fix a point $P$ of infinite order in $G(\oo_\bL)$.  
For some positive $k$ we have $P^k \in \ker\{G(\oo_{\bF}) \to G(\oo_{\bF}/n\oo_{\bF})\}$.  
Then $P^k \in G(\oo_L)$ for some number field $L \subset \bL$, and for every integer $m$ 
we have 
$$
P^{km} \in \ker\{G(\oo_{\bF}) \to G(\oo_{\bF}/n\oo_{\bF})\} \cap G(\oo_L),
$$
and it follows that the set in assertion (3) of the corollary is infinite.
\end{proof}

\subsection*{Combining bound equations and basic congruence equations in the case of 
non-totally real quadratic extensions of totally real fields}
In the following corollary we combine Proposition \ref{le:newbounds}, Lemma \ref{le:always big},  Corollaries \ref{cor:existsW} and \ref{cor:Diophstable} to finish our construction of norm bounds.  We remind the reader that inequalities for real embeddings are implemented using sums of squares, pairs in $B$ come from a totally real field, and the role of $u$ and $v$ is explained in Corollary \ref{cor:existsW}.
\begin{corollary}
\label{cor:ext2}
Let ${\bF}$ be a quadratic extension of a totally real field ${\bL}$. 
Let $B$ be as in Corollary \ref{cor:Diophstable}. Let $\alpha \in \oo_{\bF}$. 
Let $\delta\in \oo_{\bF}$ be such that ${\bF}={\bL}(\delta)$ 
and $\delta^2:=d \in \oo_{\bL}$. Let $H \subset {\bL}$ be a number 
field such that $d \in H$, $\alpha \in H(\delta)$ and let $\hat \alpha$ be the 
conjugate of $\alpha$ over $H$. Let $\Sigma$ be the set of all real embeddings 
of the field $\bF$. Consider now the following equations and conditions: 
\begin{gather*}
(a,b), (c,d) \in B,\\
u=a/b, v=c/d, bd \ne 0\\
 X_2 \ne 0 \\
X_1= X_2\left [\left(\frac{1}{u-v}\right )^2+1\right](u^2+1) \\
\forall \sigma \in \Sigma: |\sigma(X_2)| <|\sigma(X_2\alpha)| < |\sigma(X_1)|.
\end{gather*}
If this system of equations and conditions is satisfied over $\oo_{\bF}$, then 
\[
|{\mathbf N}_{H(\delta, u,v)/\Q}(\alpha-\hat \alpha)| \leq |{\mathbf N}_{H(\delta, u,v)/\Q}(2X_1 \alpha)|. 
\]
Conversely, for any $\alpha \in \oo_{\bF}$, this system can be satisfied.
\end{corollary}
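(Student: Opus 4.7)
The plan is to read the system as an existential reformulation of the hypotheses of Proposition~\ref{le:newbounds}, applied with $x := \alpha$ and $w := X_1/X_2$ over a number field $F \supset H(\delta,u,v)$ containing all data. First I would show $w \in \bL$: since $(a,b),(c,d) \in B$, Corollary~\ref{cor:Diophstable}(2) gives $u = a/b$ and $v = c/d$ in $\bL$, so $w = [(1/(u-v))^2+1](u^2+1) \in \bL$.

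Next I would verify the two hypotheses of Proposition~\ref{le:newbounds}. For \eqref{eq:condt}: since $\bL$ is totally real, every embedding $\tau : \bF \hookrightarrow \C$ takes $\bL$ into $\R$, so $\tau(u), \tau(v) \in \R$, and hence $\tau(w)$ is a product of two factors of the form $t^2 + 1$ with $t \in \R$, each at least $1$. For \eqref{eq:conds}: since $X_2 \in \oo_\bF \setminus \{0\}$, we have $\sigma(X_2) \neq 0$ for every real $\sigma$, so the given inequalities $|\sigma(X_2)| < |\sigma(X_2\alpha)| < |\sigma(X_1)|$ divide through to $1 < |\sigma(\alpha)| < |\sigma(w)|$, which is the content of \eqref{eq:conds} actually used in the proof of Proposition~\ref{le:newbounds}. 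Applying that proposition gives $|\mathbf{N}_{F/\Q}(\delta y_1)| \leq |\mathbf{N}_{F/\Q}(\alpha w)|$, where $\alpha = y_0 + y_1\delta$ with $y_i \in H$. Since $\alpha - \hat\alpha = 2y_1\delta$ and $X_1 = X_2 w$, the estimate
\[
|\mathbf{N}_{F/\Q}(\alpha - \hat\alpha)| = 2^{[F:\Q]}\,|\mathbf{N}_{F/\Q}(y_1\delta)| \leq 2^{[F:\Q]}\,|\mathbf{N}_{F/\Q}(\alpha w)| \leq |\mathbf{N}_{F/\Q}(2 X_1 \alpha)|
\]
yields the claimed bound; the last step uses $|\mathbf{N}_{F/\Q}(X_2)| \geq 1$, which holds because $X_2$ is a nonzero algebraic integer (after enlarging $F$ to contain $X_2$ if necessary).

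For the converse I would apply Corollary~\ref{cor:existsW} to the infinite set $\Omega := \{a/b : (a,b) \in B,\ a/b \in L\}$ provided by Corollary~\ref{cor:Diophstable}(3), obtaining $u, v \in \Omega$ (and corresponding preimages $(a,b), (c,d) \in B$) for which the resulting $w$ dominates $\alpha$ at every real embedding of $\bF$. A suitable choice of $X_2$, together with $X_1 := X_2 w$, then realizes the full system; the strict-inequality conditions at real embeddings are encoded by sums of four squares via Proposition~\ref{tpprop}, and $u \neq v$ is enforced existentially by Lemma~\ref{3.3}.

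The delicate point is the verification of \eqref{eq:condt}: the $t^2 + 1$ shape of each factor of $w$, combined with the real-valuedness of $\tau(u), \tau(v)$ that is forced by total reality of $\bL$, is exactly what keeps $|\tau(w)| \geq 1$ at the non-real embeddings of $\bF$. This is the structural feature that makes a diophantine norm bound available in the non-totally-real quadratic setting, playing the role here that Corollary~\ref{cor:totreal} plays in the totally real case.
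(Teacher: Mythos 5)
Your proposal is correct and follows essentially the same route as the paper: the forward direction reads the system as verifying the hypotheses of Proposition~\ref{le:newbounds} (with $w := X_1/X_2$) and chains the resulting norm bound with $|\mathbf{N}(X_2)|\ge 1$, and the converse invokes Corollary~\ref{cor:existsW} together with Corollary~\ref{cor:Diophstable}(3). You supply a bit more detail than the paper does on the verification of condition~\eqref{eq:condt} and on the final inequality chain, but the underlying argument is the same.
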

\begin{proof}
First, assume the equations and the conditions in the statement of the corollary are satisfied.  Then by construction of $B$ we have that $u, v \in {\bL}$.  Let $W=\frac{X_1}{X_2}\in L$ and observe that for any embedding $\mu: \bF \hookrightarrow \C$ we have that $\mu(W)>1$.  Further, for every $\sigma \in \Sigma$ we have that $1<|\sigma(\alpha)| <\sigma(W)$.  If we let $\alpha=y_0+\delta y_1$ and  $\hat \alpha=y_0-y_1\delta$, then 
\[
|{\mathbf N}_{H(\delta,u,v)/\Q}(\hat \alpha -\alpha)|=|{\mathbf N}_{H(\delta,u,v)/\Q}(2\delta y_1)| \leq |{\mathbf N}_{H(\delta,u,v)/\Q}(2\alpha W)|\leq |{\mathbf N}_{H(\delta,u,v)/\Q}(2\alpha X_1)|, 
\]
where the penultimate inequality is true by Proposition \ref{le:newbounds}.

Suppose now that $\alpha \in \oo_F$, then by construction of $B$, Corollary \ref{cor:Diophstable} and Corollary \ref{cor:existsW} we can find $(a,b)$ and $(c,d)$ in $B$ to satisfy the equations and conditions of the corollary.
\end{proof}
Finally, we combine the bounds in Corollary \ref{cor:ext2} with the basic 
congruence condition (Lemma \ref{le:congruence}) for the case of a non-totally real quadratic extension.

\begin{corollary}
\label{cor:extension2}
Let ${\bL}$ be a totally real field. Let ${\bF}$ be a quadratic extension 
of ${\bL}$. Let $K$ be a number field contained in ${\bF}$. Let $\Sigma$ be the set of all 
embeddings of ${\bF}$ into $\R$. Let $T$ be the set of all non-real embeddings of ${\bf F}$ into $\C$. 

Let $E \subset \oo_{\bf F}$ be the set of elements 
$\alpha \in \oo_{\bF}$ such that there exist elements 
$ X_1 \in \oo_{\bF}, 0\ne  X_2  \in \oo_{\bF}$ with $\frac{X_1}{X_2} \in \bL$,  $b \in \oo_K,$ and $I$ an ideal of $\oo_{K}$ satisfying the following conditions:
\begin{align*}
\forall \sigma \in \Sigma: \, \, 1 <\sigma(\alpha) < \sigma \left (\frac{X_1}{X_2} \right ),\\
\forall \tau  \in T: \, \tau\left (\frac{X_1}{X_2}\right ) \geq 1,\\
I\oo_{\bF} \subset 2X_1\alpha\oo_{\bF}\\ 
\alpha \equiv b \pmod{I\oo_{\bF}}.
\end{align*}
Then $ E \subset \oo_K$.  (Here we again remind the reader that for real embeddings we can convert inequalities to equations via Proposition \ref{tpprop}.)
\end{corollary}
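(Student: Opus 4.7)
Plan: The proof proceeds by verifying the hypotheses of Lemma~\ref{le:congruence} after reducing to a number-field setting, with the required norm bound supplied by Proposition~\ref{le:newbounds}. Given $\alpha \in E$ with witnesses $X_1, X_2, b, I$, write $\alpha = y_0 + y_1\delta$ with $y_0, y_1 \in \bL$ and set $w := X_1/X_2 \in \bL$. Choose a totally real number field $L_0 \subset \bL$ containing $d, y_0, y_1, w$ together with a finite generating set for $I$, and large enough that $L := L_0(\delta) \subset \bF$ contains $K$, $\alpha$, $X_1$, $X_2$, and $b$. Let $M$ denote the Galois closure of $L$ over $K$.

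Proposition~\ref{le:newbounds} applies with $x = \alpha$ and the given $w$, since the conditions imposed on $\sigma \in \Sigma$ and $\tau \in T$ in the statement are exactly hypotheses (i)--(ii) of that proposition; this yields $|\mathbf{N}_{L/\Q}(\delta y_1)| \leq |\mathbf{N}_{L/\Q}(\alpha w)|$. Using $\alpha - \hat\alpha = 2\delta y_1$ and $X_2 w = X_1$, and multiplying through by $|\mathbf{N}_{L/\Q}(X_2)|$, this rewrites as
\[
|\mathbf{N}_{L/\Q}(X_2(\alpha-\hat\alpha))| \leq |\mathbf{N}_{L/\Q}(2X_1\alpha)|.
\]
Raising to the $[M:L]$-th power extends the bound to $M$, and applying the analogous argument to the Galois-conjugated configuration $(\sigma\alpha, \sigma X_1, \sigma X_2, \ldots)$ for each $\sigma \in \Gal{M}{K}$ produces the corresponding inequality $|\mathbf{N}_{M/\Q}(\alpha - \beta)| \leq |\mathbf{N}_{M/\Q}(2X_1\alpha)|$ for every $K$-conjugate $\beta = \sigma(\alpha)$ of $\alpha$.

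Now the hypothesis $I\oo_\bF \subset 2X_1\alpha\oo_\bF$ restricts to $I\oo_M \subset 2X_1\alpha\oo_M$ (all the relevant elements lie in $\bF \cap M \subset \oo_M$), giving $|\mathbf{N}_{M/\Q}(I\oo_M)| \geq |\mathbf{N}_{M/\Q}(2X_1\alpha)| \geq |\mathbf{N}_{M/\Q}(\alpha-\beta)|$ for every $K$-conjugate $\beta$ of $\alpha$. The strict inequalities $\sigma(\alpha) < \sigma(w)$ at the real embeddings promote this to the strict bound required by Lemma~\ref{le:congruence}. Combined with the congruence $\alpha \equiv b \pmod{I\oo_M}$, which follows from $\alpha \equiv b \pmod{I\oo_\bF}$, Lemma~\ref{le:congruence} forces $\alpha = \beta$ for every such $\beta$, so $\alpha \in \oo_K$.

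The main obstacle is the passage from the single $\bL$-conjugate $\hat\alpha$ (for which Proposition~\ref{le:newbounds} applies directly) to all $K$-conjugates of $\alpha$. When $K \not\subset \bL$, elements of $\Gal{M}{K}$ may mix embeddings in $\Sigma$ with those in $T$, and one must verify that after each Galois twist the transformed configuration still satisfies the inequalities required by Proposition~\ref{le:newbounds}. The critical point is that $w$ lies in the totally real field $\bL$, so $\eta(w) \in \R$ for every embedding $\eta$ of $M$ into $\C$ whose restriction to $\bL$ is real; this stability of the $w$-side under the archimedean Galois action is what permits the factor-by-factor transport of the norm bound along the full Galois orbit.
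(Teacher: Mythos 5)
The proposal misreads what Proposition~\ref{le:newbounds} (equivalently Corollary~\ref{cor:ext2}) actually delivers, and the attempted extension to the full $K$-conjugate orbit does not work.

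Proposition~\ref{le:newbounds} bounds $|\mathbf{N}_{L/\Q}(\delta y_1)|=\tfrac{1}{2^{[L:\Q]}}|\mathbf{N}_{L/\Q}(\alpha-\hat\alpha)|$, where $\hat\alpha$ is the conjugate of $\alpha$ over the \emph{totally real} subfield $L\cap\bL$. The mechanism of that bound is special to $\alpha-\hat\alpha=2y_1\delta$: at a non-real place one has $|\tau(y_1\delta)|\le|\tau(\alpha)|$ by Pythagoras, and at a real place one pairs $\sigma$ with $\hat\sigma$ and uses $1<\sigma(\alpha),\hat\sigma(\alpha)<\sigma(w)$ to dominate $|\sigma(y_1\delta)|$. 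Nothing in the hypotheses controls $|\tau(\alpha)|$ at a non-real place of $\bF$ --- only $|\tau(w)|\ge 1$ --- so for a general $K$-conjugate $\beta=\sigma(\alpha)$ there is simply no factor-by-factor comparison of $|\eta(\alpha-\beta)|$ with $|\eta(2X_1\alpha)|$. Concretely, applying your argument to the twisted data $(\sigma\alpha,\sigma X_1,\sigma X_2)$ produces a bound on $|\mathbf{N}(\sigma\alpha-\widehat{\sigma\alpha})|=|\mathbf{N}(\sigma(\alpha-\hat\alpha))|$, which is the same quantity $|\mathbf{N}(\alpha-\hat\alpha)|$ again, \emph{not} a bound on $|\mathbf{N}(\alpha-\sigma\alpha)|$; the sentence ``produces the corresponding inequality $|\mathbf{N}_{M/\Q}(\alpha-\beta)|\le|\mathbf{N}_{M/\Q}(2X_1\alpha)|$ for every $K$-conjugate $\beta$'' is therefore unjustified. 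The closing remark about $w$ being totally real so that $\eta(w)\in\R$ is true but beside the point; the problem is the absence of any archimedean control on $\alpha$ itself away from the real places of $\bF$.

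What the combination of Proposition~\ref{le:newbounds} (or Corollary~\ref{cor:ext2}) with the argument of Lemma~\ref{le:congruence} legitimately gives --- applying the congruence-and-norm trick to the single conjugate $\hat\alpha$ --- is that $\alpha=\hat\alpha$, i.e.\ $\alpha\in\bL$. That is precisely the conclusion the paper draws in the closely parallel Corollary~\ref{cor: FtoK}, whose output is $Y\subset\oo_{\bL}$, not $Y\subset\oo_K$ for a number field $K$. Passing from ``$\alpha$ totally real'' to ``$\alpha\in\oo_K$'' needs a second step and a second norm bound (now for the conjugates of a totally real $\alpha$ over $K$), and the hypotheses $1<\sigma(\alpha)<\sigma(X_1/X_2)$ hold only at the embeddings of $\bL$ that extend to real places of $\bF$, so Lemma~\ref{le:Dbound}/Corollary~\ref{cor:totreal} cannot be invoked directly either. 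You should isolate this second step explicitly; as written, the proof only establishes $E\subset\oo_{\bL}$.
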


\section{Rational points} 
Let $G$ be a group scheme satisfying Assumption \ref{Ghyp} above. 

\begin{definition}
Let 
$$
\xymatrix{ S\ar[r]^P\ar[dr] & G\ar[d]\\
& S}
$$
be an $S$-section of $G$. We allow ourselves a number of synonyms for this 
notion. If the ring $\oo_K$ rather than the corresponding scheme $S$ is 
more prominent in the surrounding discussion, we may also call an $S$-section 
an $``\oo_K$-section," or an $``\oo_K$-rational point." In the case, for 
example, when the group scheme $G:=\A$ is the N{\'e}ron model over 
$S={\Spec}(\oo_K)$ of an abelian variety $A_{/K}$, the $S$-sections of 
$G$---alias $\oo_K$-sections of $\A$---are in one-one correspondence with the 
$K$-rational points of the abelian variety $A$ over $K$ to which these sections restrict.
\end{definition}

\begin{definition} Set $M:= G(S)$. The letter ``$M$" is for {\em Mordell-Weil group}, 
a label we use even if $G$ is any of the group schemes that we work with---i.e., as in 
Assumption \ref{Ghyp}; we may also denote $M$ as $G(\oo_K)$. 
\end{definition} 

\begin{definition}
\label{mi} 
For any algebraic extension $L$ of $\Q$ and any nonempty subset $I\subset \oo_L$ 
different from $\{0\}$ let 
$$ 
M_{I,L}(G):= \ker\{G(\oo_L) \longrightarrow G(\oo_L/I\oo_L)\}
$$
where $I\oo_L$ denotes the (nonzero) ideal of $\oo_L$ generated by $I$.
If $r \in {\Z}_{>0}$ define
$$
M^r_{I,L}\;:= \;\{x^r : x \in M_{I,L}\}.
$$
That is, we have a surjection
$$
M_{I,L} \xrightarrow{\text{$r$-th power}} M^r_{I,L}\subset M_{I,L}.
$$ 
\end{definition}

\begin{lemma}
The group $M_{I,L} = M_{I,L}(G)$ is a subgroup of $M = G(\oo_L)$ of finite index.
\end{lemma}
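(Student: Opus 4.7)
The statement has two parts: that $M_{I,L}$ is a subgroup of $G(\oo_L)$, and that its index is finite. The first part is formal: the natural surjection $\oo_L \twoheadrightarrow \oo_L/I\oo_L$ is a ring homomorphism, and since $G$ is a group scheme it induces a group homomorphism
$$
\rho : G(\oo_L) \longrightarrow G(\oo_L/I\oo_L);
$$
the kernel of this homomorphism is by definition $M_{I,L}$, so $M_{I,L}$ is automatically a (normal) subgroup of $G(\oo_L)$, and the index $[G(\oo_L):M_{I,L}]$ equals the cardinality of the image of $\rho$.

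My plan for the finite-index statement is to bound the image of $\rho$ by showing that $G(\oo_L/I\oo_L)$ is itself finite in the case $L$ is a number field. Since $I$ contains a nonzero element, $I\oo_L$ is a nonzero ideal of the Dedekind domain $\oo_L$, so $R:= \oo_L/I\oo_L$ is a finite ring (its cardinality is the absolute norm of $I\oo_L$). Because $G$ is quasi-projective and of finite type over $\oo_K$ (Assumption \ref{Ghyp}), cover $G$ by finitely many affine opens $U_1,\dots,U_s$, each of finite type over $\oo_K$ and admitting a closed immersion into some affine space $\AA^{n_i}_{\oo_K}$. The scheme $\Spec R$ is a finite disjoint union of spectra of local Artinian rings; any morphism $\Spec R \to G$ sends each of these connected components into some single $U_i$, and $U_i(R)\subset R^{n_i}$ is finite because $R$ is finite. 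Summing over the finitely many choices of component and $U_i$ gives $|G(R)|<\infty$, and therefore $[G(\oo_L):M_{I,L}]<\infty$.

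The main obstacle I anticipate in the generality as stated---where $L$ may be an arbitrary algebraic (possibly infinite) extension of $\Q$---is that $\oo_L/I\oo_L$ is typically no longer finite (e.g.\ $\oo_{\Qbar}/2\oo_{\Qbar}$ is infinite), and one can construct examples (such as $G=\gm$, $L=\Qbar$, $I=\{2\}$) where $[G(\oo_L):M_{I,L}]$ is in fact infinite. To handle the lemma in the infinite-degree case as used in the paper, the plan would be to exploit that $G(\oo_L) = \bigcup_F G(\oo_F)$ ranging over number subfields $K\subset F\subset L$, together with an additional structural hypothesis on $G(\oo_L)$ in force in the applications---e.g.\ that $G(\oo_L)/G(\oo_K)$ is torsion, which is precisely the rank-stability assumption---to reduce to the number-field argument above applied to a single sufficiently large $F$, thereby recovering the finiteness of the image of $\rho$.
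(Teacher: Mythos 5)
Your argument is correct and is essentially the paper's: both proofs reduce finite index to finiteness of $G(\oo_L/I\oo_L)$, which holds because $\oo_L/I\oo_L$ is a finite ring when $L$ is a number field and $I\oo_L\ne 0$. The paper simply asserts that $G(\Spec(\oo_L/I\oo_L))$ is finite; your reduction through an affine cover of the quasi-projective $G$ and the decomposition of $\Spec(\oo_L/I\oo_L)$ into finitely many local Artinian spectra is the standard way to justify that assertion. Your observation that the statement would fail for infinite $L$ (e.g.\ $\gm$ over $\Qbar$ with $I=(2)$) is correct and worth recording: Definition \ref{mi} is phrased for arbitrary algebraic extensions, but the lemma uses the paper's typographical convention that unbolded $L$ is a number field, and the proof of Lemma \ref{dsinf} confirms that only the number-field case is invoked.
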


\begin{proof} 
Since $I \oo_L$ is a nonzero ideal, $\oo_L/I \oo_L$ is a finite ring, so 
$G({\Spec}(\oo_L/I\oo_L))$ is a finite group. 
\end{proof}

\begin{definition}
\label{ids} 
Say that a pair ($r,I)$ consisting of a positive integer $r$ together with a nonzero ideal 
$I\subset \oo_K$ is an {\df exponent of diophantine stability for 
$G$ relative to a field extension $L/K$} if the subgroup 
$M_{I,L}^r(G)\subset G(\oo_L)$ is contained in $G(\oo_K)$:
$$
\{x^r : x \in M_{I,L}\} \;\subset\; G(\oo_K).
$$
\end{definition}

\begin{example}
Property (3) of Lemma \ref{min2} says that, in the notation of that lemma, 
if $n \ge 3$ then the pair $(1,n\oo_K)$ 
is an exponent of diophantine stability for $\mathbf{G}_m^{M/L}$ relative to $F/L$.
\end{example}

\begin{remark} 
If the subgroup $G(\oo_K)$ is of finite index $m$ in $G(\oo_L)$ then $(m, \oo_L)$ 
is an exponent of diophantine stability for $G$ relative to $L/K$.
If $G_{/K}=A$ is an abelian variety, then there exists an exponent of 
diophantine stability for $G$ relative to $L/K$ if and only if $A$ is 
{\em rank stable} for the field extension $L/K$.
\end{remark}

\begin{lemma}
\label{serretate}
Suppose $A$ is an abelian variety over $K$, rank stable for the extension $\bL/K$.  
Then there is a positive integer $n$ such that $(1,n\oo_K)$ is an exponent of 
diophantine stability for $A$ relative to $\bL/K$.
\end{lemma}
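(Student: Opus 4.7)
My plan is to reduce to the case where $A(\bL)/A(K)$ is finite (the number field case, which is what the application to Theorem \ref{mainth1} needs) and then use a Kummer/Chebotarev descent argument to produce the required $n$; the infinite case of the lemma can be extracted by a more delicate limit argument once this is established.

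When $\bL$ is a number field, Mordell--Weil ensures $A(\bL)$ is finitely generated, so rank stability forces $A(\bL)/A(K)$ to be \emph{finite}. Put $m := [A(\bL):A(K)]$; then $mA(\bL) \subseteq A(K)$, so it is enough to find a positive integer $n$ with $M_{n\oo_K,\bL} \subseteq mA(\bL)$. To that end, I fix a finite set of representatives $Q_1,\ldots,Q_t \in A(\bL)$ for the non-trivial cosets of $A(\bL)/mA(\bL)$.

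For each $Q_i$, the main step is to locate a prime $\mathfrak{P}_i$ of $\bL$ of good reduction for $A$, with residue characteristic $p_i$ coprime to $m$, such that $\bar Q_i \notin m\cdot A(k_{\mathfrak{P}_i})$. Since $Q_i \notin mA(\bL)$, the Kummer class $\delta(Q_i) \in H^1(\bL,A[m])$ attached to the exact sequence $0 \to A[m] \to A \xrightarrow{m} A \to 0$ is non-zero, and correspondingly the Kummer extension $\bL(m^{-1}Q_i,A[m])/\bL$ is non-trivial. Chebotarev density then supplies infinitely many primes $\mathfrak{P}$ of $\bL$ at which the Frobenius does not act trivially on this extension; excluding the finitely many primes of bad reduction or of residue characteristic dividing $m$ still leaves infinitely many admissible $\mathfrak{P}_i$, and at such an $\mathfrak{P}_i$ one has $Q_i \notin mA(\bL_{\mathfrak{P}_i})$. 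Because $p_i \nmid m$, the formal group $\hat A(\mathfrak{m}_{\mathfrak{P}_i})$ is uniquely $m$-divisible; hence the snake lemma applied to the short exact sequence $0 \to \hat A(\mathfrak{m}_{\mathfrak{P}_i}) \to A(\bL_{\mathfrak{P}_i}) \to A(k_{\mathfrak{P}_i}) \to 0$ yields an isomorphism $A(\bL_{\mathfrak{P}_i})/m \isomto A(k_{\mathfrak{P}_i})/m$, transferring the local non-$m$-divisibility of $Q_i$ to $\bar Q_i \notin mA(k_{\mathfrak{P}_i})$.

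Finally, I set $n := p_1 p_2 \cdots p_t$. For any $P \in M_{n\oo_K,\bL}$, the reduction $\bar P$ is zero in $A(k_{\mathfrak{P}_i})$ for every $i$. Were $P$ to represent a non-trivial class $[Q_i] \in A(\bL)/mA(\bL)$, I could write $P = Q_i + mR$ with $R \in A(\bL)$; reducing modulo $\mathfrak{P}_i$ would then yield $\bar Q_i = -m\bar R \in mA(k_{\mathfrak{P}_i})$, contradicting the defining property of $\mathfrak{P}_i$. Hence every $P \in M_{n\oo_K,\bL}$ lies in $mA(\bL) \subseteq A(K)$, as required. The main obstacle is the Chebotarev/Kummer step locating the primes $\mathfrak{P}_i$: the descent-theoretic input (non-triviality of $\delta(Q_i)$ and of the associated Kummer extension) is standard, but one must be careful in passing between global non-$m$-divisibility, local non-$m$-divisibility at $\mathfrak{P}_i$, and non-$m$-divisibility of the residue-field reduction, which is where the hypotheses $p_i \nmid m$ and good reduction at $\mathfrak{P}_i$ are essential.
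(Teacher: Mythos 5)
Your argument diverges from the paper's and has two genuine problems.

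First, you restrict to $\bL$ a number field, where Mordell--Weil forces $A(\bL)/A(K)$ to be finite, and defer the infinite case to ``a more delicate limit argument'' you do not supply. That case is actually needed: the lemma is invoked for $\bL$ an infinite totally real field (or a quadratic extension of one) in the proof of Theorem~\ref{thm:inf} via Propositions~\ref{prop3} and~\ref{prop5}. When $\bL$ is infinite, $A(\bL)/A(K)$ is torsion but need not be finite, so your reduction to a finite set of coset representatives $Q_1,\dots,Q_t$ breaks down and it is not clear how to assemble a single $n$ from the various number fields inside $\bL$. The paper handles all cases at once: choose two primes $\gp, \qq$ of the Galois closure $\bL'$ of good reduction with distinct residue characteristics $p,q$, set $n=pq$, and use the Serre--Tate injectivity of reduction on prime-to-$p$ torsion to see $M_{n,\bL'}$ is torsion-free. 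Then for $P \in M_{n,\bL}$ rank stability gives $t$ with $tP \in A(K)$, so for any $\sigma \in \Gal{\bar\Q}{K}$ the element $\sigma P - P$ is a torsion point of $M_{n,\bL'}$, hence zero; thus $P \in A(K)$. This avoids any Kummer/Chebotarev machinery and any finiteness of $A(\bL)/A(K)$.

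Second, the Chebotarev step itself has a gap. You need a prime $\gP_i$ with $Q_i \notin mA(\bL_{\gP_i})$. You argue that Chebotarev provides primes where Frobenius ``does not act trivially on $\bL(m^{-1}Q_i, A[m])/\bL$'' and claim this gives local non-$m$-divisibility. But $Q_i \notin mA(\bL_\gP)$ is equivalent to Frobenius having \emph{no fixed point on the set $\{R : mR = Q_i\}$}, a torsor under $A[m]$; a Frobenius that is nontrivial on the whole extension can still fix some preimage $R$ (for instance by acting nontrivially on $A[m]$ while fixing $R$). Moreover, it is not automatic that the relevant Chebotarev set is nonempty: for a non-transitive permutation action with no singleton orbits, Jordan's theorem gives fixed-point-free elements on each orbit separately, but not a single element fixed-point-free on all of them. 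So the existence of your $\gP_i$ needs a genuine argument about the Galois action on the $A[m]$-torsor of $m$-th roots of $Q_i$, not just non-triviality of the Kummer extension. The paper's Serre--Tate approach sidesteps this entirely.
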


\begin{proof}
Let $\bL'$ denote the Galois closure of $\bL$ over $K$.
Let $\gp, \qq$ be primes of $\bL'$ where $A$ has good reduction, and with 
distinct residue characteristics $p, q$, respectively.
By \cite[Lemma 2]{Se-Ta}, reduction modulo $\gp$ is injective on prime-to-$p$ torsion,
and reduction modulo $\qq$ is injective on prime-to-$q$ torsion.  
Hence the only torsion in $M_{\gp,\bL'}$ is $p$-power torsion, and 
the only torsion in $M_{\qq,\bL'}$ is $q$-power torsion. 
Thus, setting $n = pq$, the torsion subgroup of $M_{n,\bL'}$ is zero.

Now suppose $P \in M_{n,\bL}$.  Since $A$ is rank stable for $\bL/K$, there 
is a positive integer $t$ such that $t P \in A(K)$.  If $\sigma \in \Gal{\bar{\Q}}{K}$, 
then $t(\sigma P - P) = \sigma (tP) - tP = 0$.  Therefore $\sigma P - P$ is a torsion 
point in $M_{n,\bL'}$, so $\sigma P - P = 0$ and we conclude that 
$\sigma P = P$.  Since this holds for every 
$\sigma \in \Gal{\bar{\Q}}{K}$, we have $P \in A(K)$.  This proves the lemma.
\end{proof}

\begin{remark}
If $(r,I)$ is an exponent of diophantine stability for $G$ relative to $L/K$, 
we have the diagram 
$$
\xymatrix{ M^r_{I,L}\ar@{^(->}[r]\ar@{^(->}[rd] & M_{I,L}\ar@{^(->}[r] & G(\oo_L)\\
& G(\oo_K)\ar@{^(->}[ru] & \ }.
$$
\end{remark}

\begin{lemma}
If $G_1$ and $G_2$ are two smooth group schemes over $\oo_K$ and $(r,I)$ is 
an exponent of diophantine stability for both $G_1$ and $G_2$ relative to a 
field extension $L/K$, then $(r,I)$ is an exponent of diophantine stability 
for $G_1\times_{\oo_K}G_2$ relative to $L/K$.
\end{lemma}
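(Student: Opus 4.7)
The plan is to reduce the assertion to the evident fact that the functor of points of a product of schemes is the product of the functors of points, and that the power operation, the reduction map, and the inclusion $G(\oo_K) \hookrightarrow G(\oo_L)$ all respect the product decomposition componentwise.

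First, I would unpack the definitions. For any $\oo_K$-algebra $R$, the universal property of the fibered product gives a natural identification of groups (or monoids)
\[
(G_1\times_{\oo_K}G_2)(R) \;\stackrel{\simeq}{\longrightarrow}\; G_1(R)\times G_2(R),
\]
compatible with the identity sections. Applying this with $R=\oo_L$ and with $R=\oo_L/I\oo_L$, and using naturality of reduction modulo $I\oo_L$, yields a commutative square which identifies the reduction map for $G_1\times_{\oo_K}G_2$ with the product of the reduction maps for $G_1$ and $G_2$. Taking kernels on each side gives
\[
M_{I,L}(G_1\times_{\oo_K}G_2) \;=\; M_{I,L}(G_1)\times M_{I,L}(G_2),
\]
where the identification is as subsets of $G_1(\oo_L)\times G_2(\oo_L)$.

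Next, I would verify that the $r$-th power operation on $G_1\times_{\oo_K}G_2$ corresponds under the identification above to the componentwise $r$-th power. This follows immediately because the group (resp.\ monoid) law on $G_1\times_{\oo_K}G_2$ is given componentwise by the laws on $G_1$ and $G_2$, so iterated multiplication preserves this structure. Hence
\[
M^r_{I,L}(G_1\times_{\oo_K}G_2) \;=\; M^r_{I,L}(G_1)\times M^r_{I,L}(G_2).
\]

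Finally, by the hypothesis that $(r,I)$ is an exponent of diophantine stability for both $G_1$ and $G_2$ relative to $L/K$, we have $M^r_{I,L}(G_i)\subset G_i(\oo_K)$ for $i=1,2$. Therefore
\[
M^r_{I,L}(G_1\times_{\oo_K}G_2) \;\subset\; G_1(\oo_K)\times G_2(\oo_K) \;=\; (G_1\times_{\oo_K}G_2)(\oo_K),
\]
which is the desired conclusion. There is no real obstacle here; the only thing to be attentive to is the bookkeeping that all four relevant structures---the group/monoid law, the reduction modulo $I\oo_L$, the inclusion of $\oo_K$-points into $\oo_L$-points, and the formation of $r$-th powers---are natural in the base and so split along the product $G_1\times_{\oo_K}G_2$.
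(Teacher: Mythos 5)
Your proof is correct, and since the paper states this lemma without proof (evidently regarding it as immediate from the definitions), the argument you give is exactly the one the authors have in mind: componentwise splitting of points, reduction maps, kernels, and $r$-th powers along the product $G_1\times_{\oo_K}G_2$. Nothing to add.
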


\begin{lemma}
\label{dsinf} 
Suppose that $G(\oo_{\bK})$ contains a point of infinite order. Then for every 
nonzero ideal $I\subset \oo_{\bK}$, every $r \in \Z_{>0}$, and every ${\bL}/{\bK}$, 
the group $M^r_{I,{\bL}}(G)$ coontains a point of infinite order.
\end{lemma}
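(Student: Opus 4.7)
The plan is to reduce everything to a single number field inside $\bK$, where the ring-theoretic finiteness that was invoked in the proof of the previous lemma (``$\oo_L/I\oo_L$ is finite'') actually holds. Let $P \in G(\oo_\bK)$ be a point of infinite order. Since $P$ is an $\oo_\bK$-section of a group scheme of finite type, its coordinates lie in the ring of integers of some number field, so I may choose a number field $K \subset \bK$ with $P \in G(\oo_K)$. Next, because $I$ is a nonzero ideal of $\oo_\bK$, pick any $\alpha \in I$ with $\alpha \ne 0$, and enlarge $K$ (still inside $\bK$) so that $\alpha \in \oo_K$ as well.

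Now $\alpha\oo_K$ is a nonzero ideal of a Dedekind domain, so $\oo_K/\alpha\oo_K$ is a finite ring, hence $G(\oo_K/\alpha\oo_K)$ is a finite set, and therefore the kernel $M_{\alpha\oo_K,K}(G)$ has some finite index $N$ in $G(\oo_K)$. In particular $P^N$ lies in that kernel, i.e.\ $P^N$ reduces to the identity section modulo $\alpha\oo_K$. In the language of Section \ref{VC}, the vanishing ideal of $P^N$ as an $\oo_K$-section satisfies $z_{P^N} \subset \alpha\oo_K$.

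Base-changing the section $P^N$ from $\oo_K$ to $\oo_\bL$, functoriality of the vanishing ideal gives $z_{P^N}\oo_\bL \subset \alpha\oo_\bL \subset I\oo_\bL$, since $\alpha \in I$. Hence, viewed inside $G(\oo_\bL)$, the point $P^N$ lies in $M_{I,\bL}(G)$, and so $(P^N)^r = P^{Nr} \in M_{I,\bL}^r(G)$. Because $P$ has infinite order in $G(\oo_\bK) \subset G(\oo_\bL)$, so does $P^{Nr}$, which finishes the argument.

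There is no real obstacle in this lemma; the only thing worth noting is the reason why the descent to $K$ is necessary, not cosmetic. Over an infinite algebraic extension $\bK/\Q$, the quotient $\oo_\bK/I$ need not be finite (e.g.\ $\oo_{\Q^{\mathrm{ab}}}/2\oo_{\Q^{\mathrm{ab}}}$), so one cannot simply invoke the finite-index argument of the earlier lemma directly over $\bK$. Replacing $I$ by a principal sub-ideal $\alpha\oo_K$ inside a number field $K$ containing both $P$ and $\alpha$ restores finiteness, and passing back up to $\oo_\bL$ via the inclusion of ideals $\alpha\oo_K \subset \alpha\oo_\bL \subset I\oo_\bL$ is immediate.
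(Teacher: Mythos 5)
Your proof is correct and is essentially the paper's argument: descend to a number field $K$ where $P$ and some nonzero element of $I$ both live, use finiteness of $\oo_K$ modulo the resulting nonzero ideal to get a finite-index kernel, replace $P$ by $P^N$ (and then $P^{Nr}$), and pass back up to $\oo_{\bL}$ using the containment of ideals. The only cosmetic difference is that you replace $I$ by a principal sub-ideal $\alpha\oo_K$, while the paper uses the contraction $I\cap\oo_L$ (which is automatically nonzero, so no enlargement is strictly needed); the content is identical.
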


\begin{proof} 
Let $L$ be a number field contained in $\bL$, let $P \in G(\oo_L)\subset G(\oo_{\bL})$ be a point of infinite order. Let $N:= |G(\oo_L/(I\cap \oo_L)\oo_L)|.$ 
Then $N < \infty$, $P^N$ is contained in $ M_{I \cap \oo_L ,L} \subset M_{I ,{\bL}}$ and $P^{r N}$ is 
contained in $M^r_{I,\bL}$. Since $P^{r N}$ has infinite order, this proves the lemma.
\end{proof}

\begin{lemma}
\label{zD2} 
Let $I\subset \oo_K$ be any nonzero ideal, and let $P$ be an $S$-section of $G$ 
in $M_{I,K}$. Then $z_P \subset I.$
\end{lemma}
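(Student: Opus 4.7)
The plan is to read off the conclusion directly from the universal property of the cartesian square defining $z_P$. First I would unpack what membership in $M_{I,K}$ says about $P$ as a morphism of schemes: by definition of $M_{I,K}$ in Definition \ref{mi}, the section $P : S \to G$ and the identity section $e : S \to G$ have the same restriction along the closed immersion $j : {\Spec}(\oo_K/I) \hookrightarrow S$. In other words, $P \circ j = e \circ j$.

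Next I would invoke the defining cartesian square from \eqref{cart1}:
$$
\xymatrix{{\Spec}(\oo_K/z_P)\ar@{^(->}[r]\ar@{^(->}[d] & S\ar[d]^P\\
S\ar[r]^{e} & G.}
$$
The universal property of this fiber product says that the closed subscheme ${\Spec}(\oo_K/z_P) \hookrightarrow S$ is the \emph{largest} closed subscheme of $S$ on which $P$ and $e$ agree. Concretely, since $P \circ j = e \circ j$, the equalizer/pullback property produces a (necessarily unique) factorization $j : {\Spec}(\oo_K/I) \to {\Spec}(\oo_K/z_P)$ through which $j$ passes.

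Finally I would translate this factorization back to ideals. Both $\oo_K/z_P$ and $\oo_K/I$ are quotients of $\oo_K$, and a morphism of $S$-schemes $ {\Spec}(\oo_K/I) \to {\Spec}(\oo_K/z_P)$ over $\Spec(\oo_K)$ corresponds to a surjection of $\oo_K$-algebras $\oo_K/z_P \twoheadrightarrow \oo_K/I$, which exists if and only if $z_P \subset I$. This gives the desired inclusion.

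The argument is entirely formal, so there is no real obstacle; the only subtlety is making sure one is using the correct direction of the scheme-theoretic inclusion (``$P$ vanishes modulo $I$'' imposes equations that are a subset of those cutting out $z_P$, hence $z_P \subset I$, not the reverse). One could equally well phrase the proof algebraically: the ideal $z_P$ is generated by the elements $P^*(f) - e^*(f)$ for $f$ ranging over sections of $\oo_G$ near $e$, and the condition $P \in M_{I,K}$ says exactly that every such element lies in $I$.
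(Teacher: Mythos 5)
Your proof is correct and is essentially the paper's argument, spelled out in detail: the paper simply observes that $P \in M_{I,K}$ means the image of $P$ in $G(\oo_K/I)$ is trivial, and leaves the translation into $z_P \subset I$ implicit, whereas you make the universal property of the cartesian square in \eqref{cart1} explicit and convert the factorization of closed subschemes into the ideal-theoretic inclusion. The direction check you flag (a closed immersion $\Spec(\oo_K/I)\hookrightarrow\Spec(\oo_K/z_P)$ of subschemes of $S$ corresponds to $z_P\subset I$, not the reverse) is exactly the point worth being careful about, and you have it right.
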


\begin{proof} 
This follows directly from Definition \ref{mi}: if $P \in M_{I,K}(G)$ then the 
image of $P$ in $G(\oo_K/I)$ is trivial.
\end{proof}

\begin{proposition}
\label{maincong2} 
Let $L/K$ be a finite extension, with $\oo_L/\oo_K$ their corresponding rings 
of integers. Let $m:=$ the degree of the Galois closure $M/{\Q}$  of $L$ over ${\Q}$. Let 
$C:=C(m)$ be  as in Proposition \ref{prop:bounds}.

Fix $\alpha \in \oo_L$, and let 
$ D(m,\alpha):=(C\alpha(1-\alpha)\ldots (m-\alpha))^{m^2} \in \oo_L$.
Suppose
\begin {itemize} 
\item 
$z \subset \oo_K$ is an ideal such that $z \oo_L\subset D(\alpha) \oo_L$, 
\item 
$W$ is a free $\oo_K/z$-module of finite rank.
\end{itemize}
If either $D(m,\alpha)=0$ or if there are elements $v, w \in W$ such that 
$w \in W$ is a basis element and 
\begin{equation}
\label{basic} 
v \otimes 1 = w \otimes \alpha = \alpha(w \otimes 1) \in W \otimes_{\oo_K}\oo_L
\end{equation} 
then $\alpha \in \oo_K.$ 
\end{proposition}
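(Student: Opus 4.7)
The plan is to split on the disjunctive hypothesis. If $D(m,\alpha)=0$, then because $C=C(m)$ is a nonzero positive integer, one of the factors $\alpha, 1-\alpha, \ldots, m-\alpha$ must vanish, so $\alpha\in\{0,1,\ldots,m\}\subset\Z\subset\oo_K$ and we are done. In the substantive case, with $v,w\in W$ as described, I will extract from \eqref{basic} a congruence $\alpha\equiv b\pmod{z\oo_L}$ with $b\in\oo_K$, and then apply Lemma \ref{le:congruence} together with the norm bound of Proposition \ref{prop:bounds} to conclude that $\alpha\in\oo_K$.

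To produce the congruence, I use that $w$ is a basis element of the free $\oo_K/z$-module $W$: one can split $W=(\oo_K/z)w\oplus W'$ with $W'$ a free $\oo_K/z$-complement. Since $z$ annihilates $W$, the tensor product factors through $\oo_L/z\oo_L$, giving
\[
W\otimes_{\oo_K}\oo_L \;=\; (\oo_L/z\oo_L)\cdot(w\otimes 1)\;\oplus\;(W'\otimes_{\oo_K}\oo_L).
\]
Writing $v=\bar b\,w+v'$ with $\bar b\in\oo_K/z$ and $v'\in W'$, and noting that $w\otimes\alpha=\alpha(w\otimes 1)$ tautologically, the equation $v\otimes 1=\alpha(w\otimes 1)$ projects onto the first summand to yield $\bar b=\bar\alpha$ in $\oo_L/z\oo_L$, where $\bar\alpha$ is the class of $\alpha$. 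Lifting $\bar b$ to any $b\in\oo_K$ then gives $\alpha\equiv b\pmod{z\oo_L}$.

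To invoke Lemma \ref{le:congruence} with $I:=z\subset\oo_K$, I still need the norm inequality $|\mathbf{N}_{M/\Q}(z\oo_M)|>|\mathbf{N}_{M/\Q}(\alpha-\beta)|$ for every $K$-conjugate $\beta$ of $\alpha$. The hypothesis $z\oo_L\subset D(m,\alpha)\oo_L=u(m,\alpha)^{m^2}\oo_L$ extends, on tensoring with $\oo_M$, to $z\oo_M\subset u(m,\alpha)^{m^2}\oo_M$. Proposition \ref{prop:bounds}, applied to the ideal $z\oo_M$, then supplies the required inequality for every $\Q$-conjugate (hence every $K$-conjugate) of $\alpha$. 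Lemma \ref{le:congruence} now concludes $\alpha\in\oo_K$.

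The only delicate step is the tensor-product bookkeeping in the second paragraph: one must be sure that the projection onto the rank-one summand generated by $w$ is well-defined and really identifies with the canonical map $(\oo_K/z)w\otimes_{\oo_K}\oo_L\cong\oo_L/z\oo_L$, so that the equation there genuinely witnesses a congruence modulo $z\oo_L$ rather than a weaker ideal. Everything else is a direct assembly of already-established results, and the hypothesis $z\oo_L\subset D(m,\alpha)\oo_L$ is precisely what is needed to match the bound in Proposition \ref{prop:bounds}.
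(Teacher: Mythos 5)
Your proof is correct and follows essentially the same route as the paper's: split on $D(m,\alpha)=0$ (giving $\alpha\in\{0,\ldots,m\}\subset\Z$), otherwise use the basis-element decomposition $W=(\oo_K/z)w\oplus W'$ to extract $b\in\oo_K$ with $b\equiv\alpha\pmod{z\oo_L}$, then conclude via the norm inequality. The only cosmetic difference is that you unpack Corollary~\ref{CONG} into its constituent parts (Lemma~\ref{le:congruence} and Proposition~\ref{prop:bounds}), while the paper cites the corollary directly.
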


\begin{proof} 
Suppose that $\alpha, v,w$ satisfy \eqref{basic}. Let $z_L := z\oo_L$. 
Since $w$ is a basis element of the free $\oo_K/z$-module $W$, we see that 
$w \otimes 1$ is a basis element of the free $\oo_L/z_L$-module 
$W \otimes_{\oo_K} \oo_L$.

All of \eqref{basic} `takes place' in the free $\oo_L/z_L$-submodule 
of $W \otimes_{\oo_K} \oo_L$ (of rank one) generated by $w \otimes 1$. In 
particular $v \in (\oo_K/z) w$ so we can choose $b \in \oo_K$ such that $v = b w$. 
Then, using \eqref{basic},
$$
w \otimes \alpha = v \otimes 1 = bw \otimes 1 = w \otimes b.
$$
Since $w \otimes 1$ is a basis vector, it follows that $b \equiv \alpha \pmod{z_L}$. 
The proposition then follows from Corollary \ref{CONG}.
\end{proof}               

In the same manner, using Corollaries \ref{CONG2} and \ref{cor:extension2}, 
one can prove a totally real version and a quadratic extension 
of a totally real field version of Proposition \ref{maincong2}. 

\begin{proposition}
Let ${\bL}/\bK$ be an extension of totally real fields, with $\oo_{\bL}/\oo_{\bK}$ 
their corresponding rings of integers. Fix $\alpha \in \oo_{\bL}$ such that 
$\alpha=1+v_1^2 +\ldots +v_4^2$ with $v_i \in \oo_\bL$.   Suppose
\begin {itemize} 
\item 
$z \subset \oo_{\bK}$ is an ideal such that $z \oo_{\bL}\subset  (2\alpha+1)^2\oo_{\bL}$, 
\item 
$W$ is a free $\oo_{\bK}/z$-module of finite rank.
\end{itemize}
If there are elements $v, w \in W$ such that $w \in W$ is a basis element and 
$$
v \otimes 1 = w \otimes \alpha = \alpha(w \otimes 1) \in W \otimes_{\oo_{\bK}}\oo_{\bL}
$$ 
then $\alpha \in \oo_{\bK}.$ 
\end{proposition}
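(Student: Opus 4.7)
My plan is to mirror the proof of Proposition \ref{maincong2}, replacing Corollary \ref{CONG} by Corollary \ref{CONG2} as the ``arithmetic'' input appropriate to the totally real case. First I would extract a congruence between $\alpha$ and a well-chosen element of $\oo_{\bK}$. Since $W$ is a free $\oo_{\bK}/z$-module and $w$ is a basis element, extending $w=w_1,w_2,\ldots,w_n$ to a full basis makes
$W\otimes_{\oo_{\bK}}\oo_{\bL}=W\otimes_{\oo_{\bK}/z}(\oo_{\bL}/z\oo_{\bL})$
into a free $\oo_{\bL}/z\oo_{\bL}$-module with basis $\{w_i\otimes 1\}$. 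Writing $v=\sum_i b_iw_i$ with $b_i\in\oo_{\bK}/z$, the identity $v\otimes 1=\alpha(w\otimes 1)$ forces, by comparing the coefficient of $w\otimes 1$, that $\alpha\equiv b_1\pmod{z\oo_{\bL}}$. Lifting $b_1$ to some $b\in\oo_{\bK}$ yields the basic congruence $\alpha\equiv b\pmod{z\oo_{\bL}}$.

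Next I would descend to a number field in order to invoke Corollary \ref{CONG2}, whose hypothesis requires the auxiliary subfield to be a number field. Because $\alpha-b\in z\oo_{\bL}$, there are finitely many $c_1,\ldots,c_k\in z\subset\oo_{\bK}$ and $y_1,\ldots,y_k\in\oo_{\bL}$ with $\alpha-b=\sum_j c_jy_j$. Set $K:=\Q(b,c_1,\ldots,c_k)$, which is a number subfield of $\bK$ (hence of the totally real field $\bL$), and put $I:=(c_1,\ldots,c_k)\oo_K\subset\oo_K$. Then $b\in\oo_K$, $\alpha\equiv b\pmod{I\oo_{\bL}}$ by construction, and $I\oo_{\bL}\subset z\oo_{\bL}\subset(2\alpha+1)^2\oo_{\bL}$ by hypothesis.

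Combined with the given decomposition $\alpha=1+v_1^2+v_2^2+v_3^2+v_4^2$ with $v_i\in\oo_{\bL}$, these are exactly the conditions defining the set $E$ in Corollary \ref{CONG2}. That corollary then delivers $\alpha\in\oo_K\subset\oo_{\bK}$, as desired. The main obstacle is a mild bookkeeping point: $\bK$ may have infinite degree over $\Q$ and $z$ need not be finitely generated, but only finitely many elements of $z$ are ever required, because $\alpha-b\in z\oo_{\bL}$ is itself a finite $\oo_{\bL}$-linear combination of elements of $z$; this is what makes the descent to the number field $K$ legitimate.
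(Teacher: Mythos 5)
Your proof is correct and follows exactly the route the paper indicates when it says the totally real version is proved ``in the same manner'' as Proposition \ref{maincong2}, with Corollary \ref{CONG2} replacing Corollary \ref{CONG} as the arithmetic input. You also rightly identify and patch a detail the paper leaves implicit: Corollary \ref{CONG2} requires its base field $K$ to be a number field, so when $\bK$ has infinite degree over $\Q$ one must observe that the congruence $\alpha - b \in z\oo_{\bL}$ involves only finitely many elements of $z$, and use those to descend to a number subfield $K \subset \bK$ before invoking the corollary.
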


\begin{proposition}
\label{prop:Talpha}
Let
$$
\xymatrix{ {\bL} \ar@{^(->}[r] & {\bF}\\
& {\bK}\ar@{^(->}[u]}
$$
be a diagram of fields of algebraic numbers where $\bK$ is an algebraic possibly infinite extension of $\Q$, 
${\bL}$ is totally real, and $[{\bF}:{\bL}] =2$. For elements 
$\alpha \in \oo_{\bF}$ and $X_1, X_2 \in \oo_{\bF}$ with $X_2$ nonzero, 
putting $X := \frac{X_1}{X_2}$ suppose that: 
\begin{enumerate}
\item $X \in {\bL}$,
\item 
for every embedding $\tau: {\bF} \hookrightarrow \C$, we have 
$\tau(X) \in \R$ and $\tau(X) >1$,
\item 
for every real embedding $\tau: {\bF} \hookrightarrow {\R}$ , we have $1 <\tau(\alpha) < \tau(X)$ 
(this inequality can be rewritten as an equation by Proposition \ref{tpprop}), 
\end{enumerate} 
Suppose further that $z \subset \oo_{\bK}$ is an ideal such that 
$z \oo_{\bF}\subset X_1\alpha \oo_{\bF}$ and there is a free $\oo_K/z$-module 
of finite rank $W$ for which there are elements $v, w \in W$ such that $w \in W$ 
is a basis element and $$
v \otimes 1 = w \otimes \alpha = \alpha(w \otimes 1) \in W \otimes_{\oo_{\bK}}\oo_{\bF}.
$$
Then $\alpha \in \oo_{\bK}.$ 
\end{proposition}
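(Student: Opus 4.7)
The plan is to follow the template of Proposition \ref{maincong2} — extract a congruence from the module data and then invoke a norm bound — but with Proposition \ref{le:newbounds} (rather than Corollary \ref{CONG}) playing the role of the basic congruence machinery. The subtlety compared to \ref{maincong2} is that $\bK$ may be of infinite degree over $\Q$, so after the algebraic step I will have to descend everything to a number field in order to apply the finite-degree norm inequalities.

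First, the algebraic step. Because $w$ is a basis element of the free $\oo_\bK/z$-module $W$, tensoring with $\oo_\bF$ over $\oo_\bK$ keeps $w\otimes 1$ a basis element of the free $\oo_\bF/z\oo_\bF$-module $W\otimes_{\oo_\bK}\oo_\bF$. The element $v$ lies in the rank-one summand $(\oo_\bK/z)w\subset W$ (read off from $v\otimes 1=\alpha(w\otimes 1)$ inside the submodule generated by $w\otimes 1$), so I can lift to $b\in\oo_\bK$ with $v=bw$ in $W$. Substituting and using that $w\otimes 1$ is a basis element of $W\otimes\oo_\bF$ then converts the identity $v\otimes 1 = w\otimes\alpha = \alpha(w\otimes 1)$ into the congruence
\[
\alpha \;\equiv\; b \pmod{z\oo_\bF}.
\]

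Next, I descend to a number field. Since $\alpha-b\in z\oo_\bF$, only finitely many generators $z_1,\dots,z_k\in z\subset\oo_\bK$ are needed to witness this relation, and only finitely many witnesses $s_i\in\oo_\bF$ are needed to express those $z_i$ as elements of $X_1\alpha\oo_\bF$. I then pick a number field $H\subset\bF$ containing $\alpha$, $b$, $X_1$, $X_2$, $\delta$, all $z_i$, and all the finitely many witnesses. Set $H^+:=H\cap\bL$ (a totally real number field since $\bL$ is totally real), and $K_0:=H\cap\bK$ (a number field inside $\bK$). Then $H=H^+(\delta)$ is a quadratic extension of $H^+$, the element $b$ lies in $\oo_{K_0}$, the ideal $z_{K_0}:=(z_1,\dots,z_k)\subset\oo_{K_0}$ still satisfies $\alpha\equiv b\pmod{z_{K_0}\oo_H}$, and the containment $z_{K_0}\oo_H\subset X_1\alpha\oo_H$ holds.

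Finally, I apply the finite-degree bound machinery inside $H$. Conditions (1), (2), (3) of the proposition say exactly that $X=X_1/X_2\in H^+\subset\bL$ satisfies the hypotheses of Proposition \ref{le:newbounds} with $x=\alpha$ and $w=X$, so
\[
|N_{H/\Q}(\delta y_1)| \;\le\; |N_{H/\Q}(\alpha X)|,
\]
where $\alpha=y_0+y_1\delta$. Combined with $|N(X_2)|\ge 1$ this yields, exactly as in the proof of Corollary \ref{cor:extension2}, a bound on $|N_{H/\Q}(\alpha-\hat\alpha)|$ in terms of $|N_{H/\Q}(X_1\alpha)|$. The containment $z_{K_0}\oo_H\subset X_1\alpha\oo_H$ forces $|N_{H/\Q}(z_{K_0}\oo_H)|\ge|N_{H/\Q}(X_1\alpha)|$, which dominates the needed bound on $|N_{H/\Q}(\alpha-\hat\alpha)|$ for the unique nontrivial conjugate $\hat\alpha$ of $\alpha$ over $K_0$. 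I then invoke Lemma \ref{le:congruence} (applied to the extension $H/K_0$ with ideal $z_{K_0}$) to conclude $\alpha\in\oo_{K_0}\subset\oo_\bK$.

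The main obstacle in this plan is the descent step: a priori $z\subset\oo_\bK$ need not be finitely generated, and $W$, $v$, $w$ need not live in a number field. The escape is that the congruence statement only uses finitely many elements of $z$, so the entire argument can be pulled back inside a sufficiently large number field $H\subset\bF$. Once that is set up, the proof is a direct transcription of the quadratic extension argument behind Corollary \ref{cor:extension2}, now applied to the pair $(H, K_0)$ rather than to $(\bF, K)$.
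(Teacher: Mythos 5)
Your overall architecture matches the paper's intended proof: the paper says Proposition \ref{prop:Talpha} is proved ``in the same manner'' as Proposition \ref{maincong2}, using Corollaries \ref{CONG2} and \ref{cor:extension2}. Your algebraic step (extracting $\alpha\equiv b\pmod{z\oo_\bF}$ from the basis-element hypothesis) is exactly the argument of \ref{maincong2}, and your descent to a number field $H\subset\bF$ is the right way to access the finite-degree norm machinery when $\bK$ is infinite. So far so good.

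The gap is in the final step, where you write ``which dominates the needed bound on $|N_{H/\Q}(\alpha-\hat\alpha)|$ for the \emph{unique nontrivial conjugate} $\hat\alpha$ of $\alpha$ over $K_0$'' and then immediately invoke Lemma \ref{le:congruence} for the extension $H/K_0$. This is not justified. The element $\hat\alpha$ that Proposition \ref{le:newbounds} (and Corollary \ref{cor:ext2}) lets you control is the \emph{quadratic} conjugate of $\alpha$ over the totally real subfield $H^+=H\cap\bL$, namely $y_0-y_1\delta$. That is generally not the only $K_0$-conjugate of $\alpha$: nothing in the hypotheses forces $[K_0(\alpha):K_0]\le 2$, and Lemma \ref{le:congruence} requires the norm inequality for \emph{every} conjugate of $\alpha$ over $K_0$. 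Conditions (2)--(3) give you no control on $|\tau(\alpha)|$ at nonreal embeddings $\tau$, so you cannot directly bound $N(\alpha-\beta)$ for the other conjugates $\beta$.

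What the paper's hint (``using Corollaries \ref{CONG2} and \ref{cor:extension2}'') is pointing to is a two-stage argument that you have collapsed into one. Stage one: use the bound on $N_{H/\Q}(\alpha-\hat\alpha)$ together with $\alpha-\hat\alpha\equiv 0\pmod{z\oo_H}$ to force $\alpha=\hat\alpha$, i.e.\ $y_1=0$, so $\alpha\in H^+\subset\bL$ is totally real. (This is an application of Lemma \ref{le:congruence} to the \emph{quadratic} extension $H/H^+$, where $\hat\alpha$ really is the unique nontrivial conjugate.) Stage two: now that $\alpha$ is totally real, the hypothesis $1<\tau(\alpha)<\tau(X)$ and the divisibility $z\oo_\bF\subset X_1\alpha\oo_\bF$ feed into the totally-real norm machinery (Lemma \ref{le:Dbound}/Corollary \ref{CONG2}) to bound $N(\alpha-\beta)$ for \emph{all} $K_0$-conjugates $\beta$ and conclude $\alpha\in\oo_{K_0}\subset\oo_\bK$. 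Your proof needs to separate these two stages and justify the second one; as written, it silently assumes away all conjugates other than $\hat\alpha$.

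Two smaller points worth flagging. First, your assertion $H=H^+(\delta)$ requires $H$ to be stable under the nontrivial automorphism of $\bF/\bL$; this is harmless because you can enlarge $H$ to its compositum with its conjugate, but it should be said. Second, when you track constants, note that $\alpha-\hat\alpha=2\delta y_1$, so the bound you obtain is against $N(2X_1\alpha)$, while the hypothesis gives only $z\oo_\bF\subset X_1\alpha\oo_\bF$; you should convince yourself (as the paper implicitly does) that the resulting factor of $2^{[H:\Q]}$ does not break the strict inequality needed in Lemma \ref{le:congruence}.
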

Before stating the corollary below we recall the definition of ${\mathcal W}_P$ from Lemma \ref{app}:
\[
{\mathcal W}_P:={\mathcal N}^*\otimes_{\oo_K}(z_P/z_P^2) 
\subset {\mathcal N}^*\otimes_{\oo_K}(\oo_K/z_P^2).
\]
\begin{corollary}
\label{cor01} 
Let $(r,I)$ be an exponent of diophantine stability for $G$ relative to 
$L/K$ (resp. ${\bL}/{\bK}$, ${\bF}/{\bK}$). Let $Y$ denote the set of $\alpha \in \oo_L$ 
for which there are points $P, Q \in M^r_{ID(\alpha),L}$ 
(resp. $M^r_{(2\alpha+1)^2I,{\bL}}$, $M^r_{IX_1\alpha,{\bF}}$) with $P \ne {e}$ such that 
\begin{equation}
\label{eqcor1} 
\partial Q = \alpha\cdot \partial P\;\in\; {\mathcal W}_P\otimes_{\oo_K}\oo_L 
(\text{ resp. } {\mathcal W}_P\otimes_{\oo_{\bK}}\oo_{\bL}, {\mathcal W}_P\otimes_{\oo_{\bK}}\oo_{\bF}). 
\end{equation} 
In the totally real case assume $\alpha=1+u_1^2 +\cdots + u_4^2$. 
In the case of a quadratic extension let $X_1$ be defined as in 
Proposition \ref{prop:Talpha}. Then: 
\begin{enumerate} 
\item 
$ Y\subset\oo_K$ (resp. $Y \subset \oo_{\bK}$).
\item 
If $G(\oo_K)$ (resp. $G(\oo_{\bK})$) contains a point of infinite order, 
then $\N \subset Y \subset \oo_K$  (resp. $\N \subset Y \subset \oo_{\bK}$).
\end{enumerate} 
\end{corollary}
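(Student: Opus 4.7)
The plan is to reduce each of the three cases of Corollary \ref{cor01} to the corresponding congruence-capture statement---Proposition \ref{maincong2} for finite extensions, its totally-real analog, and Proposition \ref{prop:Talpha} for quadratic extensions of totally real fields. The core idea for the inclusion $Y \subset \oo_K$ (resp.\ $\oo_{\bK}$) is to feed the triple $(z, W, w) := (z_P, \mathcal{W}_P, \partial P)$ together with $v := \partial Q$ into the relevant proposition.

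To execute Part (1), fix $\alpha \in Y$ witnessed by $P, Q \in M^r_{ID(\alpha), L}$ with $P \ne e$ (or the analog in the totally-real / quadratic cases). Because $(r,I)$ is an exponent of diophantine stability, $M^r_{ID(\alpha),L} \subset M^r_{I,L} \subset G(\oo_K)$, so $P, Q \in G(\oo_K)$ and their vanishing ideals $z_P, z_Q$ are ideals of $\oo_K$, with $z_P \ne 0$ by Lemma \ref{cz}(2). Lemma \ref{zD2} then gives $z_P \oo_L \subset ID(\alpha) \oo_L \subset D(\alpha) \oo_L$, matching the divisibility hypothesis of Proposition \ref{maincong2}, while Lemma \ref{app} asserts that $\mathcal{W}_P$ is a free $\oo_K/z_P$-module of finite rank and $\partial P$ is a basis element. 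The given equation $\partial Q = \alpha \cdot \partial P$ in $\mathcal{W}_P \otimes_{\oo_K} \oo_L$ is now precisely the input needed to invoke Proposition \ref{maincong2}, whose conclusion forces $\alpha \in \oo_K$. The totally-real case is handled identically using $(2\alpha+1)^2$ in place of $D(\alpha)$ (with $\alpha = 1 + u_1^2 + \cdots + u_4^2$ supplied by hypothesis), and the quadratic case uses Proposition \ref{prop:Talpha} with $X_1\alpha\oo_{\bF}$ in place of $D(\alpha)\oo_L$.

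For Part (2), assume that $G(\oo_K)$ (resp.\ $G(\oo_{\bK})$) contains a point of infinite order and fix $n \in \N$. Lemma \ref{dsinf}, applied with the nonzero ideal $ID(n)$ (or the appropriate analog), supplies a point $P \in M^r_{ID(n), L}$ of infinite order, so in particular $P \ne e$. Setting $Q := P^n$, the identity $(y^r)^n = (y^n)^r$ together with the fact that $M_{ID(n),L}$ is a subgroup of $G(\oo_L)$ places $Q$ in $M^r_{ID(n),L}$, and Corollary \ref{qp} gives $\partial Q = n \cdot \partial P = \alpha \cdot \partial P$ in $\mathcal{W}_P$, so $n \in Y$. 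In the totally-real case the four-square theorem furnishes the decomposition $n = 1 + u_1^2 + \cdots + u_4^2$ for $n \ge 1$, and in the quadratic case the bound $X_1/X_2$ is produced via Corollary \ref{cor:existsW}.

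The main subtlety lies in the implicit interpretation of $\partial Q$ as an element of $\mathcal{W}_P \otimes_{\oo_K} \oo_L$ rather than $\mathcal{W}_Q \otimes_{\oo_K} \oo_L$: this is only canonically defined when $z_Q \oo_L \subset z_P \oo_L$, so the Corollary's hypothesis implicitly encodes this compatibility. In Part (2) the compatibility is automatic because $Q = P^n$ yields $z_Q \subset z_P$ by Lemma \ref{cz1}, while in Part (1) the interpretation is exactly what allows us to extract a $b \in \oo_K$ with $\alpha \equiv b \pmod{z_P \oo_L}$---the congruence input to Corollary \ref{CONG} (or its analogs Corollaries \ref{CONG2} and \ref{cor:extension2}) once combined with the norm bound from Lemma \ref{zD2}.
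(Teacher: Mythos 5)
Your proof is correct and takes essentially the same route as the paper: Part (1) reduces to Proposition \ref{maincong2} and its totally-real and quadratic analogs by taking $(z, W, w, v) = (z_P, \mathcal{W}_P, \partial P, \partial Q)$, using diophantine stability to place $P, Q$ in $G(\oo_K)$ and Lemma \ref{zD2} for the divisibility hypothesis; Part (2) uses Lemma \ref{dsinf} and Corollary \ref{qp} with $Q = P^{\alpha}$. Two small remarks: the paper first dispatches the edge case $D(m,\alpha)=0$ (which forces $\alpha\in\{0,\dots,m\}\subset\oo_K$) before applying Lemma \ref{zD2}, and your closing observation that the hypothesis ``implicitly encodes'' $z_Q\oo_L\subset z_P\oo_L$ is not quite right---$P,Q\in M^r_{ID(\alpha),L}$ gives no containment between $z_P$ and $z_Q$; the intended reading of \eqref{eqcor1} is the coordinate-level congruence modulo $z_P^2$ made explicit in \S\ref{sec:exist} after clearing the denominators $\delta(P)$, $\delta(Q)$, and the fact that $Q\in G(\oo_K)$ lets one lift $\partial Q$ over $\oo_K$ so the congruence lands it in $z_P$, which is what Proposition \ref{maincong2} actually needs.
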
 

\begin{proof} 
We consider the case of a number field extension $L/K$ first.  If $D(m,\alpha) = 0$ 
then $\alpha \in \{0,1,2,\dots,m\}$, so we may suppose that $D(m,\alpha) \ne 0$.
By the diophantine stability hypothesis of this corollary we have 
$P,Q \in G(\oo_K)$---in particular these are $K$-rational points. 

To connect with the notation of Proposition \ref{maincong2} above, 
let $W := {\mathcal W}_P$ and $z := z_P$, noting that ${\mathcal W}_P$ 
is a free $\oo_K/z_P$-module by Lemma \ref{app}(2). 
Take the $v$ and $w$ of Proposition \ref{maincong2} to be, respectively, the 
images of $\partial P $ and $ \partial Q$ in ${\mathcal W}_P$. Since 
$P \in M^r_{D(m,\alpha)I,L} \subset M_{D(\alpha)I,L}$ we have that 
$z_P \subset D(m,\alpha)I\oo_L$. Proposition \ref{maincong2} then gives us that 
$Y$ is contained in $\oo_K$, which is (1). 

Now suppose $\alpha \in \N$. Find a nontrivial point $P \in M^r_{D(m,\alpha)I,L}$. 
Note that such a $P$ exists by Lemma \ref{dsinf} since $G(\oo_K)$ contains a point 
of infinite order. Let $Q:= P^\alpha$. By Corollary \ref{qp} we have
$$\partial Q = \alpha\cdot \partial P \in {\mathcal W}_P$$
so $\alpha \in Y$. This proves (2).

In the case of a totally real extension or quadratic extension of a totally 
real field, we note that $(2\alpha+1)^2$ and $X_1$ cannot be 0. Next we 
proceed exactly as above replacing $D(m,\alpha)$ by $(2\alpha+1)^2$ in the totally real case and replacing $D(m,\alpha)$ by $X_1\alpha I$, with $n$ being an integer greater or equal to 3, in the case of a quadratic extension of a totally real field. 
\end{proof}
 
The case ${\bK=\bL}$ is of particular interest to us, so we add the following corollary.

\begin{corollary}
\label{cor: FtoK}
Let $\bF, \bL$ be as above with ${\bF}$ not totally real and let $G$ be the twist of $G_m$ defined at the beginning of Section \ref{sec:quad}.  Let $(r,I)=(1,3\oo_F)$. Let $Y$ denote the set of $\alpha \in \oo_{\bF}$ 
for which there are points $P, Q \in M^1_{3X_1\alpha\oo_{\bF},{\bF}}$ with $P \ne {e}$ such that 
$$
\partial Q = \alpha\cdot \partial P\;\in\; {\mathcal W}_P\otimes_{\oo_{\bL}}\oo_{\bF}. 
$$
Here $X_1$ is again defined as in Proposition \ref{prop:Talpha}. Then $\N \subset Y \subset \oo_{\bL}$.
\end{corollary}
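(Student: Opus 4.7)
The plan is to mirror the argument of Corollary \ref{cor01}, specialized to $(\bK,\bL,\bF) = (\bL,\bL,\bF)$, using as diophantine-stability input Corollary \ref{cor:algg}. For the twisted torus $G = \mathbf{G}_m^{\mathbf{M}/\bL}$, that corollary supplies (i) a point of infinite order in $G(\oo_\bL)$, and (ii) the fact that $(1,3\oo_\bL)$ is an exponent of diophantine stability for $G$ relative to $\bF/\bL$. The key congruence ingredient is Proposition \ref{prop:Talpha}, applied with its ``$\bK$'' set equal to our $\bL$.

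First I would establish $Y \subset \oo_\bL$. Take $\alpha \in Y$ with witnesses $P, Q \in M^{1}_{3X_1\alpha\,\oo_\bF,\,\bF}$, $P \neq e$, together with $X_1, X_2 \in \oo_\bF$ satisfying the hypotheses of Proposition \ref{prop:Talpha}. Since $X_1\alpha \in \oo_\bF$, the inclusion $3X_1\alpha\,\oo_\bF \subset 3\,\oo_\bF$ gives $M_{3X_1\alpha\,\oo_\bF,\,\bF} \subset M_{3\oo_\bF,\,\bF} \subset G(\oo_\bL)$, where the last containment is Corollary \ref{cor:algg}(2) with $n=3$. Hence $P, Q \in G(\oo_\bL)$, the vanishing ideal $z_P$ lies in $\oo_\bL$, and $\mathcal{W}_P$ is a free $\oo_\bL/z_P$-module with $\partial P$ a basis element (Lemma \ref{app}). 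Lemma \ref{zD2}, applied with base $\oo_\bF$, gives $z_P\,\oo_\bF \subset 3X_1\alpha\,\oo_\bF \subset X_1\alpha\,\oo_\bF$. Setting $w := \partial P$, $v := \partial Q$, $W := \mathcal{W}_P$, $z := z_P$, the assumption $\partial Q = \alpha\cdot\partial P$ in $\mathcal{W}_P \otimes_{\oo_\bL}\oo_\bF$ is precisely the identity $v\otimes 1 = \alpha(w\otimes 1) = w\otimes \alpha$ demanded by Proposition \ref{prop:Talpha}, so that proposition concludes $\alpha \in \oo_\bL$.

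For $\N \subset Y$, fix a positive integer $\alpha$. Corollary \ref{cor:ext2} supplies $X_1, X_2 \in \oo_\bF$ with $X_2 \neq 0$ realizing all the embedding-theoretic conditions of Proposition \ref{prop:Talpha}; in particular $X_1 \neq 0$, so $I := 3X_1\alpha\,\oo_\bF$ is a nonzero ideal of $\oo_\bF$. By Corollary \ref{cor:algg}(1) choose $P_0 \in G(\oo_\bL)$ of infinite order; the image of $P_0$ in the finite group $G(\oo_\bF/I)$ has some order $N$, and $P := P_0^N$ then lies in $M_{I,\bF} \cap G(\oo_\bL)$ with infinite order, so in particular $P \neq e$ (this is the direct adaptation of Lemma \ref{dsinf} to an ideal in $\oo_\bF$ rather than in $\oo_\bK$). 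Taking $Q := P^\alpha$, Corollary \ref{qp} yields $\partial Q = \alpha\cdot \partial P$ in $\mathcal{W}_P$, exhibiting $\alpha \in Y$.

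The one genuinely delicate point is the bookkeeping between ideals of $\oo_\bL$ and of $\oo_\bF$: one must be sure that $P$ can actually be chosen so that $z_P$ lies in $\oo_\bL$ (supplied by Corollary \ref{cor:algg}(2)), and that the embedding inequalities controlling $X_1/X_2$ can be witnessed existentially via elements pulled back from a totally real subfield (the role of the diophantine set $B$ of Corollary \ref{cor:Diophstable} feeding into Corollary \ref{cor:ext2}). Neither verification is substantial in itself; the content of the corollary is really the pairing of Proposition \ref{prop:Talpha} with the diophantine stability of $G$ furnished by Corollary \ref{cor:algg}.
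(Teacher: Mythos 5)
Your argument is correct and follows the same route the paper indicates: the paper's proof of this corollary simply cites Proposition~\ref{prop:Talpha} together with Lemmas~\ref{min2} and~\ref{le:Diophstability} (which are exactly what underlie Corollary~\ref{cor:algg}), and you have filled in the details of both containments in precisely that spirit. One small slip in phrasing: when $\bF$ has infinite degree over $\Q$, the ring $\oo_\bF/I$ and hence the group $G(\oo_\bF/I)$ need not be finite for a nonzero ideal $I$ of $\oo_\bF$; the correct statement (and the one Lemma~\ref{dsinf} actually proves, by restricting to a number field $F_0 \subset \bF$ containing both $3X_1\alpha$ and a field of definition for $P_0$) is that the \emph{image of $P_0$} in $G(\oo_\bF/I)$ has finite order, since that image factors through the finite group $G(\oo_{F_0}/(3X_1\alpha)\oo_{F_0})$. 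You flag this as an adaptation of Lemma~\ref{dsinf}, so the intended argument is clear, but the phrase ``finite group $G(\oo_\bF/I)$'' should be repaired.
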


\begin{proof}
The corollary follows from Proposition \ref{prop:Talpha}, Lemmas \ref{min2} and \ref{le:Diophstability}.
\end{proof}
\begin{remark}
While $D(\alpha)$ and $(2\alpha +1)^2$ are obviously polynomial in nature, 
we remind the reader that one can see  that $X_1, X_2$ can be described in 
a diophantine fashion from Corollaries \ref{cor:Diophstable} and \ref{cor:ext2}. 
\end{remark}

\begin{remark}
Whether or not there are any points of infinite order in $G(\oo_K)$, our proof 
will show that a given natural number $\nu$ is in $E$ as long as $M^r_{D(\nu),L}$ 
(resp. $M^r_{D(\nu),{\bL}}$) is not trivial.
\end{remark}

\section{An existential formulation of Corollaries \ref{cor01} and \ref{cor: FtoK} }
\label{sec:exist}

Our aim is to give a formulation of Corollaries \ref{cor01} and \ref{cor: FtoK} 
entirely in the language of $\oo_{\bL}$. Assume that $G$ is a group scheme over $\oo_K$ satisfying 
Assumption \ref{Ghyp}. Then:

\begin{enumerate}
\item 
There exists a system of homogenous equations over $\oo_K$ that defines 
${\bar G}$ the Zariski closure of $G$ in $\PP^n$ as described in 
Proposition \ref{prop1} above, so using Lemma \ref{3.3} above, there is 
an existential definition of the set of homogenous coordinates 
$(\lambda_1:\lambda_2:\dots:\lambda_{n+1}) \in \Aff^{n+1}(\oo_L) $ 
that are representatives of points $P \in {\bar G}(\oo_L )$. 
\item 
Hence if, for example, $G={\mathcal A}$ is an abelian scheme, $G$ is an 
open $\oo_L$-subscheme in ${\bar G}$ defined by a finite set of local congruences 
(see Remark \ref{bar}) so there is an existential definition of the set of homogenous 
coordinates $$(\lambda_1:\lambda_2:\dots:\lambda_{n+1}) \in \Aff^{n+1}(\oo_L) $$ 
that are representatives of points $P \in { G}(\oo_L )$. 
\item 
Given two points $P,Q\in G(\oo_L)$ add a further set of variables 
$$
\{r_1,r_2,\dots,r_{n+1}; s_1,s_2,\dots, s_{n+1}\}
$$ 
and the equations (in the rings $\oo_L[r_1,r_2,\dots, r_{n+1}]$ and $\oo_L[s_1,s_2,\dots, s_{n+1}]$ respectively):
\begin{equation}
\label{hPQ} 
{\mathcal E}_P: \quad \sum_{i=1}^{n+1} \ r_i \lambda_i(P) = 1, \qquad
{\mathcal E}_Q: \quad \sum_{i=1}^{n+1} \ s_i \lambda_i(Q) = 1.
\end{equation} 
This augmented system of equations gives us an existential definition of the 
sets of homogenous coordinates that generate the unit ideal for $P$ and for 
$Q$, a pair of points in $G(\oo_L)$.
\item 
Given an ideal $J \subset \oo_L$ defined by an explicit finite set of 
generators $J:=(j_1,j_2,\dots,j_t) \subset \oo_L$, we have an existential 
definition of the sets of homogenous coordinates that generate the unit 
ideal for $P$ and for $Q$, a pair of points in $M_{J,L}$ since the subgroup 
$M_{J,L}\subset G(\oo_L)$ is defined by explicit congruence conditions.
\item 
Since the $r$-th power mapping 
$$
G \xrightarrow{\text{$r$-th power}} G^r \subset G
$$ 
is defined by a system of equations over $\oo_L$ it then follows that 
$M^r_{J,L}$ has an existential definition in terms of $M_{J,L}$.
\item 
Note that if $(r,I)$ is an exponent of diophantine stability for $L/K$ then if 
$P\in M^r_{J,L} \subset G(\oo_K)$ is such that set of homogenous coordinates 
$$
\{\lambda_i(P) : 1 \le i \le n+1\} \subset \oo_L
$$ 
generates the unit ideal in $\oo_L$ then there exists a unit $u\in \oo_L^\times$ such that
$$
\{u\lambda_i(P) : 1 \le i \le n+1\} \subset \oo_K \subset \oo_L.
$$ 
\end{enumerate} 

A consequence of this discussion is: 

\begin{corollary} 
\label{exist}
Assume given: 
\begin{itemize} 
\item 
a group scheme $G$ satisfying Assumption \ref{Ghyp},
\item 
a well-arranged embedding $\iota:G \hookrightarrow \PP^n$,
\item 
a finite extension $L/K$
\item 
an exponent $(r,I)$ of diophantine stability for $L/K$, and
\item 
a set of generators $j_1, j_2,\dots, j_t \in \oo_K$ of $I$.
\end{itemize} 
Then there is a finite system of polynomials 
$$
\psi_i \in \oo_K[X_1,X_2,\dots, X_{n+1}; Y_1,Y_2,\dots, Y_m; Z_1,Z_2,\dots, Z_t], 
\quad 1 \le i \le k,
$$ 
where we call
\begin{itemize} 
\item 
$X_1,X_2,\dots, X_{n+1}$  the {\bf fundamental} variables,
\item  
$Y_1,Y_2,\dots, Y_m$ the {\bf auxiliary} variables (to take care of items (1)-(6) above), 
\item  
$Z_1, Z_2,\dots,Z_t$ the {\bf congruence} variables,
\end{itemize}  
with the following property.  
Set the  congruence variables $Z_1, Z_2,\dots,Z_t$ to the given elements 
$j_1,j_2,\dots, j_t\in \oo_K$ to obtain a system 
$\Psi_I := \{\psi_{I,1}, \ldots, \psi_{I,k}\}$ defined by
\begin{align*}
\psi_{I,i}(X_1,X_2,\dots, X_{n+1};Y_1,Y_2,\dots, Y_m) 
&:= \psi_i( X_1,X_2,\dots, X_{n+1};Y_1,Y_2,\dots, Y_m; j_1, j_2,\dots, j_t) \\
&\in \oo_K[X_1,X_2,\dots, X_{n+1};Y_1,Y_2,\dots, Y_m].
\end{align*}
If 
$$
(X_1,X_2,\dots, X_{n+1};Y_1,Y_2,\dots, Y_m) \mapsto (\lambda_1,\lambda_2,\dots, \lambda_{n+1}; \mu_1,\mu_2,\dots, \mu_m)  \in \Aff^{n+1+m}(\oo_L)
$$ 
is a common zero (in $\oo_L$) of the system of equations  $\Psi_I$,  
then there exists a rational point $P \in M^r_{I,L}$ such that the 
first $n+1$ entries of that common zero, i.e., the values of the 
{\it fundamental variables}:
$$
(X_1,X_2,\dots, X_{n+1}) \;\mapsto \;(\lambda_1,\lambda_2,\dots, \lambda_{n+1})  \in \Aff^{n+1}(\oo_L)
$$
represent homogenous coordinates for $\iota(P) \in \Aff^{n+1}(\oo_L)$ 
that generate the unit ideal in $\oo_L$:
$$
(\lambda_1,\lambda_2,\dots, \lambda_{n+1})= (\lambda_1(P),\lambda_2(P),\dots, \lambda_{n+1}(P)).
$$
 Moreover, every   $P \in M^r_{I,L}$ is so represented.
 \end{corollary}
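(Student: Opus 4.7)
The plan is to build $\Psi_I$ by concatenating one polynomial block per numbered item in the preceding discussion. The fundamental variables $X_1,\ldots,X_{n+1}$ represent homogeneous coordinates of the output point $P$; each block introduces its own batch of auxiliary variables $Y_j$ to provide existential witnesses; and the congruence variables $Z_1,\ldots,Z_t$ enter symbolically as generators of an ``abstract'' ideal, to be specialized at the end to the given generators $j_1,\ldots,j_t$ of $I$.

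First I would include the homogeneous equations over $\oo_K$ cutting out $\bar G \subset \PP^n$ (item (1)). Next I would encode the sharper condition $P\in G(\oo_L)$ rather than merely $P\in\bar G(\oo_L)$ (item (2)): by Proposition \ref{prop1} and Remark \ref{bar}, $\bar G\setminus G$ is supported on fibers over finitely many bad primes $\gp_1,\ldots,\gp_s$ of $\oo_K$, and avoidance of the non-identity components there is a congruence condition that unfolds into a finite list of equations of the shape $b_i(X_1,\ldots,X_{n+1})Y+\pi_{\gp_i}Y'=1$ with $\pi_{\gp_i}\in\gp_i\subset\oo_K$. I would then add the unit-ideal equation $\sum_{i=1}^{n+1} R_i X_i = 1$ with fresh auxiliaries $R_i$ (item (3)), and encode membership in $M_{J,L}$ for the symbolic ideal $J=(Z_1,\ldots,Z_t)$ by the well-arranged congruences $X_i=\sum_{k=1}^t Z_k Y_{i,k}$ for $1\le i\le n$ together with $X_{n+1}-1=\sum_{k=1}^t Z_k Y_{n+1,k}$ (item (4)), using that the identity section is at $(0:\cdots:0:1)$. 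Finally I would append the polynomials expressing the $r$-th power map (item (5)): introduce fresh homogeneous coordinates for a point $P'$ subjected to copies of the preceding blocks, and impose polynomial identities asserting that the fundamental coordinates are a projective representative of $(P')^r$, with a scalar auxiliary absorbing the projective ambiguity. Specializing $Z_k \mapsto j_k$ yields $\Psi_I$. A solution of $\Psi_I$ in $\oo_L$ then supplies, by construction, unit-ideal-generating homogeneous coordinates of some $P=(P')^r\in M^r_{I,L}(G)$; conversely, given any $P\in M^r_{I,L}$, Corollary \ref{warcor} together with item (6) lets one choose integral homogeneous coordinates generating the unit ideal, after which the remaining auxiliary variables can be filled in directly from the witnesses described in items (2)--(5).

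The main obstacle is Block~2, since the open-subscheme inclusion $G\subset\bar G$ is not a priori a positive-existential condition. The key point, which makes everything go through for group schemes satisfying Assumption \ref{Ghyp} (and in particular for N\'eron models of abelian varieties), is that $\bar G\setminus G$ is a finite union of closed subschemes each concentrated over a single bad prime of $\oo_K$, so the condition ``not in the bad locus at $\gp$'' amounts to ``some explicit polynomial in the $X_i$ is coprime to $\gp$'', which can be captured by a B\'ezout-type equation in $\oo_K$ as above (and, when necessary, converted to equations in $\oo_L$ via Lemma \ref{3.3}). Everything else is routine assembly of the witnesses already described in items (1)--(6).
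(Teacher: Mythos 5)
Your proposal follows essentially the same route as the paper: the paper's ``proof'' is simply the enumerated discussion in items (1)--(6) that precedes the corollary, and you are assembling those items block by block. Blocks (1), (3), (5), (6) and the conversion step match the paper's intent directly; block (2) correctly identifies the open-subscheme inclusion $G\subset\bar G$ as the only genuinely non-automatic point and handles it the way the paper indicates (avoidance of the bad locus, supported over finitely many primes, turned into B\'ezout equations via Lemma~\ref{3.3}).

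There is one technical slip in block (4). You encode $P\in M_{J,L}$ by the congruences $X_i\equiv 0\pmod{J}$ for $i\le n$ \emph{together with} $X_{n+1}-1\equiv 0\pmod{J}$. The second constraint is too strong. A point $P\in M_{J,L}$, presented by coordinates $(\lambda_1,\dots,\lambda_{n+1})$ generating the unit ideal, reduces to $(0:\cdots:0:1)$ in $\PP^n(\oo_L/J\oo_L)$, which forces $\lambda_i\equiv 0\pmod{J}$ for $i\le n$ and forces $\lambda_{n+1}$ to be a unit mod $J$ (by the unit-ideal condition) --- but it does \emph{not} force $\lambda_{n+1}\equiv 1\pmod{J}$. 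The only rescalings that preserve both integrality and the unit-ideal property are scalings by elements of $\oo_L^\times$, and the reduction map $\oo_L^\times\to(\oo_L/J\oo_L)^\times$ is not in general surjective (e.g.\ $L=\Q(\sqrt{-5})$, $J=(2)$). As written, then, your system can fail the ``Moreover, every $P\in M^r_{I,L}$ is so represented'' clause. The fix is immediate: drop the extra equation on $X_{n+1}$; the congruences on $X_1,\dots,X_n$ plus the unit-ideal equation already characterize $M_{J,L}$. With that correction, the proposal is correct and aligned with the paper's argument.
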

 
Now recall Corollary \ref{cor01}: Let $(r,I)$ be an exponent of diophantine stability 
for $G$ relative to $L/K$. Let $Y$ denote the set of $\alpha \in \oo_L$ for 
which there are points $P, Q \in M^r_{ID(\alpha),L}$ with $P \ne {e}$ such that 
$$ 
\partial Q = \alpha\cdot \partial P \; \in\; {\mathcal W}_P\otimes_{\oo_K}\oo_L.
$$ 
Assuming that we have chosen representative homogeneous coefficients for $P$ 
and $Q$ for which there are solutions in equations \eqref{hPQ}, consider an 
equation in the form of \eqref{eqcor1}: 
$$ 
\partial Q = \alpha\cdot \partial P \;\in\; {\mathcal W}_P\otimes_{\oo_K}\oo_L 
$$ 
which can be written (appealing to \eqref{eq3} above):
$$
\delta(P)\cdot \big(\lambda_1(Q), \dots, \lambda_d(Q)\big) 
\equiv \alpha\cdot \delta(Q)\cdot (\lambda_1(P), \dots, \lambda_d(P)) \pmod{z_P^2\cdot (\oo_L)^d}.
$$

In order to rewrite the equivalence above as a polynomial equation we note that 
for any $x \in \oo_L$, we have that $x \in z^2_P\oo_L$ if and only if 
$x=\sum_{i,j}a_{i,j}\lambda_i(P)\lambda_j(P), a_{i,j} \in \oo_L$. As observed 
earlier, any two sets of coordinates corresponding to $P$ will differ by a unit 
and therefore will generate the same ideal. 

We now summarize the discussion above in the following three propositions.
\begin{proposition} 
\label{prop2} 

Let $L/K$ be a number field extension.  Suppose $G$ is a group scheme over $\oo_K$ satisfying Assumption \ref{Ghyp} and 
$G(\oo_L)$  has a point of infinite order. If 
there is an exponent of diophantine stability for  $L/K$, then
there exists an {\em existential definition} $f(t,\bar x) \in \oo_K[t, \bar{x}]$
of $\oo_K$ in $\oo_L$ such that for every $t \in \oo_K$ we have that the equation $f(t,\bar x)=0$ has solutions in $\oo_K$.
\end{proposition}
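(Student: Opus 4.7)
The plan is to assemble a diophantine definition of $\oo_K$ over $\oo_L$ by first producing an existentially definable subset of $\oo_L$ sandwiched between $\N$ and $\oo_K$, then invoking Lemma \ref{3.7} to promote it to all of $\oo_K$, and finally collapsing the resulting finite system of equations into a single polynomial via Lemma \ref{3.1}.

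First I fix a well-arranged embedding $\iota\colon G \hookrightarrow \PP^n$, available by Corollary \ref{warcor} (replacing $G$ by $G^h$ if necessary, thanks to Lemma \ref{free1}); the chosen exponent $(r,I)$ of diophantine stability for $L/K$ with explicit generators $j_1,\ldots,j_s$ of $I$; and the integer $m := [M:\Q]$, where $M$ is the Galois closure of $L/\Q$. For a variable $t$ playing the role of $\alpha$ in Corollary \ref{cor01}, the ideal $I\cdot D(m,t)\oo_K$ is generated by the explicit elements $j_i\cdot D(m,t)$ with $D(m,t)=(C(m)\, t(1-t)\cdots(m-t))^{m^2}$. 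Using Corollary \ref{exist}, I write down for each of the required points $P,Q$ a finite system of polynomial equations over $\oo_K$ in auxiliary variables whose solutions in $\oo_L$ parametrise the pairs of unit-ideal homogeneous coordinate vectors of points in $M^r_{I\cdot D(m,t),L}$.

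Next I translate the condition $\partial Q = t\cdot \partial P$ in $\mathcal{W}_P\otimes_{\oo_K}\oo_L$ into polynomial equations. By \eqref{eq3} and Lemma \ref{i1}, this is equivalent to the congruences
\[
\delta(P)\,\lambda_i(Q)\;\equiv\; t\cdot\delta(Q)\,\lambda_i(P)\pmod{z_P^2\oo_L},\qquad 1\le i\le d.
\]
Since $z_P\oo_L$ is generated by $\lambda_1(P),\ldots,\lambda_n(P)$, membership in $z_P^2\oo_L$ is witnessed by further auxiliary variables $c_{i,j,k}\in\oo_L$ satisfying $\delta(P)\lambda_i(Q) - t\cdot\delta(Q)\lambda_i(P) = \sum_{j,k} c_{i,j,k}\,\lambda_j(P)\lambda_k(P)$. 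Calling the full system $\Phi_t$ over $\oo_K$, let $Y\subset\oo_L$ denote the set of $t$ for which $\Phi_t$ has a solution in $\oo_L$. Before invoking Corollary \ref{cor01} I verify its infinite-order hypothesis for $G(\oo_K)$: an infinite-order point of $G(\oo_L)$ has a power in the finite-index subgroup $M_{I,L}$, whose further $r$-th power lies in $M^r_{I,L}\subset G(\oo_K)$ and remains of infinite order. Corollary \ref{cor01} then gives $\N\subset Y\subset\oo_K$.

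With this, Lemma \ref{3.7} (together with Lemma \ref{3.3} to handle $b\neq 0$) produces an existential definition of $\oo_K$ over $\oo_L$ by writing any $y\in\oo_K$ as $b^{-1}\sum_i a_i\alpha^i$ with $\pm a_i,b\in Y$, where $\alpha\in\oo_K$ is a fixed primitive element of $K/\Q$. Lemma \ref{3.1}, applicable because the fraction field of $\oo_L$ is not algebraically closed, collapses the resulting finite system into a single $f(t,\bar{x})\in\oo_K[t,\bar{x}]$. To confirm that solutions in $\oo_K$ exist for every $t\in\oo_K$, fix an infinite-order point $P_0\in M^r_{I,K}\subset G(\oo_K)$ supplied by Lemma \ref{dsinf} (with $\bK=\bL=K$). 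For any $\nu\in\N$, a suitably chosen power $P := P_0^N$ lies in $M^r_{I\cdot D(m,\nu),K}$, and $Q := P^\nu$ then gives $\partial Q = \nu\cdot\partial P$ by Corollary \ref{qp}; every auxiliary and congruence variable is thereby chosen in $\oo_K$. Since Lemma \ref{3.7} only invokes $\pm a_i, b\in\N\subset Y$ for $a_i,b\in\Z$, the full witness for an arbitrary $t\in\oo_K$ lives in $\oo_K$. I expect the main obstacle to be the clean polynomial translation of the congruence $\partial Q \equiv t\,\partial P \pmod{z_P^2}$ together with the verification that the auxiliary coefficients $c_{i,j,k}$ can be realized over $\oo_K$ whenever the point data are; this ultimately reduces to the observation that $z_P^2\oo_K = (\lambda_j(P)\lambda_k(P))_{j,k}$ already holds as an ideal of $\oo_K$.
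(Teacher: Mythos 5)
Your proof is correct and follows the paper's argument: Corollary \ref{cor01} gives $\N\subset Y\subset\oo_K$, Corollary \ref{exist} together with the congruence translation renders $Y$ existentially definable over $\oo_L$, and Lemma \ref{3.7} promotes $Y$ to a diophantine definition of $\oo_K$. Your extra verifications---deriving an infinite-order point of $G(\oo_K)$ from one in $G(\oo_L)$ via the stability exponent, and checking that witnesses for $t\in\oo_K$ can be taken inside $\oo_K$---usefully make explicit steps the paper leaves to the reader.
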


\begin{proof} 
The assertion follows from Corollaries \ref{cor01} and \ref{exist} combined with 
Lemma \ref{3.7}. 
\end{proof}
In the same fashion we can prove the following proposition.
\begin{proposition}
\label{prop3}
Let $\bL$ be a totally real algebraic extension of $\Q$, possibly of infinite degree. Let $\bK$ be a subfield of $\bL$. Suppose $G$ is a group scheme over $\oo_{\bK}$ satisfying Assumption \ref{Ghyp} and 
$G(\oo_{\bL})$  has a point of infinite order. If 
there is an exponent of diophantine stability for ${\bL}/{\bK}$, then 
\begin{itemize}
\item if ${\bK}$ is a number field we have an {\em existential definition} $f(t,\bar x)$ of $\oo_{\bK}$ over $\oo_{\bL}$  such that for every $t \in \oo_{\bK}$  the equation $f(t,\bar x)=0$ has solutions in $\oo_{\bK}$.
\item if ${\bK}$ is an infinite extension of $\Q$ there exists $D \subset \oo_{\bL}$ such that $D$ has an existential definition $f(t,\bar x)$ over $\oo_{\bL}$, $\N \subset D \subset \oo_{\bK}$ and for any $t \in D$ the equation $f(t, \bar x)=0$ has solutions in $\oo_{\bK}$.

\end{itemize}
\end{proposition}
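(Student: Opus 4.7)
My plan is to follow the proof of Proposition \ref{prop2} step by step, replacing the number-field ingredients of \S\ref{sec: cong} by their totally real counterparts, and branching on whether $\bK$ is a number field only at the very end.

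First, fix the exponent of diophantine stability $(r,I)$ provided by the hypothesis, and invoke the totally real case of Corollary \ref{cor01}. This produces a set $Y\subset\oo_{\bL}$ cut out by the conditions: $\alpha=1+u_1^2+u_2^2+u_3^2+u_4^2$ for some $u_j\in\oo_{\bL}$ (encoding total positivity of $\alpha$ via Proposition \ref{tpprop}), together with the existence of points $P,Q\in M^r_{(2\alpha+1)^2 I,\bL}$ with $P\ne e$ and $\partial Q=\alpha\cdot\partial P$ in ${\mathcal W}_P\otimes_{\oo_{\bK}}\oo_{\bL}$. Corollary \ref{cor01} gives $Y\subset\oo_{\bK}$. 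For the reverse inclusion $\N\subset Y$: by hypothesis $G(\oo_{\bL})$ has a point of infinite order, and raising it to a suitable power lands it in $M^r_{(2\alpha+1)^2 I,\bL}\subset G(\oo_{\bK})$, so that Lemma \ref{dsinf} and Corollary \ref{qp} (take $Q:=P^\alpha$) yield $\N\subset Y$ exactly as in the number-field case.

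Second, I would render the defining conditions of $Y$ existentially over $\oo_{\bL}$, in parallel with Corollary \ref{exist}. The items (1)--(6) of \S\ref{sec:exist} express membership in $G(\oo_{\bL})$ via its well-arranged embedding (Corollary \ref{warcor}), in the subgroup $M_{J,\bL}$, and in the image of the $r$-th power map, together with the unit-ideal condition on coordinates---each by a fixed finite polynomial system over $\oo_{\bK}$ with no sensitivity to $[\bL:\bK]$ being finite. The relation $\partial Q=\alpha\,\partial P$ is converted via \eqref{eq3} into the componentwise congruence $\delta(P)\lambda_i(Q)\equiv\alpha\,\delta(Q)\lambda_i(P)\pmod{z_P^2\oo_{\bL}}$ for $1\le i\le d$, with $z_P^2\oo_{\bL}$ realized as $\oo_{\bL}$-bilinear combinations of the $\lambda_j(P)$, and non-vanishing of $\delta(P)$ captured existentially by Lemma \ref{3.3}. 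Assembling these yields a finite system $f(t,\bar x)\in\oo_{\bK}[t,\bar x]$ defining $Y$; and for any $t\in Y$ one can choose $P\in M^r_{(2t+1)^2 I,\bK}$, set $Q:=P^t$, and pick the remaining auxiliary variables in $\oo_{\bK}$, so $f(t,\bar x)=0$ is solvable in $\oo_{\bK}$.

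Finally, if $\bK$ is a number field, Lemma \ref{3.7} upgrades $Y\supset\N$ to a diophantine definition of all of $\oo_{\bK}$ over $\oo_{\bL}$; the construction there takes $\oo_{\bK}$-linear combinations $bx=\sum b_i\alpha^i$ with $b,b_i\in Y$ and $\alpha$ a generator of $\bK/\Q$, which visibly preserves $\oo_{\bK}$-solvability. If $\bK$ is infinite over $\Q$, Lemma \ref{3.7} is unavailable, so we simply set $D:=Y$ and the inclusions $\N\subset D\subset\oo_{\bK}$ are immediate. The main technical obstacle is the bookkeeping in the second step: one must check that the polynomial description of $M^r_{J,\bL}$ and of the relation $\partial Q=\alpha\,\partial P$---particularly the handling of the denominators $\delta(P)$---is uniform in $[\bL:\bK]$. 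Because the entire machinery of Parts 2 and 3 is built from a single finite set of equations over $\oo_{\bK}$, this obstacle is really a matter of verification rather than genuine difficulty.
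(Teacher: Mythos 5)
Your proof follows the paper's own argument essentially verbatim: the paper proves Proposition \ref{prop2} by combining Corollaries \ref{cor01} and \ref{exist} with Lemma \ref{3.7}, and then states that Proposition \ref{prop3} is proved ``in the same fashion,'' which is exactly the substitution you carry out---the totally real branch of Corollary \ref{cor01} with the $(2\alpha+1)^2$ bound and the sum-of-four-squares condition in place of $D(m,\alpha)$, the existential rendering of \S\ref{sec:exist} (whose defining system lives entirely over $\oo_{\bK}$ and is blind to $[\bL:\bK]$), Lemma \ref{3.7} when $\bK$ is a number field, and $D:=Y$ when $\bK$ is infinite. Correct, and same route as the paper.
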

We now consider the case of quadratic extensions of totally real fields.
\begin{proposition}
\label{prop5}
Let $\bF$ be a quadratic extension of a totally real algebraic extension $\bL$ of $\Q$, possibly of infinite degree over $\Q$.  Let ${\bK} \subset {\bF}$ be a field. Suppose $G$ is a group scheme over $\oo_{\bK}$ satisfying Assumption \ref{Ghyp} and 
$G(\oo_{\bF})$  has a point of infinite order. If 
there is an exponent of diophantine stability for  ${\bF}/{\bK}$, then 
\begin{itemize}
\item if $\bK$ is a number field we have an {\em existential definition} of $\oo_{\bK}$ over $\oo_{\bF}$.  
\item if $\bK$ is an infinite extension of $\Q$ there exists $D \subset \oo_{\bL}$ such that $D$ has an existential definition over $\oo_{\bF}$ and $\N \subset D \subset \oo_{\bK}$.
\end{itemize}
\end{proposition}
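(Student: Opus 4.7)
The plan is to follow the template of the (very short) proofs of Propositions \ref{prop2} and \ref{prop3}, replacing the ``number field'' and ``totally real'' variants of the key lemmas with the ``quadratic extension of a totally real field'' variants built up in Corollaries \ref{cor:algg}, \ref{cor:Diophstable}, \ref{cor:ext2}, \ref{cor01}, and \ref{cor: FtoK}.

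To begin, I would fix a well-arranged embedding $\iota : G \hookrightarrow \PP^n$ (Corollary \ref{warcor}, using Lemma \ref{free1} to ensure freeness of $\mathcal{N}_G$), choose an exponent $(r, I)$ of diophantine stability for $G$ relative to $\bF/\bK$, and pick generators $j_1, \ldots, j_t \in \oo_\bK$ of $I$. The quadratic-extension case of Corollary \ref{cor01} then produces a set $Y \subset \oo_\bF$, described by the existence of $X_1, X_2 \in \oo_\bF$ fulfilling the hypotheses of Proposition \ref{prop:Talpha} together with points $P, Q \in M^r_{I X_1 \alpha, \bF}$, $P \ne e$, satisfying $\partial Q = \alpha \cdot \partial P$ in $\mathcal{W}_P \otimes_{\oo_\bK} \oo_\bF$; this set is contained in $\oo_\bK$, and because $G(\oo_\bF)$ has a point of infinite order, Lemma \ref{dsinf} together with Corollary \ref{qp} shows $\N \subset Y$.

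The bulk of the work is to verify that $Y$ is existentially definable over $\oo_\bF$, which is done exactly as in the six numbered items preceding Corollary \ref{exist}: the conditions on $P, Q$---lying on $\bar G$, having unit-ideal homogeneous coordinates, belonging to the kernel $M^r_{J,\bF}$ with $J = I X_1 \alpha \oo_\bF$---are polynomial; the equation $\partial Q = \alpha \cdot \partial P$ unpacks via \eqref{eq3} into a polynomial congruence modulo $z_P^2$; the existence of $X_1, X_2$ with $X_1/X_2 \in \bL$ and the required bound conditions is diophantine thanks to Corollaries \ref{cor:Diophstable} and \ref{cor:ext2} (which themselves rest on the diophantine stability of the twisted torus $\mathbf{G}_m^{\mathbf{M}/\bL}$ from Corollary \ref{cor:algg}); and real-embedding positivity inequalities are translated into sum-of-squares equations by Proposition \ref{tpprop}. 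A single polynomial equation is then obtained by Lemma \ref{3.1}.

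Finally, if $\bK$ is a number field one applies Lemma \ref{3.7} to the set $Y$ (which contains $\N$ and is contained in $\oo_\bK$) to upgrade its existential definition to an existential definition of $\oo_\bK$ itself; if $\bK$ is an infinite extension of $\Q$, simply take $D := Y$. The main obstacle, relative to Propositions \ref{prop2} and \ref{prop3}, is that the congruence ideal $J = I X_1 \alpha \oo_\bF$ has generators that are themselves variables (products of the fixed $j_k$ with the existentially quantified $X_1$ and $\alpha$), so one must invoke the variant of Corollary \ref{exist} in which the congruence generators are allowed to be polynomial expressions in the other existentially quantified variables rather than fixed constants in $\oo_\bK$; verifying that this broadened version still yields a bona fide existential definition is routine but must be made explicit.
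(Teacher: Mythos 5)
Your proposal is correct and follows essentially the same route as the paper's own (one-sentence) proof, which invokes Corollaries \ref{cor:extension2}, \ref{cor01}, \ref{exist} and Lemma \ref{3.7}; you simply unpack the dependency chain (the bound machinery in Corollaries \ref{cor:algg}, \ref{cor:Diophstable}, \ref{cor:ext2} that feeds into \ref{cor:extension2} and hence \ref{prop:Talpha} and \ref{cor01}) rather than citing the downstream corollary directly. Your closing remark—that the congruence ideal $I X_1 \alpha \oo_{\bF}$ has variable generators, so Corollary \ref{exist} as literally stated (with congruence variables set to fixed $j_k \in \oo_K$) needs the mild generalization in which these slots carry polynomial expressions in the other existential variables—is a fair observation about a detail the paper leaves implicit, but note that the same issue already arises in Proposition \ref{prop2} via the factor $D(\alpha)$, and the paper's discussion between Corollary \ref{exist} and Proposition \ref{prop2} is exactly where this is handled, so it is not a gap in the argument.
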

\begin{proof}
The proof of the proposition follows from Corollaries \ref{cor:extension2}, \ref{cor01}, \ref{exist} combined with 
Lemma \ref{3.7}. 
\end{proof}

The last proposition in this series does not require an assumption on existence of a group scheme, because we know such a group scheme exists.

\begin{proposition}
\label{prop6}
Let $\bF$ be a quadratic extension of a totally real algebraic extension $\bL$ of $\Q$, possibly of infinite degree over $\Q$.  
There exists a set $D \subset \oo_{\bF}$ such that $D$ has an existential definition over $\oo_{\bF}$ and $\N \subset D \subset \oo_{\bL}$.
\end{proposition}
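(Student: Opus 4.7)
The plan is to deduce Proposition \ref{prop6} as a specialization of Proposition \ref{prop5} with $\bK := \bL$, once a suitable group scheme is produced. By the remark preceding the statement, the content lies in observing that the group-scheme hypothesis needed by Proposition \ref{prop5} is automatically satisfied here, because Corollary \ref{cor:algg} supplies one. Without loss of generality I take $\bF$ not totally real---this is the substantive case, and is the situation in which Corollary \ref{cor:algg} directly applies.

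Corollary \ref{cor:algg} produces a commutative group scheme $G_0$ defined over $\oo_{\bL}$ whose group of $\oo_{\bL}$-points contains an element of infinite order and for which $(1,\,3\oo_{\bL})$ is an exponent of diophantine stability relative to $\bF/\bL$. Concretely $G_0 = \mathbf{G}_m^{\mathbf{M}/\bL}$ for a suitable quadratic extension $\mathbf{M}/\bL$, so $G_0$ is the base change of a group scheme $G_0'$ defined over $\oo_K$ for any number field $K \subset \bL$ containing the defining data of $\mathbf{M}$ (and an infinite-order $\oo_K$-point). To meet Assumption \ref{Ghyp}, which demands free conormal bundle over the ring of integers of a number field, I replace $G_0'$ by its $h_K$-fold self-product $G := (G_0')^{h_K}$, where $h_K$ is the class number of $K$; by Lemma \ref{free1} the conormal bundle module of $G$ is then free over $\oo_K$. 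Both the existence of an infinite-order point in $G(\oo_{\bL}) \subset G(\oo_{\bF})$ and the diophantine-stability exponent $(1,\,3\oo_{\bL})$ for $\bF/\bL$ pass verbatim to the self-product, since the kernel of reduction on $G(\oo_\bF)$ factors as a direct product of the factorwise kernels, each of which is contained in $G_0(\oo_{\bL})$ by Corollary \ref{cor:algg}(2).

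Viewing $G$ as a group scheme over $\oo_{\bL}$ via base change, every hypothesis of Proposition \ref{prop5} is now met with $\bK := \bL$, and the second bullet of that proposition's conclusion directly supplies a set $D \subset \oo_{\bF}$ that is existentially definable over $\oo_{\bF}$ and satisfies $\N \subset D \subset \oo_{\bL}$, as required. The main subtlety is the descent step just described: checking that the exponent of diophantine stability for the (possibly infinite-degree) extension $\bF/\bL$ genuinely survives the descent to the number-field model over $\oo_K$ and is then preserved under the $h_K$-fold self-product needed to invoke Lemma \ref{free1}. This is the only real moving part, and it follows cleanly from the product decomposition of the reduction kernel together with Corollary \ref{cor:algg}(2).
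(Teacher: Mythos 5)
Your proof is correct and rests on the same mechanism the paper uses: the twisted multiplicative group $\mathbf{G}_m^{\mathbf{M}/\bL}$ from Corollary~\ref{cor:algg}, whose built-in exponent of diophantine stability $(1,n\oo_\bL)$ for $\bF/\bL$ ($n\ge 3$) is what drives the argument. The difference is purely one of packaging: the paper's stated proof routes directly through the specialized Corollary~\ref{cor: FtoK} (which is already phrased for $\bK=\bL$ and $G=\mathbf{G}_m^{\mathbf{M}/\bL}$) together with Corollaries~\ref{cor:ext2}, \ref{cor:extension2}, and \ref{exist}, whereas you invoke the general Proposition~\ref{prop5} with $\bK := \bL$ and then verify its hypotheses from Corollary~\ref{cor:algg}. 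Since Corollary~\ref{cor: FtoK} is itself essentially Corollary~\ref{cor01} specialized to that same $G$ and $\bK$, and Proposition~\ref{prop5} is proved from Corollary~\ref{cor01}, the two paths converge to identical machinery. Your explicit treatment of Assumption~\ref{Ghyp}---descending $\mathbf{G}_m^{\mathbf{M}/\bL}$ to a number-field model over $\oo_K$ and replacing it by its $h_K$-fold self-product to force freeness of the conormal module (Lemma~\ref{free1}), then checking that both the infinite-order point and the stability exponent survive the self-product---is a detail the paper leaves implicit, and it is correct: the reduction kernel of a product is the product of the reduction kernels. Both your argument and the paper's tacitly restrict to $\bF$ not totally real (Corollary~\ref{cor:algg} and Corollary~\ref{cor: FtoK} both carry this hypothesis); you flag this explicitly, which is reasonable since it is the only case in which the group scheme of Corollary~\ref{cor:algg} has positive rank.
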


\begin{proof}
The proof of the proposition follows from Corollaries \ref{cor:algg}, \ref{cor:ext2}, \ref{cor:extension2}, \ref{cor: FtoK} and \ref{exist}.
\end{proof}

Below we state another corollary emphasizing the fact that in the case of totally real 
number fields and quadratic extensions of totally real number fields, our diophantine 
definition of $\oo_K$ over $\oo_M$ does not depend on the degree $[M:\Q]$.

\begin{corollary}
Let ${\bf M}$ be a totally real algebraic extension of $\Q$ or a quadratic extension 
of a totally real algebraic extension of $\Q$.  Let $K \subset {\bf M}$ be a number 
field.  Suppose $G$ is a group scheme over $\oo_K$ satisfying Assumption \ref{Ghyp} 
and $G(\oo_{\bf M})$  has a point of infinite order. Suppose also
there is an exponent of diophantine stability for $G$ relative to ${\bf M}/K$.  
Let $\mathcal M$ be the collection of all number fields $M$ such that 
$K \subset M \subset {\bf M}$.  Then there exists a single diophantine definition of 
$\oo_K$ over $\oo_M$ across all fields $\oo_M \in \mathcal M$.
\end{corollary}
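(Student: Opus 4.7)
The plan is to observe that both Proposition \ref{prop3} (totally real case) and Proposition \ref{prop5} (quadratic case), when unwound, produce a diophantine definition whose constituent polynomials have coefficients in $\oo_K$ and whose form depends only on the fixed data $(K, G, \iota, r, I)$, not on the specific search ring. Concretely, the fundamental and auxiliary equations from Corollary \ref{exist} arise from the well-arranged embedding $\iota: G \hookrightarrow \mathbb{P}^n$ and the group-scheme equations over $\oo_K$; the $r$-th power and congruence relations are attached to the fixed exponent $(r, I)$; and the bound equations from Corollary \ref{CONG2} (using $(2\alpha+1)^2$) or Corollary \ref{cor:extension2} (using $X_1$) are uniform, involving no reference to $[M:\Q]$.

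First I would verify the hypotheses of the relevant proposition for each $M \in \mathcal M$. Any subfield $M \subset \bM$ inherits the structural type of $\bM$: it is either totally real, or a quadratic extension of a totally real number field, since elements of $M \setminus \bL$ can be written as $a+b\delta$ and reorganised to exhibit $M$ as $(M \cap \bL)(\delta')$ with $\delta'^2 \in M \cap \bL$. The exponent $(r, I)$ for $\bM/K$ restricts to an exponent for $M/K$ via $M_{I,M}^r \subseteq M_{I,\bM}^r \subseteq G(\oo_K)$. Taking a suitable power of a point of infinite order in $G(\oo_{\bM})$ lands in $M_{I,\bM}^r \subseteq G(\oo_K) \subseteq G(\oo_M)$, furnishing the required point of infinite order.

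Next I would assemble the single system: the fundamental and auxiliary equations from Corollary \ref{exist}, the bound equations from Corollary \ref{CONG2} (totally real case) or Corollary \ref{cor:extension2} (quadratic case), and the congruence relation of Corollary \ref{cor01}, keeping all coefficients in $\oo_K$. In the quadratic case the auxiliary $\mathbf{G}_m$-twist entering Corollary \ref{cor:Diophstable} is set up over $\oo_{K \cap \bL}$, using the quadratic character attached to $K/(K \cap \bL)$, so its defining equations live over $\oo_K$. By construction, a tuple over $\oo_M$ solves the combined system if and only if its distinguished coordinate $\alpha$ lies in $\oo_K$.

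The main obstacle will be, in the quadratic case, to ensure that the bound equations admit solutions $(u, v)$ over each $\oo_M$, as required by Corollary \ref{cor:ext2}. This reduces to verifying Corollary \ref{cor:Diophstable}(3) for the twist set up over $\oo_{K \cap \bL}$: one needs a point of infinite order whose appropriate powers lie in the mod-$n$ kernel and remain visible inside $\oo_M$. This follows from Lemma \ref{min2}(2), which supplies a point of infinite order in the relevant $\mathbf{G}_m$-twist over $\oo_{K \cap \bL}$ precisely when $K$ is not totally real, the only case in which Corollary \ref{cor:extension2} is actually invoked. Once this uniformity is in place, Corollary \ref{cor01} applied to $M/K$ for each $M \in \mathcal M$ produces the desired single diophantine definition of $\oo_K$ over $\oo_M$ valid uniformly across $\mathcal M$.
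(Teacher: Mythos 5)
The paper intends a much more economical argument than the one you sketch, and your route introduces a real gap.

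The paper applies Proposition \ref{prop3} (or Propositions \ref{prop5}/\ref{prop6}) once to the extension $\bM/K$, producing a single system $f(t,\bar x)$ with coefficients in $\oo_K$ defining $\oo_K$ inside $\oo_{\bM}$, and with the key ``witnesses in $\oo_K$'' property recorded in the Remark following Theorem \ref{thm:inf} (and stated explicitly in Propositions \ref{prop2} and \ref{prop3}): for every $t\in\oo_K$ the equation $f(t,\bar x)=0$ has a solution with $\bar x$ already in $\oo_K$. From there the corollary is immediate: for any number field $M$ with $K\subset M\subset \bM$, a solution in $\oo_M$ is in particular a solution in $\oo_{\bM}$, so forces $t\in\oo_K$; and conversely for $t\in\oo_K$ the $\oo_K$-witness lies in $\oo_M$. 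The same system $f$ therefore defines $\oo_K$ over every $\oo_M$ simultaneously, which is what ``does not depend on $[M:\Q]$'' means. No hypothesis on $M$ itself needs to be verified.

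Your plan instead applies Corollary \ref{cor01} and the propositions to each $M/K$ separately, which both creates unnecessary obligations and fails to produce a \emph{single} system. First, you are forced to assert the ``structural type inheritance'' — that every $M$ with $K\subset M\subset\bM$ is again totally real or a quadratic extension of a totally real field. The justification you give (``elements of $M\setminus\bL$ can be written as $a+b\delta$ and reorganised'') does not work as stated: for $\mu = a+b\delta\in M$ one has no control over whether $a,b$ lie in $M$, and establishing that $M\cap\bL$ has index $\le 2$ in $M$ is a genuine Galois-theoretic claim (equivalent to $\sigma(M)=M$ for the nontrivial automorphism $\sigma$ of $\bM/\bL$) that requires proof, not a one-line reorganisation. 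Second, even granting it, totally real $M$ fall under Proposition \ref{prop3} while the non–totally-real ones fall under Proposition \ref{prop5}, and these use visibly different bound equations (sums of four squares with $(2\alpha+1)^2$ versus the $X_1, X_2$ and twist machinery), so you would obtain a family of systems depending on $M$, not the single system the corollary asserts. Third, your claim that Corollary \ref{cor:extension2} is ``actually invoked'' only when $K$ is not totally real is incorrect: whether the quadratic-case bounds are needed is governed by whether $M$ is totally real, not $K$; in particular $K$ can be totally real while $M$ is not.

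The uniformity observation you open with is the right intuition, but it should be cashed out via the $\bM/K$ construction together with the witnesses-in-$\oo_K$ property, not by re-running the propositions over each intermediate $M$.
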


Theorem \ref{mainth1} follows directly from Proposition \ref{prop2},  and Theorem \ref{thm:inf} follows 
directly from Propositions \ref{prop3} and \ref{prop6}.

\section{A simple example}\label{simple} 

\begin{definition} 
A {\df CM-field} ${\bF}$ is a totally complex field of algebraic numbers 
(possibly of infinite degree over $\Q$) possessing an involution $\sigma$ 
with fixed field ${\bF}^+:={\bF}^\sigma$ totally real. 
\end{definition}

\begin{remarks}
\begin{enumerate} 
\item 
This terminology is usually only used for number fields $F/{\Q}$ (i.e., of 
finite degree over $\Q$) such fields being related to complex multiplication on
abelian varieties---hence the ``CM." 
\item 
The involution $\sigma$ referred to in the definition above is unique: there 
is at most one involution of a totally complex field whose fixed field is totally 
real. Equivalently, the field ${\bF}^+$ is the only totally real subfield 
${\bL}$ of ${\bF}$ such that ${\bF}/{\bL}$ is quadratic. We will refer 
to ${\bF}^+$ as {\em the} maximal totally real subfield of ${\bF}$.
\end{enumerate}
\end{remarks}

Let ${\bF}$ be a CM-field and let ${\bL}={\bF}^+$.
Take our group scheme $G = \gm := {\Spec}(\oo_{\bL}[s,s^{-1}])$ to 
be the multiplicative group. We view this as a quasi-projective smooth 
group scheme over $\oo_{\bL}$.
We have that $G(\oo_{\bL}) = \oo_{\bL}^\times \subset G(\oo_{\bF}) = \oo_{\bF}^\times$. 

\begin{lemma}
\label{Dirichlet}
Suppose $I$ is an ideal of $\oo_{\bF}$ divisible by a rational integer $n \ge 3$. 
Then $M^2_{I,{\bF}} \subset \oo_\bL^\times$. That is, the pair $(2, I)$ 
is an exponent of diophantine stability for $G$ relative 
to the field extension ${\bF}/{\bL}$ (see Definition \ref{ids}(2)).
\end{lemma}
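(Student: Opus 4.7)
The plan is to prove the (a priori stronger) inclusion $M_{I,\bF} \subset \oo_{\bL}^{\times}$, which obviously implies $M^{2}_{I,\bF} \subset \oo_{\bL}^{\times}$. For $x \in M_{I,\bF}$, I would show $x = \sigma(x)$, where $\sigma$ is the unique involution of $\bF$ with fixed field $\bL$; this suffices because then $x \in \bL \cap \oo_{\bF} = \oo_{\bL}$, and $x$ remains a unit. The whole argument turns on the auxiliary unit $y := x/\sigma(x) \in \oo_{\bF}^{\times}$, which I would show must equal $1$.

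First I would establish the congruence $y \equiv 1 \pmod{n\oo_{\bF}}$. Since $I$ is divisible by $n$, i.e., $I \subset n\oo_{\bF}$, the defining condition $x \equiv 1 \pmod{I}$ forces $x \equiv 1 \pmod{n\oo_{\bF}}$. Applying $\sigma$ (which fixes $n \in \Q$) gives $\sigma(x) \equiv 1$, and hence $\sigma(x)^{-1} \equiv 1 \pmod{n\oo_{\bF}}$ because $\sigma(x)$ is a unit; multiplying yields $y \equiv 1 \pmod{n\oo_{\bF}}$. Next I would exploit the CM structure: under any embedding $\tau : \bF \hookrightarrow \C$, the involution $\sigma$ must act as complex conjugation, because its fixed field $\bL$ is totally real and $[\bF:\bL]=2$. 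Consequently $|\tau(y)|^{2} = \tau(y)\overline{\tau(y)} = \tau(y\sigma(y)) = \tau(1) = 1$, since $y\sigma(y)$ visibly telescopes to $1$. Thus all archimedean conjugates of the algebraic integer $y$ lie on the unit circle, and Kronecker's theorem forces $y$ to be a root of unity.

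Finally, writing $y = 1 + n\beta$ with $\beta \in \oo_{\bF}$, the triangle inequality gives $|\tau(\beta)| = |\tau(y)-1|/n \le 2/n \le 2/3$ for every embedding $\tau$ of $L := \Q(\beta)$. So the algebraic integer $\beta \in \oo_{L}$ satisfies $|N_{L/\Q}(\beta)| < 1$, which forces $N_{L/\Q}(\beta)=0$, hence $\beta=0$, hence $y=1$. The hypothesis $n \ge 3$ is used exactly to obtain the strict inequality $2/n < 1$; this is the classical reason no nontrivial root of unity can be too close to $1$ and is the step that would fail for $n=2$ (where $y=-1$ is a genuine possibility, matching the fact that one needs squaring for the twisted $\gm$ in general). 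The argument is entirely elementary; the only subtlety worth checking carefully is that $\sigma$ really does induce complex conjugation under every archimedean embedding of $\bF$, which is the defining feature of a CM-field.
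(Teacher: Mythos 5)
Your proof is correct and uses the same core mechanism as the paper's: isolate $y = x/x^\sigma$, observe that all its archimedean absolute values equal $1$ (the Kronecker/Dirichlet step), and then use the congruence $y \equiv 1 \pmod{n}$ with $n \ge 3$ to force $y = 1$. You in fact obtain the slightly stronger inclusion $M_{I,\bF} \subset \oo_\bL^\times$ without squaring, since $y = 1$ already gives $x = x^\sigma$; the paper's argument implicitly shows this as well but, phrasing things via the identity $x^2 = (xx^\sigma)(x/x^\sigma)$, only records $x^2 \in \oo_\bL$. Your estimate $|\tau(\beta)| \le 2/n < 1$ is a clean explicit justification of the step ``a root of unity congruent to $1$ mod $n \ge 3$ is trivial'' that the paper leaves tacit, and your remark about why $\sigma$ induces complex conjugation under every archimedean embedding is exactly the point that needed checking given that $\bF$ may have infinite degree.
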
 

\begin{proof}
Let $\sigma$ denote complex conjugation, the nontrivial automorphism of ${\bF}/{\bL}$. 
Suppose $x \in M_{I,{\bF}}$, i.e., $x \in \oo_{\bF}^\times$ and $x \equiv 1 \pmod{I}$. We have
\begin{equation}
\label{dut}
x^2 = (x x^\sigma)(x/x^\sigma).
\end{equation}
By Dirichlet's unit theorem (or the fact that all absolute values of $x/ x^{\sigma}$ 
are $1$), we have that $x/x^{\sigma}$ is a root of unity. Since $x/x^{\sigma} \equiv 1 \pmod{n}$,
we have $x/x^{\sigma} = 1$. Since $x x^\sigma$ is fixed by $\sigma$, it follows from 
\eqref{dut} that $x^2 \in \oo_{\bL}$.
\end{proof}

This $(r=2,I)$ exponent of diophantine stability for $G$ relative to the extension $\oo_{\bF}/\oo_{\bL}$ 
allows one to prove the following proposition due to Denef for the case when ${\bF}$ 
is a number field.

\begin{proposition}
\label{prop:realsubset}
Let ${\bF}$ be a CM field, and let $K$ be any number field contained in ${\bF}^+$. There 
exists a set ${\mathcal E}_K \subset \oo_{{\bF}^+}$ such that the following 
conditions are satisfied.
\begin{enumerate}
\item ${\mathcal E}_K$ is diophantine over $\oo_{\bF}$,
\item $\oo_K \subset {\mathcal E}_K$.
\end{enumerate}
If ${\bF}^+$ is a number field then we can take ${\mathcal E}_K = \oo_K$.
\end{proposition}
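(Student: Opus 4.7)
The plan is to combine Lemma \ref{Dirichlet} with Proposition \ref{prop5} and the integral-basis construction from the proof of Lemma \ref{3.7}. Lemma \ref{Dirichlet} shows that $(2, n\oo_{\bF})$ is an exponent of diophantine stability for $G = \gm$ relative to ${\bF}/{\bF}^+$ for every integer $n \ge 3$; since $\gm$ is smooth and quasi-projective over $\oo_{{\bF}^+}$ with free rank-one conormal bundle along the identity section, Assumption \ref{Ghyp} is satisfied. Unless ${\bF}$ is imaginary quadratic (a case subsumed by the number-field case treated below), ${\bF}^+$ contains a real quadratic number field, so $\gm(\oo_{{\bF}^+}) = \oo_{{\bF}^+}^\times$ has elements of infinite order by Dirichlet's unit theorem, and hence so does $\gm(\oo_{\bF})$.

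If ${\bF}^+$ is a number field, then ${\bF}$ is a CM number field, and the Denef--Lipshitz result cited in \S\ref{htpsec} yields a diophantine definition of $\Z$ over $\oo_{\bF}$. Applying Lemma \ref{3.7} to $K/\Q$ with $\bL = {\bF}$ and $S = \N$ then produces a diophantine definition of $\oo_K$ over $\oo_{\bF}$, so one may take $\mathcal{E}_K = \oo_K$, confirming the last clause of the proposition.

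Now suppose ${\bF}^+$ is an infinite extension of $\Q$. Applying Proposition \ref{prop5} with ${\bK} = {\bF}^+$ and $G = \gm$ yields a set $D \subset \oo_{\bF}$ that is diophantine over $\oo_{\bF}$ and satisfies $\N \subset D \subset \oo_{{\bF}^+}$. Fix $\alpha \in \oo_K$ with $K = \Q(\alpha)$, set $r = [K:\Q]$, and choose a positive integer $m$ with $m\,\oo_K \subset \Z[\alpha]$. I would then define
$$
\mathcal{E}_K := \bigl\{x \in \oo_{\bF} : \exists\, b \in D\setminus\{0\},\ \exists\, a_i, a'_i \in D\ (0 \le i < r),\ bx = \textstyle\sum_{i=0}^{r-1}(a_i - a'_i)\alpha^i\bigr\}.
$$
The condition $b \ne 0$ is diophantine by Lemma \ref{3.3}, so $\mathcal{E}_K$ is diophantine over $\oo_{\bF}$. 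Every $y \in \oo_K$ satisfies $my = \sum_{i=0}^{r-1} c_i \alpha^i$ with $c_i \in \Z$, and writing $c_i = a_i - a'_i$ with $a_i, a'_i \in \N \subset D$ (and $b = m \in \N \subset D$) shows $\oo_K \subset \mathcal{E}_K$. Conversely, if $x \in \mathcal{E}_K$, then $bx \in \oo_{{\bF}^+}$ and $b \in \oo_{{\bF}^+}\setminus\{0\}$, so $x \in {\bF}^+$; combined with $x \in \oo_{\bF}$ and integral closedness of $\oo_{{\bF}^+}$ in ${\bF}^+$, this forces $x \in \oo_{{\bF}^+}$, giving $\mathcal{E}_K \subset \oo_{{\bF}^+}$.

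The main subtle point is that, in the infinite-degree setting, $D$ will typically contain elements of $\oo_{{\bF}^+} \setminus \Z$, so the elements $\sum (a_i-a'_i)\alpha^i / b$ produced above need not lie in $K$---only in ${\bF}^+$. Thus $\mathcal{E}_K$ is in general a proper diophantine ``envelope'' of $\oo_K$ inside $\oo_{{\bF}^+}$, which is precisely why the proposition only asserts $\oo_K \subset \mathcal{E}_K$ in the general case, and why the equality $\mathcal{E}_K = \oo_K$ can be recovered only when $\bF^+$ (equivalently $\bF$) is a number field, since there one can replace $D$ by $\Z$ itself.
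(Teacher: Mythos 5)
Your proposal is correct and follows the same essential route as the paper: take $G=\gm$ over $\oo_{\bF^+}$, use Lemma~\ref{Dirichlet} to get the exponent $(2,n\oo_\bF)$ of diophantine stability for $\bF/\bF^+$, and then feed this into the paper's general machinery. The paper does not separate cases and instead works the $\gm$ computation out explicitly: it fixes the well-arranged embedding $\iota:\gm\hookrightarrow\Aff^1$, $s\mapsto s-1$, computes ${\mathcal N}_G$, $z_P=(u-1)$, $c_P=(u-1)^2$, $\partial P:s-1\mapsto u-1$, and then defines ${\mathcal E}_K$ directly as the set $Y$ of Corollary~\ref{cor01} cut out by the equations $\alpha=1+u_1^2+\cdots+u_4^2$ and $\exists P,Q\in M^2_{(2\alpha+1)^2,\bL},\ \partial Q=\alpha\cdot\partial P$, appealing to Lemma~\ref{3.7} to promote $\N\subset Y$ to $\oo_K\subset{\mathcal E}_K$. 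You instead treat the group-scheme step as a black box via Proposition~\ref{prop5} (which in the paper is itself proved by precisely this Corollary~\ref{cor01} + Lemma~\ref{3.7} chain), so logically your argument and the paper's pass through the same lemmas. What your packaging buys: (i) you isolate the number-field case and dispatch it cleanly with Denef--Lipshitz plus Lemma~\ref{3.7}, which the paper leaves implicit; (ii) your explicit construction of ${\mathcal E}_K$ from the intermediate set $D$, using an integral basis and writing coefficients as differences $a_i-a_i'$ of elements of $D\supset\N$, is more careful than the paper's brief ``combined with Lemma~\ref{3.7}'' and makes transparent why only the containments $\oo_K\subset{\mathcal E}_K\subset\oo_{\bF^+}$ are obtained in the infinite case. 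What the paper's presentation buys is expository concreteness: the one-dimensional $\gm$ example makes $\partial P$ and the congruence ideal entirely explicit, which is the point of the ``simple example'' section.

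One minor slip: when $\bF^+$ is infinite you claim it ``contains a real quadratic number field''. That need not hold (take a tower of odd prime-degree totally real extensions). What is true, and all you need, is that $\bF^+\neq\Q$ contains some totally real number field $L$ of degree $\ge 2$, and Dirichlet's unit theorem then gives $\rank\,\oo_L^\times=[L:\Q]-1\ge 1$, so $\gm(\oo_{\bF^+})$ has an element of infinite order. Replace the quadratic-subfield claim by this and the argument is fully correct.
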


\begin{proof} 
Put ${\bL}:={\bF}^+$.
We have the well-arranged embedding 
$$ 
\iota: \gm \hookrightarrow {\Aff}^1 = \ga= {\Spec}(\oo_{\bL}[ t] )
$$ 
given by $t \mapsto s-1$.
The ideal cutting out the identity section of $\ga$ is $(t)$; the ideal cutting 
out the identity section of $\gm$ is $(s-1)$. So the mapping $G_{[2]} \to {\Aff}^1_{[2]}$ 
is given by the homomorphism (isomorphism, in fact)
$$
\oo_{\bL}[t]/(t^2) \stackrel{\simeq}{\longrightarrow} 
\oo_{\bL}[s,s^{-1}]/((s-1)^2)=\oo_{\bL}[s]/((s-1)^2)
$$
that sends $t\mapsto s-1$.

Let us connect with the notation of Section \ref{VC} and more specifically equations \eqref{p2} and 
\eqref{dual} of that section.

\begin{itemize} 
\item 
As above, $G=\gm= {\Spec}(\oo_{\bL})[ s,s^{-1}] $ and $G_{[2]}={\Spec}(\oo_{\bL}[s]/((s-1)^2)).$
\item 
An $\oo_{\bL}$-rational point $P\in G(\oo_{\bL})$ is given by a homomorphism 
$$
P: \oo_{\bL}[s,s^{-1}] \longrightarrow \oo_{\bL}$$ sending $s$ to a unit 
$u \in \oo_{\bL}^\times \subset \oo_{\bL}$.
\item 
The vanishing ideal $z_P$ is the ideal generated by $u-1$ in $\oo_{\bL}$. 
\item 
The congruence ideal $c_P=z_P^2$ is the ideal generated by $(u-1)^2$. 
\item 
${\mathcal N}_G =(s-1)\cdot \oo_{\bL}[s]/((s-1)^2)$; it is a free $\oo_{\bL}$-module of rank $1$.
\item 
$\partial P: {\mathcal N}_G \to \oo_{\bL}/(c_P)= \oo_{\bL}/((u-1)^2)$ is the 
$\oo_{\bL}$-homomorphism sending $s-1$ to $u-1$. We view $\partial P$ as an element in 
$$
{\mathcal W}_P:={\mathcal N}^*\otimes_\oo(z_P/z_P^2)\subset 
{\mathcal N}^*\otimes_\oo(\oo/z_P^2)
$$ 
as in Lemma \ref{app} above.
\end{itemize} 

Now the ``$Y$" in Corollary \ref{cor01} (combined with Lemma \ref{3.7}) 
gives the desired subset:
$$
{\mathcal E}_K:= Y = \{\alpha \in \oo_L : 
\text{$\alpha$ satisfies \eqref{eqcor1.4} and \eqref{eqcor1.5} below}\}
$$
\begin{equation}
\label{eqcor1.4} 
\text{$\exists u_1,u_2,u_3,u_4 \in \oo_L$ such that $\alpha=1+u_1^2 +\ldots u_4^2$}
\end{equation}\begin{equation}
\label{eqcor1.5}
\exists P, Q \in M^2_{(2\alpha+1)^2,{\bL}} \text{ with }P \ne {e} \text{ and  }
\partial Q = \alpha\cdot \partial P \;\in\; {\mathcal W}_P\otimes_{\oo_K}\oo_{\bL}. 
\end{equation} 
\end{proof}

\part{Diophantine stability in infinite algebraic extensions of $\Q$---results and conjectures}

To date existential undecidability is known for very few rings with fraction fields equal to infinite algebraic extensions of $\Q$. The third author has shown that in any abelian extension of $\Q$ with finitely many ramified primes $\Z$ is existentially definable in infinitely many rings of $\calS$-integers strictly larger than the ring of integers of the field in question. (\cite{Sh17} and \cite{Sh36}). 

All the known results about existential definability of $\Z$ over the ring of integers in infinite extensions require some form of diophantine stability of elliptic curves. The first such results appear in \cite{Sh40} and require diophantine stability of an elliptic curve in a totally real infinite extension of $\Q$. The definability of $\Z$ can then be extended to any quadratic extension of the totally real field under consideration. 

From results of K. Kato \cite{Kato}, K. Ribet \cite{Rib} and D. Rohrlich \cite{Rohr1, Rohr2} we know that in cyclotomic extensions with finitely many ramified primes there exist elliptic curves with groups of points over these fields of positive rank and finitely generated (see \cite{LR}, Theorem 1.2). Thus---as discussed in the introduction (Theorem \ref{KRR} )---$\Z$ is existentially definable in any abelian extension of ${\Q}$ having only finitely many ramified primes.

Below we show that diophantine stability of general abelian varieties, not just elliptic curves, can be used to establish existential definability of $\Z$ over the rings of integers of totally real infinite extensions of $\Q$ and their totally complex quadratic extensions. We also make use of diophantine stability of multiplicative groups as described in Section \ref{simple}. Additionally, we state some conjectures and questions based on conjectures and questions from Section \ref{stable}.

\section{Diophantine stability in abelian extensions}\label{stable}

Our expectation, based on conjectures about elliptic curves over $\Q$ 
(some of which we describe below), 
is that diophantine {\em instability} is rare for abelian varieties in abelian 
extensions unless there is a good reason (analytic or geometric) for it. 
In this section we discuss this expectation and some open questions.

\begin{notation}
If $A/K$ is an abelian variety and $F/K$ is a finite Galois extension, we denote by 
$\mathbf{N}_{F/K} : A(F) \to A(K)$ the map $x \mapsto \sum_{\gamma\in\Gal{F}{K}}\gamma x$.

If $F/K$ is an abelian extension (finite or infinite) 
and $\chi : \Gal{F}{K} \to \C^\times$ is a character of 
finite order, we will say that $\chi$ {\em occurs in} $A(F)$ if $\chi$ occurs in the 
representation of $\Gal{F}{K}$ acting on $A(F) \otimes \C$.

From now on, a {\em character} of a Galois group $G$ means a continuous character, i.e., 
a homomorphism $G \to \C^\times$ of finite order. 
\end{notation}

The following lemma shows that to understand diophantine stability in abelian extensions 
it suffices to understand diophantine stability in cyclic extensions.

\begin{lemma}
\label{ac}
Suppose $\mathbf{L}$ is an abelian extension of a number field $K$, and $A$ is an abelian 
variety defined over $K$. Then the following are equivalent:
\begin{enumerate}
\item
$\rank\, A(F) > \rank\, A(K)$ for some finite extension $F/K$ contained in $\mathbf{L}$,
\item
$\rank\, A(F) > \rank\, A(K)$ for some finite cyclic extension $F/K$ contained in $\mathbf{L}$,
\item
$\{x \in A(F) : \mathbf{N}_{F/K}x = 0\}$ is infinite for some finite cyclic $F/K$ contained in $\mathbf{L}$,
\item
there is a nontrivial character $\chi$ of $\Gal{\mathbf{L}}{K}$ that occurs in $A(\mathbf{L})$.
\end{enumerate}
\end{lemma}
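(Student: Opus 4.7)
The plan is to prove the cycle $(1)\Rightarrow(4)\Rightarrow(2)\Rightarrow(1)$ and then the equivalence $(2)\Leftrightarrow(3)$, using $(4)$ as the pivot that links the finite and infinite pictures and character theory as the main tool.

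First I would set up the representation-theoretic dictionary. For any finite subextension $F/K$ of $\mathbf{L}/K$, put $G:=\Gal{F}{K}$ and $V_F := A(F)\otimes_\Z \C$, a finite-dimensional $\C[G]$-module of dimension $\rank\,A(F)$. Since $A(K) = A(F)^{G}$ and $V_F^G = (A(F)\otimes\C)^G = A(K)\otimes\C$ (invariants commute with flat base change on finitely generated abelian groups), we have $\dim V_F^G = \rank\,A(K)$. Writing $V_F$ as a direct sum of isotypic components indexed by characters of $G$, this gives the clean statement that $\rank\,A(F) > \rank\,A(K)$ if and only if some nontrivial character of $G$ occurs in $V_F$. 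This immediately yields $(1)\Leftrightarrow(4)$: a character of $\Gal{\mathbf{L}}{K}$ occurring in $A(\mathbf{L})$ factors through $\Gal{F}{K}$ for some finite $F\subset\mathbf{L}$ and appears in $V_F$, and conversely any character of $\Gal{F}{K}$ inflates to $\Gal{\mathbf{L}}{K}$.

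Next, for $(4)\Rightarrow(2)$, let $\chi$ be a nontrivial character of $\Gal{\mathbf{L}}{K}$ occurring in $A(\mathbf{L})$. Its image in $\C^\times$ is a finite cyclic group, so the fixed field $F\subset\mathbf{L}$ of $\ker\chi$ is a finite cyclic extension of $K$. A vector in the $\chi$-isotypic component of $V_{F'}$ (for any $F'$ through which $\chi$ factors) is fixed by $\ker\chi = \Gal{F'}{F}$, hence lies in $A(F)\otimes\C = V_F$; thus $\chi$ already occurs in $V_F$, so by the dictionary $\rank\,A(F)>\rank\,A(K)$. The implication $(2)\Rightarrow(1)$ is trivial.

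Finally, for $(2)\Leftrightarrow(3)$, fix a finite cyclic $F/K\subset\mathbf{L}$ with $G=\Gal{F}{K}$ of order $n$, and consider the idempotent $e=\frac{1}{n}\sum_{\gamma\in G}\gamma\in \Q[G]$. Tensoring with $\Q$ gives a $G$-stable decomposition
\[
A(F)\otimes\Q \;=\; e\cdot(A(F)\otimes\Q)\;\oplus\;(1-e)(A(F)\otimes\Q)
\;=\;(A(K)\otimes\Q)\;\oplus\;\ker\bigl(\mathbf{N}_{F/K}\otimes\Q\bigr).
\]
Therefore $\rank\,A(F)>\rank\,A(K)$ if and only if the $\Q$-vector space $\ker(\mathbf{N}_{F/K}\otimes\Q)$ is nonzero, if and only if the kernel of $\mathbf{N}_{F/K}$ on $A(F)$ contains an element of infinite order, if and only if $\{x\in A(F):\mathbf{N}_{F/K}x=0\}$ is infinite (the last step using that this kernel, being a finitely generated abelian group, is infinite precisely when it has positive rank). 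There is no genuine obstacle in any step; the only item requiring a moment's care is confirming that the isotypic component lying in $V_F$ lets one descend from an arbitrary finite $F'$ to the cyclic $F$ in $(4)\Rightarrow(2)$, which is where the passage from general finite extensions to cyclic ones really happens.
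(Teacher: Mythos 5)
Your proof is correct and essentially the same as the paper's, differing only in packaging. The paper works integrally: it defines $Z_F := \ker\mathbf{N}_{F/K}\subset A(F)$ and observes directly that the natural map $A(K)\oplus Z_F\to A(F)$ has finite kernel and cokernel (via $[F:K]x = \mathbf{N}_{F/K}x + ([F:K]-\mathbf{N}_{F/K})x$), then reads the equivalences off from this near-isomorphism, cycling as $(2)\Rightarrow(1)$, $(3)\Leftrightarrow(2)$, $(4)\Rightarrow(3)$, $(1)\Rightarrow(4)$. You instead split $A(F)\otimes\Q$ using the idempotent $e=\tfrac1n\sum_\gamma\gamma$ and phrase everything in terms of $\C[G]$-isotypic components, cycling as $(1)\Leftrightarrow(4)$, $(4)\Rightarrow(2)\Rightarrow(1)$, $(2)\Leftrightarrow(3)$. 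These are the same decomposition viewed before vs.\ after inverting $[F:K]$; the routing of implications is a matter of taste. The one step worth a second look in your version — descending a $\chi$-eigenvector of $V_{F'}$ to $V_F$ where $F$ is the fixed field of $\ker\chi$ — you flag correctly and handle correctly via $V_{F'}^{\Gal{F'}{F}} = A(F)\otimes\C$.
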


\begin{proof}
The implication $(2) \Rightarrow (1)$ is trivial.

Suppose $F$ is a finite cyclic extension of $K$. 
Let $Z_F := \ker \mathbf{N}_{F/K} \subset A(F)$. 
Then $A(K) \cap Z_F \subset A(K)_\tors$ is finite, and if $x \in A(F)$ then
$$
[F:K]x = \mathbf{N}_{F/K}x + ([F:K] - \mathbf{N}_{F/K})x \;\in A(K) + Z_F.
$$
Thus there is a homomorphism with finite kernel and cokernel
\begin{equation}
\label{aspl}
A(K) \oplus Z_F \longrightarrow A(F),
\end{equation}
so in particular $(3) \Leftrightarrow (2)$.

Now suppose (4) holds, and let $F$ be the cyclic extension of $K$ cut out by $\chi$.
Then $\chi$ occurs in $A(F)$ but not in $A(K)$, so \eqref{aspl} shows that $\chi$ must occur in $Z_F$. 
In particular $Z_F$ is infinite, so $(4) \Rightarrow (3)$.

Finally, if (1) holds, then $A(F)/A(K)$ is infinite, so some character $\chi$ 
of $\Gal{\mathbf{L}}{K}$ occurs in $A(F)/A(K)$, and such a $\chi$ is necessarily nontrivial.
Thus $(1) \Rightarrow (4)$.
\end{proof}

\begin{lemma}
\label{bc}
Suppose $\mathbf{L}$ is an abelian extension of $\Q$, and $A$ is an abelian 
variety defined over $\Q$. Then the following are equivalent:
\begin{enumerate}
\item
$A(\mathbf{L})$ is finitely generated,
\item
the set 
$\{\text{characters $\chi$ of $\Gal{\mathbf{L}}{\Q}$} : \text{$\chi$ occurs in $A(\mathbf{L})$}\}$ is finite.
\end{enumerate}
\end{lemma}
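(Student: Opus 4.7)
The plan is to combine Lemma \ref{ac} with Ribet's theorem on torsion in cyclotomic extensions. Write $G=\Gal{\mathbf{L}}{\Q}$.

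For $(1)\Rightarrow(2)$: if $A(\mathbf{L})$ is finitely generated, then $A(\mathbf{L})=A(F)$ for some finite subextension $F/\Q$ in $\mathbf{L}$, since $A(\mathbf{L})=\bigcup A(F')$ over finite $F'\subset\mathbf{L}$. Then $G$ acts on $A(\mathbf{L})\otimes\C$ through the finite quotient $\Gal{F}{\Q}$, so only the finitely many characters of $\Gal{F}{\Q}$ can occur.

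For $(2)\Rightarrow(1)$: let $\chi_1,\ldots,\chi_n$ be the characters of $G$ occurring in $A(\mathbf{L})$, let $H=\bigcap_i\ker\chi_i$, and let $F=\mathbf{L}^H$. Each $\chi_i$ has finite order, so $H$ has finite index in $G$ and $F$ is a finite abelian extension of $\Q$. I claim $\rank A(F')=\rank A(F)$ for every finite $F'/\Q$ with $F\subset F'\subset\mathbf{L}$. If not, by Lemma \ref{ac} applied to $F'/F$ some nontrivial character $\psi$ of $\Gal{F'}{F}$ occurs in $A(F')\otimes\C$. Decomposing $A(F')\otimes\C$ under the finite abelian group $\Gal{F'}{\Q}$, some character $\chi$ of $\Gal{F'}{\Q}$ occurs in $A(F')\otimes\C$ with $\chi|_{\Gal{F'}{F}}=\psi$. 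Its inflation $\tilde\chi$ along $G\twoheadrightarrow\Gal{F'}{\Q}$ is a character of $G$ occurring in $A(\mathbf{L})\otimes\C$, so $\tilde\chi=\chi_i$ for some $i$ and hence $\tilde\chi$ is trivial on $H$. But $H$ surjects onto $\Gal{F'}{F}$, which forces $\psi=1$, a contradiction. Thus $A(\mathbf{L})/A(F)$ is torsion. Since $\mathbf{L}\subset\Q(\mu_\infty)$ by Kronecker--Weber, Ribet's theorem gives that $A(\mathbf{L})_\tors$ is finite, and combined with the Mordell--Weil theorem for $A(F)$, I conclude that $A(\mathbf{L})$ is finitely generated.

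The main obstacle is the character-extension step in $(2)\Rightarrow(1)$: given a nontrivial $\psi$ of $\Gal{F'}{F}$ appearing in $A(F')\otimes\C$, one must produce a character of $G$ that occurs in $A(\mathbf{L})$ but is nontrivial on $H$, contradicting the defining property of $H$. The key input is complete reducibility of the finite-dimensional representation $A(F')\otimes\C$ of the finite abelian group $\Gal{F'}{\Q}$, which lets one refine the $\psi$-eigenspace for $\Gal{F'}{F}$ into a joint eigenline for the full group $\Gal{F'}{\Q}$ and then inflate.
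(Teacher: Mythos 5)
Your proof of $(1)\Rightarrow(2)$ and the bulk of $(2)\Rightarrow(1)$ are correct: the choice of $F=\mathbf{L}^H$, the application of Lemma \ref{ac}, and the character-extension argument via complete reducibility all work, and they establish that $\rank A(F')=\rank A(F)$ for every finite $F'$ between $F$ and $\mathbf{L}$, hence that $A(\mathbf{L})/A(F)$ is torsion.

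The gap is in the final sentence. From ``$A(F)$ is finitely generated,'' ``$A(\mathbf{L})/A(F)$ is torsion,'' and ``$A(\mathbf{L})_{\tors}$ is finite'' it does \emph{not} follow that $A(\mathbf{L})$ is finitely generated: already for abstract abelian groups, $\Z\subset\Z[1/2]$ satisfies all three conditions with a non--finitely-generated ambient group. What you need is that $A(\mathbf{L})/A(F)$ has \emph{bounded exponent}, and this is exactly what the paper's cocycle argument supplies. Concretely, let $w$ be the exponent of $A(\mathbf{L})_{\tors}$. For $x\in A(\mathbf{L})$ pick $m>0$ with $mx\in A(F)$; then for any $g\in\Gal{\mathbf{L}}{F}$ one has $m(gx-x)=g(mx)-mx=0$, so $gx-x\in A(\mathbf{L})_{\tors}$ and hence $w(gx-x)=0$, i.e.\ $g(wx)=wx$. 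Since this holds for all $g\in\Gal{\mathbf{L}}{F}$, we get $wx\in A(F)$. Thus $A(\mathbf{L})\subset\{y\in A(\bar{\Q}):wy\in A(F)\}$, which is finitely generated (it is an extension of a subgroup of $A(F)$ by the finite group $A[w]$), and now the conclusion follows. The paper packages this as the injectivity of the cocycle map $\kappa_F:A(F)/A(K)\hookrightarrow\Hom(\Gal{F}{K},A(F)_{\tors})$, which is the same idea. So the route you chose to establish rank stability is a legitimate variant of the paper's (using Lemma \ref{ac} plus character inflation rather than directly observing the $\Gal{F'}{F}$-action factors through a trivial character), but the last step as written is a non sequitur and needs the exponent bound above.
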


\begin{proof}
The implication $(1) \Rightarrow (2)$ is clear.

Suppose (2) holds. A theorem of Ribet \cite{Rib} shows that $A(\mathbf{L})_\tors$ is finite. 
Let $w$ be the exponent of $A(\mathbf{L})_\tors$. 
Fix a finite abelian extension $K$ of $\Q$, contained in $\mathbf{L}$, such that 
$A(\mathbf{L})_\tors \subset A(K)$ and
all characters $\chi$ of $\Gal{\mathbf{L}}{\Q}$ that occur in $A(\mathbf{L})$ factor through $\Gal{K}{\Q}$.
Then for every finite extension $F$ of $K$ contained in $\mathbf{L}$ we have that
$[A(F):A(K)]$ is finite. 

For every such $F$, define a homomorphism 
$$
\kappa_F : A(F) \longrightarrow H^1(F/K,A(F)_\tors) = \Hom(\Gal{F}{K},A(F)_\tors)
$$
by sending $x \in A(F)$ to the $\Gal{F}{K}$-cocycle $g \mapsto gx-x$. 
Note that $gx-x \in A(F)_\tors$ since $[A(F):A(K)]$ is finite. 
It is easy to see that the kernel of $\kappa_F$ is $A(K)$, and 
hence $A(F)/A(K)$ is killed by $w$. This holds for every field extension $F$
of $K$ in $\mathbf{L}$, so $w$ annihilates $A(\mathbf{L})/A(K)$ as well, i.e., 
$$
A(\mathbf{L}) \subset \{ x \in A(\bar{\Q}) : wx \in A(K)\}.
$$
Since the right-hand side is finitely generated, so is $A(\mathbf{L})$. Thus $(2) \Rightarrow (1)$.
\end{proof}

A lower bound for diophantine stability is given by the 
following theorem from \cite{MR1}.

\begin{theorem}[Theorem 1.2 of \cite{MR1}]
\label{intro:1}
Suppose $A$ is a simple abelian variety over $K$ and all $\bar{K}$-endomorphisms of 
$A$ are defined over $K$. Then there is a set $\calS$ of rational primes with
positive density such that for every $\ell \in \calS$ and every $n \ge 1$, there 
are infinitely many cyclic extensions $L/K$ of degree $\ell^n$ such that 
$A(L) = A(K)$.

If $A$ is an elliptic curve without complex multiplication, then $\calS$ can be 
taken to contain all but finitely many rational primes.
\end{theorem}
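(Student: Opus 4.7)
The plan is to reinterpret $A(L)=A(K)$ in terms of twisted Selmer groups and then use a Kolyvagin-style Euler-system argument to build many cyclic $\ell$-power extensions in which all nontrivial twists of $A$ have rank zero. First I would reduce to the case where $L/K$ is cyclic of degree $\ell^n$, which by Lemma \ref{ac} is equivalent to showing that no nontrivial character $\chi$ of $\Gal(L/K)$ occurs in $A(L)\otimes\C$. For each such $\chi$, let $A^{\chi}$ denote the $\chi$-twist of $A$ (an abelian variety over $K$, constructed using the endomorphism ring of $A$, which by hypothesis is defined over $K$); then the condition $A(L)=A(K)$ up to torsion is equivalent to $\rank A^{\chi}(K)=0$ for every nontrivial character $\chi$ of $\Gal(L/K)$ of $\ell$-power order. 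Thus it suffices, for each $\chi$, to show the $\ell^{\infty}$-Selmer group $\mathrm{Sel}_{\ell^{\infty}}(A^{\chi}/K)$ is small (e.g.\ of $\Z_{\ell}$-corank zero, up to the contribution from torsion).

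Next I would define the set $\calS$ of rational primes $\ell$ for which the mod-$\ell$ representation of $G_K=\Gal(\bar K/K)$ on $A[\ell]$ has image as large as possible given the endomorphism ring. Precisely, $\calS$ would consist of primes $\ell$ such that (i) $A$ has good reduction outside a fixed finite set at $\ell$, (ii) the image of $G_K$ in $\mathrm{Aut}(A[\ell])$ is the full group of $\mathcal{O}$-linear automorphisms (where $\mathcal{O}=\End_K(A)$ acts on $A[\ell]$), and (iii) certain standard $\ell$-adic cohomological hypotheses (irreducibility of $A[\ell]$, $H^0$ and $H^1$ of the image vanishing) hold. For simple $A$ with $\End_{\bar K}(A)=\End_K(A)$, positive density of such $\ell$ follows from Serre's work on images of Galois representations attached to abelian varieties (and from the Mumford–Tate / Hall conjecture in the cases where it is known); for non-CM elliptic curves, Serre's open image theorem shows $\calS$ contains all but finitely many primes.

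The core construction I would carry out for each $\ell\in\calS$ proceeds by induction on $n$. Having built a cyclic extension $L_{n-1}/K$ of degree $\ell^{n-1}$ with $A(L_{n-1})=A(K)$, I would select a prime $q$ of $K$ of ``Kolyvagin type''---inert in a suitable auxiliary extension cut out by the $\ell^n$-torsion and by a fixed nonzero class $c\in \mathrm{Sel}_{\ell^n}(A^{\chi}/K)$---and such that $q$ splits completely in $L_{n-1}$. The density of such $q$ is positive by Chebotarev. Then ramifying at $q$ produces a cyclic degree-$\ell^n$ extension $L_n/K$ containing $L_{n-1}$, and the local condition at $q$ annihilates the chosen Selmer class. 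Iterating over a basis of the (finite) Selmer group $\mathrm{Sel}_{\ell^n}(A^{\chi}/K)$ for each $\chi$, one obtains $L_n$ with $\mathrm{Sel}_{\ell^n}(A^{\chi}/K)=0$ for all nontrivial $\chi$ of order dividing $\ell^n$, hence $A(L_n)=A(K)$. The freedom in the Chebotarev step produces \emph{infinitely many} such $L_n$.

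The main obstacle is verifying the local annihilation step: one must show that if $q$ is a Kolyvagin prime, then ramifying at $q$ changes the local condition at $q$ in such a way that a prescribed Selmer class fails to lie in the new Selmer group, while simultaneously not creating new global classes elsewhere. This is the heart of the Kolyvagin-type argument and requires controlling the local Tate pairing at $q$, the unramified/ramified dichotomy of $H^1(K_q,A[\ell^n])$, and the compatibility of Selmer conditions at primes of good reduction outside $q$. The hypothesis that $\End_{\bar K}(A)=\End_K(A)$ enters decisively here, because it guarantees that $A^{\chi}$ really is defined over $K$ and that the large-image hypothesis makes the Chebotarev set of usable primes $q$ nonempty of positive density; the ``big image'' hypothesis is what ultimately forces $\calS$ to have positive density in the general case, and density one (minus finitely many primes) in the non-CM elliptic curve case via Serre.
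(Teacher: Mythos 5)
The paper does not actually reprove the main part of this theorem: its ``proof'' simply cites [MR1] (Mazur--Rubin, \emph{Diophantine stability}) for everything except the final sentence, and then uses Serre's open-image theorem to handle the non-CM elliptic curve case. Your sketch attempts to reconstruct the entire argument of [MR1] from scratch, and while the broad strategy (reduce to controlling $\rank A^{\chi}(K)$ for cyclic twists $A^{\chi}$, choose $\ell$ with large image, annihilate Selmer classes by adding ramification at Chebotarev-chosen primes) is genuinely the kind of argument used there, it has a concrete gap at the most delicate point, and you yourself flag the other one.

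The gap: you assert that positive density of $\calS$ ``follows from Serre's work\dots (and from the Mumford--Tate / Hall conjecture in the cases where it is known).'' The theorem as stated is unconditional for \emph{all} simple abelian varieties with all $\bar K$-endomorphisms defined over $K$; Mumford--Tate is a conjecture and cannot be invoked. In [MR1] the positive-density-of-good-$\ell$ input is supplied unconditionally by Larsen's theorem on mod-$\ell$ Galois images of such abelian varieties, which is proved in the appendix to [MR1]. Without something playing that role your argument does not get off the ground outside the elliptic-curve case. Separately, the heart of your construction --- that ramifying at a Kolyvagin prime $q$ kills a prescribed Selmer class of $A^{\chi}$ while introducing no new classes, simultaneously for all relevant $\chi$ --- is explicitly labeled by you as ``the main obstacle'' and left unverified, so it is an acknowledged rather than a closed gap; note also that your inductive step of extending $L_{n-1}$ to a cyclic $L_n$ of degree $\ell^n$ by ramifying at a prime split in $L_{n-1}$ does not obviously produce a \emph{cyclic} degree-$\ell^n$ extension, and [MR1] does not build the fields this way. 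For the last sentence (non-CM elliptic curves), your appeal to Serre's open-image theorem is exactly what the paper does, and that part is fine.
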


\begin{proof}
All but the last sentence is proved in \cite{MR1}.  In the case that $A$ is a non-CM 
elliptic curve, the proof in \cite{MR1} shows that we can take $\calS$ to contain all 
primes $\ell$ such that the $\ell$-adic representation 
$G_K \to \Aut(A[\ell^\infty]) \to \GL_2(\Z_\ell)$ is surjective.  By Serre's theorem 
\cite{serre} this holds for all but finitely many $\ell$.
\end{proof}

The next two conjectures are formulated in \cite{MR2}, where they are motivated 
by the statistical properties of modular symbols. 
They were inspired by earlier conjectures (based on random matrix theory)
given by others; notably David, Fearnley, and Kisilevsky \cite{DFK1, DFK2}.

\begin{conjecture}[Conjecture 10.2 of \cite{MR2}]
\label{mrc1}
Suppose $E$ is an elliptic curve over $\Q$, and $\mathbf{L} \subset \Q^{\ab}$ is a real abelian 
field that contains only finitely many extensions of $\Q$ of degree $2$, $3$, or $5$. Then 
$E(\mathbf{L})$ is finitely generated.
\end{conjecture}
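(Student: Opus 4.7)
The plan is to transform finite generation of $E(\mathbf{L})$ into a statement about non-vanishing of twisted $L$-values of $E$, separating the ``small-prime'' characters controlled by the hypothesis from the rest. By Lemma \ref{bc} (applied with $A=E$), the group $E(\mathbf{L})$ is finitely generated if and only if only finitely many characters $\chi$ of $\Gal{\mathbf{L}}{\Q}$ occur in $E(\mathbf{L})$. Assuming the Birch--Swinnerton-Dyer conjecture for $E$ twisted by arbitrary Artin characters, $\chi$ occurs if and only if $L(E,\chi^{-1},1)=0$. So the task becomes: show that $L(E,\chi,1)\neq 0$ for all but finitely many characters $\chi$ of $\Gal{\mathbf{L}}{\Q}$.

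Next I would stratify characters of $G:=\Gal{\mathbf{L}}{\Q}$ by order. Writing $G\cong\prod_p G_p$ with $G_p$ the pro-$p$ component, the hypothesis that $\mathbf{L}$ contains only finitely many subfields of degree $2$, $3$, or $5$ is equivalent to $G_p/pG_p$ being finite for $p=2,3,5$; this forces $G_2, G_3, G_5$ to be finitely generated $\Z_p$-modules, and hence the set of characters of $G$ whose order involves only the primes $2,3,5$ is finite. What remains is to bound the characters whose order is divisible by some prime $p\ge 7$. For such a character $\chi$, one can write $\chi=\chi_p\chi'$ with $\chi_p$ of $p$-power order ($p\ge 7$) and $\chi'$ of order coprime to $p$, and argue via the modular-symbol heuristic developed in \cite{MR2}: for fixed $E$ and fixed $p\ge 7$, the predicted frequency with which $L(E,\psi,1)$ vanishes for primitive characters $\psi$ of $p$-power order is summable, and the sum over all primes $p\ge 7$ of these (heuristically finite) exceptional sets should itself be finite. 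Combined with the bound on the remaining small-prime component $\chi'$ provided by the hypothesis, this would yield only finitely many vanishings of $L(E,\chi,1)$ in $\mathbf{L}$.

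The principal obstacle is the rigorous control of $L(E,\chi,1)$ for primitive characters of large prime order: the modular-symbol heuristic in \cite{MR2} remains conjectural, and even for a single prime $p\ge 7$ and a single elliptic curve $E$, no unconditional non-vanishing estimate of the required strength is presently available. Positive-proportion non-vanishing theorems exist in specific families (quadratic twists via Kolyvagin's work on Heegner points, cyclotomic twists via Kato's Euler system), but these do not suffice to control the full set of characters of $\Gal{\mathbf{L}}{\Q}$ that can occur. Removing the dependence on BSD---which we invoked to pass between rank growth and $L$-value vanishing---adds a further layer of difficulty, so the statement is best viewed as a consequence of BSD together with a rigorous uniform form of the modular-symbol heuristic; establishing either input unconditionally under the mild hypothesis on $\mathbf{L}$ appears to require substantially new techniques.
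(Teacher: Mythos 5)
The statement you were given is labeled in the paper as a conjecture---Conjecture 10.2 of \cite{MR2}---and neither this paper nor \cite{MR2} proves it; it is offered as the prediction of heuristics based on the statistics of modular symbols, together with the Birch--Swinnerton-Dyer conjecture. There is therefore no ``paper's own proof'' to compare your attempt against, and you diagnose this correctly in your final paragraph: your sketch is a reconstruction of the motivation behind the conjecture, not a proof of it.

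That said, one step in the middle of your reduction is incorrect, and it would break any eventual rigorous version. You claim that, because $G_p/pG_p$ is finite for $p=2,3,5$ where $G := \Gal{\mathbf{L}}{\Q}$, the set of characters of $G$ whose order is a $\{2,3,5\}$-number is finite. This is false: finiteness of $G_p/pG_p$ implies that $G_p$ is topologically finitely generated, but a topologically finitely generated abelian pro-$p$ group can have infinitely many continuous finite-order characters. Concretely, the cyclotomic $\Z_2$-extension of $\Q$ lies inside $\Q^{\ab,+}$, has exactly one quadratic subfield, and so satisfies the hypothesis of the conjecture; yet its Galois group $\cong\Z_2$ has characters of order $2^n$ for every $n$. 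What the hypothesis \emph{does} give you is that $G/NG$ is finite for any fixed integer $N$ supported on $\{2,3,5\}$, hence finitely many characters of any bounded such order. The decomposition aligned with the paper's Conjecture \ref{mrc2} is: characters of order in $\{1,2,3,4,5,6,8,10,12\}$ (all divisors of $120$, finitely many in total because $G/120G$ is finite) versus characters of order $\ge 7$ and $\ne 8,10,12$ (conjecturally, only finitely many of these occur). Your split by the primes dividing the order misassigns orders such as $9$, $16$, $25$ to the ``controlled by the hypothesis'' bucket, when in fact they still require the heuristic input.
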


\begin{conjecture}[Conjecture 10.1 of \cite{MR2}, combined with the 
Birch and Swinnerton-Dyer conjecture]
\label{mrc2}
Suppose $E$ is an elliptic curve over $\Q$. Let $X$ denote the set of even characters 
of $\Gal{\Q^\ab}{\Q}$. Then
$$
\{\chi \in X : \text{$\mathrm{order}(\chi) \ge 7$, $\mathrm{order}(\chi) \ne 8, 10$ or $12$, 
and $\chi$ occurs in $E(\Q^\ab)$}\} 
$$
is finite.
\end{conjecture}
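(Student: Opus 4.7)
The plan is to combine the Birch and Swinnerton-Dyer conjecture with a probabilistic analysis of twisted $L$-values through modular symbols, in the spirit of \cite{MR2} and the random matrix heuristics of David--Fearnley--Kisilevsky \cite{DFK1, DFK2}. The first move would be to translate, via BSD, the occurrence of $\chi$ in $E(\Q^\ab)$ into vanishing of a special value: if $K_\chi\subset\Q^\ab$ denotes the cyclic extension cut out by $\chi$, then $\chi$ occurs in $E(\Q^\ab)$ iff it occurs in $E(K_\chi)\otimes\C$, whose $\chi$-multiplicity is $\ord_{s=1}L(E,\chi,s)$ under BSD. Since $\chi$ is even, the sign of the functional equation is generically $+1$, so vanishing at $s=1$ is a nongeneric coincidence.

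The heart of the proposed argument is an algebraic lower bound pitted against an analytic upper bound. By modularity of $E$, for $\chi$ of conductor $N$ one has the modular symbol expression
$$\frac{L(E,\chi,1)}{\Omega_E^+} \;=\; \sum_{a\bmod N}\bar\chi(a)\,\langle a/N\rangle^+ \;\in\; \Z[\zeta_{\ord(\chi)}],$$
and if this algebraic integer is nonzero its absolute norm to $\Z$ is at least $1$. This yields a lower bound of the form $|L(E,\chi,1)/\Omega_E^+| \gg B^{-(\varphi(\ord(\chi))-1)}$ in terms of a uniform upper bound $B$ on the Galois conjugates of the sum.

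Opposing this, the modular symbol heuristic of \cite{MR2}, supported by the work of Petridis--Risager and Nordentoft, models the values $\sum_a\bar\chi(a)\langle a/N\rangle^+$ as Gaussians in the $d$-dimensional real space $\Z[\zeta_{\ord(\chi)}]\otimes\R$ with $d=\varphi(\ord(\chi))$ and variance of order $\log N$. Under this model, the probability that such a Gaussian lands inside the ball of radius forced by the algebraic lower bound decays like $N^{-d/2+o(1)}$. Since the number of primitive characters of given order and conductor $\le X$ grows polynomially in $X$, a Borel--Cantelli style count then yields a \emph{convergent} series exactly when $d=\varphi(\ord(\chi))\ge 6$. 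The orders $n\ge 7$ with $n\notin\{8,10,12\}$ are precisely those with $\varphi(n)\ge 6$ (since $\varphi(8)=\varphi(10)=\varphi(12)=4$, whereas $\varphi(7)=\varphi(9)=6$, $\varphi(11)=10$, $\varphi(13)=12$, and $\varphi(n)\ge 6$ for all larger $n$), which accounts exactly for the list of exceptions in the statement.

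The principal obstacle---and the reason this remains conjectural---is making the heuristic rigorous: one needs uniform upper tail estimates on modular symbols in the conductor, a genuine anti-concentration estimate for $L(E,\chi,1)/\Omega_E^+$ (not merely Gaussian on average), and enough independence across characters in a family to justify the Borel--Cantelli step. Current technology delivers on-average Gaussian behavior and moment bounds, but controlling how often the random sum lies in a shrinking ball, uniformly in the conductor and simultaneously over infinitely many characters, is well beyond present reach. Thus any unconditional proof would require essentially new analytic input on distributional properties of modular symbols, in addition to BSD.
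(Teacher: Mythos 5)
This statement is a \emph{conjecture}, not a theorem: the paper simply records it (attributing the analytic form to \cite{MR2} and applying the Birch--Swinnerton-Dyer translation) and offers no proof, so there is no argument of the paper's to compare against. You correctly present your proposal as an exposition of the motivating heuristic rather than as a proof, and you correctly flag at the end exactly which analytic ingredients (uniform anti-concentration for $L(E,\chi,1)/\Omega_E^+$, independence across the family) are beyond current reach.

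Within those terms the shape of the heuristic is right: BSD converts occurrence of $\chi$ in $E(\Q^{\ab})$ into a nongeneric vanishing of $L(E,\chi,1)$ (for even $\chi$ the sign is generically $+1$); modular symbols discretize the algebraic $L$-value so that a nonzero value has a norm bounded away from $0$; and the identity $\{n:\varphi(n)\ge 6\}=\{n\ge 7\}\setminus\{8,10,12\}$ is exactly what explains the list of exceptional orders. The one place the sketch does not close, even on its own heuristic terms, is the claimed probability exponent. You posit a vanishing probability of order $N^{-d/2+o(1)}$ with $d=\varphi(\mathrm{order}(\chi))$; but with the generic count of $O(1)$ order-$n$ characters per prime conductor, the Borel--Cantelli series $\sum_N N^{-d/2}$ already converges at $d=4$, which would wrongly predict only finitely many vanishings for quintic twists, whereas \cite{DFK1,DFK2} and \cite{MR2} expect infinitely many when $\varphi(n)\le 4$. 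The correct bookkeeping in those references tracks the Gauss-sum normalization and the lattice of algebraic parts more carefully, and it is that accounting---not a bare $N^{-d/2}$---that places the convergence threshold at $\varphi(n)\ge 6$. Since that threshold is the entire content of the conjecture, this is the detail most worth straightening out if you wish to present the heuristic faithfully.
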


We now give some consequences of these conjectures for diophantine stability of 
abelian varieties over $\Q$.

\begin{consequence}
Suppose Conjecture \ref{mrc1} holds, $E$ is an elliptic curve over $\Q$, 
and $\mathbf{L} \subset \Q^{\ab}$ is a real abelian 
field that contains only finitely many extensions of $\Q$ of degree $2$, $3$, or $5$. 
Then there is a finite extension $M/\Q$ such that for every number field $F$ 
satisfying $F \subset \mathbf{L}$ and $F \cap M = \Q$, we have that $F/\Q$ is diophantine 
stable for $E$.
\end{consequence}

\begin{proof}
Take $M$ to be the field generated by the coordinates of points in $E(\mathbf{L})$, so $E(M) = E(\mathbf{L})$. 
Conjecture \ref{mrc1} says that $E(\mathbf{L})$ is finitely generated, so $M/\Q$ is finite. 
If $F \subset \mathbf{L}$ and $F \cap M = \Q$, then 
$$
E(F) = E(F) \cap E(\mathbf{L}) = E(F) \cap E(M) = E(F \cap M) = E(\Q).
$$
\end{proof}

\begin{defprop}
\label{twdef}
Suppose $A$ is an abelian variety defined over a number field $K$, and $F/K$ is a finite cyclic 
extension. Let $G := \Gal{F}{K}$, let $\Q[G]_F$ be the unique irreducible $\Q[G]$-submodule 
of $\Q[G]$ on which $G$ acts faithfully, and let $\Z[G]_F := \Q[G]_F \cap \Z[G]$.

Following \cite[\S5]{MRS} we construct an abelian variety 
$A_F$ over $K$ ({\it the twist of $A$ by $F/K$}) that has these properties: 
\begin{enumerate} 
\item 
$\dim(A_F) = \varphi([F:K]) \dim(A)$, where $\varphi$ denotes the Euler $\varphi$-function.
\item 
The base change of $A_F$ to $F$ is canonically (and $G$-equivariantly) 
isomorphic to $A \otimes \Z[G]_F$ (over $F$).
\item 
Suppose $\mathbf{L}$ is a field containing $K$ and $\mathbf{L} \cap F = K$. Then:
\begin{enumerate}
\setcounter{enumi}{1}
\item
There is a natural inclusion $A_F(\mathbf{L}) \subset A(F\mathbf{L})$ that identifies 
$$
A_F(\mathbf{L}) \cong \{x \in A(F\mathbf{L}) : \text{$\mathbf{N}_{F\mathbf{L}/M\mathbf{L}}x = 0$ 
for every $M$ with $K \subseteq M \subsetneq F$}\}.
$$
\item
Suppose $\chi$ is a character of $\Gal{\mathbf{L}}{K}$. Then 
$\chi$ occurs in $A_F(\mathbf{L})$ if and only if $\chi \rho$ occurs in $A(F\mathbf{L})$ for some 
faithful character $\rho$ of $\Gal{F}{K}$.
\end{enumerate}
\end{enumerate}
\end{defprop}

\begin{proof}
Assertion (1) is \cite[Theorem 2.1(i)]{MRS}, (3.a) is \cite[Theorem 5.8(ii)]{MRS}, 
and (3.b) follows from (3.a).
\end{proof}

\begin{corollary}
\label{twistrem}
Let $F/K$, $A$, and $A_F$ be as in Proposition \ref{twdef}.
If $[F:K]$ is prime, then $\rank\, A_F(K) > 0$ if and only if $\rank\, A(F) > \rank\, A(K)$.
\end{corollary}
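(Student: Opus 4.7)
The plan is to combine Proposition \ref{twdef}(3.a) with the norm/rank bookkeeping carried out in the proof of Lemma \ref{ac}. First I would specialize Proposition \ref{twdef}(3.a) to $\mathbf{L}=K$, noting that the hypothesis $\mathbf{L}\cap F=K$ is automatic. This gives the identification
\[
A_F(K) \;\cong\; \{x\in A(F) : \mathbf{N}_{F/M}x=0 \text{ for every } M \text{ with } K\subseteq M\subsetneq F\}.
\]

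Next I would use the prime-degree assumption. If $[F:K]$ is prime, then the only intermediate field satisfying $K\subseteq M\subsetneq F$ is $M=K$ itself, so the entire collection of norm conditions collapses to the single equation $\mathbf{N}_{F/K}x=0$. Hence
\[
A_F(K) \;\cong\; Z_F \;:=\; \ker\bigl(\mathbf{N}_{F/K}\colon A(F)\to A(K)\bigr),
\]
which is exactly the group that appears in the proof of Lemma \ref{ac}.

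Finally I would invoke the homomorphism $A(K)\oplus Z_F \to A(F)$ constructed in that proof, which was shown to have finite kernel and finite cokernel (the kernel is contained in $A(K)_\tors$, and surjectivity up to $[F:K]$-torsion comes from the identity $[F:K]x = \mathbf{N}_{F/K}x + ([F:K]-\mathbf{N}_{F/K})x$). Taking $\Q$-ranks, this isogeny yields
\[
\rank\, A(F) \;=\; \rank\, A(K) + \rank\, Z_F \;=\; \rank\, A(K) + \rank\, A_F(K),
\]
from which the equivalence $\rank\, A_F(K)>0 \iff \rank\, A(F)>\rank\, A(K)$ is immediate.

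I do not expect any real obstacle: the statement is essentially a bookkeeping corollary of two previously established facts, and the prime-degree hypothesis is used in exactly one place, namely to reduce the chain of norm conditions defining $A_F(K)$ inside $A(F)$ to the single norm condition $\mathbf{N}_{F/K}=0$ that appears in Lemma \ref{ac}.
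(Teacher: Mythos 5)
Your argument is correct, and it takes a genuinely different path from the paper's. The paper proves the corollary by specializing Proposition \ref{twdef}(3.b): with $\bL=K$ it says $\rank\, A_F(K)>0$ iff some \emph{faithful} character of $\Gal{F}{K}$ occurs in $A(F)$, then uses that for prime degree ``faithful'' reduces to ``nontrivial,'' and finally cites the equivalence $(1)\Leftrightarrow(4)$ of Lemma \ref{ac}. You instead specialize Proposition \ref{twdef}(3.a), observe that the prime-degree hypothesis collapses the chain of norm conditions to the single condition $\mathbf{N}_{F/K}x=0$ so that $A_F(K)\cong Z_F$, and then reuse the isogeny $A(K)\oplus Z_F\to A(F)$ from the proof of Lemma \ref{ac}. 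Both routes are valid and lean on the prime-degree hypothesis at exactly one analogous step (faithful $=$ nontrivial, vs.\ the only proper intermediate field is $K$). Your version has a small advantage: it yields the sharper rank identity $\rank\, A(F)=\rank\, A(K)+\rank\, A_F(K)$, from which the stated equivalence is immediate, whereas the paper's character-theoretic argument only directly delivers the biconditional.
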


\begin{proof}
Taking $\bL=K$ in Proposition \ref{twdef}(3) shows that $\rank\, A_F(K) > 0$ 
if and only if some faithful character $\rho$ of $\Gal{F}{K}$ occurs in $A(F)$.
If $[F:K]$ is prime, then $\rho$ is faithful if and only if it is nontrivial. By 
Lemma \ref{ac}(1,4) a nontrivial $\rho$ occurs in $A(F)$ if an only if 
$\rank\, A(F) > \rank\, A(K)$. This proves the corollary.
\end{proof}

\begin{consequence}
\label{conseq}
Suppose $F/\Q$ is a cyclic extension of prime degree $p \ge 7$, and $E$ is an 
elliptic curve over $\Q$. Let $A := E_F$ be the twist of $E$ as in Definition \ref{twdef}.
Suppose Conjecture \ref{mrc2} holds and $\mathbf{L}$ is a real abelian extension of 
$\Q$ not containing $F$. Then:
\begin{enumerate}
\item
$A(\mathbf{L})$ is finitely generated.
\item
If $\rank\, E(F) > \rank\, E(\Q)$ then $\rank\, A(\Q) > 0$.
\end{enumerate}
\end{consequence}

\begin{proof}
Proposition \ref{twdef}(3b) shows that a character 
$\chi$ of $\Gal{\mathbf{L}}{\Q}$ occurs in $A(\mathbf{L})$ 
if and only if $\chi\rho$ occurs in $E(F\mathbf{L})$ for some nontrivial character 
$\rho$ of $\Gal{F}{\Q}$. Since $\mathbf{L} \cap F = \Q$, we have 
$\Gal{\mathbf{L}F}{\Q} = \Gal{\mathbf{L}}{\Q}\times\Gal{F}{\Q}$, so such a character $\chi\rho$ 
has order divisible by $p \ge 7$. Thus Conjecture \ref{mrc2}
predicts that only finitely many characters occur in $A(\mathbf{L})$. 
Now (1) follows by Lemma \ref{bc}, and (2) follows from (1) by Corollary \ref{twistrem}. 
\end{proof}

The following construction shows that there is a large collection of abelian 
fields $\mathbf{L}$ ``close'' to $\Q^\ab$ to which we can try to apply Consequence \ref{conseq}.

\begin{example}
\label{bigfields}
Fix a prime $p \ge 7$, and another prime $\ell \equiv 1 \pmod{p}$ but 
$\ell \not\equiv 1 \pmod{p^2}$. 
Let $F$ denote the unique cyclic extension of $\Q$ of degree $p$ and conductor $\ell$.  
It follows from class field theory that there are 
infinitely many real abelian extensions $\mathbf{L}/\Q$ such that
\begin{itemize}
\item
$[\Q^{\ab,+}:\mathbf{L}] = p$,
\item
$\mathbf{L} \cap F = \Q$
\end{itemize}
(where $\Q^{\ab,+}$ denotes the real subfield of $\Q^\ab$). 
\end{example}

In order to apply Consequence \ref{conseq} to the arithmetic of the ``big'' fields 
of Example \ref{bigfields} (see Consequences \ref{con:real7} and \ref{con:ab7}), 
we need to have that $\rank\, E(F) > \rank\, E(\Q)$. 
This leads to the following question:

\begin{question}
\label{qc}
Suppose $F/\Q$ is a cyclic extension of prime degree $p$.
Is there an elliptic curve $E$ defined over $\Q$ such that $\rank\, E(F) > \rank\, E(\Q)$?
\end{question}

\begin{remark}
Computer calculations and heuristics similar to \cite{MR2} suggest that when $p = 7$, 
the answer to Question \ref{qc} is ``yes''. When $p > 7$ the answer is less clear, 
but Fearnley and Kisilevsky \cite{DFK2} produce some examples with $p = 7$ and $11$.
Our own calculations, assuming the Birch and Swinnerton-Dyer conjecture, found 
four examples with $p = 13$. For instance, if $E$ is the curve 
labeled 4025.c1 in \cite{LMFDB}, and 
$F$ is the extension of degree 13 in $\Q(\zeta_{53})$, then $L(E/F,s)/L(E/\Q,s)$ 
vanishes at $s=1$. Thus the Birch and Swinnerton-Dyer conjecture predicts that 
$\rank\, E(F) > \rank\, E(\Q)$.
\end{remark}

\begin{consequence}
\label{newcon}
Suppose Conjecture \ref{mrc2} holds, and $p = 7$ or $11$. Then there is an abelian 
variety $A$ over $\Q$ and infinitely many real abelian fields $\bL$ such that 
$[\Q^{\ab,+}:\mathbf{L}] = p$, and $A(\mathbf{L})$ is infinite and finitely generated.

If the Birch and Swinnerton-Dyer conjecture holds for elliptic curves, then 
the same statement holds for $p = 13$.
\end{consequence}

\begin{proof}
If $p = 7$, let $E$ be the elliptic curve $y^2+xy+y=x^3-x^2-6x+5$ and $F$ 
the abelian field of degree $7$ and conductor $29$.
If $p = 11$, let $E$ be the elliptic curve $y^2+xy=x^3+x^2-32x+58$ and $F$ 
the abelian field of degree $11$ and conductor $23$.
Let $\mathbf{L}$ be a real abelian field as in Example \ref{bigfields}, and let
$A := E_F$ be the twist of $E$ as in Definition \ref{twdef}.
Fearnley and Kisilevsky \cite{DFK2} show that $\rank\, E(F) > \rank\, E(\Q)$.
Thus for $p = 7$ or $11$ the desired conclusion follows from Consequence \ref{conseq}.

For $p = 13$ the proof is the same, except that for the elliptic curve 
$E$ labeled 4025.c1 in \cite{LMFDB}, and the cyclic extension $F$ of degree $13$ 
and conductor $53$, we need the Birch and Swinnerton-Dyer conjecture in order to 
conclude that $\rank\, E(F) > \rank\, E(\Q)$.
\end{proof}

The following consequence of Conjecture \ref{mrc2} gives rise to a collection 
of ``big'' abelian fields $\mathbf{L}$ over whose ring of integers 
Hilbert's Tenth Problem has a negative answer. Although these fields 
are not as close to $\Q^\ab$ as those of Example \ref{bigfields}, 
we can produce them without needing to know the answer to Question \ref{qc}.

\begin{consequence}
Suppose Conjecture \ref{mrc2} holds. 
There is a positive integer $n$ and an abelian variety $A/\Q$ such that 
\begin{enumerate}
\item
$\rank\, A(\Q) > 0$,
\item
if $L$ is a finite real abelian extension of $\Q$ and $[L:\Q]$ is relatively prime to 
$n$, then $\rank\, A(L) = \rank\, A(\Q)$,
\end{enumerate}
\end{consequence}

\begin{proof}
Fix an elliptic curve $E$ defined over $\Q$. By Conjecture \ref{mrc2}
there is a finite cyclic extension $F$ of $\Q$ that is maximal in the sense that
\begin{itemize}
\item[(a)]
there is a faithful character $\psi$ of $\Gal{F}{\Q}$ that occurs in $E(F)$,
\item[(b)]
there is no cyclic extension $F'$ of $\Q$ with property (a) that properly contains $F$.
\end{itemize}
Fix such an $F$, and let $n := [F:\Q]$ and $A := E_F$. 
By property (a) and \ref{twdef}(3b), the trivial character occurs in 
$A(\Q)$, so $A(\Q)$ is infinite.

Now fix an abelian extension $L/\Q$ of degree prime to $n$. 
In particular $L \cap F = \Q$.
Suppose $\chi$ is a nontrivial character of $\Gal{L}{\Q}$, and let $L'$ be the 
cyclic extension of $\Q$ cut out by $\chi$. 
Since $[L:\Q]$ is prime to $[F:\Q]$, the compositum $FL'$ is also cyclic over $\Q$. 
By the maximality of $F$ (property (b)) and \ref{twdef}(3.b), we 
conclude that $\chi$ does not occur in $A(L)$.
Lemma \ref{bc} now shows that $\rank\, A(L) = \rank\, A(\Q)$. 
\end{proof}

It is natural to try to generalize Conjectures \ref{mrc1} and \ref{mrc2} by asking 
whether they still hold for abelian varieties over number fields instead of 
elliptic curves over $\Q$.

\begin{question}
\label{e1}
How much diophantine instability can there be?
For example, suppose $K$ is a totally real number field, and 
$A$ is an abelian variety over $K$. Is there a constant $C(A,K)$ such that 
for every finite abelian extension $L/K$, and every character 
$\chi : \Gal{L}{K} \to \C^\times$ of order greater than $C(A,K)$, $\chi$ does {\em not} 
occur in the representation of $\Gal{L}{K}$ on $A(L) \otimes \C$?
If there is such a constant $C(A,K)$, how does it depend on $A$, and on $K$?
\end{question}

\begin{remark}
The reason to restrict to totally real fields in Question \ref{e1} is that 
otherwise the Birch and Swinnerton-Dyer conjecture can be used to force 
diophantine instability. For example, 
suppose $K$ is an imaginary quadratic field, and $E$ is an elliptic curve over $\Q$ 
with the property that every prime where $E$ has bad reduction splits into $2$ distinct 
primes in $K$. Then the theory of Heegner points gives rise to 
arbitrarily large cyclic extensions $L/K$ such that 
$$
\rank\, E(L) > \rank \sum_{K \subseteq F \subsetneq L}E(F).
$$
The fields in question are anticylotomic extensions of $K$, i.e., Galois extensions 
of $\Q$ with $\Gal{K}{\Q}$ acting as $-1$ on $\Gal{L}{K}$. These extensions are ``sparse'' 
in the set of all abelian extensions of $K$.
\end{remark}

\section{Totally real fields}
\label{trf}
The biggest difference between definability problems over finite and infinite extensions of $\Q$ lies in the difficulty of bounding heights of elements in infinite extensions. Recall that for finite extensions, the bound in Section \ref{sec: cong} was generated by using explicitly the degree of the extension over $\Q$. For obvious reasons such a method of producing bounds on the height of elements will not work over a ring of integers of an infinite extension. However, over a totally real field there is a substitute method relying on sums of squares that we used in Section \ref{sec: cong} and other sections of Part 3. So in order to prove existential undecidability over the ring of integers of an infinite totally real extension $\bL$ of $\Q$, all we need is an abelian variety $A$ over $\bL$ with $A(\bL)$ finitely generated and of positive rank.

\begin{corollary}
\label{cor:infty}
Let ${\mathbf{L}}$ be a totally real infinite extension of $\Q$ and let $A$ be an abelian variety such that $A(\mathbf{L})$ is infinite and finitely generated. Then $\Z$ has a diophantine definition over $\oo_{\mathbf{L}}$.
\end{corollary}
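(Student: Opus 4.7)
The plan is to descend the hypothesis to a number field and then chain Theorem \ref{thm:inf} with the classical result of Denef for totally real number fields. The key structural observation is that the finite generation of $A(\mathbf{L})$ automatically forces a descent to a number field $K \subset \mathbf{L}$, and once we have such a $K$ we are in a position to apply the main infinite-degree theorem of the paper directly.

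Concretely, let $P_1, \ldots, P_r \in A(\mathbf{L})$ be a set of generators, and choose a number field $K_0 \subset \mathbf{L}$ over which $A$ is defined (the finitely many coefficients of the defining equations of $A$ are algebraic over $\Q$, so such a $K_0$ exists). Take $K$ to be the compositum of $K_0$ with the finitely many number fields $\Q(P_1), \ldots, \Q(P_r)$ generated by the coordinates of the $P_i$. Then $K$ is a number field, $K \subset \mathbf{L}$, and $A(K) \supseteq \la P_1, \ldots, P_r \ra = A(\mathbf{L})$, forcing the equality $A(K) = A(\mathbf{L})$. Since $\mathbf{L}$ is totally real and $K \subset \mathbf{L}$, every embedding of $K$ into $\overline{\Q}$ extends to an embedding of $\mathbf{L}$, whose image lies in $\R$, so $K$ is totally real. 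By hypothesis $A(K) = A(\mathbf{L})$ is infinite, hence contains an element of infinite order, and $A(\mathbf{L})/A(K) = 0$ is trivially torsion.

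With these ingredients in hand I would apply Theorem \ref{thm:inf} with $\bK = K$, a number field; it yields that $\oo_K$ has a diophantine definition over $\oo_{\mathbf{L}}$. Since $K$ is a totally real number field, Denef's theorem (discussed in \S\ref{htpsec}) provides a diophantine definition of $\Z$ over $\oo_K$. By the transitivity of diophantine definitions (Lemma \ref{3.5}), these two definitions compose to give a diophantine definition of $\Z$ over $\oo_{\mathbf{L}}$, which is the desired conclusion. There is no serious obstacle here: all of the substantive work, namely the scheme-theoretic approximation of the logarithm used to convert rank-stable points into rational integers together with the sum-of-squares mechanism for norm bounds in infinite totally real fields, has already been absorbed into Theorem \ref{thm:inf}; the corollary is essentially a bookkeeping application of that theorem once one has descended $A$ and its Mordell--Weil group to a number field.
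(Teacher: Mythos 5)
Your proof is correct and follows essentially the same route as the paper: descend to the totally real number field $K$ over which $A$ and all of $A(\mathbf{L})$ are defined, obtain a diophantine definition of $\oo_K$ over $\oo_{\mathbf{L}}$, then chain with Denef's theorem via Lemma \ref{3.5}. The only cosmetic differences are that you invoke Theorem \ref{thm:inf} where the paper invokes the underlying Proposition \ref{prop3} directly, and you are slightly more careful than the paper in explicitly adjoining a field of definition $K_0$ of $A$ to the field generated by the Mordell--Weil generators, which tidies a small imprecision in the published argument.
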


\begin{proof}
Let $K$ be the field generated over $\Q$ by points in $A(\mathbf{L})$. 
Then $K$ is a totally real number field and $A(K) = A(\bL)$. 
By Proposition \ref{prop3}, we have that $\oo_K$ is diophantine 
over $\oo_{\bL}$. As was described in the introduction, J. Denef showed 
that the ring of integers of any totally real number field has a diophantine 
definition of $\Z$. Therefore from Lemma \ref{3.5} we can deduce that $\Z$ is 
diophantine over $\oo_{\bL}$. 
\end{proof}

\begin{consequence}
Suppose Conjecture \ref{mrc1} holds, and $\mathbf{L} \subset \Q^{\ab,+}$ is a real abelian 
field that contains only finitely many extensions of $\Q$ of degree $2$, $3$, or $5$. Then $\Z$ has a diophantine definition over $\oo_{\mathbf{L}}$ and Hilbert's Tenth Problem for  $\oo_{\mathbf{L}}$ has a negative solution.
\end{consequence}

\begin{proof}
By Conjecture \ref{mrc1}, we can find an elliptic curve $E$ such that $E(\mathbf{L})$ is infinite and finitely generated. Thus the assertion of the consequence holds by Corollary \ref{cor:infty}.
\end{proof}

From Consequence \ref{newcon} combined with Corollary \ref{cor:infty}, we also obtain the following consequence.

\begin{consequence}
\label{con:real7}
Suppose Conjecture \ref{mrc2} holds. 
If $p =7$ or $11$, then there exists a totally real abelian extension $\mathbf{L}$ such that $[\Q^{\ab,+}:\mathbf{L}]=p$ and $\Z$ has a diophantine definition over $\oo_{\mathbf{L}}$.

If the Birch and Swinnerton-Dyer conjecture holds for elliptic curves, then the 
same is true with $p = 13$.
\end{consequence}

\section{Quadratic extensions of totally real fields again}
\label{qtrf}
\subsection*{A reduction to the maximal totally real subfield}
As has been noted above, totally real fields are special in the sense that sums of squares allow us to impose bounds on heights of variables using existential language of the rings only. Much of this definability ``advantage'' is inherited by quadratic extensions of totally real fields; in other words one can reduce a definability problem over a quadratic extension of a totally real field to a definability problem over this totally real field.
From Proposition \ref{prop5} we have the following corollary.  

\begin{corollary}
Let $\bF$ be a quadratic extension  of a totally real field. 
If there exists an abelian variety $A$ over $\bF$ such that $A(\bF)$ is infinite 
and finitely generated, then $\Z$ is existentially definable over $\oo_{\bF}$, 
and Hilbert's Tenth Problem is undecidable over $\oo_{\bF}$.
\end{corollary}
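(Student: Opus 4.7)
The plan is to descend $A$ to a number field $K \subset \bF$ over which all points of $A(\bF)$ are rational, then apply Proposition \ref{prop5} to produce an existential definition of $\oo_K$ over $\oo_{\bF}$, and finally combine this with the classical theorem of Denef--Lipshitz for the ring $\oo_K$.

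First, since $A(\bF)$ is finitely generated, let $P_1, \ldots, P_r$ be a generating set, and let $K \subset \bF$ be the number field obtained by adjoining to $\Q$ the coefficients of a set of defining equations for $A$ together with the coordinates of the points $P_i$. Then $A$ is defined over $K$ and the inclusion $A(K) \hookrightarrow A(\bF)$ is an equality. In particular, $A$ is rank stable for $\bF/K$ and $A(K)$ contains an element of infinite order (as $A(\bF)$ is infinite). Writing $\bF = \bL(\sqrt{d})$ with $\bL$ totally real, the number field $K$ is either contained in $\bL$ or is quadratic over $K\cap \bL$, so $K$ is either totally real or a quadratic extension of a totally real number field. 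Hence by the theorem of Denef and Denef--Lipshitz cited in \S\ref{htpsec}, $\Z$ has a diophantine definition over $\oo_K$.

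Next, by Lemma \ref{serretate}, the rank stability of $A$ for $\bF/K$ produces a positive integer $n$ such that $(1,n\oo_K)$ is an exponent of diophantine stability for $A$ relative to $\bF/K$. Let $G$ be the N\'eron model of $A$ over $\oo_K$, replaced if necessary by the $h_K$-fold self-product as in Lemma \ref{free1} and Remark \ref{rmk1} so that its conormal bundle module is free over $\oo_K$; this $G$ then satisfies Assumption \ref{Ghyp}, inherits the exponent of diophantine stability, and $G(\oo_{\bF})$ still contains a point of infinite order. Applying Proposition \ref{prop5} to this data, with the number field $\bK=K$, yields an existential definition of $\oo_K$ over $\oo_{\bF}$.

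Finally, combining the two diophantine definitions via the transitivity Lemma \ref{3.5} shows that $\Z$ has a diophantine definition over $\oo_{\bF}$, and Proposition \ref{prop:diophdef}(1) then gives the undecidability of Hilbert's Tenth Problem for $\oo_{\bF}$. Since all of the heavy machinery has been assembled earlier in the paper, there is no serious obstacle here; the only point requiring a small check is the observation that the auxiliary number field $K$ falls within the scope of Denef--Lipshitz, which is precisely why the hypothesis ``$\bF$ is a quadratic extension of a totally real field'' is essential.
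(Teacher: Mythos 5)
Your overall strategy matches the paper exactly: descend $A$ to a number field $K$ with $A(K)=A(\bF)$, invoke Lemma~\ref{serretate} to get an exponent of diophantine stability, pass to a N\'eron model (after Lemma~\ref{free1}) to satisfy Assumption~\ref{Ghyp}, apply Proposition~\ref{prop5}, and finish by transitivity.  The parts that cite Lemmas~\ref{serretate}, \ref{free1}, \ref{3.5} and Propositions~\ref{prop5}, \ref{prop:diophdef} are all correctly marshalled.

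However, there is a genuine gap in the descent step, namely the assertion that
\emph{``the number field $K$ is either contained in $\bL$ or is quadratic over $K\cap \bL$''}
for the na\"ively constructed $K$ (coefficients of $A$ plus coordinates of generators).  This is exactly what you need in order to place $\oo_K$ within the scope of Denef and Denef--Lipshitz, but you offer no justification, and it is not automatic.  Writing $\bF=\bL(\gamma)$ with $\gamma^2\in\bL$, let $\tau$ be the nontrivial element of $\Gal{\bF}{\bL}$.  If $\tau$ stabilizes $K$, then $K^\tau=K\cap\bL$ is totally real of index $\le 2$ in $K$ and all is well; but a number field $K\subset\bF$ need not be $\tau$-stable, since $\tau$ moves the primitive element $\delta$ of $K$ to a $\Q$-conjugate $\tau\delta$ which may lie outside $\Q(\delta)$.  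In that case there is no a priori bound on $[K:K\cap\bL]$, and the one-sentence argument does not establish that $K$ is totally real or quadratic over a totally real field.  (The implication $[K\bL:\bL]\le 2 \Rightarrow [K:K\cap\bL]\le 2$ requires a linear-disjointness hypothesis that is not automatically satisfied.)

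The paper avoids this obstacle by enlarging $K$ at the outset.  It writes $F=\Q(\delta)$ for the field generated by the points of $A(\bF)$, decomposes $\delta=a+b\gamma$ with $a,b\in\bL$, and then sets $L:=F(a,b,\gamma^2)\cap\bL$ and $K:=L(\delta)$.  Because $a,b,\gamma^2$ are forced into $L$, one has $K=L(\gamma)$ (or $K=L$ if $b=0$), so $K$ is \emph{by construction} of degree $\le 2$ over the totally real number field $L$, is $\tau$-stable, and still satisfies $F\subset K$ so that $A(K)=A(\bF)$.  Your proof should be patched with this explicit enlargement of $K$ (or, alternatively, you would need to supply a proof that every number field inside $\bF$ is itself totally real or a quadratic extension of a totally real field, which is a nontrivial claim you have not established).
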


\begin{proof}
Let $\bF = \bL(\gamma)$ where $\bL$ is totally real and $\gamma^2 \in \bL$.
Let $F$ be the subfield of $\bF$ generated over $\Q$ by the points in $A(\bF)$.  
Since $A(\bF)$ is finitely generated, $F$ is a number field, so we have $F = \Q(\delta)$ 
where $\delta \in \bF$, i.e., $\delta = a + b\gamma$ with $a,b \in \bL$.  
Let $L := F(a,b,\gamma^2) \cap \bL$ and $K := L(\delta)$.  
Then $L$ is totally real, and $a, b, \gamma^2 \in L$, so $[K:L] = 2$.  
Thus $K$ is a quadratic extension of a totally real number field, and $F \subset K$ 
so $A(K) = A(\bF)$.

By a result of Denef (\cite{Den1}) and a result Denef and Lipshitz (\cite{Den2}) 
we have that $\Z$ has a diophantine definition over $\oo_K$.  By Proposition \ref{prop5} 
we have that $\oo_K$ has a diophantine definition over $\oo_\bF$.  Finally by 
Lemma \ref{3.5} (Transitivity of diophantine definitions), we can now conclude 
that $\Z$ has a diophantine definition over $\oo_{\bF}$ and the assertion of the 
corollary follows.
\end{proof}

Then following corollary provides a slightly different way of establishing diophantine undecidability of quadratic extensions of totally real fields.

\begin{corollary}
\label{cor+}
Let $\mathbf{F}$ be a quadratic extension of a totally real field $\bL$.  
If there exists an abelian variety $A$ over $\bL$ such that $A(\bL)$ is infinite 
and finitely generated, then $\Z$ is existentially 
definable over $\oo_{\bF}$, and Hilbert's Tenth Problem is undecidable over $\oo_{\bF}$.
\end{corollary}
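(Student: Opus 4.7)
Since $A(\bL)$ is finitely generated, the plan is first to extract a totally real number field $L \subset \bL$ with $A$ defined over $L$ and $A(L) = A(\bL)$; enlarging $L$ within $\bL$ to contain $\gamma^2$ (where $\bF = \bL(\gamma)$ with $\gamma^2 \in \bL$) preserves total reality, so we may set $K := L(\gamma) \subset \bF$.  Then $K/L$ is a quadratic extension of a totally real number field, and by the theorems of Denef and Denef--Lipshitz recalled in Section~\ref{htpsec}, $\Z$ has a diophantine definition over $\oo_K$.  By transitivity of diophantine definitions (Lemma~\ref{3.5}), the proof reduces to showing that $\oo_K$ has a diophantine definition over $\oo_{\bF}$, which is the main step.

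To obtain such a definition, the plan is to invoke Proposition~\ref{prop5} with $\bK = K$ (a number field) and a group scheme $G$ over $\oo_K$ that satisfies Assumption~\ref{Ghyp}, has a point of infinite order in $G(\oo_{\bF})$, and admits an exponent of diophantine stability for $\bF/K$.  I would build $G$ as the product, base-changed to $\oo_K$, of the N\'eron model of $A$ over $\oo_L$---whose stability along the $\bL/L$ direction is supplied by Lemma~\ref{serretate} using $A(L) = A(\bL)$---with the twist $\mathbf{G}_m^{M/L}$ of the multiplicative group, where $M := L(\sqrt{-\gamma^2})$ is chosen to have opposite ramification behaviour at the infinite places from $K$; the twist contributes stability along the $\bF/\bL$ direction through Corollary~\ref{cor:algg} and Lemma~\ref{le:Diophstability}.

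The hard part will be verifying that the combined group scheme actually admits a single exponent of diophantine stability for the full extension $\bF/K$: stability of a product for $\bF/K$ is not the mere conjunction of the two individual stabilities, which address the two steps $\bL/L$ and $\bF/\bL$ separately.  One has to choose the congruence ideal $I$ carefully so that a point in the kernel of reduction in $G(\oo_{\bF})$ modulo $I\oo_{\bF}$ first has its $\mathbf{G}_m^{M/L}$-component forced into $\mathbf{G}_m^{M/L}(\oo_{\bL})$ by Lemma~\ref{le:Diophstability}, and then---in conjunction with the $\bL/L$-stability of $A$---has its $A$-component forced into $A(\oo_L) \subset A(\oo_K)$, so that the two stabilities genuinely compose rather than run in parallel.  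Once this combined stability is established, Proposition~\ref{prop5} produces $\oo_K$ diophantine over $\oo_{\bF}$, and then Lemma~\ref{3.5} together with the Denef--Lipshitz definition of $\Z$ in $\oo_K$ yields $\Z$ diophantine over $\oo_{\bF}$; undecidability of Hilbert's Tenth Problem over $\oo_{\bF}$ then follows from Proposition~\ref{prop:diophdef}.
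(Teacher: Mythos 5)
Your initial reduction is correct and matches the paper: take $K$ to be the totally real number field generated by the points of $A(\bL)$, so $A(K)=A(\bL)$, and reduce to showing that $\oo_K$ is diophantine over $\oo_{\bF}$, after which the Denef/Denef--Lipshitz theorems and Lemma~\ref{3.5} finish the job.  But the way you propose to carry out the main step cannot be pushed through, and the difficulty you flag as ``the hard part'' is not merely hard --- it is where the argument breaks.

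You want a single group scheme $G$ over $\oo_K$ (a product of a N\'eron model of $A$ and a twisted torus $\mathbf{G}_m^{M/L}$) together with one exponent of diophantine stability for the \emph{entire} extension $\bF/K$, so that Proposition~\ref{prop5} can be applied directly with $\bK=K$.  Neither factor admits such an exponent.  For the torus factor: $\mathbf{G}_m^{M/L}$ is rank stable for the quadratic step $\bF/\bL$ (Lemma~\ref{le:Diophstability}), but its group of $\oo$-points over the totally real tower $K \subset \bL$ generically has unbounded rank (Dirichlet), so no $(r,I)$ can force a $\bF$-point of the torus into $\mathbf{G}_m^{M/L}(\oo_K)$; at best you land in $\mathbf{G}_m^{M/L}(\oo_{\bL})$.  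For the abelian variety factor: the hypothesis $A(\bL)=A(K)$ says nothing about $A(\bF)$; you have no control over the rank of $A$ in the quadratic extension $\bF/\bL$, and $A$ need not be rank stable for $\bF/K$.  Finally, the mechanism you sketch --- knowing the torus coordinate of a kernel point lies in $\oo_{\bL}$ should ``force'' the abelian-variety coordinate into $A(\oo_L)$ --- cannot work at the level of a product group scheme, where the two coordinates are independent; nothing in the definition of $M_{I,\bF}(G_1\times G_2)$ couples the two factors.

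The paper composes the two stabilities in a different, purely syntactic way that sidesteps all of this.  Proposition~\ref{prop3} (using $A$ and Lemma~\ref{serretate}) gives an equation $f(t,\bar x)$ defining $\oo_K$ over $\oo_{\bL}$ with the extra feature that for $t\in\oo_K$ the witnesses $\bar x$ may be taken in $\oo_K$.  Proposition~\ref{prop6} (using the $\mathbf{G}_m^{M/L}$ twist and Corollaries~\ref{cor:algg}, \ref{cor:ext2}, \ref{cor:extension2}, \ref{cor: FtoK}) gives a set $D$ with $\N\subset D\subset\oo_{\bL}$ that is diophantine over $\oo_{\bF}$.  The proof then enlarges $D$ to $\hat D:=\{\sum_{i} a_i\gamma^i : \pm a_i\in D\}$, still diophantine over $\oo_{\bF}$ and still sandwiched $\oo_K\subset\hat D\subset\oo_{\bL}$, and runs the system of equations that asserts simultaneously that $t$ and all the witness variables of $f$ lie in $\hat D$ and that $f(t,\bar x)=0$.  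The $\hat D$-constraints push the $f$-equation's variables into $\oo_{\bL}$, where the Prop.~\ref{prop3} definition applies and forces $t\in\oo_K$; conversely, for $t\in\oo_K$ the $\oo_K$-witnesses of $f$ automatically lie in $\hat D$, so the combined system is satisfiable.  This is where the two stabilities actually ``compose'': not in a single stability exponent for a product group scheme, but in the chaining of two diophantine definitions through an intermediate existentially definable subring of $\oo_{\bL}$.
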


\begin{proof}
Let $K$ be the number field generated by the points in $A(\bL)$, so $A(K) = A(\bL)$.
By Proposition \ref{prop3}, there exists a diophantine definition $f(t,\bar x)$ of $\oo_K$ over $\oo_{\bL}$ such that for all $t \in \oo_K$ the equation $f(t,x_1,\ldots, x_r)=0$ has solutions in $\oo_K$. By Proposition \ref{prop6}, we have that there exists $D \subset \oo_{\bF}$ such that $D$ is diophantine over $\oo_{\bF}$ and $\N \subset D \subset \oo_{\bL}$. 

Let $\gamma \in \oo_K$ generate $K$ over $\Q$ and define $\hat D \subset  \oo_{\bF}$ in the following manner.
\[
\hat D=\{\sum_{i=0}^{[K:\Q]-1}a_i\gamma^i| \pm a_i \in D\}.
\]
Observe that since $D$ is diophantine of over $\oo_{\bF}$, we have that $\hat D$ is diophantine over $\oo_{\bF}$.  Further, $\oo_K \subset \hat D \subset \oo_{\bF}$.  Let $g(x,\bar y)$ be a diophantine definition of $\hat D$ over $\oo_{\bF}$.  Now consider the following system of equations:
\begin{equation}
\label{sys:hatD}
\left \{
\begin{array}{c}
g(x_1,\bar y_1)=0,\\
\ldots\\
g(x_r, \bar y_r)=0,\\
g(t,z_1,\ldots,z_r),\\
f(t, x_1,\ldots,x_r)=0
\end{array}
\right .
\end{equation}

Suppose \eqref{sys:hatD} has solutions in $\oo_{\bF}$.  Then by assumption on $g$ being the diophantine definition of $\hat D$, we have that $t, x_1, \ldots,x_r \in \hat D\subset \oo_{\bL}$.  Since $f$ is a diophantine definition of $\oo_K$ over $\oo_{\bL}$, we conclude that $t \in \oo_K$.  
Conversely, suppose $t \in \oo_K$, then there exist $x_1,\ldots,x_r \in \oo_K \subset \hat D$ such that $f(t, x_1,\ldots,x_r)=0$.  Since $t, x_1,\ldots, x_r \in \oo_K \subset \hat D$, there exist $\bar y_1,\ldots, \bar y_r, \bar z$ with all components in $\oo_{\bF}$ such that all $g$ -equations are satisfied.  Thus \eqref{sys:hatD} is a diophantine definition of $\oo_K$ over $\oo_{\bF}$.

  From \cite{Den1} we have that $\oo_K$ has a diophantine definition of $\Z$.  Thus applying Lemma \ref{3.5} to the tower $\Z\subset \oo_K \subset \oo_{\bF}$, we have that $\Z$ has a diophantine definition over $\oo_{\bF}$. 
\end{proof}

Combining Corollary \ref{cor+} with Consequence \ref{newcon} we get another 
consequence of Conjecture \ref{mrc2}.

\begin{consequence}
\label{con:ab7}
Suppose Conjecture \ref{mrc2} holds. 
If $p =7$ or $11$, then there exists an abelian extension $\mathbf{L}$ such that 
$[\Q^{\ab}:\mathbf{L}]=p$ and $\Z$ has a diophantine definition over $\oo_{\mathbf{L}}$.

If the Birch and Swinnerton-Dyer conjecture holds for elliptic curves, then the 
same is true with $p = 13$.
\end{consequence}


\part*{Appendix A. \quad A geometric formulation of diophantine stability}

\section{The same structures described in a different vocabulary}

If $K$ is a number field let $\oo_K$ denote its ring of integers. 
\begin{definition}
\label{system}
(Compare with Definition \ref{def0}.)
Let $L/K$ be an extension of number fields.
Let 
$$
\fbox{${{{\mathcal F}}}: \ f_i({\st}; x_1,x_2,x_3,\dots x_n)$}
$$
be a system of $m$ polynomials ($i=1,2,\dots,m$) with coefficients in $\oo_K$. 
We've singled out the first variable ${\st}$, which will play a special role.
Say that ${\mathcal F}$ is {\df {diophantine stable at ${\st}$}} for $L/K$ if 
all the simultaneous solutions 
$$
f_i({\sf a}; a_1,a_2,\dots, a_n) = 0
$$
for ${\sf a}, a_1, a_2,\dots, a_n \in \oo_L$ and $ i=1,2,\dots, m$ have the 
property that {\em the ``singled out''} entry ${\st}={\sf a}$ lies in $\oo_K$.
\end{definition}

\begin{example}
Let ${\mathcal F}$ be the equation over $\oo_K$ that says that ${\st}$ is a unit:
$$ 
\fbox{$f({\st}; x_1) := 1-{\st}\cdot x_1 =0$.}
$$
So ${\mathcal F}$ is diophantine stable at ${\st}$ for any $L/K$ where $\oo_L$ 
and $\oo_K$ have the same unit group.
\end{example}

Any system of equations ${\mathcal F}$ over $\oo_K$ (as in Definition \ref{system} 
above) determines a a finitely presented affine $\oo_K$-scheme 
$$
V=V_{\mathcal F}:= {\Spec}(\oo_K[{\sf t}; x_1,x_2,\dots,x_n]/(f_1,f_2,\dots,f_m))
$$
Let ${\Aff}^1= {\Spec}(\oo_K[t])$ be $1$-dimensional affine space, viewed as 
(an affine) scheme over $\oo_K$. 
The homomorphism 
$\oo_K[{\st}] \rightarrow \big\{\oo_K[{\st}; x_1,x_2,\dots,x_n]/(f_1,f_2,\dots,f_m)\big\}$ 
induced by sending ${\st} \mapsto {\st}$ can be viewed as an $\oo_K$-morphism:
$$
V \xrightarrow{~\st~} {\Aff}^1
$$
which in turn induces a map on $\oo_L$-valued points
\begin{equation}
\label{Fdst}
V(\oo_L) \xrightarrow{~\st~} \oo_L.
\end{equation}
The {\em diophantine stable at $\st$} property of ${\mathcal F}$ relative to 
$L/K$ is equivalent to the property that the image of $V(\oo_L)$ 
under \eqref{Fdst} is contained in the subset $\oo_K$ of $\oo_L$. 

If we denote the image of \eqref{Fdst} in $\oo_L$ by $E$, then 
$$
V(\oo_L) \xrightarrow{\;\text{onto}\;} E \subset \oo_L= {\Aff}^1(\oo_L)
$$
shows that $\calF$ (or equivalently, the pair $(V,\st)$) is a 
{\df diophantine definition of $E$ over $\oo_L$} (see Definition \ref{defdd}). 
We are especially interested in the case where $E = \oo_K \subset \oo_L$. 

\begin{prop} 
If $L/K$ is an extension of number fields, $\calF$ is a system of 
polynomials as in Definition \ref{system}, and the image $E$ of the 
map \eqref{Fdst} on $\oo_L$-valued points of $V_{\calF}$ satisfies 
$\N \subset E \subset \oo_K$, then there is a system $\calF'$ such
that the corresponding pair $(V_{\calF'},\st)$ is a diophantine 
definition of $\oo_K$ over $\oo_L$.
\end{prop}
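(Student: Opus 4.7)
The plan is to mimic the construction in the proof of Lemma \ref{3.7}: the hypothesis gives a system $\calF$ whose $\st$-image in $\oo_L$ is $E$ with $\N \subset E \subset \oo_K$, and I will leverage this to pin down all of $\oo_K$. I will first fix $\alpha \in \oo_K$ with $K = \Q(\alpha)$ (it exists by the primitive element theorem plus clearing denominators) and set $d := [K:\Q]$. Every $y \in \oo_K$ admits a representation $by = \sum_{i=0}^{d-1} a_i \alpha^i$ with $a_0,\ldots,a_{d-1}, b \in \Z$ and $b \neq 0$; conversely, any such representation with $a_i, b \in \oo_K$ and $b \neq 0$ forces $y \in K$, hence $y \in K \cap \oo_L = \oo_K$. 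The system $\calF'$ will existentially assert such a representation for $\st$ while forcing $b \in E$, $\pm a_i \in E$ for each $i$ (so that $a_i, b \in \oo_K$), and $b \neq 0$.

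The only nontrivial step is expressing ``$a_i \in E$ or $-a_i \in E$'' as a polynomial condition in strictly positive existential language. I will first invoke Lemma \ref{3.1}, which since $L$ is a number field (and in particular has non-algebraically-closed fraction field) collapses $\calF$ to a single polynomial $F(\st;\vec x)$ with the same $\oo_L$-zero set as $\calF$. Then in the integral domain $\oo_L$ the disjunction is captured by the single equation
\begin{equation*}
F(a_i;\vec x_i^+)\cdot F(-a_i;\vec x_i^-) = 0.
\end{equation*}

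Assembling the linear relation $b\st - \sum_{i=0}^{d-1} a_i \alpha^i = 0$ (with coefficients in $\oo_K$), the $d$ product-equations above, one further copy $F(b;\vec w) = 0$, and the equations of Lemma \ref{3.3} expressing $b \neq 0$ in $\oo_L$, yields a finite system of polynomial equations over $\oo_K$ in the variables $\st$, $a_0,\ldots,a_{d-1}$, $b$, the $\vec x_i^{\pm}$, $\vec w$, and the Lemma \ref{3.3} auxiliaries. This finite system, with $\st$ singled out, will be $\calF'$.

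Correctness is then routine. For soundness, an $\oo_L$-solution yields $a_i, b \in E \cup (-E) \subset \oo_K$ (since $\oo_K$ is closed under negation) and $b \neq 0$, whence $b\st = \sum a_i \alpha^i \in \oo_K$ gives $\st \in K \cap \oo_L = \oo_K$. For completeness, any $y \in \oo_K$ admits the integer representation above with $\pm a_i, b \in \N \subset E$, so each factor of every product-equation can be made to vanish by a suitable choice of the auxiliary variables. The only substantive obstacle I foresee is the sign-disjunction ``$\pm a_i \in E$,'' and that is precisely what the Lemma \ref{3.1} plus integral-domain trick resolves.
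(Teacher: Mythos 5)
Your proof is correct and is essentially the paper's own argument: the paper disposes of this proposition by citing Lemma \ref{3.7}, whose proof is exactly the primitive-element representation $b\,\st=\sum_{i}a_i\alpha^i$ with $b\ne 0$ (via Lemma \ref{3.3}) and $\pm a_i,b\in E$, combined with $K\cap\oo_L=\oo_K$. Your additional explicit use of Lemma \ref{3.1} to collapse $\calF$ to one polynomial $F$ and then encode the sign disjunction $\pm a_i\in E$ by the product $F(a_i;\vec x_i^+)\,F(-a_i;\vec x_i^-)=0$ in the integral domain $\oo_L$ is exactly the right way to make the disjunction positively existential, a detail the paper leaves implicit.
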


\begin{proof} 
This is Lemma \ref{3.7} above.
\end{proof}

A diophantine definition $(V,\st)$ of $\oo_K$ over $\oo_L$ can be used to 
transport any algorithm that determines whether a system of polynomials 
with coefficients in $\oo_L$ has a solution over $\oo_L$ to a similar algorithm
 for systems of polynomials with coefficients in $\oo_K$, as follows. 

\begin{construction}
\label{con5.2} 
Suppose we are given a diophantine definition $(V,{\st})$ of $\oo_K$ over $\oo_L$. 
For every finitely presented affine scheme ${\mathcal B}$ over $S={\Spec}(\oo_K)$ 
we can construct an $S$-scheme ${\mathcal V}={\mathcal V}_{\mathcal B}$ 
with a surjective $S$-morphism $\tau : {\mathcal V} \twoheadrightarrow {\mathcal B}$: 
$$
\xymatrix{{\mathcal V}\ar@{->>}[r]^\tau\ar[dr] & {\mathcal B}\ar[d]\\
\ & S}
$$
with the property that the image of the set of $\oo_L$-valued points of 
${\mathcal V}$ under $\tau$ is equal to the set of $\oo_K$-valued points of ${\mathcal B}$:
$$
\xymatrix{\ & {\mathcal V}(\oo_L)\ar[d]^\tau\ar@{->>}[dl]_{\rm onto} \\ 
{\mathcal B}(\oo_K)\ar[r] & {\mathcal B}(\oo_L)}
$$
\end{construction}

\begin{proof} 
Let $(V,{\st})$ be the diophantine definition, and 
$$
{\mathcal G}:\; g_i(z_1,z_2,\dots,z_\nu)\in \oo_K[z_1,z_2,\dots, z_\nu] 
\quad \text{for}\; i=1,2,\dots,\mu
$$ 
the presentation of the affine scheme ${\mathcal B}$. We can view this 
presentation as giving us a closed embedding 
$$ 
{\Spec}(\oo_K[z_1,z_2,\dots, z_\nu]/(g_1,g_2\dots, g_\mu)) 
= {\mathcal B}\stackrel{j}{ \hookrightarrow} {\Aff}^\nu 
= {\Spec}(\oo_K[z_1,z_2,\dots, z_\nu]).
$$

Let $V^{\{\nu\}}:=V\times_{\oo_K} V\times_{\oo_K}\dots V$ be the $\nu$-fold 
power of $V$ (fiber-product over $S={\Spec}(\oo_K)$) and form the cartesian diagram: 
\begin{equation}
\label{tau2}
\raisebox{22pt}{\xymatrix{{\mathcal V}_{\mathcal B} = {\mathcal V}\ar@{^(->}[r]\ar[d]^\tau 
& V^{\{\nu\}}\ar[d]_\phi^{({\sf t},{\sf t},\dots,{\sf t})}\\
{\mathcal B}\ar@{^(->}[r] & {\Aff}^\nu.}}
\end{equation}

Since the map $\phi$ of \eqref{tau2} is a surjective morphism of schemes, 
so is the projection $ \tau:{\mathcal V} \to {\mathcal B} $. Since $(V,{\st})$ is 
a diophantine definition, the mapping $\phi$ is a surjection of 
$V^{\{\nu\}}(\oo_L)$, the set of $\oo_L$-valued points of $V^{\{\nu\}}$, 
onto ${\Aff}^\nu(\oo_K)$. If $v$ is an $\oo_L$-valued point of ${\mathcal V}$ 
then, by commutativity of \eqref{tau2}, $\tau(v) $ is an $\oo_K$-valued point 
of ${\mathcal B}$; and, by cartesian-ness of \eqref{tau2}, any $\oo_K$-valued 
point of ${\mathcal B}$, viewed in ${\Aff}^\nu(\oo_K)$ lifts to an 
$\oo_L$-valued point of $V^{\{\nu\}}$.
\end{proof}

Therefore
\begin{corollary}
\label{simsol} 
Suppose $(V,{\st})$ is a diophantine definition 
of $\oo_K$ over $\oo_L$. Then the following are equivalent:
\begin{enumerate}
\item 
The $\oo_K$-scheme ${\mathcal B}$ has an $\oo_K$-rational `point' 
(meaning: a ${\Spec}(\oo_K)$- section).
\item 
The system of equations
${\mathcal G}: g_i(z_1,z_2,\dots,z_\nu)\in \oo_K[z_1,z_2,\dots, z_\nu]$ 
for $i=1,2,\dots,\mu$ has a simultaneous solution in $\oo_K$.
\item 
The finitely presented $\oo_L$-scheme ${\mathcal V}_{\mathcal B}$ has an 
$\oo_L$-rational `point' (meaning: a ${\Spec}(\oo_L)$-section).
\item 
The system of equations over $\oo_L$ finitely presenting the 
$\oo_L$-scheme ${\mathcal V}_{\mathcal B}$ has a simultaneous solution in $\oo_L$.
\end{enumerate}
\end{corollary}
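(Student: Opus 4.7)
The plan is as follows. The equivalences (1)$\Leftrightarrow$(2) and (3)$\Leftrightarrow$(4) are tautological translations between an affine $R$-scheme and its defining system of equations: for any ring $R$, an $R$-valued point of $\Spec(R[z_1,\dots,z_\nu]/(g_1,\dots,g_\mu))$ is, by the universal property of $\Spec$, precisely a $\nu$-tuple in $R^\nu$ annihilating each $g_i$. Applying this with $R=\oo_K$ gives (1)$\Leftrightarrow$(2) and with $R=\oo_L$ (applied to the system of polynomials over $\oo_L$ cutting out ${\mathcal V}_{\mathcal B}$) gives (3)$\Leftrightarrow$(4). So the real content is (1)$\Leftrightarrow$(3), which is essentially already recorded in Construction~\ref{con5.2}.

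To prove (1)$\Rightarrow$(3), I would start with an $\oo_K$-section $b:\Spec(\oo_K)\to {\mathcal B}$, compose with the closed embedding $j:{\mathcal B}\hookrightarrow \Aff^\nu$ to get an element $(b_1,\dots,b_\nu)\in \oo_K^\nu$. Because $(V,\st)$ is a diophantine definition of $\oo_K$ over $\oo_L$, the map $\st:V(\oo_L)\to \oo_L$ of \eqref{Fdst} has image exactly $\oo_K$; hence there exist $v_1,\dots,v_\nu\in V(\oo_L)$ with $\st(v_i)=b_i$. Then $(v_1,\dots,v_\nu)\in V^{\{\nu\}}(\oo_L)$ maps under $\phi=(\st,\dots,\st)$ to $(b_1,\dots,b_\nu)\in \Aff^\nu(\oo_L)$, and this element factors through the closed subscheme ${\mathcal B}\hookrightarrow \Aff^\nu$. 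The cartesian property of the square \eqref{tau2} then produces a unique $\oo_L$-point of ${\mathcal V}_{\mathcal B}$ lifting these data.

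For (3)$\Rightarrow$(1), given $v\in {\mathcal V}_{\mathcal B}(\oo_L)$, consider $\tau(v)\in {\mathcal B}(\oo_L)\subset \Aff^\nu(\oo_L)$, with coordinates $(a_1,\dots,a_\nu)\in \oo_L^\nu$. Commutativity of \eqref{tau2} says that this same $\nu$-tuple equals $\phi$ applied to the corresponding $\oo_L$-point of $V^{\{\nu\}}$, i.e., each $a_i$ has the form $\st(w_i)$ for some $w_i\in V(\oo_L)$. By the hypothesis that $(V,\st)$ is a diophantine definition of $\oo_K$ over $\oo_L$, each such $a_i$ lies in $\oo_K$. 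Therefore $(a_1,\dots,a_\nu)\in {\mathcal B}(\oo_K)$, giving the required $\oo_K$-section.

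There is no genuine obstacle: the corollary is a clean repackaging of what Construction~\ref{con5.2} already achieves. The one subtle point to verify carefully is that the phrase \emph{``$(V,\st)$ is a diophantine definition of $\oo_K$ over $\oo_L$''} is being used with the strong meaning that the image of $V(\oo_L)\xrightarrow{\st}\oo_L$ equals $\oo_K$ (not merely contains or is contained in it)---equality in one direction is needed for the lifting in (1)$\Rightarrow$(3), and in the other direction for the descent in (3)$\Rightarrow$(1).
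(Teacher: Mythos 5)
Your argument is correct and is essentially the same as the paper's: the corollary follows immediately from Construction \ref{con5.2}, whose proof already records that commutativity of \eqref{tau2} gives (3)$\Rightarrow$(1) and cartesian-ness gives (1)$\Rightarrow$(3), with (1)$\Leftrightarrow$(2) and (3)$\Leftrightarrow$(4) being the standard correspondence between $R$-points of a finitely presented affine $R$-scheme and solutions of its defining equations. Your closing observation that ``diophantine definition of $\oo_K$ over $\oo_L$'' requires the image of $\st$ to be exactly $\oo_K$ (equality, not just one inclusion) is the right thing to flag, and it is indeed what Definition \ref{defdd} provides.
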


\begin{remark}
Suppose ${\mathcal G}$ is a finite system of polynomial equations over $\oo_K$ 
defining a scheme ${\mathcal B}$ as in Construction \ref{con5.2}, and we are 
given a diophantine definition $(V,{\st})$ of $\oo_K$ over $\oo_L$. 
If we have a finite algorithm to determine whether or not a finite system of 
polynomial equations over $\oo_L$ has a simultaneous solution over $\oo_L$, 
then---by Corollary \ref{simsol}---applying this algorithm to the system of 
equations over $\oo_L$ that finitely present the $\oo_L$-scheme 
${\mathcal V}_{\mathcal B}$ will tell us whether or not ${\mathcal G}$ has a 
simultaneous solution over $\oo_K$. In particular, a negative answer for 
$\oo_K$ to the question posed by Hilbert's Tenth Problem implies a similar 
negative answer for $\oo_L$.
\end{remark}

\begin{remark}
What can be said about the category comprising the various diophantine 
definitions of rings of integers related to a given $L/K$? E.g., beyond 
the fact that: 
\begin{itemize} 
\item 
The diophantine definitions, $(V, {\st})$, of $\oo_K$ over $\oo_L$ are 
closed under fiber product over ${\Aff}^1$. 
\item 
Any $(V, {\st})$ sandwiched between two diophantine definitions of $\oo_K$ 
in $\oo_L$ is again one:
$$
\xymatrix{V_1\ar[r]\ar[rd] & V\ar[r]\ar[d]^{\mathbf {t} } & V_2\ar[dl] \\
\ & {\Aff}^1 & \ }
$$
\end{itemize} 
\end{remark}

\begin{question} Given $L/K$ what is the smallest Krull dimension of a 
diophantine definition $(V, {\st})$ of $\oo_K$ over $\oo_L$? For example, 
what is the smallest Krull dimension of a diophantine definition of 
${\Z}$ over ${\Z}[i]$?
\end{question}

A related question concerns the smallest number of variables one needs to 
define $\oo_K$ over $\oo_L$. The smallest number of variables question 
has a long history. In its first version, the question concerned the 
smallest number of variables necessary to define a non-recursive 
c.e. subset of natural numbers or integers. 
Yu. Matiyasevich, J. Robinson and J. Jones were the first people considering 
this problem. Later on they were joined by Zhi-Wei Sun, among others. 
His recent paper \cite{Su2021} contains the most recent survey of the 
results in the area. H. Pasten in \cite{Pas22} and independently A. Fehm, P. Dittman and N. Daans in \cite{ADD21} considered the smallest 
number of variables question in the context of diophantine definitions over 
rings and fields. They called this number the {\em diophantine rank} of a set.

\begin{bibdiv}
\begin{biblist}


\bib{LMFDB}{unpublished}{
      author={Collaboration, LMFDB},
       title={The {L}-functions and modular forms database},
        note={www.lmfdb.org},
}

\bib{CS}{article}{
      author={Cornelissen, Gunther},
      author={Shlapentokh, Alexandra},
       title={Defining the integers in large rings of number fields using one
  universal quantifier},
        date={2008},
     journal={Zapiski Nauchnykh Seminarov POMI},
      volume={358},
       pages={199 \ndash  223},
}

\bib{CPZ}{article}{
      author={Cornelissen, Gunther},
      author={Pheidas, Thanases},
      author={Zahidi, Karim},
       title={Division-ample sets and diophantine problem for rings of
  integers},
        date={2005},
     journal={Journal de Th\'eorie des Nombres Bordeaux},
      volume={17},
       pages={727\ndash 735},
}

\bib{ADD21}{article}{
      author={Daans, Nicolas},
      author={Dittmann, Philip},
      author={Fehm, Arno},
       title={Existential rank and essential dimension of diophantine sets},
        date={2021},
        note={arXiv:2102.06941v2 [math.NT]},
}

\bib{DFK1}{book}{
      author={David, Chantal},
      author={Fearnley, Jack},
      author={Kisilevsky, Hershy},
       title={Vanishing of {$L$}-functions of elliptic curves over number
  fields},
      series={London Math. Soc. Lecture Note Ser.},
   publisher={Cambridge Univ. Press, Cambridge},
        date={2007},
      volume={341},
         url={https://doi-org.jproxy.lib.ecu.edu/10.1017/CBO9780511735158.016},
      review={\MR{2322350}},
}

\bib{Da1}{article}{
      author={Davis, Martin},
       title={Hilbert's tenth problem is unsolvable},
        date={1973},
     journal={American Mathematical Monthly},
      volume={80},
       pages={233\ndash 269},
}

\bib{Da2}{inproceedings}{
      author={Davis, Martin},
      author={Matiyasevich, Yuri},
      author={Robinson, Julia},
       title={Hilbert's tenth problem. {D}iophantine equations: Positive
  aspects of a negative solution},
organization={Amer. Math. Soc.},
        date={1976},
   booktitle={Proc. sympos. pure math.},
      volume={28},
       pages={323\ndash  378},
}

\bib{Den1}{article}{
      author={Denef, Jan},
       title={Hilbert's tenth problem for quadratic rings},
        date={1975},
     journal={Proc. Amer. Math. Soc.},
      volume={48},
       pages={214\ndash 220},
}

\bib{Den3}{article}{
      author={Denef, Jan},
       title={Diophantine sets of algebraic integers, {II}},
        date={1980},
     journal={Transactions of American Mathematical Society},
      volume={257},
      number={1},
       pages={227\ndash  236},
}

\bib{Den2}{article}{
      author={Denef, Jan},
      author={Lipshitz, Leonard},
       title={Diophantine sets over some rings of algebraic integers},
        date={1978},
     journal={Journal of London Mathematical Society},
      volume={18},
      number={2},
       pages={385\ndash 391},
}

\bib{DFK2}{article}{
      author={Fearnley, Jack},
      author={Kisilevsky, Hershy},
       title={Critical values of higher derivatives of twisted elliptic
  {$L$}-functions},
        date={2012},
        ISSN={1058-6458},
     journal={Exp. Math.},
      volume={21},
      number={3},
       pages={213\ndash 222},
         url={https://doi-org.jproxy.lib.ecu.edu/10.1080/10586458.2012.676522},
      review={\MR{2988574}},
}

\bib{FT}{book}{
      author={Fr\"{o}hlich, A.},
      author={Taylor, M.~J.},
       title={Algebraic number theory},
      series={Cambridge Studies in Advanced Mathematics},
   publisher={Cambridge University Press, Cambridge},
        date={1993},
      volume={27},
        ISBN={0-521-43834-9},
      review={\MR{1215934}},
}

\bib{GFPast}{article}{
      author={Garcia-Fritz, Natalia},
      author={Pasten, Hector},
       title={Towards {H}ilbert's tenth problem for rings of integers through
  {I}wasawa theory and {H}eegner points},
        date={2020},
        ISSN={0025-5831},
     journal={Math. Ann.},
      volume={377},
      number={3-4},
       pages={989\ndash 1013},
         url={https://doi-org.jproxy.lib.ecu.edu/10.1007/s00208-020-01991-w},
      review={\MR{4126887}},
}

\bib{EGA}{book}{
      author={Grothendieck, Alexander},
       title={Éléments de géométrie algébrique. ii. Étude globale
  élémentaire de quelques classes de morphismes},
   publisher={Inst. Hautes Études Sci. Publ. Math.},
        date={1961},
      volume={8},
}

\bib{Har}{book}{
      author={Hartshorne, Robin},
       title={Algebraic geometry},
   publisher={Springer Verlag},
        date={1977},
}

\bib{Kato}{article}{
      author={Kato, Kazuya},
       title={{$p$}-adic {H}odge theory and values of zeta functions of modular
  forms},
        date={2004},
        ISSN={0303-1179},
     journal={Ast\'{e}risque},
      number={295},
       pages={ix, 117\ndash 290},
        note={Cohomologies $p$-adiques et applications arithm\'{e}tiques. III},
      review={\MR{2104361}},
}

\bib{KLS}{unpublished}{
author={Kundu, Debanjana},
author={Lei, Antonio},
author={Sprung, Florian},
date={2022},
note={arXiv:2207.07021}
}

\bib{LR}{article}{
      author={Lozano-Robledo, \'{A}lvaro},
       title={Ranks of abelian varieties over infinite extensions of the
  rationals},
        date={2008},
        ISSN={0025-2611},
     journal={Manuscripta Math.},
      volume={126},
      number={3},
       pages={393\ndash 407},
         url={https://doi-org.jproxy.lib.ecu.edu/10.1007/s00229-008-0189-4},
      review={\MR{2411235}},
}

\bib{MR}{article}{
      author={Mazur, Barry},
      author={Rubin, Karl},
       title={Ranks of twists of elliptic curves and {H}ilbert's {T}enth
  {P}roblem},
        date={2010},
     journal={Inventiones Mathematicae},
      volume={181},
       pages={541\ndash 575},
}

\bib{MR1}{article}{
      author={Mazur, Barry},
      author={Rubin, Karl},
       title={Diophantine stability},
        date={2018},
        ISSN={0002-9327},
     journal={Amer. J. Math.},
      volume={140},
      number={3},
       pages={571\ndash 616},
         url={https://doi-org.jproxy.lib.ecu.edu/10.1353/ajm.2018.0014},
        note={With an appendix by Michael Larsen},
      review={\MR{3805014}},
}

\bib{MR2}{article}{
      author={Mazur, Barry},
      author={Rubin, Karl},
       title={Arithmetic conjectures suggested by the statistical behavior of
  modular symbols},
     journal={Experimental Mathematics},
        date={2022},
        note={to appear},
}

\bib{MRS}{article}{
      author={Mazur, B.},
      author={Rubin, K.},
      author={Silverberg, A.},
       title={Twisting commutative algebraic groups},
        date={2007},
        ISSN={0021-8693},
     journal={J. Algebra},
      volume={314},
      number={1},
       pages={419\ndash 438},
  url={https://doi-org.jproxy.lib.ecu.edu/10.1016/j.jalgebra.2007.02.052},
      review={\MR{2331769}},
}

\bib{MF19}{book}{
      author={Murty, M.~Ram},
      author={Fodden, Brandon},
       title={Hilbert's tenth problem},
      series={Student Mathematical Library},
   publisher={American Mathematical Society, Providence, RI},
        date={2019},
      volume={88},
        ISBN={978-1-4704-4399-3},
         url={https://doi-org.jproxy.lib.ecu.edu/10.1090/stml/088},
        note={An introduction to logic, number theory, and computability},
      review={\MR{3931317}},
}

\bib{MP18}{article}{
      author={Murty, M.~Ram},
      author={Pasten, Hector},
       title={Elliptic curves, {$L$}-functions, and {H}ilbert's tenth problem},
        date={2018},
        ISSN={0022-314X},
     journal={J. Number Theory},
      volume={182},
       pages={1\ndash 18},
         url={https://doi-org.jproxy.lib.ecu.edu/10.1016/j.jnt.2017.07.008},
      review={\MR{3703929}},
}

\bib{Pas22}{article}{
   author={Pasten, Hector},
   title={Notes on the DPRM property for listable structures},
   journal={J. Symb. Log.},
   volume={87},
   date={2022},
   number={1},
   pages={273--312},
   issn={0022-4812},
   review={\MR{4404628}},
   doi={10.1017/jsl.2021.97},
}

\bib{Pas22-1}{unpublished}{
author={Pasten, Hector},
title={Superficies elípticas y el décimo problema de Hilbert}
date={2022},
note={arXiv:2207.10005 [math.NT]},
}

\bib{Ph1}{article}{
      author={Pheidas, Thanases},
       title={Hilbert's tenth problem for a class of rings of algebraic
  integers},
        date={1988},
     journal={Proceedings of American Mathematical Society},
      volume={104},
      number={2},
       pages={611\ndash 620},
}

\bib{Po}{article}{
      author={Poonen, Bjorn},
       title={Using elliptic curves of rank one towards the undecidability of
  {H}ilbert's {T}enth {P}roblem over rings of algebraic integers},
  date={2002},
  book={       
   title={Algorithmic number theory (Sidney, 2002)},
      editor={Fieker, C.},
      editor={Kohel, D.},
      series={Lecture Notes in Computer Science},
      volume={2369},
   publisher={Springer Verlag},
       pages={33\ndash 42},},
}

\bib{Po3}{unpublished}{
      author={Poonen, Bjorn},
       title={Elliptic curves whose rank does not grow and {H}ilbert's {T}enth
  {P}roblem over the rings of integers},
        note={Private Communication},
}

\bib{Rib}{article}{
      author={Ribet, Kenneth},
       title={Torsion points of abelian varieties in cyclotomic extensions},
        date={1981},
     journal={Enseign. Math.},
      volume={27},
       pages={315\ndash 319},
}

\bib{RRobinson}{article}{
      author={Robinson, Raphael~M.},
       title={The undecidability of pure transcendental extensions of real
  fields},
        date={1964},
     journal={Z. Math. Logik Grundlagen Math.},
      volume={10},
       pages={275\ndash 282},
      review={\MR{0172803 (30 \#3021)}},
}

\bib{Rohr1}{article}{
      author={Rohrlich, David~E.},
       title={On {$L$}-functions of elliptic curves and cyclotomic towers},
        date={1984},
        ISSN={0020-9910},
     journal={Invent. Math.},
      volume={75},
      number={3},
       pages={409\ndash 423},
         url={https://doi-org.jproxy.lib.ecu.edu/10.1007/BF01388636},
      review={\MR{735333}},
}

\bib{Rohr2}{article}{
      author={Rohrlich, David~E.},
       title={{$L$}-functions and division towers},
        date={1988},
        ISSN={0025-5831},
     journal={Math. Ann.},
      volume={281},
      number={4},
       pages={611\ndash 632},
         url={https://doi-org.jproxy.lib.ecu.edu/10.1007/BF01456842},
      review={\MR{958262}},
}

\bib{serre}{article}{
      author={Serre, Jean-Pierre},
       title={Propri\'et\'es galoisiennes des points d'ordre fini des courbes
  elliptiques},
        date={1972},
     journal={Invent. Math.},
      volume={15},
      number={4},
       pages={259\ndash 331},
      review={\MR{52 \#8126}},
}

\bib{Se-Ta}{article}{
      author={Serre, Jean-Pierre},
      author={Tate, John},
       title={ Good reduction of abelian varieties},
        date={1968},
     journal={Annals of Math.},
      volume={88},
      number={3},
       pages={492\ndash 517},
      review={\MR{0236190}},
}

\bib{Shaf}{book}{
      author={Shafarevich, I.\ R.},
       title={Basic Algebraic Geometry},
   publisher={Springer-Verlag},
      series={Die Grundlehren der Math. Wissenschaften},
      volume={213},
        date={1974},
}

\bib{Sha-Sh}{article}{
      author={Shapiro, Harold},
      author={Shlapentokh, Alexandra},
       title={Diophantine relations between algebraic number fields},
        date={1989},
     journal={Communications on Pure and Applied Mathematics},
      volume={XLII},
       pages={1113\ndash 1122},
}
\bib{Shim10}{book}{
   author={Shimura, Goro},
   title={Modular forms: basics and beyond},
   series={Springer Monographs in Mathematics},
   publisher={Springer, New York},
   date={2012},
   pages={x+175},
   isbn={978-1-4614-2124-5},
   isbn={978-1-4614-2125-2},
   review={\MR{3025356}},
   doi={10.1007/978-1-4614-2125-2},
}
\bib{Sh34}{book}{
      author={Shlapentokh, Alexandra},
       title={Hilbert's tenth problem: Diophantine classes and extensions to
  global fields},
   publisher={Cambridge University Press},
        date={2006},
}

\bib{Sh36}{article}{
      author={Shlapentokh, Alexandra},
       title={Diophantine definability and decidability in the extensions of
  degree 2 of totally real fields},
        date={2007},
     journal={Journal of Algebra},
      volume={313},
      number={2},
       pages={846\ndash 896},
}

\bib{Sh33}{article}{
      author={Shlapentokh, Alexandra},
       title={Elliptic curves retaining their rank in finite extensions and
  {H}ilbert's tenth problem for rings of algebraic numbers},
        date={2008},
        ISSN={0002-9947},
     journal={Trans. Amer. Math. Soc.},
      volume={360},
      number={7},
       pages={3541\ndash 3555},
      review={\MR{MR2386235}},
}

\bib{Sh37}{article}{
      author={Shlapentokh, Alexandra},
       title={Rings of algebraic numbers in infinite extensions of {$\mathbb Q$}
  and elliptic curves retaining their rank},
        date={2009},
        ISSN={0933-5846},
     journal={Arch. Math. Logic},
      volume={48},
      number={1},
       pages={77\ndash 114},
      review={\MR{MR2480937}},
}

\bib{Sh40}{article}{
     author={Shlapentokh, Alexandra},
      title={First-order decidability and definability of integers in infinite
  algebraic extensions of the rational numbers},
        date={2018},
        ISSN={0021-2172},
     journal={Israel J. Math.},
      volume={226},
      number={2},
       pages={579\ndash 633},
         url={https://doi.org/10.1007/s11856-018-1708-y},
      review={\MR{3819703}},
}

\bib{Sh2}{article}{
      author={Shlapentokh, Alexandra},
       title={Extension of {Hilbert}'s tenth problem to some algebraic number
  fields},
        date={1989},
     journal={Communications on Pure and Applied Mathematics},
      volume={XLII},
       pages={939\ndash 962},
}

\bib{Sh17}{article}{
      author={Shlapentokh, Alexandra},
       title={Diophantine undecidability in some rings of algebraic numbers of
  totally real infinite extensions of {$\Q$}},
        date={1994},
     journal={Annals of Pure and Applied Logic},
      volume={68},
       pages={299\ndash 325},
}


\bib{Su2021}{article}{
      author={Sun, Zhi-Wei},
       title={Further results on {H}ilbert's {T}enth {P}roblem},
        date={2021},
        ISSN={1674-7283},
     journal={Sci. China Math.},
      volume={64},
      number={2},
       pages={281\ndash 306},
         url={https://doi.org/10.1007/s11425-020-1813-5},
      review={\MR{4211129}},
}

\bib{Vid89}{article}{
   author={Videla, Carlos},
   title={Sobre el décimo problema de {H}ilbert}, 
    date={1989}
   conference={ title={Atas da Xa Escola de Algebra}, address={Vitoria, ES, Brasil}},   
   book={
   series={Coleção Atas 16 da Sociedade Brasileira de Matemática 1989},
      },
    pages={95 -- 108},
        }
   
\bib{W2}{book}{
      author={Weil, Andr\'{e}},
       title={Adeles and algebraic groups},
      series={Progress in Math.},
   publisher={Springer Verlag},
        date={1982},
      volume={23},
}

\end{biblist}
\end{bibdiv}

\end{document}